\documentclass[a4paper,reqno]{amsart}
\pdfoutput=1

\usepackage{amsmath,amssymb,amsthm,mathtools}
\usepackage{mathrsfs}
\usepackage{stmaryrd}
\usepackage{graphicx, xcolor}
\usepackage[all]{xy}
\usepackage{color}
\usepackage{tikz}
\usepackage{ifthen}
\usepackage{enumerate}
\usepackage{enumitem}
\usepackage{cite}
\usepackage{comment}
\usetikzlibrary{arrows, calc}

\makeatletter
\newcommand{\oset}[3][0ex]{%
  \mathrel{\mathop{#3}\limits^{
    \vbox to#1{\kern-5\ex@
    \hbox{$\scriptstyle#2$}\vss}}}}
\makeatother

\makeatletter
\newcommand{\loweroset}[3][0ex]{%
  \mathrel{\mathop{#3}\limits^{
    \vbox to#1{\kern-3\ex@
    \hbox{$\scriptstyle#2$}\vss}}}}
\makeatother

\setlist[enumerate]{leftmargin=18pt}
\numberwithin{equation}{section}

\makeatletter
\renewcommand{\p@enumii}{\arabic{enumi})(}
\renewcommand{\p@enumiii}{\arabic{enumi})(\roman{enumii}-}
\makeatother

\theoremstyle{definition}
\newtheorem{thm}{Theorem}[section]
\newtheorem*{thm*}{Theorem}
\newtheorem{prop}[thm]{Proposition}
\newtheorem{defi-prop}[thm]{Definition-Proposition}
\newtheorem{lem}[thm]{Lemma}
\newtheorem{cor}[thm]{Corollary}
\newtheorem{defi}[thm]{Definition}
\newtheorem{rmk}[thm]{Remark}
\newtheorem{ex}[thm]{Example}

\newtheorem{nota}[thm]{Notation}

\newtheorem{cla}[thm]{Claim}
\newtheorem{condi}[thm]{Condition}
\newtheorem*{ack}{Acknowledgements}


\newcommand{\ul}[1]{\underline{#1}}

\newcommand{\wt}[1]{\widetilde{#1}}
\newcommand{\wh}[1]{\widehat{#1}}
\newcommand{\xrar}[1]{\xrightarrow{#1}}
\newcommand{\xdrar}[1]{\oset{#1}{\dashrightarrow}}
\newcommand{\lxdrar}[1]{\loweroset{#1}{\dashrightarrow}}
\newcommand{\resp}[1]{(resp.\hspace{2pt}#1)}
\newcommand{\bracket}[1]{\langle #1 \rangle}
\newcommand{\lpp}[1]{{}^{\perp}#1}
\newcommand{\rpp}[1]{#1^{\perp}}
\newcommand{\dpp}[1]{\lpp{\rpp{#1}}}

\newcommand{\msize}[2]{\scalebox{#1}{$#2$}}


\newcommand{\bbF}{\mathbb{F}}
\newcommand{\bbE}{\mathbb{E}}

\newcommand{\bbS}{\mathbb{S}}

\renewcommand{\AA}{\mathcal{A}}

\newcommand{\CC}{\mathcal{C}}
\newcommand{\DD}{\mathcal{D}}
\newcommand{\EE}{\mathcal{E}}

\newcommand{\II}{\mathcal{I}}
\newcommand{\JJ}{\mathcal{J}}

\newcommand{\MM}{\mathcal{M}}
\newcommand{\NN}{\mathcal{N}}

\newcommand{\PP}{\mathcal{P}}

\newcommand{\RR}{\mathcal{R}}
\renewcommand{\SS}{\mathcal{S}}
\newcommand{\TT}{\mathcal{T}}
\newcommand{\UU}{\mathcal{U}}
\newcommand{\VV}{\mathcal{V}}

\newcommand{\XX}{\mathcal{X}}
\newcommand{\YY}{\mathcal{Y}}
\newcommand{\ZZ}{\mathcal{Z}}

\newcommand{\fraks}{\mathfrak{s}}

\DeclareMathOperator{\Proj}{Proj}
\DeclareMathOperator{\Inj}{Inj}

\DeclareMathOperator{\add}{\mathsf{add}\hspace{-0.8pt}}

\DeclareMathOperator{\Ab}{\mathsf{Ab}}
\DeclareMathOperator{\Set}{\mathsf{Set}}

\DeclareMathOperator{\Ob}{\mathrm{Ob}}

\DeclareMathOperator{\id}{id}

\DeclareMathOperator{\Cone}{Cone \hspace{0.5pt}}
\DeclareMathOperator{\CoCone}{CoCone \hspace{0.5pt}}
\DeclareMathOperator{\per}{\mathsf{per}}
\DeclareMathOperator{\thick}{\mathsf{thick}}
\DeclareMathOperator{\sfD}{\mathsf{D}}
\DeclareMathOperator{\Db}{\sfD^b}


\newcommand{\op}{\mathrm{op}}
\newcommand{\pr}{\prime}
\newcommand{\rar}{\rightarrow}
\newcommand{\drar}{\dashrightarrow}
\newcommand{\car}{\circlearrowright}

\newcommand{\vsim}{\rotatebox[origin=c]{90}{{\scriptsize{$\sim$}}}}
\newcommand{\bcap}[1]{ \raise2pt\hbox{$\displaystyle\bigcap_{#1}$}}
\newcommand{\bcup}[1]{ \raise0pt\hbox{$\displaystyle\bigcup_{#1}$}}
\newcommand{\bopl}[1]{ \raise0pt\hbox{$\displaystyle\bigoplus_{#1}$}}

\arraycolsep=2pt

\begin{document}

\title{
Triangulated structures induced by mutations
}
\author{Ryota Iitsuka}
\email{}
\begin{abstract}
In representation theory of algebras, there exist two types of mutation pairs: 
rigid subcategories by Iyama-Yoshino 
and orthogonal collections by Coelho Sim\~oes-Pauksztello.
It is known that such mutation pairs induce triangulated categories, however,
these facts have been proved in different ways.
In this paper, we introduce the concept of ``premutation triples'', which is a simultaneous generalization of two different types of mutation pairs as well as concentric twin cotorsion pairs.
We present two main theorems concerning mutation triples.
The first theorem is that premutation triples induce pretriangulated categories. 
The second one is that pretriangulated categories induced by mutation triples, which are premutation triples satisfying an additional condition (MT4), become triangulated categories.
\end{abstract}

\maketitle
\vspace{-15pt}
\tableofcontents

\newpage
\section{Introduction}
The notion of ``mutation'' plays important roles in representation theory and related fields. 
Roughly speaking, mutation is an operation to obtain new objects from old ones, 
usually considered in triangulated categories, exact categories, or extriangulated categories \cite{IY08, AI12, AIR14, GNP23}. 
There are many studies about mutation of tilting objects (for example, APR-tilting \cite{ASS06}), silting objects\cite{AI12}, cluster-tilting objects\cite{IY08, BMRRT06} and support $\tau$-tilting objects\cite{AIR14}. 
They are respectively called tilting mutations, silting mutations, cluster-tilting mutations and support $\tau$-tilting mutations.
There are some mutations which are considered in more generalized situations \cite{LZ13, ZZ18}.
In many cases, we can study characters of certain objects (silting objects and so on) by mutating them \cite{AIR14,BMRRT06,IY08}.
We collected some results on mutations of rigid subcategories in Appendix \ref{Rigid mutation pairs} and \ref{Triangulated structures induced by rigid mutation pairs}.

On the other hand, we may consider not only mutations of rigid subcategories (called ``rigid mutations'' here) 
but also those of orthogonal collections (called ``orthogonal mutations'' here).
For example, simple-minded collections \cite{KY14} and simple-minded systems \cite{SP20,IJ23,Sim17, Dug15,SPP22}.
We also collected some results in orthogonal mutations in Appendix \ref{Orthogonal mutation pairs} and 
\ref{Triangulated structures induced by orthogonal mutation pairs}.

What is more interesting is that both rigid and orthogonal mutations in a triangulated category induce another smaller triangulated category \cite{IY08, BMRRT06, AI12, SP20}.
Furthermore, it is also known that some mutation-like concepts induce triangulated categories.
For example, a Frobenius extriangulated category induces a triangulated category 
whose shift functor is exactly a cosyzygy functor, 
which can be seen as a special case of rigid mutations in extriangulated categories \cite{NP19}.
For another example, a concentric twin cotorsion pair \cite{Nak18, NP19, LN19} in triangulated category 
induces a pretriangulated category \cite{BR07} 
and induces a triangulated category with some conditions \cite{Nak18}.
We review extriangulated categories and pretriangulated categories in section \ref{Structures associated with additive category}.
(For details on concentric twin cotorsion pairs, see Appendix \ref{ccTCP}.)

However, the proofs showing that they induce triangulated structures are independent of all four cases: rigid mutations, orthogonal mutations, Frobenius extriangulated categories and concentric twin cotorsion pairs \cite{IY08, IY18, Jin23, SP20, NP19, Nak18}.
So our goal is to understand these triangulated structures within the same framework.
In other words, we consider a simultaneous generalization of all four cases.

In section \ref{pretri}, we introduce the new concept of ``premutation triples'',
which is the framework we wanted to explain induced triangulated structures.
Then we collect elementary results of premutation triples.

In the following definition, the concept of ``strongly functorially finite'' is defined in Definition \ref{defi_strongly-finite} and the extriangulated categories $(\CC, \bbE^{\II}, \fraks^{\II})$ and $(\CC, \bbE_{\II}, \fraks_{\II})$ are defined in Example \ref{ex_relative-ET}.

\begin{defi} (Condition \ref{MT1-2}, \ref{MT3} and Definition \ref{mutation_triple})
\label{defi_first}
Let $(\CC, \bbE, \fraks)$ be an extriangulated category and $(\SS, \ZZ, \VV)$ be a triplet of subcategories of $\CC$.
$(\SS, \ZZ, \VV)$ is called \emph{premutation triple} if it satisfies the following conditions.
	\begin{itemize}[leftmargin=40pt]
	\item[(MT1)] $\SS \cap \ZZ = \ZZ \cap \VV$, denoted by $\II$, and 
	$\II$ is strongly functorially finite in $\ZZ$.
	\item[(MT2)] 
		\begin{enumerate}[label=(\roman*)]
		\item $\bbE^{\II}(\SS, \ZZ) = 0$ and $\bbE_{\II}(\SS, \CoCone_{\bbE_{\II}}(\II, \ZZ)) = 0$. 
		\item $\bbE_{\II}(\ZZ, \VV) = 0$ and $\bbE^{\II}(\Cone_{\bbE^{\II}}(\ZZ, \II), \VV) = 0$.
		\end{enumerate}
	\item[(MT3)] 
		\begin{enumerate}[label=(\roman*)]
		\item $\Cone_{\bbE^{\II}}(\ZZ, \ZZ) \subset \CoCone_{\bbE^{\II}}(\ZZ, \SS)$.
		\item $\CoCone_{\bbE_{\II}}(\ZZ, \ZZ) \subset \Cone_{\bbE_{\II}}(\VV, \ZZ)$.
		\item $\SS$ and $\ZZ$ are closed under extensions in $(\CC, \bbE^{\II}, \fraks^{\II})$ and $\VV$ and 
		$\ZZ$ are closed under extensions in $(\CC, \bbE_{\II}, \fraks_{\II})$.
		\end{enumerate}
	\end{itemize}
\end{defi}

In the last of this section,
We show the first main theorems below, which is a generalization of the results in \cite{Nak18}.

\begin{thm} (Theorem \ref{main_thm1})
Let $(\SS, \ZZ, \VV)$ be a premutation triple. Then $\ZZ/[\II]$ has a pretriangulated structure.
\end{thm}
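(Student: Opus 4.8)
The plan is to realize $\ZZ/[\II]$ as an additive category equipped with an automorphism and a class of triangles coming from the two ``exact'' structures $\bbE^{\II}$ and $\bbE_{\II}$, and to verify Neeman's (or Beligiannis–Reiten's) axioms for a pretriangulated category. First I would use condition (MT1): since $\II \subset \ZZ$ is strongly functorially finite, for each $Z \in \ZZ$ one can choose a left $\II$-approximation $Z \to I$ that is an inflation in $\bbE^{\II}$ (and dually a right $\II$-approximation that is a deflation in $\bbE_{\II}$). This lets me define a shift functor $\langle 1 \rangle$ on $\ZZ/[\II]$ by taking cones of such approximations; condition (MT3)(i) — that $\Cone_{\bbE^{\II}}(\ZZ,\ZZ) \subset \wt{\UU}$ and $\CoCone_{\bbE_{\II}}(\ZZ,\ZZ)\subset\wt{\TT}$ — is exactly what guarantees these cones land back in (a category equivalent to) $\ZZ$ after passing mod $[\II]$, so $\langle 1\rangle$ is well defined on the quotient. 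The orthogonality parts of (MT2) ensure the approximations are unique up to the ideal $[\II]$, hence $\langle 1\rangle$ is a functor, and the same input with the roles of $\bbE^{\II}$ and $\bbE_{\II}$ reversed gives a quasi-inverse $\langle -1\rangle$; I would then check $\langle 1\rangle\langle -1\rangle \cong \id \cong \langle -1\rangle\langle 1\rangle$ on the quotient.

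Next I would define the candidate (co)triangles: a diagram $X \to Y \to Z \to X\langle 1\rangle$ in $\ZZ/[\II]$ is a triangle if it is obtained, up to isomorphism in the quotient, from an $\bbE^{\II}$-conflation $X \rightarrowtail Y \twoheadrightarrow Z$ by splicing with the defining sequence of $\langle 1\rangle$; dually for cotriangles using $\bbE_{\II}$. The closure-under-extensions hypotheses in (MT3)(ii) guarantee that the middle term $Y$ of such a conflation between objects of $\ZZ$ again lies in $\ZZ$, so these data genuinely live in the subquotient $\ZZ/[\II]$. Then I would verify the pretriangulated axioms: (TR1) existence of triangles and that $X \xrightarrow{\id} X \to 0 \to X\langle 1\rangle$ is a triangle — this follows from the extriangulated axioms for $\bbE^{\II}$ plus the approximation setup; (TR2) rotation — here the subtlety is that rotating an $\bbE^{\II}$-triangle requires recognizing $X\langle 1\rangle$ via the cone construction, which is where the vanishing conditions $\bbE^{\II}(\SS,\ZZ)=0$, $\bbE_{\II}(\SS,\CoCone_{\bbE_{\II}}(\II,\ZZ))=0$ and their duals in (MT2) do the real work of making morphism complexes exact; and (TR3) the morphism (completion-of-commutative-squares) axiom, which I would deduce from the functoriality of extriangles and the homotopy-cartesian square axiom (ET4)/(ET4)$^{\op}$ in the extriangulated structures, transported to the quotient.

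The main obstacle I expect is the interplay between the \emph{two} exact structures $\bbE^{\II}$ and $\bbE_{\II}$: unlike the Iyama–Yoshino situation, where a single Frobenius-type structure yields the stable category, here $\langle 1\rangle$ is built from $\bbE^{\II}$ while the cotriangles (and $\langle -1\rangle$) come from $\bbE_{\II}$, and one must show these two halves are compatible — i.e. that the class of triangles is closed under the octahedral-type manipulations and that rotating a ``$\bbE^{\II}$-triangle'' three times produces an ``$\bbE_{\II}$-cotriangle'' and vice versa. This is precisely the point at which conditions (MT2) and (MT3)(i) must be used in full strength: the inclusions into $\wt{\UU}$ and $\wt{\TT}$ from Notation \ref{nota_wtU} are the technical device that lets a cone computed in one structure be re-expressed as a cocone in the other modulo $[\II]$. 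I would isolate this compatibility as a separate lemma (mirroring the corresponding lemmas of \cite{Nak18} for concentric twin cotorsion pairs, and of \cite{IY08, SP20} for mutation pairs) and then assemble the axioms; the genuinely new content is checking that lemma under the weaker hypotheses (MT1)–(MT3) rather than under the existence of an ambient cotorsion pair. Note that we do \emph{not} expect octahedral-type closure to hold in general — that is exactly what the extra hypothesis (MT4) in the second main theorem is for — so at this stage I would only verify the pretriangulated (not triangulated) axioms.
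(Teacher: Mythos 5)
There is a genuine gap, and it is located exactly at the point your plan treats as routine: you assert that reversing the roles of $\bbE^{\II}$ and $\bbE_{\II}$ gives a functor $\langle -1\rangle$ quasi-inverse to $\langle 1\rangle$, and that you would "check $\langle 1\rangle\langle -1\rangle\cong\id$". Under (MT1)--(MT3) alone this is false in general. First, the cone of a left $\II$-approximation of $Z\in\ZZ$ does not land in (anything equivalent to) $\ZZ$: by (MT3)(i) it lands in $\wt{\UU}=\CoCone_{\bbE^{\II}}(\ZZ,\SS)$, and $\ul{\wt{\UU}}$ is not equivalent to $\ul{\ZZ}$; one needs the reflection $\sigma\colon\ul{\wt{\UU}}\to\ul{\ZZ}$ (a \emph{left adjoint} of the inclusion, constructed from the vanishing conditions in (MT2)) to return to $\ZZ$, so the shift is the composite $\Sigma=\sigma\circ\bracket{1}$, and dually $\Omega=\omega\circ\bracket{-1}$. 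Second, and decisively, $\Sigma$ and $\Omega$ are only an \emph{adjoint pair} (Lemma \ref{adjoint_sigma_omega}); they are mutually quasi-inverse only under the extra hypothesis (MT4) of the second main theorem. If your claimed quasi-inverse existed under (MT1)--(MT3), Corollary \ref{cor_trictg} would immediately yield a triangulated category and (MT4) would be vacuous. The Beligiannis--Reiten notion of pretriangulated category is designed precisely for this situation: an adjoint pair $(\Sigma,\Omega)$, a right triangulated structure for $\Sigma$, a left one for $\Omega$, and a gluing condition relating them through the unit and counit.

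This misidentification also skews which axioms you propose to verify. In a right (resp.\ left) triangulated category \emph{all} axioms of a triangulated category hold, including the octahedral axiom (RT4); the only thing relaxed is that $\Sigma$ need not be an equivalence. So the octahedral verification cannot be deferred to the (MT4) stage --- it is part of the present theorem and is one of the hardest steps (Proposition \ref{proof_of_RT4}, resting on Lemma \ref{lem_for_RT4}, which repeatedly interleaves (ET4) with $\sigma$-completions and uses that $\SS$ is extension-closed in $\bbE^{\II}$). (MT4) has nothing to do with octahedra; it is the condition $\ul{\ZZ\bracket{1}}^-=\ul{\ZZ\bracket{-1}}^+$ forcing $\Sigma\Omega\cong\mathrm{Id}$. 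Finally, your plan omits the actual content of Theorem \ref{main_thm1} beyond the two one-sided structures: the compatibility axiom (iv) of pretriangulated categories, i.e.\ completing squares between a right triangle and a left triangle using $\beta_{Z'}\circ\Sigma\ul{s}$ and $\Omega\ul{u'}\circ\alpha_X$. That verification is where $\bbE^{\II}(\SS,\ZZ)=0$ and the explicit shifted-octahedron diagrams \eqref{THE_shifted_oct} and \eqref{THE_shifted_oct_2} are used, and it cannot be replaced by an appeal to (ET3)/(ET4) alone.
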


In section \ref{triangulated}, we collect sufficient conditions for mutation triples to induce a triangulated category.
We consider two cases.
The former one requires an additional condition (MT4), but it is not necessary that $\CC$ is a triangulated category.

\begin{thm} (Theorem \ref{main_thm2}, Remark \ref{MT4}) 
Let $(\SS, \ZZ, \VV)$ be a premutation triple. 
$(\, \cdot \,)^-$ and $(\, \cdot \,)^+$ are defined in Proposition \ref{prop_+_and_-}.
We consider the following new condition (MT4).
	\begin{itemize}[leftmargin=40pt]
	\item[(MT4)] $(\ZZ\bracket{1}/[\II])^- = (\ZZ\bracket{-1}/[\II])^+$
	\end{itemize}
$(\SS, \ZZ, \VV)$ is called a \emph{mutation triple} if it satisfies (MT4).
If $(\SS, \ZZ, \VV)$ is a mutation triple, then $\ZZ/[\II]$ has a triangulated structure.
\end{thm}

The latter one is the result in \cite{Nak18}, which can be applied to mutation triples defined  by concentric twin cotorsion pairs in triangulated category with specific conditions: Hovey and heart-equivalent.
We show that (MT4) follows from these conditions.
Therefore, the latter case is a special case of the former one.

In section \ref{redMT}, we introduce another triplet of subcategories, named reducible triple.
We consider the following different version of (MT3) and (MT4) to define reducible triples.
The extriangulated category $(\CC, \bbE^{\II}_{\II}, \fraks^{\II}_{\II})$ is also defined in Example \ref{ex_relative-ET}.
\begin{itemize}[leftmargin=40pt]
	\item[(RT3)]
		\begin{enumerate}[label=(\roman*)]
		\item $\Cone_{\bbE^{\II}}(\ZZ, \ZZ) \subset \CoCone_{\bbE^{\II}_{\II}}(\ZZ, \SS)$.
		\item $\CoCone_{\bbE_{\II}}(\ZZ, \ZZ) \subset \Cone_{\bbE^{\II}_{\II}}(\VV, \ZZ)$.
		\item $\SS$, $\ZZ$ and $\VV$ are closed under extensions in $(\CC, \bbE^{\II}_{\II}, \fraks^{\II}_{\II})$.
		\end{enumerate}
	\item[(RT4)]
		\begin{enumerate}[label=(\roman*)]
		\item $\II$ is strongly contravariantly finite in $\SS$.
		\item $\II$ is strongly covariantly finite in $\VV$.
		\item $\CoCone_{\bbE_{\II}}(\II, \SS) = \Cone_{\bbE^{\II}}(\VV, \II)$, denoted by $\RR$.
		\end{enumerate}
\end{itemize}
Then we define a reducible triple as a triplet of subcategories satisfying 
(MT1), (MT2), (RT3) and (RT4).

Reducible triples have the following nice property.
\begin{thm} (Theorem \ref{main_thm3})
Let $(\SS, \ZZ, \VV)$ be a reducible triple.
Let $\EE$ be an extension closed subcategory in $(\CC, \bbE^{\II}_{\II}, \fraks^{\II}_{\II})$ containing $\RR$.
Then $(\EE \cap \ZZ)/[\II]$ is an extension closed in $\ZZ/[\II]$.
\end{thm}

As an application of this theorem, 
we may consider restricting mutations to extension closed subcategory $\EE$ in $(\CC, \bbE^{\II}_{\II}, \fraks^{\II}_{\II})$, which induces mutations in the extriangulated category $(\EE \cap \ZZ)/[\II]$.
Mutations of 2-term silting complexes \cite{AIR14} are one of these examples.

Another advantage of introducing reducible triples is that 
we may define mutations of collections in $\CC$, 
which is a simultaneous generalization of cluster-tilting mutations, silting mutations, 
mutations of simple-minded systems and mutations of simple-minded collections.

\begin{defi} (Definition \ref{defi_mu-rMT})
Let $(\SS, \ZZ, \VV)$ be a reducible triple and $\RR^{\pr}$ be a collection in $\Ob(\CC)$ whose extension closure in $(\CC, \bbE^{\II}_{\II}, \fraks^{\II}_{\II})$ is $\RR$.
Assume that $\XX \subset \Ob(\ZZ)$ be a collection containing $\RR^{\pr}$.
We denote $\XX \setminus \RR^{\pr}$ by $\XX_{\RR^{\pr}}$.
	\begin{enumerate}
	\item We define \emph{right $\RR^{\pr}$-mutation} of $\XX$ as 
	$\RR^{\pr} \cup \Sigma \XX_{\RR^{\pr}}$, which is denoted by $\mu^-_{\RR^{\pr}}(\XX)$.
	\item We define \emph{left $\RR^{\pr}$-mutation} of $\XX$ as 
	$\RR^{\pr} \cup \Omega \XX_{\RR^{\pr}}$, which is denoted by $\mu^+_{\RR^{\pr}}(\XX)$.
	\end{enumerate}
\end{defi}

Throughout this thesis, let $k$ be a field and $\CC$ be a skeletally small additive category, thus the isomorphism class of $\Ob(\CC)$ is a set. 
If $\CC$ is an extriangulated or triangulated, we denote the extension closure in $\CC$ by $\bracket{\cdot}$.
We denote the category of abelian groups \resp{sets} by $\Ab$ \resp{$\Set$}.

We also assume that all subcategories are additive, full and closed under isomorphisms.
We do not always assume that all subcategories are closed under direct summands, 
so we denote the smallest subcategory containing $\DD$ and closed under direct summands by $\add \DD$ for a subcategory $\DD$.

We recall the concept of approximations.

\begin{defi}
Let $\II$ and $\ZZ$ be subcategories of $\CC$ and let $X \in \CC$.
	\begin{enumerate}
	\item A morphism $a \colon I_X \to X$ in $\CC$ is \emph{$\II$-epic}
	if $\CC(I, a) \colon \CC(I, I_X) \to \CC(I, X)$ is surjective for any $I$ in $\II$.
	\item A morphism $b \colon I_X \to X$ in $\CC$ is a \emph{right $\II$-approximation} of $X$ 
	if $I_X \in \II$ and $b$ is $\II$-epic.
	\item $\II$ is \emph{contravariantly finite} in $\ZZ$ if any $Z$ in $\ZZ$ has a right $\II$-approximation.
	\end{enumerate}

Dually, we define \emph{$\II$-monic}, a \emph{left $\II$-approximation} of $X$ and \emph{covariantly finite} in $\ZZ$. 
$\II$ is called \emph{functorially finite} in $\ZZ$ if $\II$ is both covariantly finite and contravariantly finite in $\ZZ$. 
\end{defi}

\begin{ack}
The author would like to thank H. Nakaoka and Professor Michael Wemyss for valuable suggestions to improve this paper.
\end{ack}

\section{Structures associated with additive category} \label{Structures associated with additive category}

\subsection{Extriangulated categories}
First, we start this section from the definition of extriangulated categories \cite{NP19}. 

\begin{defi} \cite[Definition 2.7, 2.8]{NP19}
	\begin{enumerate}
	\item For $X,Y \in \CC$, we denote the collection of three-term sequences whose first-term is $X$ 
	and third-term is $Y$ by $\wt{\EE}(Y,X)$ (note the order of $X$ and $Y$). 
	Then we introduce an equivalence relation $\sim$ in $\wt{\EE}(Y,X)$ as follows.

	For $\mathbf{E} = (X \xrar{x} E \xrar{y} Y), 
	\mathbf{E}^{\pr} = (X \xrar{x^{\pr}} E^{\pr} \xrar{y^{\pr}} Y)$ in $\wt{\EE}(Y,X)$,
	\[
	\mathbf{E} \sim \mathbf{E}^{\pr} \iff 
	\begin{aligned}
	\text{There } &\text{exists an isomorphism $e \colon E \to E^{\pr}$} \\
	&\text{such that } x^{\pr} = ex \text{ and } y = y^{\pr}e.
	\end{aligned}
	\]
	We denote $\wt{\EE}(Y,X) /\sim$ by $\EE(Y,X)$. 
	
	\item For $X,Y \in \CC$, we denote as 
	$0 = (X \xrar{\msize{0.6}{\begin{bmatrix} 1 \\ 0  \end{bmatrix}}} X \oplus Y 
	\xrar{\msize{0.6}{\begin{bmatrix} 0 & 1 \end{bmatrix}}} Y)$
	in $\EE(Y,X)$.
	
	\item For $(X \xrar{a} E \xrar{b} Y)$ in $\EE(Y,X)$ 
	and $(X^{\pr} \xrar{a^{\pr}} E^{\pr} \xrar{b^{\pr}} Y^{\pr})$ in $\EE(Y^{\pr},X^{\pr})$,
	 $(X \xrar{a} E \xrar{b} Y) \oplus (X^{\pr} \xrar{a^{\pr}} E^{\pr} \xrar{b^{\pr}} Y^{\pr})$ is defined by 
	 $(X\oplus X^{\pr} \xrar{a\oplus a^{\pr}} E\oplus E^{\pr} \xrar{b \oplus b^{\pr}} Y\oplus Y^{\pr})$
	  in $\EE(Y\oplus Y^{\pr}, X\oplus X^{\pr})$.
	\end{enumerate}
\end{defi}

\begin{rmk} \cite[Definition 2.1-2.3, Remark 2.2]{NP19}
Let $\bbE \colon \CC^{\op} \times \CC \to \Ab$ be an additive bifunctor and 
$X,X^{\pr},Y,Y^{\pr},Z \in \CC$.
\begin{enumerate} 
	\item An element $\delta \in \bbE(X,Y)$ is called \emph{$\bbE$-extension}.
	
	\item Let $a \colon X \to Y$ and $b \colon Y \to Z$ be morphisms in $\CC$, 
	we can define the following natural transformations.
		\[
		\bbE(b,-) \colon \bbE(Z, -) \Rightarrow \bbE(Y, -)
		\]
		\[
		\bbE(-,a) \colon \bbE(-,X) \Rightarrow \bbE(-,Y)
		\]
	\item There exists the following isomorphism.
	\[
	\bbE(X \oplus Y, X^{\pr} \oplus Y^{\pr}) \cong \bbE(X, X^{\pr}) \oplus \bbE(X, Y^{\pr}) 
	\oplus \bbE(Y, X^{\pr}) \oplus \bbE(Y,Y^{\pr})
	\]
	Then we define $\delta \oplus \delta^{\pr}$ in left-hand side as the element which corresponds to $(\delta, 0, 0, \delta^{\pr})$ in right-hand side by the above isomorphism.
	
	\item Let $\delta \in \bbE(Z,X)$.
	We write $\bbE(b,X)(\delta), \bbE(Z,a)(\delta)$ as $\delta b, a \delta$ respectively.
	
	\item In the rest of this paper, we sometimes regard $\bbE$-extensions as ``morphisms'' in $\CC$, that is, 
	we interpret $\delta b$ as a ``composition'' of $(Y \xrar{b} Z \lxdrar{\delta} X)$ and $a \delta$ as a ``composition'' of  $(Z \lxdrar{\delta} X \xrar{a} Y)$. Then we can consider commutative diagrams with $\bbE$-extensions by this notation. 
	
	For example, let $\delta \in \bbE(X,Y), \delta^{\pr} \in \bbE(X^{\pr}, Y^{\pr}), 
	x \in \CC(X,X^{\pr}), y \in \CC(Y, Y^{\pr})$, then
	\[
	y \delta = \delta^{\pr} x \text{ is expressed as }
	\xy
	(0,8)*+{X}="1";
	(0,-8)*+{X^{\pr}}="2";
	(16,8)*+{Y}="3";
	(16,-8)*+{Y^{\pr}}="4";
	(20,-8)*+{\vphantom{X}.}="piri";
	{\ar@{-->}^{\delta} "1";"3"};
	{\ar^{x} "1";"2"};
	{\ar@{-->}^{\delta ^{\pr}} "2";"4"};
	{\ar^{y} "3";"4"};
	{\ar@{}|{\car} "1";"4"};
	\endxy
	\]
\end{enumerate}
\end{rmk}

\begin{defi} \cite[Definition 2.4, 2.5]{NP19}
\begin{enumerate}
\item Let $\bbE \colon \CC^{\op} \times \CC \to \Ab$ be an additive bifunctor. 
$\fraks$ is called a \emph{realization} of $\bbE$ if $\fraks$ satisfies the following conditions.
	\begin{enumerate}[label=(\roman*)]
	\item $\fraks$ is a collection of correspondence $\{ \fraks_{X,Y} \colon \bbE(X,Y) \to \EE(X,Y)\}_{X,Y \in \CC}$.
	We often denote $\fraks_{X,Y}$ as $\fraks$ if there is no confusion.
	\item For $\delta \in \bbE(Y,X), \delta^{\pr} \in \bbE(Y^{\pr}, X^{\pr})$, let $\fraks(\delta) = (X \xrar{x} E \xrar{y} Y), \fraks(\delta^{\pr}) = (X^{\pr} \xrar{x^{\pr}} E^{\pr} \xrar{y^{\pr}} Y^{\pr})$. Then for any commutative diagrams in $\CC$,
	\[
	\xy
	(0,8)*+{X}="1";
	(0,-8)*+{X^{\pr}}="2";
	(16,8)*+{E}="3";
	(16,-8)*+{E^{\pr}}="4";
	(32,8)*+{Y}="5";
	(32,-8)*+{Y^{\pr}}="6";
	(48,8)*+{X}="7";
	(48,-8)*+{X^{\pr}}="8";
	{\ar^{x} "1";"3"};
	{\ar^{x^{\pr}} "2";"4"};
	{\ar^{y} "3";"5"};
	{\ar^{y^{\pr}} "4";"6"};
	{\ar^{a} "1";"2"};
	{\ar^{b} "5";"6"};
	{\ar^{a} "7";"8"};
	{\ar@{-->}^{\delta} "5";"7"};
	{\ar@{-->}^{\delta^{\pr}} "6";"8"};
	{\ar@{}|\car "5";"8"};
	\endxy
	\]
	there exists a morphism $e \colon E \to E^{\pr}$ which makes the following diagram commutative.
	\[
	\xy
	(0,8)*+{X}="1";
	(0,-8)*+{X^{\pr}}="2";
	(16,8)*+{E}="3";
	(16,-8)*+{E^{\pr}}="4";
	(32,8)*+{Y}="5";
	(32,-8)*+{Y^{\pr}}="6";
	(48,8)*+{X}="7";
	(48,-8)*+{X^{\pr}}="8";
	{\ar^{x} "1";"3"};
	{\ar^{x^{\pr}} "2";"4"};
	{\ar^{y} "3";"5"};
	{\ar^{y^{\pr}} "4";"6"};
	{\ar^{a} "1";"2"};
	{\ar^{^{\exists}e} "3";"4"};
	{\ar^{b} "5";"6"};
	{\ar^{a} "7";"8"};
	{\ar@{-->}^{\delta} "5";"7"};
	{\ar@{-->}^{\delta^{\pr}} "6";"8"};
	{\ar@{}|\car "1";"4"};
	{\ar@{}|\car "3";"6"};
	{\ar@{}|\car "5";"8"};
	\endxy
	\]
\end{enumerate}
\item Let $\fraks$ be a realization of $\bbE$. $\fraks$ is \emph{additive} if it satisfies the following conditions.
\begin{enumerate}[label=(\roman*)]
	\item For any $X,Y \in \CC$, $\fraks(0) = (X \xrar{\msize{0.5}{\begin{bmatrix} 1 \\ 0 \\ \end{bmatrix}}} X \oplus Y \xrar{\msize{0.5}{[0 \ 1]}}Y)$, that is, $\fraks$ maps $0$ in $\bbE(Y,X)$ to $0$ in $\EE(Y,X)$.
	\item For any $\delta \in \bbE(X,Y), \delta^{\pr} \in \bbE(X^{\pr}, Y^{\pr})$, $\fraks(\delta \oplus \delta^{\pr}) = \fraks(\delta) \oplus \fraks(\delta^{\pr})$.
\end{enumerate}
\end{enumerate}
\end{defi}

\begin{rmk} \cite[Definition 2.15, 2.19]{NP19}
\begin{enumerate}
	\item Let $\delta \in \bbE(Y,X)$ and $\fraks(\delta) = (X \xrar{x} E \xrar{y} Y)$. This sequence $(X \xrar{x} E \xrar{y} Y)$ is called $\fraks$-\emph{conflation}. We often call it \emph{conflation} if there is no confusion.
	\item The left morphism of a conflation is called an $\fraks$-\emph{inflation} and 
	the right one is called an $\fraks$-\emph{deflation}. 	
	\item A pair $(\delta, \fraks(\delta))$ is called an \emph{$\fraks$-triangle} and it is denoted by
	\[
	X \xrar{x} E \xrar{y} Y \xdrar{\delta} X
	\quad \text{or} \quad
	Y \xdrar{\delta} X \xrar{x} E \xrar{y} Y.
	\]
\end{enumerate}
\end{rmk}

\begin{defi} \cite[Definition 2.12]{NP19}

A triplet ($\CC$, $\bbE$, $\fraks$) is called an \emph{extriangulated category}, or \emph{ET category} if the triplet satisfies the following conditions.

\begin{itemize}[leftmargin=45pt]
	\item[(ET1)] $\bbE \colon \CC^{\op} \times \CC \to \Ab$ is an additive bifunctor.
	\item[(ET2)] $\fraks$ is an additive realization of $\bbE$.
	\item[(ET3)] Let $\delta \in \bbE(Y,X), \delta^{\pr} \in \bbE(Y^{\pr}, X^{\pr})$. For $\fraks(\delta) = (X \xrar{x} E \xrar{y} Y), \fraks(\delta^{\pr}) = (X^{\pr} \xrar{x^{\pr}} E^{\pr} \xrar{y^{\pr}} Y^{\pr})$ and any diagram in $\CC$,
	\[
	\xy
	(0,8)*+{X}="1";
	(0,-8)*+{X^{\pr}}="2";
	(16,8)*+{E}="3";
	(16,-8)*+{E^{\pr}}="4";
	(32,8)*+{Y}="5";
	(32,-8)*+{Y^{\pr}}="6";
	(48,8)*+{X}="7";
	(48,-8)*+{X^{\pr}}="8";
	{\ar^{x} "1";"3"};
	{\ar^{x^{\pr}} "2";"4"};
	{\ar^{y} "3";"5"};
	{\ar^{y^{\pr}} "4";"6"};
	{\ar^{a} "1";"2"};
	{\ar^{e} "3";"4"};
	{\ar^{a} "7";"8"};
	{\ar@{-->}^{\delta} "5";"7"};
	{\ar@{-->}^{\delta^{\pr}} "6";"8"};
	{\ar@{}|\car "1";"4"};
	\endxy
	\]
	there exists a morphism $b \colon Y \to Y^{\pr}$ which makes the following diagram commutative.
	\[
	\xy
	(0,8)*+{X}="1";
	(0,-8)*+{X^{\pr}}="2";
	(16,8)*+{E}="3";
	(16,-8)*+{E^{\pr}}="4";
	(32,8)*+{Y}="5";
	(32,-8)*+{Y^{\pr}}="6";
	(48,8)*+{X}="7";
	(48,-8)*+{X^{\pr}}="8";
	{\ar^{x} "1";"3"};
	{\ar^{x^{\pr}} "2";"4"};
	{\ar^{y} "3";"5"};
	{\ar^{y^{\pr}} "4";"6"};
	{\ar^{a} "1";"2"};
	{\ar^{e} "3";"4"};
	{\ar^{b} "5";"6"};
	{\ar^{a} "7";"8"};
	{\ar@{-->}^{\delta} "5";"7"};
	{\ar@{-->}^{\delta^{\pr}} "6";"8"};
	{\ar@{}|\car "1";"4"};
	{\ar@{}|\car "3";"6"};
	{\ar@{}|\car "5";"8"};
	\endxy
	\]
	\item[(ET3)$^{\op}$] Dual of $\mathrm{(ET3)}$.
	\item[(ET4)] For $\delta \in \bbE(C,X), \epsilon \in \bbE(D,Y)$, $\fraks(\delta) = (X \xrar{x} Y \xrar{x^{\pr}} C),
	 \fraks(\epsilon) = (Y \xrar{y} Z \xrar{y^{\pr}} D)$, there exist $\fraks$-triangles
	$X \xrar{z} Z \xrar{z^{\pr}} E \xdrar{\delta^{\pr}} X$ and $C \xrar{c} E \xrar{d} D \xdrar{\epsilon^{\pr}} C$
	which make the following diagram commutative.
	\[
	\xy
	(0,8)*+{X}="01";
	(0,-8)*+{X}="02";
	(16,24)*+{D}="10";
	(16,8)*+{Y}="11";
	(16,-8)*+{Z}="12";
	(16,-24)*+{D}="13";
	(32,24)*+{D}="20";
	(32,8)*+{C}="21";
	(32,-8)*+{E}="22";
	(32,-24)*+{D}="23";
	(48,8)*+{X}="31";
	(48,-8)*+{X}="32";
	(48,-24)*+{Y}="33";
	{\ar^{x} "01";"11"};
	{\ar^{x} "32";"33"};
	{\ar^{x^{\pr}} "11";"21"};
	{\ar^{y} "11";"12"};
	{\ar^{y^{\pr}} "12";"13"};
	{\ar^{z} "02";"12"};
	{\ar^{z^{\pr}} "12";"22"};
	{\ar^{c} "21";"22"};
	{\ar^{d} "22";"23"};
	{\ar@{=} "01";"02"};
	{\ar@{=} "10";"20"};
	{\ar@{=} "13";"23"};
	{\ar@{=} "31";"32"};
	{\ar@{-->}^{\epsilon} "10";"11"};
	{\ar@{-->}^{\epsilon} "23";"33"};
	{\ar@{-->}^{\epsilon^{\pr}} "20";"21"};
	{\ar@{-->}^{\delta} "21";"31"};
	{\ar@{-->}^{\delta^{\pr}} "22";"32"};
	{\ar@{}|\car "01";"12"};
	{\ar@{}|\car "10";"21"};
	{\ar@{}|\car "11";"22"};
	{\ar@{}|\car "12";"23"};
	{\ar@{}|\car "21";"32"};
	{\ar@{}|\car "22";"33"};
	\endxy
	\]
	\item[(ET4)$^{\op}$] Dual of $\mathrm{(ET4)}$.
\end{itemize}
\end{defi}

\begin{rmk} \cite[Corollary 3.12]{NP19} \label{long_exact_seq}
Let $(\CC, \bbE, \fraks)$ be an ET category and $(X \xrar{x} Y \xrar{y} Z  \lxdrar{\delta} X)$ be an $\fraks$-triangle. Then the following are exact sequences.
\[
\CC(Z,-) \xrar{\msize{0.6}{\CC(y,-)}} \CC(Y,-) \xrar{\msize{0.6}{\CC(x,-)}} \CC(X,-) 
\xrar{- \circ \delta} 
\bbE(Z,-) \xrar{\msize{0.6}{\bbE(y,-)}} \bbE(Y,-) \xrar{\msize{0.6}{\bbE(x,-)}} \bbE(X,-)
\]
\[
\CC(-,X) \xrar{\msize{0.6}{\CC(-,x)}} \CC(-,Y) \xrar{\msize{0.6}{\CC(-,y)}} \CC(-,Z) \xrar{\delta \circ -}
\bbE(-,X) \xrar{\msize{0.6}{\bbE(-,x)}} \bbE(-,Y) \xrar{\msize{0.6}{\bbE(-,y)}} \bbE(-,Z)
\]
\end{rmk}

The following proposition is often used in this paper.

\begin{prop} \cite[Proposition 3.15]{NP19} (Shifted octahedrons) \label{shifted_octahedrons}
Let $X_i \xrar{x_i} Y_i \xrar{y_i} Z \lxdrar{\delta_i} X_i$ be an $\fraks$-triangle for $i =1,2$.
Then there exist $\fraks$-triangles
$X_2 \xrar{v_1} W \xrar{w_1} Y_1 \lxdrar{\epsilon_1} X_2$ and $X_1 \xrar{v_2} W \xrar{w_2} Y_2 \lxdrar{\epsilon_2} X_1$ which make the following diagram commutative.
\[
\xy
(0,24)*+{}="11";
(16,24)*+{X_2}="12";
(32,24)*+{X_2}="13";
(48,24)*+{}="14";
(0,8)*+{X_1}="21";
(16,8)*+{W}="22";
(32,8)*+{Y_2}="23";
(48,8)*+{X_1}="24";
(0,-8)*+{X_1}="31";
(16,-8)*+{Y_1}="32";
(32,-8)*+{Z}="33";
(48,-8)*+{X_1}="34";
(0,-24)*+{}="41";
(16,-24)*+{X_2}="42";
(32,-24)*+{X_2}="43";
(48,-24)*+{W}="44";
{\ar@{=} "12";"13"};
{\ar^{v_2} "21";"22"};
{\ar^{w_2} "22";"23"};
{\ar@{-->}^{\epsilon_2} "23";"24"};
{\ar^{x_1} "31";"32"};
{\ar^{y_1} "32";"33"};
{\ar@{-->}^{\delta_1} "33";"34"};
{\ar@{=} "42";"43"};
{\ar^{-v_1} "43";"44"};
{\ar@{=} "21";"31"};
{\ar^{v_1} "12";"22"};
{\ar^{w_1} "22";"32"};
{\ar@{-->}^{\epsilon_1} "32";"42"};
{\ar^{x_2} "13";"23"};
{\ar^{y_2} "23";"33"};
{\ar@{-->}^{\delta_2} "33";"43"};
{\ar@{=} "24";"34"};
{\ar^{v_2} "34";"44"};
{\ar@{}|\car "12";"23"};
{\ar@{}|\car "21";"32"};
{\ar@{}|\car "22";"33"};
{\ar@{}|\car "23";"34"};
{\ar@{}|\car "32";"43"};
{\ar@{}|\car "33";"44"};
\endxy
\]
\end{prop}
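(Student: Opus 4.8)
The plan is to build the single object $W$ at one stroke, by realizing the ``diagonal'' $\bbE$-extension assembled from $\delta_1$ and $\delta_2$, and then to read off \emph{both} of the required $\fraks$-triangles from two applications of (ET4).

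Concretely, I would use the canonical isomorphism $\bbE(Z,X_1\oplus X_2)\cong\bbE(Z,X_1)\oplus\bbE(Z,X_2)$ to form the $\bbE$-extension $\eta\in\bbE(Z,X_1\oplus X_2)$ with components $\delta_1$ and $\delta_2$, and fix an $\fraks$-triangle $X_1\oplus X_2\xrar{u}W\xrar{p}Z\xdrar{\eta}X_1\oplus X_2$; this $W$ is the object sought. Writing $u=(v_2\ v_1)$ for the two components of $u$ and noting that $u\eta=0$ by Remark~\ref{long_exact_seq}, one already gets $v_2\delta_1=-v_1\delta_2$, which is precisely the commuting lower-right square of the diagram (and explains the sign on $-v_1$).

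Next I would apply (ET4) to the composable $\fraks$-inflations $X_2\xrar{\binom{0}{1}}X_1\oplus X_2\xrar{u}W$, the first being the inflation of the split $\fraks$-triangle $X_2\to X_1\oplus X_2\to X_1$ (whose $\bbE$-extension is $0$) and the second coming from the triangle above. This produces an $\fraks$-triangle $X_2\xrar{v_1}W\xrar{w_1}C_1\xdrar{\epsilon_1}X_2$ on the composite inflation $u\binom{0}{1}=v_1$, together with an $\fraks$-triangle $X_1\to C_1\to Z\xdrar{\delta_1}X_1$ on the cokernels, the connecting extension of the latter being the $X_1$-component $\delta_1$ of $\eta$ (the ``$\epsilon'=x'\epsilon$'' clause of (ET4)). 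Since this last triangle realizes $\delta_1$, it is isomorphic to the given $X_1\xrar{x_1}Y_1\xrar{y_1}Z\xdrar{\delta_1}X_1$, so I would transport along such an isomorphism, identifying $C_1$ with $Y_1$ and $C_1\to Z$ with $y_1$; the ``$x\delta'=\epsilon d$'' clause of (ET4) then forces $\epsilon_1=\delta_2y_1$. Applying (ET4) symmetrically to $X_1\xrar{\binom{1}{0}}X_1\oplus X_2\xrar{u}W$ yields in the same way an $\fraks$-triangle $X_1\xrar{v_2}W\xrar{w_2}Y_2\xdrar{\epsilon_2}X_1$ with $\epsilon_2=\delta_1y_2$. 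These are exactly the two $\fraks$-triangles of the statement, occupying the second column and the second row of the diagram.

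Finally I would verify that the whole $4\times4$ diagram commutes: the squares $w_2v_1=x_2$ and $w_1v_2=x_1$ come from the ``$cx'=z'y$'' clauses of the two (ET4) instances, $y_1w_1=p=y_2w_2$ from the ``$y'=dz'$'' clauses, the squares through the connecting maps of rows $2$ and $3$ are the already-established identities $\epsilon_1=\delta_2y_1$ and $\epsilon_2=\delta_1y_2$, and the lower-right square was handled above. The step I expect to need the most care is the identification of the cokernel object $C_1$ with the given $Y_1$ (and its analogue in the symmetric application) together with the sign bookkeeping in the bottom strip — the sign being exactly the one built into the diagram accompanying (ET4); once the two (ET4) octahedra are drawn side by side, everything matches. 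A more naive route — pulling $\delta_2$ back along $y_1$ and $\delta_1$ back along $y_2$ separately — would instead force one to re-identify the two resulting objects, which the diagonal construction sidesteps; this argument recovers \cite[Proposition~3.15]{NP19}.
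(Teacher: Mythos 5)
Your proposal is correct, but note that the paper itself offers no argument for this proposition: its ``proof'' is just the citation of \cite[Proposition 3.15]{NP19}. What you give is a self-contained derivation, and in fact it follows the same line as the proof in \cite{NP19} (there the extension with components $\delta_1,\delta_2$ is obtained as the pullback of $\delta_1\oplus\delta_2$ along the diagonal of $Z$, then the octahedron axiom is applied), so it is a legitimate replacement for the citation rather than a genuinely new route. Your bookkeeping of the (ET4) clauses checks out: $\epsilon'=x'\epsilon$ makes the second output triangle a realization of $\delta_1$ (resp.\ $\delta_2$); $x\delta'=\epsilon d$ gives $\epsilon_1=\delta_2y_1$ and $\epsilon_2=\delta_1y_2$; the clauses $z'y=cx'$ and $y'=dz'$ give $w_1v_2=x_1$, $w_2v_1=x_2$ and $y_1w_1=p=y_2w_2$; and $v_2\delta_1+v_1\delta_2=u\eta=0$ by Remark \ref{long_exact_seq}, which is exactly the sign in the bottom square. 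The one step you should spell out is the ``transport'': if $X_2\xrar{v_1}W\xrar{z'}E\xdrar{\delta'}X_2$ is an $\fraks$-triangle and $e\colon E\to Y_1$ is an isomorphism with $ec=x_1$, $y_1e=d$ (it exists since both sequences realize $\delta_1$), then $X_2\xrar{v_1}W\xrar{ez'}Y_1\xdrar{\delta'e^{-1}}X_2$ is again an $\fraks$-triangle; this follows from the realization axiom applied to $(\id_{X_2},e)$ together with the standard fact from \cite{NP19} that a morphism of $\fraks$-triangles whose outer components are isomorphisms has an isomorphism in the middle, and it is what justifies identifying $\epsilon_1$ with $\delta_2y_1$ after replacing $E$ by $Y_1$. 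With that remark added, the proof is complete.
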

\begin{proof}
See \cite[Proposition 3.15]{NP19}.
\end{proof}

The following definitions of projective objects and injective objects are analogies of exact category.

\begin{defi} \cite{NP19}
Let $(\CC, \bbE, \fraks)$ be an ET category.
	\begin{enumerate}
	\item We define a subcategory of $\CC$, $\mathrm{Proj}_{\bbE} \CC$ as $\{ X \in \CC \mid \bbE(X,\CC) = 0\}$. An object in $\Proj_{\bbE} \CC$ is called a \emph{projective object}. 
	\item $\CC$ \emph{has enough projectives} if, for any $X$ in $\CC$, there exists a conflation $X^{\pr} \rar P \rar X$ with $P \in \Proj_{\bbE} \CC $.
	\item For subcategories $\XX, \YY$ of $\CC$, we define the following three subcategories. \vspace{-5pt}
		\[
		\XX \ast \YY = 
		\left\{
		E \in \CC \ \middle| \
		\begin{aligned}
			\text{There }&\text{exists an } \fraks \text{-conflation } \\
			&X \rar E \rar Y \text{ where } X \in \XX, Y \in \YY 
		\end{aligned}
		\right\}
		\]
		\[
		\Cone_{\bbE}(\XX, \YY) =
		\left\{ 
		Z \in \CC \ \middle| \
		\begin{aligned}
			\text{There }&\text{exists an } \fraks \text{-conflation } \\
			&X \rar Y \rar Z \text{ where } X \in \XX, Y \in \YY 
		\end{aligned}
		\right\}
		\]
		\[
		\CoCone_{\bbE}(\XX, \YY) =
		\left\{ 
		Z^{\pr} \in \CC \ \middle| \
		\begin{aligned}
			\text{There }&\text{exists an } \fraks \text{-conflation } \\
			& Z^{\pr} \rar X \rar Y  \text{ where } X \in \XX, Y \in \YY 
		\end{aligned}
		\right\}
		\]
	\end{enumerate}
	
We denote $\Proj_{\bbE} \CC$ by $\Proj \CC$ when there is no confusion. 
Dually, we define $\Inj_{\bbE} \CC$ and \emph{enough injectives}. 
$\CC$ is called \emph{Frobenius} if $\Proj \CC = \Inj \, \CC$ and $\CC$ has enough projectives and enough injectives.
\end{defi}

\begin{ex}	\cite[Corollary 3.18, Proposition 3.22]{NP19}
	\begin{enumerate}
	\item An exact category is an ET category whose inflations are monomorphic and deflations are epimorphic. 
	In this situation, $\fraks$-conflations are exactly conflations in the exact structure.
	\item A triangulated category is exactly a Frobenius ET category with $\Proj \CC = 0$.
	\end{enumerate}
\end{ex}

There are some ways to obtain a new ET category from old one. First case is a generalized statement of Happel's theorem \cite{Hap88}.

\begin{prop} \cite[Proposition 3.30]{NP19}
Let $\CC$ be an ET category and let $\II \subset \Proj \CC \cap \Inj \CC$. 
Then $\CC/[\II]$ also becomes an ET category.
\end{prop}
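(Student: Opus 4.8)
The plan is to transport the extriangulated structure of $\CC$ along the additive quotient functor $Q\colon\CC\to\ol{\CC}:=\CC/[\II]$, where $[\II]$ is the ideal of morphisms factoring through $\add\II$; note $\ol{\CC}$ has the same objects as $\CC$. First I would set $\ol\bbE(Y,X):=\bbE(Y,X)$ for objects $X,Y$. This descends to a bifunctor on $\ol{\CC}$ because any morphism factoring through $\add\II$ is annihilated by $\bbE(-,X)$ and by $\bbE(Y,-)$: indeed $\add\II\subset\Proj\CC$ gives $\bbE(\add\II,-)=0$ and $\add\II\subset\Inj\CC$ gives $\bbE(-,\add\II)=0$, using that $\Proj\CC$ and $\Inj\CC$ are closed under finite direct sums and summands. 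Additivity of $\ol\bbE$ is then inherited, so (ET1) holds. Next I would define the realization by $\ol\fraks(\delta):=Q(\fraks(\delta))$, applied to a representative sequence; this is well defined on $\sim$-classes since $Q$ is additive and carries the isomorphism witnessing $\sim$ in $\CC$ to one in $\ol{\CC}$, and it is additive (it sends $0$ to a split sequence and respects $\oplus$) because $Q$ and $\fraks$ are. By construction, $Q$ carries every $\fraks$-triangle of $\CC$ to an $\ol\fraks$-triangle of $\ol{\CC}$ with the same $\bbE$-extension.

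The substance is the lifting axioms. For the realization axiom in $\ol{\CC}$: given $\ol a\colon X\to X'$ and $\ol b\colon Y\to Y'$ in $\ol{\CC}$ with $\ol a\,\delta=\delta'\,\ol b$ in $\ol\bbE(Y,X')$, choose lifts $a,b$ in $\CC$; since $\ol\bbE(Y,X')=\bbE(Y,X')$ verbatim and both $a\delta$ and $\delta'b$ are independent of the chosen lift (by $\add\II\subset\Inj\CC$ and $\add\II\subset\Proj\CC$ respectively), the hypothesis literally reads $a\delta=\delta'b$ in $\bbE$, so the realization axiom in $\CC$ supplies a morphism $e\colon E\to E'$ between the middle terms making the square commute, and $Q(e)$ is the one needed in $\ol{\CC}$. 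The genuinely new point is (ET3): working with the representatives $Q(\fraks(\delta))$ and $Q(\fraks(\delta'))$, one may represent the given morphism between the middle terms by some $e\colon E\to E'$ in $\CC$ for which $ex-x'a$ merely factors through $\add\II$, say $ex-x'a=\beta\alpha$ with $\alpha\colon X\to I$, $\beta\colon I\to E'$, $I\in\add\II$. Since $x$ is an inflation and $\bbE(-,I)=0$, the map $\alpha$ extends along $x$ to some $\wt{\alpha}\colon E\to I$ (Remark \ref{long_exact_seq}), so $e-\beta\wt{\alpha}$ represents the same morphism in $\ol{\CC}$ but now satisfies $(e-\beta\wt{\alpha})x=x'a$ in $\CC$. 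Then (ET3) in $\CC$ yields $b\colon Y\to Y'$ compatible with $e-\beta\wt{\alpha}$ and with $a\delta=\delta'b$, so $Q(b)$ does the job. The dual (ET3)$^{\op}$ is symmetric, using $\add\II\subset\Proj\CC$ to lift a map from an object of $\add\II$ along a deflation.

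For (ET4) and (ET4)$^{\op}$ there is essentially nothing to do, precisely because passing to $\ol{\CC}$ changes neither objects nor $\bbE$-groups. Given $\delta\in\ol\bbE(C,X)=\bbE(C,X)$, take the representative of $\ol\fraks(\delta)$ coming from $\fraks(\delta)=(X\rar Y\rar C)$ in $\CC$; then any $\epsilon\in\ol\bbE(D,Y)=\bbE(D,Y)$ already has $\fraks(\epsilon)=(Y\rar Z\rar D)$ starting at the \emph{same} object $Y$, so $\delta$ and $\epsilon$ are composable in $\CC$ in the sense required by (ET4). Applying (ET4) in $\CC$ and then $Q$ produces the desired commutative diagram of $\ol\fraks$-triangles in $\ol{\CC}$: $Q$ preserves every square and every equality of morphisms, and sends $\fraks$-triangles to $\ol\fraks$-triangles by construction. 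The dual (ET4)$^{\op}$ is symmetric, and collecting (ET1)--(ET4)$^{\op}$ gives that $(\ol{\CC},\ol\bbE,\ol\fraks)$ is an extriangulated category. I expect the main obstacle to be the bookkeeping in (ET3) and (ET3)$^{\op}$: one must verify that the freedom in lifting the given middle-term morphism — namely adding a morphism through $\add\II$ — is exactly enough to upgrade ``commutes modulo $[\II]$'' to ``commutes in $\CC$'', which is the one place where the full hypothesis $\II\subset\Proj\CC\cap\Inj\CC$ is used. Everything else, and in particular (ET4), reduces almost formally to the corresponding facts in $\CC$ once one observes that the quotient leaves the $\bbE$-groups and the objects untouched.
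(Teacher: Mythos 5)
Your proposal is correct, and it is essentially the argument of the cited result \cite[Proposition 3.30]{NP19}, to which the paper simply defers: transport $\bbE$ and $\fraks$ along the quotient functor (using $\II\subset\Proj\CC$ and $\II\subset\Inj\CC$ to see that $\bbE$ kills $[\II]$ in each variable), and do the real work only in (ET3)/(ET3)$^{\op}$, where the defect $ex-x'a\in[\II]$ is corrected by extending a map into an injective object of $\II$ along the inflation $x$. Your observation that (ET4) is formal because the quotient changes neither objects nor $\bbE$-groups is also exactly how the cited proof proceeds.
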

\begin{proof}
See \cite[Proposition 3.30]{NP19}.
\end{proof}

\begin{cor} \cite[Corollary 7.4, Remark 7.5]{NP19}
Let $\CC$ be an Frobenius ET category. 
Then $\CC/[\Proj \CC]$ becomes a triangulated category.
\end{cor}
\begin{proof}
See \cite[Corollary 7.4, Remark 7.5]{NP19}.
\end{proof}

Next way is to restrict the bifunctor $\bbE$ and the realization $\fraks$ to an extension-closed subcategory. We start from the definition of ``extension-closed'' subcategories.

\begin{defi} \cite[Definition 2.17]{NP19}
Let $\CC$ be an ET category and $\ZZ$ be a subcategory. 
$\ZZ$ is called \emph{extension-closed} if, for any conflation $A \rar B \rar C$ where $A,C$ in $\ZZ$, 
then $B$ is also in $\ZZ$.
\end{defi}

\begin{lem} \cite[Remark 2.18]{NP19}
Let $\CC$ be an ET category and $\ZZ$ is an extension-closed subcategory. Then $\ZZ$ has an ET structure defined by restricting $\bbE$ and $\fraks$ to $\ZZ$.
\end{lem}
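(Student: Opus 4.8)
The plan is to verify axioms (ET1) through (ET4)$^{\op}$ directly for the triplet $(\DD,\,\bbE|_{\DD^{\op}\times\DD},\,\fraks|_{\DD})$, where $\fraks|_{\DD}$ sends $\delta\in\bbE(Y,X)$ with $X,Y\in\DD$ to the class of the sequence $\fraks(\delta)$. The first thing to pin down is well-definedness: if $\fraks(\delta)=(X\xrar{x}E\xrar{y}Y)$ with $X,Y\in\DD$, then $E\in\DD$ precisely because $\DD$ is extension-closed, so $\fraks|_{\DD}(\delta)$ really is a three-term sequence in $\DD$; and because $\DD$ is a full subcategory, the equivalence relation $\sim$ on such sequences is the same whether computed in $\DD$ or in $\CC$, so $\fraks|_{\DD}$ is a well-defined collection of maps into the equivalence classes of three-term sequences in $\DD$. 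That $\bbE|_{\DD}$ is an additive bifunctor and that $\fraks|_{\DD}$ respects $0$ and direct sums (the additive parts of (ET1) and (ET2)) is inherited directly from $\CC$, using that $\DD$ is additive, hence closed under finite biproducts, so that split sequences and finite direct sums of sequences in $\DD$ remain in $\DD$.

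Next I would check that $\fraks|_{\DD}$ is a realization and that (ET3) and (ET3)$^{\op}$ hold. In each case the morphism demanded by the axiom — the map $e\colon E\to E^{\pr}$ between middle terms in the realization axiom, or the map $b\colon Y\to Y^{\pr}$ in (ET3) — is produced by applying the corresponding axiom of $(\CC,\bbE,\fraks)$ to the given diagram, all of whose objects lie in $\DD$ by hypothesis (invoking extension-closedness once more to keep the middle terms of the relevant conflations inside $\DD$). Since $\DD$ is full, such a morphism of $\CC$ is automatically a morphism of $\DD$, and the commutativity relations, being equalities of morphisms of $\CC$ between objects of $\DD$, hold verbatim in $\DD$. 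The statement (ET3)$^{\op}$ is handled dually.

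The one step that uses extension-closedness in an essential way, rather than merely fullness, is (ET4) and its dual. Given $\delta\in\bbE|_{\DD}(C,X)$ and $\epsilon\in\bbE|_{\DD}(D,Y)$, we have $X,C,Y,D\in\DD$ by definition, and the conflations $\fraks(\delta)=(X\to Y\to C)$ and $\fraks(\epsilon)=(Y\to Z\to D)$ force $Z\in\DD$ as well. Applying (ET4) in $\CC$ yields $\fraks$-triangles $X\to Z\to E\xdrar{\delta^{\pr}}X$ and $C\to E\to D\xdrar{\epsilon^{\pr}}C$ together with the required commutative diagram; the only object not yet known to lie in $\DD$ is $E$, and the second triangle presents $E$ as the middle term of a conflation with ends $C,D\in\DD$, whence $E\in\DD$. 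Consequently $\delta^{\pr}\in\bbE|_{\DD}(E,X)$ and $\epsilon^{\pr}\in\bbE|_{\DD}(C,E)$, both sequences are $\fraks|_{\DD}$-triangles, and the entire octahedral diagram lives in $\DD$; (ET4)$^{\op}$ follows by duality. The only genuine obstacle is bookkeeping: ensuring that extension-closedness is invoked at exactly the places where a ``cone''/middle term of a conflation must be kept inside $\DD$ — the well-definedness of $\fraks|_{\DD}$, the inputs feeding (ET3)/(ET3)$^{\op}$ and the realization axiom, and the new object $E$ in (ET4)/(ET4)$^{\op}$ — with everything else inherited for free from fullness and additivity of $\DD$.
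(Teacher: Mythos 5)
Your proof is correct: the paper itself gives no argument for this lemma (it simply cites \cite[Remark 2.18]{NP19}), and your direct verification -- extension-closedness to keep middle terms and the new object $E$ in (ET4) inside $\DD$, fullness and closure under isomorphisms for everything else -- is exactly the standard argument behind that remark. One harmless bookkeeping slip: the extension $\epsilon^{\pr}$ attached to the triangle $C \rar E \rar D \drar C$ lives in $\bbE|_{\DD}(D,C)$, not $\bbE|_{\DD}(C,E)$; since $C,D \in \DD$ from the start, your conclusion is unaffected.
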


Last way is to restrict the bifunctor to a closed subfunctor.
See also \cite[p649]{DRSSK99} for the following definitions in exact categories.

\begin{defi} \cite[Definition 3.7]{HLN21}
Let $(\CC, \bbE, \fraks)$ be an ET category.
	\begin{enumerate}
	\item A functor $\bbF \colon \CC^{\op} \times \CC \to \Set$ is called a \emph{subfunctor} of $\bbE$ 
	if it satisfies the following conditions.
		\begin{enumerate}[label=(\roman*)]
		\item For any $X,Y \in \CC$, $\bbF(X,Y)$ is a subset of $\bbE(X,Y)$.
		\item For any morphism $x \colon X^{\pr} \to X$ and $y \colon Y \to Y^{\pr}$, $\bbF(x,y) = \bbE(x,y)|_{\bbF(X,Y)}$.
		\end{enumerate}
	Then we denote $\bbF \subset \bbE$.
	\item A subfunctor $\bbF \subset \bbE$ is called \emph{additive} if $\bbF$ is an additive bifunctor.
	\end{enumerate}
\end{defi}

\begin{defi} \cite[Definition 3.8]{HLN21}
Let $(\CC, \bbE, \fraks)$ be an ET category and $\bbF$ be an additive subfunctor of $\bbE$.
We define $\fraks |_{\bbF}$ by restriction of $\fraks$ onto $\bbF$.
\end{defi}

\begin{prop} \cite[Proposition 3.16]{HLN21} \cite[Proposition 1.4]{DRSSK99}
Let $(\CC, \bbE, \fraks)$ be an ET category and $\bbF$ be an additive subfunctor of $\bbE$. 
Then the following are equivalent.
	\begin{enumerate}[label=(\roman*)]
	\item $\fraks |_{\bbF}$-inflations are closed under composition.
	\item $\fraks |_{\bbF}$-deflations are closed under composition.
	\item $(\CC, \bbF, \fraks |_{\bbF})$ satisfies (ET4).
	\item $(\CC, \bbF, \fraks |_{\bbF})$ satisfies (ET4)$^{\op}$.
	\item $(\CC, \bbF, \fraks |_{\bbF})$ is an ET category.
	\end{enumerate}
In this case, $\bbF$ is called \emph{closed}.
\end{prop}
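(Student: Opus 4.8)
The plan is to separate the problem into a pure ``restriction'' part, which is formal, and a single genuinely substantial equivalence, which is the extriangulated incarnation of the classical fact about sub-bifunctors of $\Ext^1$ in \cite{DRSSK99}. First I would record that $(\CC,\bbF,\fraks|_{\bbF})$ automatically satisfies (ET1), (ET2), (ET3) and (ET3)$^{\op}$: (ET1) is the hypothesis that $\bbF$ is an additive bifunctor; $\fraks|_{\bbF}$ is an additive realization of $\bbF$ because $\fraks|_{\bbF}(\delta)=\fraks(\delta)$ for $\delta\in\bbF(X,Y)$, because the split $\fraks$-triangles have split $\bbE$-extensions lying in $\bbF$, and because $\delta\oplus\delta'\in\bbF$ when $\delta,\delta'\in\bbF$; and (ET3), (ET3)$^{\op}$ for $\bbF$-extensions follow verbatim from (ET3), (ET3)$^{\op}$ for the ambient $\bbE$-extensions, since those axioms only assert the existence of morphisms. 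Hence $(\CC,\bbF,\fraks|_{\bbF})$ is an ET category iff it satisfies both (ET4) and (ET4)$^{\op}$, i.e.\ (v) $\iff$ (iii)$\wedge$(iv). So it remains to prove (iii)$\iff$(i), (iv)$\iff$(ii) and (i)$\iff$(ii), after which (v) $\iff$ (iii)$\wedge$(iv) $\iff$ (i)$\wedge$(ii) $\iff$ (i), so all five conditions are equivalent.

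For ``(iii)$\Rightarrow$(i)'' I would complete two composable $\fraks|_{\bbF}$-inflations to $\fraks|_{\bbF}$-conflations and feed them into (ET4) for $\bbF$: the composite inflation appears there as the inflation of an $\fraks|_{\bbF}$-conflation. For the converse ``(i)$\Rightarrow$(iii)'' I would start from composable $\fraks|_{\bbF}$-conflations $X\infl Y\defl C$ and $Y\infl Z\defl D$ with connecting extensions $\delta,\epsilon\in\bbF$, apply (ET4) for the ambient $\bbE$ to obtain $\fraks$-triangles $X\to Z\to E\xdrar{\delta'}X$ and $C\to E\to D\xdrar{\epsilon'}C$ in the associated diagram, and then verify $\delta',\epsilon'\in\bbF$: from the diagram $\epsilon'$ is the image of $\epsilon$ under the functorial action of the deflation $Y\defl C$, hence lies in $\bbF$ since $\bbF\subset\bbE$ is a subfunctor; and since by (i) the composite inflation $X\to Z$ already underlies some $\fraks|_{\bbF}$-conflation $X\to Z\to E''\xdrar{\delta''}X$ with $\delta''\in\bbF$, the standard uniqueness (up to isomorphism compatible with the extensions) of the third term of an $\fraks$-triangle with prescribed inflation --- a consequence of (ET3), (ET3)$^{\op}$ and Remark~\ref{long_exact_seq} --- forces $\delta'\in\bbF$ too, and transporting the diagram along this isomorphism gives (ET4) for $\bbF$. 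Applying this equivalence to the opposite ET category $(\CC^{\op},\bbE^{\op},\fraks^{\op})$ with the subfunctor $\bbF^{\op}$, under which $\fraks|_{\bbF}$-inflations correspond to $\fraks^{\op}|_{\bbF^{\op}}$-deflations and (ET4) to (ET4)$^{\op}$, then yields ``(iv)$\iff$(ii)''.

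The remaining equivalence (i)$\iff$(ii) is the main obstacle, since it is not a formal restriction statement. Here I would follow the exact-category strategy of \cite{DRSSK99}: the class of $\fraks|_{\bbF}$-conflations is closed under isomorphisms, contains the split conflations, and is stable under pushout of inflations and pullback of deflations along arbitrary morphisms (in each case the ambient ET structure supplies the square and the relevant extension is a functorial image of a $\bbF$-extension, so it lies in $\bbF$). Granting (i), I would first prove the analogue of Quillen's ``obscure axiom'' --- that the pullback of an $\fraks|_{\bbF}$-inflation along an $\fraks|_{\bbF}$-deflation is again an $\fraks|_{\bbF}$-inflation --- which is the one place (i) is genuinely used; and then deduce closure of $\fraks|_{\bbF}$-deflations under composition by the usual pullback: for composable $\fraks|_{\bbF}$-deflations $B\defl C\defl D$ with kernels $K\infl B$, $L\infl C$, form $M:=B\times_C L$, so that $K\infl M\defl L$ is an $\fraks|_{\bbF}$-conflation, $M\infl B$ is an $\fraks|_{\bbF}$-inflation by the obscure axiom, and a diagram chase identifies the cokernel of $M\infl B$ with $D$ and its deflation with the composite $B\defl D$, which is therefore an $\fraks|_{\bbF}$-deflation; the implication (ii)$\Rightarrow$(i) then follows by the same argument in $\CC^{\op}$. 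I expect the delicate point throughout to be precisely this ``obscure axiom'' for $\bbF$: carrying out the pullback construction in an extriangulated --- rather than exact --- category requires care with the existence and functoriality of the pullback $\fraks$-triangles furnished by (ET3)$^{\op}$ and (ET4).
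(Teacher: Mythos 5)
The paper offers no proof of this proposition --- it is imported from \cite[Proposition 3.16]{HLN21} and \cite[Proposition 1.4]{DRSSK99} --- so I am judging your argument on its own merits. Your architecture is the standard and correct one: (ET1), (ET2), (ET3), (ET3)$^{\op}$ restrict to $\bbF$ for free, so (v) reduces to (iii)$\wedge$(iv); and your treatment of (i)$\Leftrightarrow$(iii) (hence, dually, (ii)$\Leftrightarrow$(iv)) is right, including the asymmetry you point out: in the ambient (ET4) diagram $\epsilon'=x'\epsilon$ is a functorial image of $\epsilon$ and lies in $\bbF$ automatically, whereas $\delta'$ must be compared, via (ET3) and the uniqueness of the cone of the composite inflation, with the extension $\delta''\in\bbF$ that condition (i) supplies; since $\delta'=\bbE(b,X)(\delta'')$ for the comparison morphism $b$, membership in $\bbF$ follows from the subfunctor property alone, without even needing $b$ to be invertible.

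The genuine gap is that (i)$\Rightarrow$(ii) --- equivalently your relative ``obscure axiom'' --- is the \emph{only} non-formal step of the whole proposition, and you announce it rather than prove it. It can be closed without ever forming an actual pullback, as follows. Given $\fraks|_{\bbF}$-conflations $K\xrar{k}B\xrar{g}C$ and $L\xrar{l}C\xrar{h}D$ with extensions $\delta,\epsilon\in\bbF$, realize $\delta l\in\bbF(L,K)$ as $K\xrar{k'}M\xrar{m'}L$; the dual of Lemma \ref{inflation} yields $m\colon M\to B$ and an $\fraks$-triangle $M\xrar{\left[\begin{smallmatrix} m \\ m' \end{smallmatrix}\right]}B\oplus L\to C$ whose extension is $k'\delta\in\bbF$, so $\left[\begin{smallmatrix} m \\ m' \end{smallmatrix}\right]$ is an $\fraks|_{\bbF}$-inflation. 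Composing it, by (i), with the $\fraks|_{\bbF}$-inflation $1_B\oplus l$ (a direct sum of a split conflation with the $\epsilon$-conflation) and then adjusting by the automorphism $\left[\begin{smallmatrix}1&0\\-g&1\end{smallmatrix}\right]$ of $B\oplus C$ shows that $\left[\begin{smallmatrix} m \\ 0 \end{smallmatrix}\right]\colon M\to B\oplus C$ is an $\fraks|_{\bbF}$-inflation. But $\left[\begin{smallmatrix} m \\ 0 \end{smallmatrix}\right]$ is also the inflation of the direct sum of the ambient conflation $M\xrar{m}B\xrar{hg}D\xdrar{\theta}M$ produced by (ET4)$^{\op}$ for $\bbE$ with $0\to C=C$; uniqueness of the cone and additivity of $\bbF$ then give $(\theta,0)\in\bbF(D\oplus C,M)$, hence $\theta\in\bbF(D,M)$, i.e.\ $hg$ is an $\fraks|_{\bbF}$-deflation. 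Some argument of this kind is indispensable: without it, (i)$\Leftrightarrow$(iii) and (ii)$\Leftrightarrow$(iv) leave the two columns of your equivalence disconnected.
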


The following are examples of closed subfunctors defined in \cite{HLN21}.

\begin{ex} \cite[Definition 3.18, Proposition 3.19]{HLN21} \label{ex_relative-ET}
Let $(\CC, \bbE, \fraks)$ be an ET category and $\II$ be a subcategory of $\CC$. 
	\begin{enumerate}
	\item We define a closed subfunctor $\bbE_{\II}$ of $\bbE$ as follows.
	\[
	\bbE_{\II}(C,A) = \{ \delta \in \bbE(C,A) \mid \delta \circ - \colon \CC(\II, C) \to \bbE(\II, A); \text{zero morphism}\}.
	\]
	\item We define a closed subfunctor $\bbE^{\II}$ of $\bbE$ as follows.
	\[
	\bbE^{\II}(C,A) = \{ \delta \in \bbE(C,A) \mid - \circ \delta \colon \CC(A,\II) \to \bbE(C,\II); \text{zero morphism}\}.
	\]
	\item We define a closed subfunctor $\bbE^{\II}_{\II}$ of $\bbE$ as follows.
	\[
	\bbE^{\II}_{\II}(C,A) = \bbE^{\II}(C,A) \cap \bbE_{\II}(C,A).
	\]
	\end{enumerate}
We denote $\fraks|_{\bbE_{\II}}$ \resp{$\fraks|_{\bbE^{\II}}$, $\fraks|_{\bbE^{\II}_{\II}}$} by $\fraks_{\II}$ \resp{$\fraks^{\II}$, $\fraks^{\II}_{\II}$}. 
\end{ex}

\begin{rmk}
In this paper, the ET structures defined by $(\CC,$ $\bbE_{\II},$ $\fraks_{\II})$ and $(\CC,$ $\bbE^{\II},$ $\fraks^{\II})$ are called \emph{relative extriangulated structure}, or more simply \emph{relative structure}.
On the other hand, in \cite[Section 2]{FGPPP24}, all extriangulated substructures are called relative extriangulated structure.
\end{rmk}

\begin{rmk}
Let $(\CC, \bbE, \fraks)$ be an ET category and $\II$ be a subcategory of $\CC$. 
Then $\II \subset \Proj_{\bbE_{\II}} \CC$ and $\II \subset \Inj_{\bbE^{\II}} \CC$.
This follows from the long exact sequences in Remark \ref{long_exact_seq}.
\end{rmk}

Now, we consider the approximation theory in ET categories.
In the rest of this subsection, we fix an ET category $(\CC, \bbE, \fraks)$.
We start from a reformulation of extensions in relative structures by using $\II$-epic and $\II$-monic morphisms in ET categories for a subcategory $\II$. 

\begin{lem} \cite[Proposition 3.2]{Ara24} \label{epic_monic}
Let $\II$ be a subcategory of $\CC$. Let $X \xrar{x} Y \xrar{y} Z \lxdrar{\delta} X$ be an $\fraks$-triangle.
	\begin{enumerate}
	\item $\delta \in \bbE_{\II}(Z,X) \iff y$ is $\II$-epic.
	\item $\delta \in \bbE^{\II}(Z,X) \iff x$ is $\II$-monic.
	\end{enumerate}
\end{lem}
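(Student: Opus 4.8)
The statement is a direct-sum-symmetric pair, so it suffices to prove (1) and obtain (2) by dualizing (working in $\CC^{\op}$ with the opposite extriangulated structure, under which $\bbE_{\DD}$ becomes $\bbE^{\DD}$ and ``$\DD$-epic'' becomes ``$\DD$-monic''). So the plan is to focus entirely on (1), namely the equivalence $\delta \in \bbE_{\DD}(Z,X) \iff y$ is $\DD$-epic, where $X \xrar{x} Y \xrar{y} Z \lxdrar{\delta} X$ is a fixed $\fraks$-triangle.

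The key tool is the long exact sequence from Remark \ref{long_exact_seq}: applying $\CC(D,-)$ for $D \in \DD$ to the $\fraks$-triangle gives the exact sequence
\[
\CC(D,Y) \xrar{\CC(D,y)} \CC(D,Z) \xrar{\delta \circ -} \bbE(D,X).
\]
Here the connecting map $\delta \circ - \colon \CC(D,Z) \to \bbE(D,X)$ sends $f \colon D \to Z$ to $\delta \circ f = \bbE(f, X)(\delta)$. The plan is then a two-line unwinding: $y$ is $\DD$-epic exactly when $\CC(D,y)$ is surjective for every $D \in \DD$; by exactness this holds exactly when the connecting map $\delta \circ - \colon \CC(D,Z) \to \bbE(D,X)$ is the zero map for every $D \in \DD$; and by the very definition of the subfunctor $\bbE_{\DD}$ (the set of $\delta \in \bbE(Z,X)$ for which $\delta \circ - \colon \CC(\DD,Z) \to \bbE(\DD,X)$ is the zero morphism), this last condition is precisely $\delta \in \bbE_{\DD}(Z,X)$. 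Assembling these equivalences gives (1).

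There is essentially no obstacle here: the lemma is a bookkeeping identity matching the definition of $\bbE_{\DD}$ against the connecting-homomorphism description of ``$\DD$-epic.'' The only point requiring a moment's care is the identification of the connecting map $\CC(D,Z) \to \bbE(D,X)$ in the long exact sequence with the map $\delta \circ -$ appearing in the definition of $\bbE_{\DD}$ — but this is exactly the ``$- \circ \delta$'' arrow in the first exact sequence of Remark \ref{long_exact_seq} (read with the variable in the first slot), together with the notational convention $\delta f = \bbE(f,X)(\delta)$ fixed in the preliminaries. So the proof reduces to citing Remark \ref{long_exact_seq}, invoking exactness at $\CC(\DD,Z)$, and quoting the definition of $\bbE_{\DD}$; (2) follows by the dual argument.
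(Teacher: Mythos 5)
Your proof is correct and is exactly the paper's argument: the paper's entire proof of this lemma is ``From Remark \ref{long_exact_seq}, this follows from definitions of $\DD$-epic and $\DD$-monic,'' and your unwinding (surjectivity of $\CC(D,y)$ $\iff$ vanishing of the connecting map $\delta\circ-$ on $\CC(D,Z)$ $\iff$ $\delta\in\bbE_{\DD}(Z,X)$, with (2) dual) is precisely what that one-liner abbreviates. One cosmetic slip: the sequence you actually need for (1) is the \emph{second} exact sequence of Remark \ref{long_exact_seq} (the one with the arrow $\delta\circ-$), not the first one with $-\circ\delta$, which is the one relevant to part (2).
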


\begin{proof}
From Remark \ref{long_exact_seq}, this follows from definitions of $\II$-epic and $\II$-monic.
\end{proof}

\begin{defi} \cite[Definition 3.21]{ZZ18} \label{defi_strongly-finite}
Let $\II, \XX$ be subcategories of $\CC$ where $\II$ is closed under direct summands. 
	\begin{enumerate}
	\item $\II$ is \emph{strongly contravariantly finite} in $\XX$ with respect to $(\CC, \bbE, \fraks)$ if, for any $X \in \XX$, there exists an $\fraks$-deflation $I_X \xrar{g} X$ where $g$ is a right $\II$-approximation.
	\item $\II$ is \emph{strongly covariantly finite} in $\XX$ with respect to $(\CC, \bbE, \fraks)$ if, for any $X \in \XX$, there exists an $\fraks$-inflation $X \xrar{f} I^X$ where $f$ is a left $\II$-approximation.
	\item $\II$ is \emph{strongly functorially finite} in $\XX$ with respect to $(\CC, \bbE, \fraks)$ if,
	$\II$ is both strongly covariantly finite and strongly contravariantly finite in $\XX$. 
	\end{enumerate}
\end{defi}

\begin{rmk}
We do not assume that $\II$ is contained in $\XX$ in Definition \ref{defi_strongly-finite}.
\end{rmk}

\begin{ex}
Assume that $\CC$ has enough projectives \resp{injectives}. 
Then $\Proj \CC$ \resp{$\Inj \, \CC$} is strongly contravariantly \resp{covariantly} finite in $\CC$.
\end{ex}

The following lemma is an ET version of Lemma \ref{lem_bracket_rigidver}.

\begin{lem} \cite[Lemma 3.5]{Ara24} \label{def_of_bracket}
Let $\II \subset \XX$ be subcategories of $\CC$.
	\begin{enumerate}
	\item Assume that $\II$ is strongly covariantly finite in $\XX$.
		\begin{enumerate}[leftmargin=15pt]
		\item For $X \in \XX$, there exists an inflation $i^X \colon X \to I^X$ which is a left $\II$-approximation of $X$.
		Then we obtain the following $\fraks^{\II}$-triangle.
		\[
		X \xrar{i^X} I^{X} \xrar{p^X} X\bracket{1} \xdrar{\lambda^X} X
		\]
		\item For a morphism $x \colon X \to X^{\pr}$ in $\XX$, 
		we define $x\bracket{1} \colon X\bracket{1} \to X^{\pr}\bracket{1}$ as a morphism in $\CC$ 
		which makes the following diagram in $\CC$ commutative.
		\[
		\xy
		(0,8)*+{X \bracket{1}}="1";
		(0,-8)*+{X^{\pr} \bracket{1}}="2";
		(16,8)*+{X}="3";
		(16,-8)*+{X^{\pr}}="4";
		{\ar@{-->}^-{\lambda^X} "1";"3"};		
		{\ar@{-->}^-{\lambda^{X^{\pr}}} "2";"4"};
		{\ar^{x} "3";"4"};
		{\ar_{x \bracket{1}} "1";"2"};
		{\ar@{}|\car "1";"4"};
		\endxy
		\]
		\end{enumerate}
	Then $\bracket{1}$ induces an additive functor $\bracket{1} \colon \XX/[\II] \to \CC/[\II]$.
	Moreover, $\bracket{1}$ is unique up to natural isomorphisms.
	\item Assume that $\II$ is strongly contravariantly finite in $\XX$.
		\begin{enumerate}[leftmargin=15pt]
		\item For $X \in \XX$, there exists a deflation $p_X \colon I_X \to X$ which is a right $\II$-approximation of $X$.
		Then we obtain the following $\fraks_{\II}$-triangle.
		\[
		X \xdrar{\lambda_X}  X\bracket{-1} \xrar{i_X} I_{X} \xrar{p_X} X
		\]
		\item For a morphism $x \colon X \to X^{\pr}$ in $\XX$, we define $x\bracket{-1} \colon X\bracket{-1} \to X^{\pr}\bracket{-1}$ as a morphism in $\CC$ which makes the following diagram commutative.
		\[
		\xy
		(16,8)*+{X \bracket{-1}}="3";
		(16,-8)*+{X^{\pr} \bracket{-1}}="4";
		(0,8)*+{X}="1";
		(0,-8)*+{X^{\pr}}="2";
		{\ar@{-->}^-{\lambda_X} "1";"3"};		
		{\ar@{-->}^-{\lambda_{X^{\pr}}} "2";"4"};
		{\ar^{x  \bracket{-1}} "3";"4"};
		{\ar_{x} "1";"2"};
		{\ar@{}|\car "1";"4"};
		\endxy
		\]
		\end{enumerate}
	Then $\bracket{-1}$ induces an additive functor $\bracket{-1} \colon \XX/[\II] \to \CC/[\II]$.
	Moreover, $\bracket{-1}$ is unique up to natural isomorphisms.
	\end{enumerate}
\end{lem}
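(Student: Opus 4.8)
The plan is to prove statement (1) in full and to deduce (2) by passing to the opposite extriangulated category: this interchanges $\bbE^{\II}$ with $\bbE_{\II}$, "strongly covariantly finite" with "strongly contravariantly finite", and part (2) of Lemma \ref{epic_monic} with part (1), so the argument for (1) transports verbatim. Throughout I use the standing assumption that subcategories are closed under direct summands, together with the fact that, $\bbE^{\II}$ being a subfunctor of $\bbE$ and $\fraks^{\II}$ a restriction of $\fraks$, every $\fraks^{\II}$-triangle is in particular an $\fraks$-triangle, so that Remark \ref{long_exact_seq} applies to it.

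First I would fix the triangles. For each $X\in\XX$, strong covariant finiteness of $\II$ in $\XX$ provides an $\fraks$-inflation $i^X\colon X\to I^X$ that is a left $\II$-approximation, and in particular $\II$-monic. By Lemma \ref{epic_monic}(2), the extension $\lambda^X$ completing $i^X$ to an $\fraks$-triangle $X\xrar{i^X}I^X\xrar{p^X}X\bracket{1}\xdrar{\lambda^X}X$ lies in $\bbE^{\II}(X\bracket{1},X)$, so this is an $\fraks^{\II}$-triangle; I fix one such for every $X\in\XX$ (one may take $X\bracket{1}=0$, $i^X=\id$ when $X\in\II$).

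Next I would define $x\bracket{1}$ for $x\colon X\to X'$ in $\XX$ and check it is well defined in $\CC/[\II]$. Since $i^{X'}x\colon X\to I^{X'}$ factors through $\II$ and $\lambda^X\in\bbE^{\II}(X\bracket{1},X)$, we have $(i^{X'}x)\lambda^X=0$, i.e.\ $\bbE(X\bracket{1},i^{X'})(x\lambda^X)=0$. Applying the second exact sequence of Remark \ref{long_exact_seq} to the $\fraks^{\II}$-triangle for $X'$, evaluated at $X\bracket{1}$: exactness at $\bbE(X\bracket{1},X')$ yields $g\colon X\bracket{1}\to X'\bracket{1}$ with $\lambda^{X'}g=x\lambda^X$, and I set $x\bracket{1}:=g$ (equivalently, lift $i^{X'}x$ through the left $\II$-approximation $i^X$ to some $f\colon I^X\to I^{X'}$ and invoke (ET3), which moreover gives $x\bracket{1}\,p^X=p^{X'}f$). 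If $g,g'$ both satisfy $\lambda^{X'}g=x\lambda^X=\lambda^{X'}g'$, then $\lambda^{X'}(g-g')=0$, so exactness of the same sequence at $\CC(X\bracket{1},X'\bracket{1})$ gives $g-g'=p^{X'}k$ with $k\colon X\bracket{1}\to I^{X'}\in\II$; thus $g-g'$ factors through $\II$, and the residue class $\ol{x\bracket{1}}\in(\CC/[\II])(X\bracket{1},X'\bracket{1})$ is independent of the choices of $f$ and $g$.

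Functoriality, descent and uniqueness are then formal. For $X\xrar{x}X'\xrar{y}X''$ one checks $\lambda^{X''}(y\bracket{1}\,x\bracket{1})=(y\lambda^{X'})x\bracket{1}=y(x\lambda^X)=(yx)\lambda^X$, so $y\bracket{1}\,x\bracket{1}$ is an admissible representative of $(yx)\bracket{1}$; likewise $\id$ represents $\id\bracket{1}$, and $x\bracket{1}+x'\bracket{1}$ represents $(x+x')\bracket{1}$ by bilinearity of $\bbE$, whence $\ol{(\,\cdot\,)\bracket{1}}$ is additive and functorial. If $x=\beta\alpha$ with $\alpha\colon X\to I\in\II$, then $\alpha\lambda^X=0$ (again because $\lambda^X\in\bbE^{\II}$), so $x\lambda^X=0$ and $x\bracket{1}$ factors through $I^{X'}\in\II$; hence $\bracket{1}$ descends to an additive functor $\XX/[\II]\to\CC/[\II]$. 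Finally, given a second system of choices $\{i'^{X}\}$ producing $\{X\bracket{1}'\}$, applying the construction of $\ol{\id_X\bracket{1}}$ between the two systems in both directions yields mutually inverse natural transformations, so the functor is unique up to natural isomorphism; part (2) follows by the duality above. I do not expect any conceptual difficulty here: the only delicate point is the bookkeeping around the relative structure — keeping track that $\fraks^{\II}$-triangles feed into Remark \ref{long_exact_seq} and that the "extension-as-morphism" identities ($(i^{X'}x)\lambda^X=0$, $\lambda^{X'}g=x\lambda^X$, and their associativity) are manipulated correctly via the bifunctoriality of $\bbE$.
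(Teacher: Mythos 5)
Your proposal is correct and follows essentially the same route as the paper: fix the approximation triangles, define $x\bracket{1}$ via the lifting property of the left $\II$-approximation (your long-exact-sequence formulation and the paper's (ET3) formulation are interchangeable, as you note), check uniqueness modulo $[\II]$ from $\lambda^{X'}(g-g')=0$, deduce descent and functoriality, and get uniqueness up to natural isomorphism by comparing two systems of triangles; part (2) is dual in both treatments.
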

\begin{proof}
We prove only (1).
First, for each $X$ in $\CC$, choose an $\fraks$-triangle $X \xrar{i^X} I^{X} \xrar{p^X} X \bracket{1} \xdrar{\lambda^X} X$ where $i^X$ is a left $\II$-approximation. 
For any morphism $x \colon X \to X^{\pr}$ in $\XX$, since $i^X$ is a left $\II$-approximation and (ET3), there exist morphisms $y$ and $i$ which make the following diagram commutative.
\[
\xy
	(0,8)*+{X}="1";
	(0,-8)*+{X^{\pr}}="2";
	(16,8)*+{I^{X}}="3";
	(16,-8)*+{I^{X^{\pr}}}="4";
	(32,8)*+{X \bracket{1}}="5";
	(32,-8)*+{X^{\pr} \bracket{1}}="6";
	(48,8)*+{X}="7";
	(48,-8)*+{X^{\pr}}="8";
	{\ar^{i^X} "1";"3"};
	{\ar^{i^{X^{\pr}}} "2";"4"};
	{\ar^{p^X} "3";"5"};
	{\ar^{p^{X^{\pr}}} "4";"6"};
	{\ar^{x} "1";"2"};
	{\ar^{i} "3";"4"};
	{\ar^{y} "5";"6"};
	{\ar^{x} "7";"8"};
	{\ar@{-->}^{\lambda^X} "5";"7"};
	{\ar@{-->}^{\lambda^{X^{\pr}}} "6";"8"};
	{\ar@{}|\car "1";"4"};
	{\ar@{}|\car "3";"6"};
	{\ar@{}|\car "5";"8"};
\endxy
\]
Then we define $x\bracket{1}$ by $y$.
Assume that morphisms $y,y^{\pr} \colon X \bracket{1} \to X^{\pr} \bracket{1}$ in $\CC$ satisfy
$x \lambda^X = \lambda^{X^{\pr}} y = \lambda^{X^{\pr}} y^{\pr}$.
Then $\lambda^{X^{\pr}} (y-y^{\pr}) =0$. So, $y-y^{\pr} = 0$ in $\CC/[\II]$ and $\bracket{1} \colon \XX \to \CC/[\II]$ is well-defined. 
This induces a functor $\bracket{1} \colon \XX/[\II] \to \CC/[\II]$ because 
$\lambda^{X^{\pr}} x\bracket{1} = x \lambda^X = 0$ for $x \in [\II]$.
Therefore, $y$ is uniquely determined by $x$ up to $[\II]$ and $\bracket{1}$ is an additive functor. 

Uniqueness of $\bracket{1}$ up to natural isomorphisms follows from the diagram below and $n^Xm^X = \id_{X\bracket{1}}$ in $\CC/[\II]$ 
where $X \xrar{j^X} J^X \xrar{q^X} X \bracket{1}^{\pr} \xdrar{{\lambda^{\pr}}^X} X$ is another
$\fraks$-triangle with a left $\II$-approximation $j^X$.
\[
\xy
	(0,8)*+{X}="1";
	(0,-8)*+{X}="2";
	(0,-24)*+{X}="9";
	(16,8)*+{I^X}="3";
	(16,-8)*+{J^X}="4";
	(16,-24)*+{I^{X}}="10";
	(32,8)*+{X \bracket{1}}="5";
	(32,-8)*+{X \bracket{1}^{\pr}}="6";
	(32,-24)*+{X \bracket{1}}="11";
	(48,8)*+{X}="7";
	(48,-8)*+{X}="8";
	(48,-24)*+{X}="12";
	{\ar^{i^X} "1";"3"};
	{\ar^{i^X} "9";"10"};
	{\ar^{j^X} "2";"4"};
	{\ar^{p^X} "3";"5"};
	{\ar^{p^X} "10";"11"};
	{\ar^{q^X} "4";"6"};
	{\ar@{=}^{\id} "1";"2"};
	{\ar^{i} "3";"4"};
	{\ar^{m^X} "5";"6"};
	{\ar@{=}^{\id} "7";"8"};
	{\ar@{=}^{\id} "2";"9"};
	{\ar^{j} "4";"10"};
	{\ar^{n^X} "6";"11"};
	{\ar@{=}^{\id} "8";"12"};
	{\ar@{-->}^{\lambda^X} "5";"7"};
	{\ar@{-->}^{\lambda^X} "11";"12"};
	{\ar@{-->}^{{\lambda^{\pr}}^X} "6";"8"};
	{\ar@{}|\car "1";"4"};
	{\ar@{}|\car "3";"6"};
	{\ar@{}|\car "5";"8"};
	{\ar@{}|\car "2";"10"};
	{\ar@{}|\car "4";"11"};
	{\ar@{}|\car "6";"12"};
\endxy
\]
\end{proof}

\begin{rmk} \label{rmk_natiso_bracket}
In the proof of Lemma \ref{def_of_bracket}, we used the following commutative diagram.
\begin{align}
\xy
	(0,8)*+{X}="1";
	(0,-8)*+{X}="2";
	(16,8)*+{I^{X}}="3";
	(16,-8)*+{J^X}="4";
	(32,8)*+{X \bracket{1}}="5";
	(32,-8)*+{X \bracket{1}^{\pr}}="6";
	(48,8)*+{X}="7";
	(48,-8)*+{X}="8";
	{\ar^{i^X} "1";"3"};
	{\ar^{j^X} "2";"4"};
	{\ar^{p^X} "3";"5"};
	{\ar^{q^X} "4";"6"};
	{\ar@{=}^{\id} "1";"2"};
	{\ar^{i} "3";"4"};
	{\ar^{m^X} "5";"6"};
	{\ar@{=}^{\id} "7";"8"};
	{\ar@{-->}^{\lambda^X} "5";"7"};
	{\ar@{-->}^{{\lambda^{\pr}}^X} "6";"8"};
	{\ar@{}|\car "1";"4"};
	{\ar@{}|\car "3";"6"};
	{\ar@{}|\car "5";"8"};
\endxy \label{diag_natural-iso-bracket}
\end{align}
Then we denote $m^X$ in $\CC/[\II]$ (that is $\ul{m^X}$ in Notation \ref{nota_ul}) by $\mu_X$ because $\mu$ induces natural isomorphism $\bracket{1} \Rightarrow \bracket{1}^{\pr}$ 
where $\bracket{1}, \bracket{1}^{\pr} \colon \XX/[\II] \to \CC/[\II]$.
\end{rmk}

\begin{nota}
Let $\II$ and $\XX$ be subcategories of $\CC$.
	\begin{enumerate}
	\item We denote $\Cone_{\bbE^{\II}}(\XX, \II)$ by $\XX \bracket{1}_{\II}$.
	In particular, $\II \subset \XX \bracket{1}_{\II}$.
	\item We denote $\CoCone_{\bbE_{\II}}(\II, \XX)$ by $\XX \bracket{-1}_{\II}$.
	In particular, $\II \subset \XX \bracket{-1}_{\II}$.
	\end{enumerate}
If there is no confusion, we often drop $\II$ of $\bracket{1}_{\II}$ and $\bracket{-1}_{\II}$.

$\{ X\bracket{1} \mid X \in \XX \}$ in Lemma \ref{def_of_bracket} and 
$\XX \bracket{1}$ are same up to isomorphisms in $\CC/[\II]$.

Note that we can define $\XX \bracket{1}$ \resp{$\XX \bracket{-1}$}
even if $\II$ is not strongly covariantly \resp{contravariantly} finite in $\XX$.
\end{nota}

\begin{lem} \cite[Proposition 3.2]{Ara24}
Let $\II$ be a subcategory of $\CC$.
	\begin{enumerate}
	\item If $\II$ is strongly contravariantly finite in $\CC$, 
	then $\II = \Proj_{\bbE_{\II}} \!\CC$ and $(\CC, \bbE_{\II}, \fraks_{\II})$ has enough projectives.
	\item If $\II$ is strongly covariantly finite in $\CC$, 
	then $\II = \Inj_{\bbE^{\II}} \!\CC$ and $(\CC, \bbE^{\II}, \fraks^{\II})$ has enough injectives.
	\end{enumerate}
\end{lem}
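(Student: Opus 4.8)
The plan is to prove (1) in full and obtain (2) by the obvious dualization. Write $\fraks_{\II}$ for $\fraks|_{\bbE_{\II}}$ and recall, from the Remark immediately preceding this subsection, that $\II\subseteq \Proj_{\bbE_{\II}}\CC$ holds for \emph{any} full subcategory; so for (1) it remains only to show that $(\CC,\bbE_{\II},\fraks_{\II})$ has enough projectives and that $\Proj_{\bbE_{\II}}\CC\subseteq\II$.

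First I would establish enough projectives. Fix $C\in\CC$. Since $\II\subseteq\CC$ is strongly contravariantly finite, there is an $\fraks$-deflation $g\colon I_C\to C$ which is a right $\II$-approximation; complete it to an $\fraks$-triangle $C'\xrar{f} I_C\xrar{g} C\xdrar{\delta} C'$. Being a right $\II$-approximation, $g$ is $\II$-epic, so Lemma \ref{epic_monic}(1) gives $\delta\in\bbE_{\II}(C,C')$. Hence $C'\xrar{f} I_C\xrar{g} C$ is an $\fraks_{\II}$-conflation with $I_C\in\II\subseteq\Proj_{\bbE_{\II}}\CC$, which is exactly the required conflation.

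Next I would show $\Proj_{\bbE_{\II}}\CC\subseteq\II$. Let $P\in\Proj_{\bbE_{\II}}\CC$. Applying the previous step to $P$ (or strong contravariant finiteness directly), we obtain an $\fraks_{\II}$-triangle $P'\xrar{f} I_P\xrar{g} P\xdrar{\delta} P'$ with $I_P\in\II$ and $\delta\in\bbE_{\II}(P,P')$. Since $P$ is $\bbE_{\II}$-projective, $\bbE_{\II}(P,P')=0$, so $\delta=0$; by additivity of the realization $\fraks_{\II}$ the conflation then splits, whence $I_P\cong P\oplus P'$. Thus $P$ is a direct summand of an object of $\II$, and since in this subsection all subcategories are assumed closed under direct summands, $P\in\II$. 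Together with $\II\subseteq\Proj_{\bbE_{\II}}\CC$ this yields $\II=\Proj_{\bbE_{\II}}\CC$ and proves (1). Part (2) follows by replacing $\bbE_{\II},\fraks_{\II}$, deflations, right $\II$-approximations and Lemma \ref{epic_monic}(1) by $\bbE^{\II},\fraks^{\II}$, inflations, left $\II$-approximations and Lemma \ref{epic_monic}(2).

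There is no real obstacle here: the argument is a direct combination of Lemma \ref{epic_monic} with the long exact sequences of Remark \ref{long_exact_seq}. The only points demanding care are invoking Lemma \ref{epic_monic}(1) in the correct direction, so that the chosen approximation deflation is recognized as an $\fraks_{\II}$-deflation, and using the standing ``closed under direct summands'' hypothesis to pass from ``direct summand of an object of $\II$'' to ``object of $\II$'' in the last step.
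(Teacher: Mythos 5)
Your proposal is correct and follows essentially the same route as the paper's proof: the inclusion $\II\subseteq\Proj_{\bbE_{\II}}\CC$ comes from the relative structure, and the reverse inclusion comes from splitting the approximation triangle of a projective object; the paper simply proves the dual part (2) and leaves the ``enough projectives/injectives'' clause and the $\bbE_{\II}$-membership check via Lemma \ref{epic_monic} implicit, which you spell out.
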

\begin{proof}
We only prove (2). 
Let $\II^{\pr} = \Inj_{\bbE^{\II}} \CC$. By definition of relative structure, $\II \subset \II^{\pr}$. 
On the other hand, for any $I^{\pr} \in \II^{\pr}$, 
there exists an $\fraks^{\II}$-triangle $I^{\pr} \rar I \rar Z \drar I^{\pr}$ 
where $I \in \II$ since $\II$ is strongly covariantly finite in $\CC$.  
Since $I^{\pr} \in \II^{\pr}$ and $\II$ is closed under direct summands, 
the above $\fraks$-triangle splits and $I^{\pr} \in \II$.
\end{proof}

\if0
As an application of this lemma, we define ``relative Frobenius'' to produce examples later. 

\begin{defi}
Let $\II$ be a subcategory of $\CC$. 
Then $\CC$ is called \emph{relative $\II$-Frobenius} if $\II$ is strongly functorially finite in $\CC$ and $\bbE^{\II} = \bbE_{\II}$.
\end{defi}

For  more information about ``mutation pair'', see Section \ref{tri_str_by_MP} and \ref{Mutation theory of simple-minded systems}.

\begin{ex}
Assume that $\CC$ is a triangulated category.
Let $\DD \subset \ZZ$ be subcategories of $\CC$ which satisfies (IY1), (IY2) and (IY3) in Condition \ref{IY_condi}, then $\ZZ$ is relative $\DD$-Frobenius.
\end{ex}

\begin{rmk}
If $\CC$ is relative $\II$-Frobenius, $(\CC, \bbE^{\II}, \fraks^{\II})$ is Frobenius ET category.
\end{rmk}
\fi

ET categories are different from triangulated categories because not every morphism has a cone or cocone. However, we may sometimes replace any morphisms by inflations \resp{deflations} up to ideal quotient in the following meanings.

\begin{lem} \cite[Proposition 1.20]{LN19} \label{inflation}
Take a morphism $f \colon X \to X^{\pr}$. Let $X \xrar{x} E \xrar{y} Y \xdrar{\delta} X$ be an $\fraks$-triangle and 
$X^{\pr} \xrar{x^{\pr}} E^{\prime} \xrar{y^{\pr}} Y \xdrar{f \delta} X^{\pr}$ be a realization of $f \delta$. 
Then there exists a morphism $g \colon E \to E^{\prime}$ which satisfies the following two conditions.
	\begin{enumerate}[label=(\roman*)]
	\item $g$ makes the following diagram commutative.
	\[
	\xy
	(0,8)*+{X}="00";
	(0,-8)*+{X^{\prime}}="01";
	(16,8)*+{E}="10";
	(16,-8)*+{E^{\prime}}="11";
	(32,8)*+{Y}="20";
	(32,-8)*+{Y}="21";
	(48,8)*+{X}="30";
	(48,-8)*+{X^{\prime}}="31";
	{\ar@{->}^{x} "00";"10"};
	{\ar@{->}^{y} "10";"20"};
	{\ar@{-->}^{\delta} "20";"30"};
	{\ar@{->}^{x^{\prime}} "01";"11"};
	{\ar@{->}^{y^{\prime}} "11";"21"};
	{\ar@{-->}^{f\delta} "21";"31"};
	{\ar@{->}^{f} "00";"01"};
	{\ar@{->}^{g} "10";"11"};
	{\ar@{=} "20";"21"};
	{\ar@{->}^{f} "30";"31"};
	{\ar@{}|\circlearrowright "00";"11"};
	{\ar@{}|\circlearrowright "10";"21"};
	{\ar@{}|\circlearrowright "20";"31"};
	\endxy
	\]
	\item
	$
	X \xrightarrow{\msize{0.5}{\begin{bmatrix} f \\ x \\ \end{bmatrix}}} X^{\prime} \oplus E \xrar{\msize{0.5}{[-x^{\pr} \ g]}} E^{\pr} \xdrar{\delta y^{\pr}} X
	$ \
	is an $\fraks$-triangle, in particular, ${\msize{0.8}{\begin{bmatrix} f \\ x \\ \end{bmatrix}}}$ is an inflation.
	\end{enumerate}
\end{lem}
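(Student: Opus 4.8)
The plan is to deduce (1) directly from the realization axiom, and to carve the $\fraks$-triangle of (2) out of a single shifted octahedron (Proposition \ref{shifted_octahedrons}). For (1): the realization axiom (ET2) applied to the extensions $\delta$ and $f\delta$, together with the morphism $f\colon X\to X'$ and $\id_Y\colon Y\to Y$ (which trivially satisfy $f\delta=(f\delta)\circ\id_Y$), produces a morphism $g\colon E\to E'$ with $gx=x'f$ and $y'g=y$; this is precisely the commutative diagram of (1). The remaining task is to arrange that such a $g$ makes (2) hold as well.

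For (2): I would apply Proposition \ref{shifted_octahedrons} to the two $\fraks$-triangles $X\xrar{x}E\xrar{y}Y\xdrar{\delta}X$ and $X'\xrar{x'}E'\xrar{y'}Y\xdrar{f\delta}X'$, which share the third term $Y$. This yields $\fraks$-triangles $X'\xrar{v_1}W\xrar{w_1}E\xdrar{\epsilon_1}X'$ and $X\xrar{v_2}W\xrar{w_2}E'\xdrar{\epsilon_2}X$ and an octahedral commutative diagram, from which one reads off $\epsilon_1=(f\delta)y=f(\delta y)$ and $\epsilon_2=\delta y'$. The key point is that $\delta y=\bbE(y,X)(\delta)$ vanishes: in the long exact sequence of Remark \ref{long_exact_seq} attached to $X\xrar{x}E\xrar{y}Y\xdrar{\delta}X$, $\delta=\id_X\circ\delta$ lies in the image of $-\circ\delta$, hence in the kernel of $\bbE(y,X)$. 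Therefore $\epsilon_1=0$, and since $\fraks$ is additive the first triangle splits: there is an isomorphism $W\cong X'\oplus E$ carrying $v_1,w_1$ to $\left[\begin{smallmatrix}1\\0\end{smallmatrix}\right]$ and $[0\ 1]$. Transporting the second triangle along this isomorphism and using the relations furnished by the octahedral diagram (namely $w_1v_2=x$, $w_2v_1=x'$, $y'w_2=yw_1$, $w_2v_2=0$, and $v_2\circ\delta=-v_1\circ(f\delta)$), one computes the matrix entries of $v_2$ and $w_2$ and finds that, after composing the splitting isomorphism with an automorphism of $X'\oplus E$ of the form $\left[\begin{smallmatrix}-1&h\\0&1\end{smallmatrix}\right]$ with $h\colon E\to X'$, they become $v_2=\left[\begin{smallmatrix}f\\x\end{smallmatrix}\right]$ and $w_2=[-x'\ g]$ for a morphism $g\colon E\to E'$ which moreover satisfies $y'g=y$ and $gx=x'f$. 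This $g$ is the one required by the lemma, and in these coordinates the second octahedral triangle is exactly $X\xrar{\left[\begin{smallmatrix}f\\x\end{smallmatrix}\right]}X'\oplus E\xrar{[-x'\ g]}E'\xdrar{\delta y'}X$.

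The main obstacle is entirely the sign and coordinate bookkeeping in the last step: one must track the explicit minus sign carried by the octahedron (the arrow $-v_1$ in Proposition \ref{shifted_octahedrons}) and verify that the single morphism $g$ it produces simultaneously fulfils both (1) and (2). Everything else is formal, relying only on (ET2), the octahedral axiom, and the long exact sequences of Remark \ref{long_exact_seq}.
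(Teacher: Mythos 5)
Your argument is correct. The paper itself gives no proof of this lemma (it only cites \cite[Proposition 1.20]{LN19}), and your route is essentially the standard one from that reference: obtain $g$ for (1) from the realization axiom, then apply the shifted octahedron (Proposition \ref{shifted_octahedrons}) to the two $\fraks$-triangles sharing the third term $Y$, use $\delta y=0$ (from the long exact sequence) to split the triangle $X'\to W\to E$, and read off the conflation of (2) after normalizing by an automorphism $\left[\begin{smallmatrix}-1&t\\0&1\end{smallmatrix}\right]$ of $X'\oplus E$; the relations $w_2v_2=0$ and $(p+f)\delta=0$ indeed force the resulting $g$ to satisfy $y'g=y$ and $gx=x'f$ simultaneously, so the bookkeeping you flag does close up.
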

\begin{proof}
See \cite[Proposition 1.20]{LN19}.
\end{proof}

The following statement is also useful.

\begin{lem} \cite[Proposition 3.17]{NP19}  \label{happel_diagram}
Let 
$X \xrar{a} Y \xrar{b^{\pr}} Z \lxdrar{\delta_1} X$,
$ X \xrar{c} Z^{\pr} \xrar{d} Z^{\pr\pr} \lxdrar{\delta_2} X$ and
$X^{\pr} \xrar{a^{\pr}} Y \xrar{b} Z^{\pr} \lxdrar{\delta_3} X^{\pr}$
be $\fraks$-triangles where $c = ba$.
Then there exists an $\fraks$-triangle $X^{\pr} \xrar{f} Z \xrar{g} Z^{\pr\pr} \lxdrar{\delta} X^{\pr}$ 
which makes the following diagram commutative.
\[
\xy
(0,24)*+{}="11";
(0,8)*+{X}="21";
(0,-8)*+{X}="31";
(0,-24)*+{}="41";
(16,24)*+{X^{\pr}}="12";
(16,8)*+{Y}="22";
(16,-8)*+{Z^{\pr}}="32";
(16,-24)*+{X^{\pr}}="42";
(32,24)*+{X^{\pr}}="13";
(32,8)*+{Z}="23";
(32,-8)*+{Z^{\pr\pr}}="33";
(32,-24)*+{X^{\pr}}="43";
(48,24)*+{}="14";
(48,8)*+{X}="24";
(48,-8)*+{X}="34";
(48,-24)*+{Y}="44";
{\ar@{=} "12";"13"};
{\ar^-{a} "21";"22"};
{\ar^-{b^{\pr}} "22";"23"};
{\ar@{-->}^{\delta_1} "23";"24"};
{\ar^{c} "31";"32"};
{\ar^{d} "32";"33"};
{\ar@{-->}^{\delta_2} "33";"34"};
{\ar@{=} "42";"43"};
{\ar^{-a^{\pr}} "43";"44"};
{\ar@{=} "21";"31"};
{\ar^{a^{\pr}} "12";"22"};
{\ar^{b} "22";"32"};
{\ar@{-->}^{\delta_3} "32";"42"};
{\ar^{f} "13";"23"};
{\ar^{g} "23";"33"};
{\ar@{-->}^{\delta} "33";"43"};
{\ar@{=} "24";"34"};
{\ar^{a} "34";"44"};
{\ar@{}|\car "12";"23"};
{\ar@{}|\car "21";"32"};
{\ar@{}|\car "22";"33"};
{\ar@{}|\car "23";"34"};
{\ar@{}|\car "32";"43"};
{\ar@{}|\car "33";"44"};
\endxy
\]
\end{lem}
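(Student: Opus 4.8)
The plan is to view the statement as the extriangulated octahedral lemma for the mixed composition $c=ba$, in which $a\colon X\to Y$ is the $\fraks$-inflation appearing in $X\xrar{a}Y\xrar{b^{\pr}}Z\lxdrar{\delta_1}X$, while $b\colon Y\to Z^{\pr}$ is the $\fraks$-deflation appearing in $X^{\pr}\xrar{a^{\pr}}Y\xrar{b}Z^{\pr}\lxdrar{\delta_3}X^{\pr}$. Since neither (ET4) nor (ET4)$^{\op}$ applies to a composite of an $\fraks$-inflation with an $\fraks$-deflation, the key device is to rectify $b$ by the split $\fraks$-inflation $\binom{\id_Y}{b}\colon Y\to Y\oplus Z^{\pr}$: then $\binom{a}{c}=\binom{\id_Y}{b}\circ a\colon X\to Y\oplus Z^{\pr}$ is an honest composite of $\fraks$-inflations, to which (ET4) does apply.

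In detail I would argue in three steps. First, Lemma~\ref{inflation} applied to $a\colon X\to Y$ and to $X\xrar{c}Z^{\pr}\xrar{d}Z^{\pr\pr}\lxdrar{\delta_2}X$ realizes $a\delta_2$ as some $\fraks$-triangle $Y\xrar{u}N\xrar{v}Z^{\pr\pr}\lxdrar{a\delta_2}Y$ and exhibits $\binom{a}{c}$ as an $\fraks$-inflation with cone $N$. Second, applying (ET4) to the composable $\fraks$-inflations $a$ (cone $Z$, extension $\delta_1$) and $\binom{\id_Y}{b}$ (split, cone $Z^{\pr}$, extension $0$), the extension on the ``$Z$-side'' of the cone of $\binom{a}{c}$ equals $b^{\pr}\cdot 0=0$, so that side splits and $N\cong Z\oplus Z^{\pr}$ by uniqueness of the cone of an $\fraks$-inflation (a consequence of (ET3)); transporting along this isomorphism and tracking the commutativities in the (ET4)-diagram, one obtains an $\fraks$-triangle $Y\xrar{u}Z\oplus Z^{\pr}\xrar{v}Z^{\pr\pr}\lxdrar{a\delta_2}Y$ whose $Z^{\pr}$-component of $u$ is $b$, together with $g:=v\circ\binom{\id_Z}{0}\colon Z\to Z^{\pr\pr}$ satisfying $gb^{\pr}=db$; after readjusting $g$ via (ET3) we may also assume $\delta_2 g=\delta_1$. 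Third, applying (ET4) once more, now to the composable $\fraks$-inflations $a^{\pr}\colon X^{\pr}\to Y$ (cone $Z^{\pr}$, extension $\delta_3$) and $u\colon Y\to Z\oplus Z^{\pr}$ (cone $Z^{\pr\pr}$): since $ba^{\pr}=0$ the composite is $ua^{\pr}=\binom{f}{0}$ for a suitable $f\colon X^{\pr}\to Z$, so it factors through the split inclusion $Z\hookrightarrow Z\oplus Z^{\pr}$, and the extension on the corresponding side of the new cone is $b\cdot(a\delta_2)=c\delta_2$, which vanishes by Remark~\ref{long_exact_seq} for $X\xrar{c}Z^{\pr}\xrar{d}Z^{\pr\pr}\lxdrar{\delta_2}X$ (since $\delta_2=\delta_2\circ\id_{Z^{\pr\pr}}$ lies in the image of $\delta_2\circ-$). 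Hence the auxiliary summand $Z^{\pr}$ splits off and there remains an $\fraks$-triangle $X^{\pr}\xrar{f}Z\xrar{g}Z^{\pr\pr}\lxdrar{\delta}X^{\pr}$, with $\delta$ the $Z^{\pr\pr}$-component of the connecting extension, and chasing this last (ET4)-diagram also yields $\delta d=\delta_3$.

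It then remains to check that the displayed $4\times4$ diagram commutes; its squares encoding $c=ba$, $gb^{\pr}=db$, $\delta_2 g=\delta_1$, $\delta_3=\delta d$, the definition of $f$, and the bottom row carrying $-a^{\pr}$ all follow from the commutativities produced above, from (ET3)-uniqueness of cones, and from the long exact sequences of Remark~\ref{long_exact_seq}. I expect the main obstacle to be precisely this bookkeeping, and in particular the task of making the two direct-sum splittings (of $N$ and of the final cone) compatible enough that the extracted triple $X^{\pr}\xrar{f}Z\xrar{g}Z^{\pr\pr}$ agrees with the given maps and the connecting $\fraks$-extension on the nose, not merely up to an automorphism of $Z$ or $Z^{\pr\pr}$. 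A more conceptual route, bypassing the explicit matrices, is to first build via (ET3) the morphism of $\fraks$-triangles $(\id_X,b,g)\colon(X\xrar{a}Y\xrar{b^{\pr}}Z\lxdrar{\delta_1}X)\to(X\xrar{c}Z^{\pr}\xrar{d}Z^{\pr\pr}\lxdrar{\delta_2}X)$ and then apply the extriangulated $3\times3$-lemma, whose third column is exactly the required $\fraks$-triangle $X^{\pr}\xrar{f}Z\xrar{g}Z^{\pr\pr}\lxdrar{\delta}X^{\pr}$ sitting inside the full commutative diagram.
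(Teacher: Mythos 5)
Your write-up is being compared against a proof the paper does not actually give: the paper disposes of this lemma by quoting it as the dual of \cite[Proposition 3.17]{NP19}, so what matters is whether your from-scratch argument is complete. Steps 1 and 2 are sound and can be verified: Lemma \ref{inflation} applied to $a$ and $\delta_2$, together with (ET4) for the composite $\binom{1}{b}\circ a=\binom{a}{c}$ and (ET3)-uniqueness of cones, does yield (after a harmless change of basis on $Z\oplus Z^{\pr}$) an $\fraks$-triangle $Y\xrar{\binom{-b^{\pr}}{b}}Z\oplus Z^{\pr}\xrar{[g\ d]}Z^{\pr\pr}\overset{a\delta_2}{\dashrightarrow}Y$ with $gb^{\pr}=db$ and $\delta_2g=\delta_1$, exactly as you claim (one sign remark: the $Z$-component of the composite in Step 3 then comes out as $-b^{\pr}a^{\pr}$, which is what is compensated by the $-a^{\pr}$ in the statement, so the bookkeeping there is fine). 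The computation $c\delta_2=0$ via Remark \ref{long_exact_seq} is also correct, so the second (ET4) application does produce an $\fraks$-triangle $X^{\pr}\xrar{\binom{-b^{\pr}a^{\pr}}{0}}Z\oplus Z^{\pr}\xrar{t}F\overset{\delta^{\pr\pr}}{\dashrightarrow}X^{\pr}$ together with a split $\fraks$-triangle $Z^{\pr}\xrar{s_1}F\xrar{s_2}Z^{\pr\pr}$.

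The genuine gap is the very last move: ``the auxiliary summand $Z^{\pr}$ splits off and there remains an $\fraks$-triangle $X^{\pr}\to Z\to Z^{\pr\pr}$''. Cancelling a common direct summand out of a conflation is not licensed by the axioms. A cancellation statement does hold once the deflation is in block form: if $X^{\pr}\xrar{\binom{x}{0}}Z\oplus Z^{\pr}\to Z^{\pr\pr}\oplus Z^{\pr}$ is an $\fraks$-triangle whose deflation is $\mathrm{diag}(y,1_{Z^{\pr}})$, one can split it off using additivity of $\bbE$ and $\fraks$ and the long exact sequence. But what you actually have is $t$ with components $t_1\colon Z\to F$, $t_2\colon Z^{\pr}\to F$ satisfying only $s_2t_1=g$, $s_2t_2=d$, $-t_1b^{\pr}+t_2b=s_1b$, $\delta_3=\delta^{\pr\pr}s_1$, $a^{\pr}\delta^{\pr\pr}=(a\delta_2)s_2$; to reach the block form, compatibly with the already fixed $g$, $d$, $\delta_2$, $\delta_3$, you must show that $r\,t_2\colon Z^{\pr}\to Z^{\pr}$ is invertible (for a suitable retraction $r$ of $s_1$), and none of the assembled identities gives this. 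This is exactly the delicate point in extriangulated categories: in general one cannot even conclude from $\binom{x}{0}$ being an inflation that $x$ is one (that is the extra condition (WIC) of Nakaoka--Palu), so the extraction is a missing argument, not bookkeeping; it is precisely the content carried by \cite[Proposition 3.17]{NP19}. Your fallback ``conceptual route'' does not repair this, because the $3\times3$-type completion of the square $(\id_X,b,g)$ with one identity component is (the dual of) that very proposition, so invoking it is circular unless you cite it outright --- which is, in effect, what the paper does.
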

\begin{proof}
This is a dual statement of  \cite[Proposition 3.17]{NP19}.
\end{proof}

\begin{cor} \label{uniqueness_of_cone}
Let $\II$ be a strongly covariantly finite subcategory in $\CC$ and $a \colon X \to Y$ be a morphism in $\CC$. 
Take $\fraks^{\II}$-triangles $X \xrar{i^X} I^X \xrar{p^X} X\bracket{1} \xdrar{\lambda^X} X$
and $X \xrar{j^X} J^X \xrar{q^X} X\bracket{1}^{\pr} \xdrar{{\lambda^{\pr}}^X} X \vphantom{\Big(}$ 
where $i^X$ and $j^X$ are left $\II$-approximations.

There exist the following $\fraks^{\II}$-triangles from Lemma \ref{inflation}.
	\begin{gather*}
	X \xrar{\msize{0.5}{\begin{bmatrix} a \\ i^X \\ \end{bmatrix}}} Y \oplus I^X \xrar{\wt{b}} 
	C^a \xdrar{\wt{\delta}} X \\
	X \xrar{\msize{0.5}{\begin{bmatrix} a \\ j^X \\ \end{bmatrix}}} Y \oplus J^X \xrar{\wt{b^{\pr}}} 
	C^{a^{\pr}} \xdrar{\wt{\delta^{\pr}}} X \\
	X \xrar{\msize{0.5}{\begin{bmatrix} a \\ i^X \\ j^X \\ \end{bmatrix}}} Y \oplus I^X \oplus J^X \xrar{\wt{b^{\pr\pr}}} 
	C^{a^{\pr\pr}} \xdrar{\wt{\delta^{\pr\pr}}} X
	\end{gather*}
Then $C^{a^{\pr\pr}} \cong C^a \oplus J^X \cong C^{a^{\pr}} \oplus I^X$. 
In particular, $C^a$ is uniquely determined up to isomorphisms in $\CC/[\II]$ 
and does not depend on choices of $\fraks^{\II}$-triangle $X \xrar{i^X} I^X \xrar{p^X} X\bracket{1} \xdrar{\lambda^X} \vphantom{\Big(}X$.
\end{cor}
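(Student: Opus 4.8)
The plan is to reduce the whole statement to the single isomorphism $C^{a^{\pr\pr}}\cong C^a\oplus J^X$ in $\CC$, and then exploit that $I^X$ and $J^X$ lie in $\II$. Since $i^X$ and $j^X$ are both left $\II$-approximations of $X$, the situation is symmetric in them, so once $C^{a^{\pr\pr}}\cong C^a\oplus J^X$ is proved, the isomorphism $C^{a^{\pr\pr}}\cong C^{a^{\pr}}\oplus I^X$ follows by interchanging their roles. Granting both, the ideal quotient $\CC\to\CC/[\II]$, under which $I^X$ and $J^X$ become zero, gives $C^a\cong C^{a^{\pr\pr}}\cong C^{a^{\pr}}$ in $\CC/[\II]$; and for the last assertion one simply re-runs the argument with two \emph{arbitrary} $\fraks^{\II}$-triangles of the form $X\xrar{i^X}I^X\xrar{p^X}X\bracket{1}\xdrar{\lambda^X}X$ playing the two roles, concluding that the cones they produce agree in $\CC/[\II]$.

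To prove $C^{a^{\pr\pr}}\cong C^a\oplus J^X$, write $\alpha:=\left[\begin{smallmatrix}a\\ i^X\end{smallmatrix}\right]\colon X\to Y\oplus I^X$ and $\gamma:=\left[\begin{smallmatrix}a\\ i^X\\ j^X\end{smallmatrix}\right]\colon X\to Y\oplus I^X\oplus J^X$; by Lemma \ref{inflation} these are $\fraks^{\II}$-inflations, and the defining triangles for $C^a$ and $C^{a^{\pr\pr}}$ have $\alpha$, respectively $\gamma$, as their inflation. Since $i^X$ is a left $\II$-approximation and $J^X\in\II$, fix $h\colon I^X\to J^X$ with $hi^X=j^X$ and put $\psi:=\left[\begin{smallmatrix}1&0\\ 0&1\\ 0&h\end{smallmatrix}\right]\colon Y\oplus I^X\to Y\oplus I^X\oplus J^X$. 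The point is that $\psi$ is the canonical split monomorphism $Y\oplus I^X\to Y\oplus I^X\oplus J^X$ postcomposed with the automorphism $\left[\begin{smallmatrix}1&0&0\\ 0&1&0\\ 0&h&1\end{smallmatrix}\right]$ of $Y\oplus I^X\oplus J^X$, so it is an $\fraks^{\II}$-inflation fitting in the split $\fraks^{\II}$-triangle $Y\oplus I^X\xrar{\psi}Y\oplus I^X\oplus J^X\rar J^X\xdrar{0}Y\oplus I^X$, and moreover $\psi\circ\alpha=\gamma$.

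I would then apply (ET4) in the extriangulated category $(\CC,\bbE^{\II},\fraks^{\II})$ to the composable $\fraks^{\II}$-inflations $\alpha$ (with its $\fraks^{\II}$-triangle $X\xrar{\alpha}Y\oplus I^X\xrar{\wt b}C^a\xdrar{\wt\delta}X$) and $\psi$. Since $\psi\circ\alpha=\gamma$, (ET4) produces an $\fraks^{\II}$-triangle $X\xrar{\gamma}Y\oplus I^X\oplus J^X\rar E\drar X$ together with an $\fraks^{\II}$-triangle $C^a\rar E\rar J^X\xdrar{\epsilon^{\pr}}C^a$, and the commutativity built into (ET4) forces $\epsilon^{\pr}=\wt b\circ\epsilon$, where $\epsilon\in\bbE^{\II}(J^X,Y\oplus I^X)$ is the extension realized by the split $\psi$-triangle, i.e.\ $\epsilon=0$. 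Hence $\epsilon^{\pr}=0$, the triangle $C^a\rar E\rar J^X\xdrar{0}C^a$ splits, and $E\cong C^a\oplus J^X$. As this triangle and the defining one $X\xrar{\gamma}Y\oplus I^X\oplus J^X\xrar{\wt{b^{\pr\pr}}}C^{a^{\pr\pr}}\xdrar{\wt{\delta^{\pr\pr}}}X$ have the same $\fraks^{\II}$-inflation $\gamma$, one gets $C^{a^{\pr\pr}}\cong E\cong C^a\oplus J^X$; the symmetric computation, factoring $i^X=h^{\pr}j^X$ through $j^X$, gives $C^{a^{\pr\pr}}\cong C^{a^{\pr}}\oplus I^X$.

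I expect the main obstacle to be exactly the last identification, which uses that the cone of a fixed $\fraks^{\II}$-inflation is unique up to isomorphism — a standard consequence of (ET3) together with the five lemma for extriangulated categories (\cite{NP19}). If one prefers to argue only with results in the present excerpt, one can instead fix a coherent choice of cones from the start: take the $\bracket{1}_{\II}$-triangle for $I^X\oplus J^X$ to be the direct sum of the one chosen for $I^X$ with the trivial triangle $0\rar J^X\xrar{\mathrm{id}}J^X\xdrar{0}0$; then by Lemma \ref{inflation} the object $C^{a^{\pr\pr}}$ is the middle term of the realization $\fraks^{\II}(a\lambda^X)\oplus\fraks^{\II}(0)=(Y\rar C^a\oplus J^X\rar X\bracket{1}\oplus J^X)$, hence literally equals $C^a\oplus J^X$, and independence of that choice is then \cite[Corollary 3.6]{NP19}.
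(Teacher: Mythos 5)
Your argument is correct, and it reaches the splitting by a genuinely different (though closely related) octahedral manoeuvre than the paper. The paper factors the \emph{smaller} inflation through the bigger one, writing $\left[\begin{smallmatrix} a \\ i^X \end{smallmatrix}\right]$ as the split projection $Y\oplus I^X\oplus J^X\to Y\oplus I^X$ composed with $\left[\begin{smallmatrix} a \\ i^X \\ j^X\end{smallmatrix}\right]$, and applies Lemma \ref{happel_diagram}; this produces directly an $\fraks^{\II}$-triangle $J^X\to C^{a^{\pr\pr}}\to C^a\xdrar{\gamma}J^X$ with $C^{a^{\pr\pr}}$ \emph{already sitting in the diagram}, and the splitting is then bought with the vanishing $\bbE^{\II}(C^a,J^X)=0$ (because $J^X\in\II=\Inj_{\bbE^{\II}}\CC$). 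You instead factor the \emph{bigger} inflation through the smaller one, $\left[\begin{smallmatrix} a \\ i^X \\ j^X\end{smallmatrix}\right]=\psi\circ\left[\begin{smallmatrix} a \\ i^X\end{smallmatrix}\right]$ with $\psi$ a split inflation, and apply (ET4); this gives $C^a\to E\to J^X$ whose connecting extension is the pushforward $\wt{b}_*(0)=0$, so the splitting is free and no vanishing of $\bbE^{\II}$ on $\II$ is needed — but you must then pay for the identification $E\cong C^{a^{\pr\pr}}$ with the uniqueness of the cone of a fixed inflation, which is indeed standard ((ET3) plus the extriangulated five lemma, as you cite from \cite{NP19}) but is exactly the step the paper's choice of decomposition avoids. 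Both routes are sound; one small caveat on your optional final alternative: the direct sum of the chosen triangle for $I^X$ with $0\to J^X\xrightarrow{\id}J^X$ has inflation $\left[\begin{smallmatrix} i^X \\ 0\end{smallmatrix}\right]$ rather than $\left[\begin{smallmatrix} i^X \\ j^X\end{smallmatrix}\right]$, so you still need to correct by the automorphism $\left[\begin{smallmatrix}1&0\\ h&1\end{smallmatrix}\right]$ of $I^X\oplus J^X$ before the cones "literally" agree — a harmless but nonvacuous adjustment.
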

\begin{proof}
From Lemma \ref{happel_diagram}, there exists a commutative diagram.
\[
\xy
(-8,24)*+{}="11";
(-8,8)*+{X}="21";
(-8,-8)*+{X}="31";
(-8,-24)*+{}="41";
(16,24)*+{J^X}="12";
(16,8)*+{Y\! \oplus \! I^X \! \oplus \!J^X}="22";
(16,-8)*+{Y \! \oplus \!I^X}="32";
(16,-24)*+{J^X}="42";
(40,24)*+{J^X}="13";
(40,8)*+{C^{a^{\pr\pr}}}="23";
(40,-8)*+{C^a}="33";
(40,-24)*+{J^X}="43";
(64,24)*+{}="14";
(64,8)*+{X}="24";
(64,-8)*+{X}="34";
{\ar@{=} "12";"13"};
{\ar^-{\msize{0.5}{\begin{bmatrix} a \\ i^X \\ j^X \end{bmatrix}}} "21";"22"};
{\ar^-{\wt{b^{\pr\pr}}} "22";"23"};
{\ar@{-->}^{\wt{\delta^{\pr\pr}}} "23";"24"};
{\ar^-{\msize{0.5}{\begin{bmatrix} a \\ i^X \end{bmatrix}}} "31";"32"};
{\ar^-{\wt{b}} "32";"33"};
{\ar@{-->}^{\wt{\delta}} "33";"34"};
{\ar@{=} "42";"43"};
{\ar@{=} "21";"31"};
{\ar^-{} "12";"22"};
{\ar^-{} "22";"32"};
{\ar@{-->}^{0} "32";"42"};
{\ar^-{} "13";"23"};
{\ar^-{} "23";"33"};
{\ar@{-->}^-{\gamma} "33";"43"};
{\ar@{=} "24";"34"};
%
{\ar@{}|\car "12";"23"};
{\ar@{}|\car "21";"32"};
{\ar@{}|\car "22";"33"};
{\ar@{}|\car "23";"34"};
{\ar@{}|\car "32";"43"};
\endxy
\]
Since $\bbE^{\II}(C^a, J^X) = 0$, $C^{a^{\pr\pr}} \cong C^a \oplus J^X$. One can show $C^{a^{\pr\pr}} \cong C^{a^{\pr}} \oplus I^X$ in the same way.
\end{proof}

\begin{cor} \cite[Corollary 3.7]{Nak18} \label{cor_iso-in-ul}
Let $\II \subset \ZZ$ be subcategories in $\CC$ and assume that $\II$ is strongly functorially finite in $\ZZ$.
If a morphism $f \colon X \to Y$ in $\ZZ$ is an isomorphism in $\CC/[\II]$, then 
there exist $I,J \in \II$ and an isomorphism $\msize{0.8}{\begin{bmatrix} f & j \\ i & k \end{bmatrix}} \colon X \oplus J \to Y \oplus I$.
\end{cor}
\begin{proof}
Take an inflation $i^X \colon X \to I^X$ which is also a left $\II$-approximation.
Since $f$ is an isomorphism in $\CC/[\II]$, $\msize{0.8}{\begin{bmatrix} f \\ i^X \end{bmatrix}} \colon X \to Y \oplus I^X$ is both an inflation and a section.
We denote $\msize{0.8}{\begin{bmatrix} f \\ i^X \end{bmatrix}}$ by $\wt{f}$.
Next, there exists a deflation $p \colon I_Y \to Y \oplus I^X$ which is also a right $\II$-approximation.
Since $\wt{f}$ is also an isomorphism in $\CC/[\II]$, $\msize{0.8}{\begin{bmatrix} \wt{f} & p \end{bmatrix}} \colon X \oplus I_Y \to Y \oplus I^X$ is both an deflation and a retraction.
From Lemma \ref{happel_diagram}, there exists the following commutative diagram in $\CC$.
\[
\xy
(0,8)*+{X}="21";
(0,-8)*+{X}="31";
(16,24)*+{J^{\pr}}="12";
(16,8)*+{X \oplus I_Y}="22";
(16,-8)*+{Y \oplus I^X}="32";
(16,-24)*+{J^{\pr}}="42";
(32,24)*+{J^{\pr}}="13";
(32,8)*+{I_Y}="23";
(32,-8)*+{J}="33";
(32,-24)*+{J^{\pr}}="43";
(48,8)*+{X}="24";
(48,-8)*+{X}="34";
{\ar@{=} "12";"13"};
{\ar^-{\msize{0.6}{\begin{bmatrix} 1 \\ 0 \end{bmatrix}}} "21";"22"};
{\ar^-{\msize{0.6}{\begin{bmatrix} 0 & 1 \end{bmatrix}}} "22";"23"};
{\ar@{-->}^{0} "23";"24"};
{\ar^-{\wt{f}} "31";"32"};
{\ar^-{} "32";"33"};
{\ar@{-->}^{0} "33";"34"};
{\ar@{=} "42";"43"};
{\ar@{=} "21";"31"};
{\ar^{} "12";"22"};
{\ar^{\msize{0.6}{\begin{bmatrix} \wt{f} & p \end{bmatrix}}} "22";"32"};
{\ar@{-->}^{0} "32";"42"};
{\ar^{} "13";"23"};
{\ar^{} "23";"33"};
{\ar@{-->}^{0} "33";"43"};
{\ar@{=} "24";"34"};
{\ar@{}|\car "12";"23"};
{\ar@{}|\car "21";"32"};
{\ar@{}|{\phantom{XX}\car} "22";"33"};
{\ar@{}|\car "23";"34"};
{\ar@{}|\car "32";"43"};
\endxy
\]

Because $\II$ is closed under direct summands, $J \in \II$.
By taking a section $\msize{0.8}{\begin{bmatrix} j \\ k \end{bmatrix}} \colon J \to Y \oplus I^X$,
$\msize{0.8}{\begin{bmatrix} f & j \\ i^X & k \end{bmatrix}}$ induces an isomorphism.
\end{proof}

At the end of this subsection, we add the following lemma which is used in section \ref{triangulated}.

\begin{lem}\label{bracket_equiv}
Let $\II \subset \XX$ be subcategories of $\CC$ and suppose that $\II$ is strongly covariantly finite in $\XX$.
Assume that $\bbE^{\II}(\II, \XX) = 0$, then $\bracket{1} \colon \XX/[\II] \to \XX\bracket{1}/[\II]$ is an equivalence.
In particular, an $\fraks^{\II}$-triangle $X \rar I^X \rar X\bracket{1} \xdrar{\lambda^X} X\vphantom{\bigg(}$ is an $\fraks^{\II}_{\II}$-triangle.
\end{lem}
\begin{proof}
Since $\bracket{1} \colon \XX/[\II] \to \XX\bracket{1}/[\II]$ is essentially surjective by definition and one can directly prove $\bracket{1}$ is full,
we only show that $\bracket{1}$ is faithful.
Let $x \colon X \to X^{\pr}$ be a morphism in $\XX$ and take $\fraks^{\II}$-triangles 
$X \rar I^X \rar X\bracket{1} \xdrar{\lambda^X} X\vphantom{\Big(}$ and 
$X^{\pr} \rar I^{X^{\pr}} \rar X^{\pr} \bracket{1} \xdrar{\lambda^{X^{\pr}}} X^{\pr} \vphantom{\Big(}$.
Then we obtain a morphism $x\bracket{1} \colon X\bracket{1} \to X^{\pr}\bracket{1}$ where $x \lambda^X = \lambda^{X^{\pr}} x\bracket{1}$. 
If $x\bracket{1} =0$ in $\CC/[\II]$, $x \lambda^X =0$ from $\bbE^{\II}(\II, \XX) =0$. 
Thus, $x$ factors through $I^X$.
\end{proof}

\subsection{Pretriangulated categories} 

In this subsection, we introduce right triangulated categories, left triangulated categories and pretriangulated categories in \cite{BR07}.
First, we define right triangles and left triangles like as distinguished triangles in triangulated categories.

\begin{defi} \cite[II.1]{BR07}
Let $\Sigma, \Omega \colon \CC \to \CC$ be additive (endo)functors.
	\begin{enumerate}
	\item We define a category $\mathsf{RTri}$ as follows.
		\begin{enumerate}[leftmargin=20pt]
		\item Objects are sequences in $\CC$ of the form $ X \xrar{f} Y \xrar{g} Z \xrar{h} \Sigma X$.
		\item Morphisms are triplets $(x, y, z)$ which make the following diagram commutative.
		\[
		\xy
		(-30,8)*+{}="0";
		(0,8)*+{X_1}="1";
		(0,-8)*+{X_2}="2";
		(16,8)*+{Y_1}="3";
		(16,-8)*+{Y_2}="4";
		(32,8)*+{Z_1}="5";
		(32,-8)*+{Z_2}="6";
		(48,8)*+{\Sigma X_1}="7";
		(48,-8)*+{\Sigma X_2}="8";
		{\ar^{f_1} "1";"3"};
		{\ar^{g_1} "3";"5"};	
		{\ar^{h_1} "5";"7"};
		{\ar^{f_2} "2";"4"};
		{\ar^{g_2} "4";"6"};
		{\ar^{h_2} "6";"8"};
		{\ar^{x} "1";"2"};
		{\ar^{y} "3";"4"};
		{\ar^{z} "5";"6"};
		{\ar^{\Sigma x} "7";"8"};
		{\ar@{}|\car "1";"4"};
		{\ar@{}|\car "3";"6"};
		{\ar@{}|\car "5";"8"};
		\endxy
		\]
		\end{enumerate}
	\item We define a category $\mathsf{LTri}$ as follows.
		\begin{enumerate}[leftmargin=20pt]
		\item Objects are sequences in $\CC$ of the form $ \Omega Z \xrar{h^{\pr}} X \xrar{f} Y \xrar{g} Z$.
		\item Morphisms are triplets $(x, y, z)$ which satisfy the following commutative diagram.
		\[
		\xy
		(-30,8)*+{}="0";
		(0,8)*+{\Omega Z_1}="1";
		(0,-8)*+{\Omega Z_2}="2";
		(16,8)*+{X_1}="3";
		(16,-8)*+{X_2}="4";
		(32,8)*+{Y_1}="5";
		(32,-8)*+{Y_2}="6";
		(48,8)*+{Z_1}="7";
		(48,-8)*+{Z_2}="8";
		{\ar^{h_1^{\pr}} "1";"3"};
		{\ar^{f_1} "3";"5"};	
		{\ar^{g_1} "5";"7"};
		{\ar^{h_2^{\pr}} "2";"4"};
		{\ar^{f_2} "4";"6"};
		{\ar^{g_2} "6";"8"};
		{\ar^{\Omega z} "1";"2"};
		{\ar^{x} "3";"4"};
		{\ar^{y} "5";"6"};
		{\ar^{z} "7";"8"};
		{\ar@{}|\car "1";"4"};
		{\ar@{}|\car "3";"6"};
		{\ar@{}|\car "5";"8"};
		\endxy
		\]
		\end{enumerate}
	\end{enumerate}
\end{defi}

Now, let us define right triangulated categories and left triangulated categories.

\begin{defi} \cite[II.1]{BR07}
	\begin{enumerate}
	\item A \emph{right triangulated category} is a triplet $(\CC, \Sigma, \nabla)$ where
		\begin{enumerate}
		\item $\Sigma \colon \CC \to \CC$ be an additive functor.
		\item $\nabla$ is a full subcategory of $\mathsf{RTri}$.
		\item $(\CC, \Sigma, \nabla)$ satisfies all of the axioms of a triangulated category except that $\Sigma$ is not necessarily an equivalence.
		\end{enumerate}
	\item A \emph{left triangulated category} is a triplet $(\CC, \Omega, \Delta)$ where
		\begin{enumerate}
		\item $\Omega \colon \CC \to \CC$ be an additive functor.
		\item $\Delta$ is a full subcategory of $\mathsf{LTri}$.
		\item $(\CC, \Omega, \Delta)$ satisfies all of the axioms of a triangulated category except that $\Omega$ is not necessarily an equivalence.
		\end{enumerate}
	\end{enumerate}
\end{defi}

\begin{rmk} \label{axioms_of_RTri}
For the convenience of the reader, we list the axioms of right triangulated categories $(\CC, \Sigma, \nabla)$ below.
	\begin{itemize}[leftmargin=40pt]
	\item[(rTR0)] $\nabla$ is closed under isomorphisms.
	\item[(rTR1)]
		\begin{enumerate}[label=(\roman*)]
		\item For any $X \in \CC$, the sequence $X \xrar{\text{id}} X \rar 0 \rar \Sigma X$ is in $\nabla$.
		\item For any morphism $f \colon X \to Y$, there exists the sequence $X \xrar{f} Y \rar Z \rar \Sigma X$ in $\nabla$.
		\end{enumerate}
	\item[(rTR2)] Let $X \xrar{f} Y \xrar{g} Z \xrar{h} \Sigma X$ be a sequence in $\nabla$, 
	then $Y \xrar{g} Z \xrar{h} \Sigma X \xrar{- \Sigma f} \Sigma Y$ is also in $\nabla$.
	\item[(rTR3)] Assume that there exists a commutative diagram where each row is in $\nabla$.
	\[
	\xy
	(0,8)*+{X_1}="11";
	(16,8)*+{Y_1}="12";
	(32,8)*+{Z_1}="13";
	(48,8)*+{\Sigma X_1}="14";
	(0,-8)*+{X_2}="21";
	(16,-8)*+{Y_2}="22";
	(32,-8)*+{Z_2}="23";
	(48,-8)*+{\Sigma X_2}="24";
	{\ar^{f_1} "11";"12"};
	{\ar^{g_1} "12";"13"};	
	{\ar^{h_1} "13";"14"};
	{\ar^{f_2} "21";"22"};
	{\ar^{g_2} "22";"23"};
	{\ar^{h_2} "23";"24"};
	{\ar^{x} "11";"21"};
	{\ar^{y} "12";"22"};
	{\ar^{\Sigma x} "14";"24"};
	{\ar@{}|\car "11";"22"};
	\endxy
	\]
	Then there exists a morphism $z \colon Z_1 \to Z_2$ which makes the following diagram commutative.
	\[
	\xy
	(0,8)*+{X_1}="11";
	(16,8)*+{Y_1}="12";
	(32,8)*+{Z_1}="13";
	(48,8)*+{\Sigma X_1}="14";
	(0,-8)*+{X_2}="21";
	(16,-8)*+{Y_2}="22";
	(32,-8)*+{Z_2}="23";
	(48,-8)*+{\Sigma X_2}="24";
	{\ar^{f_1} "11";"12"};
	{\ar^{g_1} "12";"13"};	
	{\ar^{h_1} "13";"14"};
	{\ar^{f_2} "21";"22"};
	{\ar^{g_2} "22";"23"};
	{\ar^{h_2} "23";"24"};
	{\ar^{x} "11";"21"};
	{\ar^{y} "12";"22"};
	{\ar^{z} "13";"23"};
	{\ar^{\Sigma x} "14";"24"};
	{\ar@{}|\car "11";"22"};
	{\ar@{}|\car "12";"23"};
	{\ar@{}|\car "13";"24"};	
	\endxy
	\]
	\item[(rTR4)]
	Assume that $a^{\pr\pr} = a^{\pr} a$ and $X \xrar{a} Y \xrar{b} C \xrar{c} \Sigma X$, 
	$Y \xrar{a^{\pr}} Z \xrar{b^{\pr}} D \xrar{c^{\pr}} \Sigma Y$, 
	$X \xrar{a^{\pr\pr}} Z \xrar{b^{\pr\pr}} E \xrar{c^{\pr\pr}} \Sigma X$ are in $\nabla$.
	Then there exists $C \xrar{s} E \xrar{t} D \xrar{u} \Sigma C$ in $\nabla$ which makes the following diagram commutative.
	\[
	\xy
	(0,24)*+{X}="11";
	(16,24)*+{Y}="12";
	(32,24)*+{C}="13";
	(48,24)*+{\Sigma X}="14";
	(0,8)*+{X}="21";
	(16,8)*+{Z}="22";
	(32,8)*+{E}="23";
	(48,8)*+{\Sigma X}="24";
	(0,-8)*+{}="31";
	(16,-8)*+{D}="32";
	(32,-8)*+{D}="33";
	(48,-8)*+{\Sigma Y}="34";
	(0,-24)*+{}="41";
	(16,-24)*+{\Sigma Y}="42";
	(32,-24)*+{\Sigma C}="43";
	(48,-24)*+{}="44";
	{\ar^{\Sigma b} "42";"43"};
	{\ar^{a} "11";"12"};
	{\ar^{b} "12";"13"};
	{\ar^{c} "13";"14"};
	{\ar^{a^{\pr\pr}} "21";"22"};
	{\ar^{b^{\pr\pr}} "22";"23"};
	{\ar^{c^{\pr\pr}} "23";"24"};
	{\ar@{=} "32";"33"};
	{\ar^{c^{\pr}} "33";"34"};
	{\ar@{=} "11";"21"};
	{\ar^{a^{\pr}} "12";"22"};
	{\ar^{b^{\pr}} "22";"32"};
	{\ar^{c^{\pr}} "32";"42"};
	{\ar^{s} "13";"23"};
	{\ar^{t} "23";"33"};
	{\ar^{u} "33";"43"};
	{\ar@{=} "14";"24"};
	{\ar^{\Sigma a} "24";"34"};
	{\ar@{}|\car "12";"23"};
	{\ar@{}|\car "11";"22"};
	{\ar@{}|\car "22";"33"};
	{\ar@{}|\car "23";"34"};
	{\ar@{}|\car "32";"43"};
	{\ar@{}|\car "13";"24"};
	\endxy
	\]
	\end{itemize}
\end{rmk}

\begin{rmk} \label{rmk_facts_for_rtri}
Assume that $(\CC, \Sigma, \nabla)$ satisfies from (rTR0) to (rTR3). 
Then one can show the following statements like triangulated categories.
	\begin{enumerate}
	\item For any right triangle $X \xrar{f} Y \xrar{g} Z \xrar{h} \Sigma X$,
	\[
	\CC(-,X) \xrar{f \circ -} \CC(-,Y) \xrar{g \circ -} \CC(-,Z)
	\]
	is exact.
	\item \label{rmk_uniqueness_of_cone}
	Assume that there exists a commutative diagram where each row is in $\nabla$ and $x,y$ are isomorphisms.
	\[
	\xy
	(0,8)*+{X_1}="11";
	(16,8)*+{Y_1}="12";
	(32,8)*+{Z_1}="13";
	(48,8)*+{\Sigma X_1}="14";
	(0,-8)*+{X_2}="21";
	(16,-8)*+{Y_2}="22";
	(32,-8)*+{Z_2}="23";
	(48,-8)*+{\Sigma X_2}="24";
	{\ar^{f_1} "11";"12"};
	{\ar^{g_1} "12";"13"};	
	{\ar^{h_1} "13";"14"};
	{\ar^{f_2} "21";"22"};
	{\ar^{g_2} "22";"23"};
	{\ar^{h_2} "23";"24"};
	{\ar^{x}_{\vsim} "11";"21"};
	{\ar^{y}_{\vsim} "12";"22"};
	{\ar^{\Sigma x}_{\vsim} "14";"24"};
	{\ar@{}|\car "11";"22"};
	\endxy
	\]
	Then there exists an isomorphism $z \colon Z_1 \to Z_2$ which makes the following diagram commutative.
	\[
	\xy
	(0,8)*+{X_1}="11";
	(16,8)*+{Y_1}="12";
	(32,8)*+{Z_1}="13";
	(48,8)*+{\Sigma X_1}="14";
	(0,-8)*+{X_2}="21";
	(16,-8)*+{Y_2}="22";
	(32,-8)*+{Z_2}="23";
	(48,-8)*+{\Sigma X_2}="24";
	{\ar^{f_1} "11";"12"};
	{\ar^{g_1} "12";"13"};	
	{\ar^{h_1} "13";"14"};
	{\ar^{f_2} "21";"22"};
	{\ar^{g_2} "22";"23"};
	{\ar^{h_2} "23";"24"};
	{\ar^{x}_{\vsim} "11";"21"};
	{\ar^{y}_{\vsim} "12";"22"};
	{\ar^{z}_{\vsim} "13";"23"};
	{\ar^{\Sigma x}_{\vsim} "14";"24"};
	{\ar@{}|\car "11";"22"};
	{\ar@{}|\car "12";"23"};
	{\ar@{}|\car "13";"24"};	
	\endxy
	\]
	\end{enumerate}
\end{rmk}

Finally, we define pretriangulated categories.

\begin{defi} \cite[II.1]{BR07} \label{defi_pretri-in-BR}
	$(\CC, \Sigma, \Omega, \nabla, \Delta)$ is a \emph{pretriangulated category} if it satisfies the following conditions.
	\begin{enumerate}[label=(\roman*)]
	\item $(\Sigma, \Omega)$ is an adjoint pair of additive endofunctors $\Sigma, \Omega \colon \CC \to \CC$.
	\\ Now, let $\alpha$ be a unit and $\beta$ be a counit.
	\item $(\CC, \Sigma, \nabla)$ is a right triangulated category.
	\item $(\CC, \Omega, \Delta)$ is a left triangulated category.
	\item For any commutative diagrams in $\CC$
	\[
	\xy
	(0,8)*+{X_1}="1";
	(0,-8)*+{\Omega Z_2}="2";
	(14,8)*+{Y_1}="3";
	(14,-8)*+{X_2}="4";
	(28,8)*+{Z_1}="5";
	(28,-8)*+{Y_2}="6";
	(42,8)*+{\Sigma X_1}="7";
	(42,-8)*+{Z_2}="8";
	{\ar^{f_1} "1";"3"};
	{\ar^{g_1} "3";"5"};
	{\ar^{h_1} "5";"7"};
	{\ar^{h_2^{\pr}} "2";"4"};
	{\ar^{f_2} "4";"6"};
	{\ar^{g_2} "6";"8"};
	{\ar^{s} "1";"2"};
	{\ar^{t} "3";"4"};
	{\ar^{\beta_{Z_2} \circ \Sigma s} "7";"8"};
	{\ar@{}|\car "1";"4"};
	\endxy
	\xy
	(0,8)*+{X_1}="1";
	(0,-8)*+{\Omega Z_2}="2";
	(14,8)*+{Y_1}="3";
	(14,-8)*+{X_2}="4";
	(28,8)*+{Z_1}="5";
	(28,-8)*+{Y_2}="6";
	(42,8)*+{\Sigma X_1}="7";
	(42,-8)*+{Z_2}="8";
	{\ar^{f_1} "1";"3"};
	{\ar^{g_1} "3";"5"};
	{\ar^{h_1} "5";"7"};
	{\ar^{h_2^{\pr}} "2";"4"};
	{\ar^{f_2} "4";"6"};
	{\ar^{g_2} "6";"8"};
	{\ar_{u^{\pr}} "7";"8"};
	{\ar_{t^{\pr}} "5";"6"};
	{\ar_{\Omega u^{\pr} \circ \alpha_{X_1}} "1";"2"};
	%
	{\ar@{}|\car "5";"8"};
	\endxy
	\]
	where $X_1 \xrar{f_1} Y_1 \xrar{g_1} Z_1 \xrar{h_1} \Sigma X_1$ is a right triangle and
	$\Omega Z_2 \xrar{h_2^{\pr}} X_2 \xrar{f_2} Y_2 \xrar{g_2} Z_2$ is a left triangle,
	then there exist morphisms $u \colon Z_1 \to Y_2$ and $s^{\pr} \colon Y_1 \to X_2$ 
	which make following diagrams commutative.
	\[
	\xy
	(0,8)*+{X_1}="1";
	(0,-8)*+{\Omega Z_2}="2";
	(14,8)*+{Y_1}="3";
	(14,-8)*+{X_2}="4";
	(28,8)*+{Z_1}="5";
	(28,-8)*+{Y_2}="6";
	(42,8)*+{\Sigma X_1}="7";
	(42,-8)*+{Z_2}="8";
	{\ar^{f_1} "1";"3"};
	{\ar^{g_1} "3";"5"};
	{\ar^{h_1} "5";"7"};
	{\ar^{h_2^{\pr}} "2";"4"};
	{\ar^{f_2} "4";"6"};
	{\ar^{g_2} "6";"8"};
	{\ar^{s} "1";"2"};
	{\ar^{t} "3";"4"};
	{\ar^{u} "5";"6"};
	{\ar^{\beta_{Z_2} \circ \Sigma s} "7";"8"};
	{\ar@{}|\car "1";"4"};
	{\ar@{}|\car "3";"6"};
	{\ar@{}|\car "5";"8"};
	\endxy
	\xy
	(0,8)*+{X_1}="1";
	(0,-8)*+{\Omega Z_2}="2";
	(14,8)*+{Y_1}="3";
	(14,-8)*+{X_2}="4";
	(28,8)*+{Z_1}="5";
	(28,-8)*+{Y_2}="6";
	(42,8)*+{\Sigma X_1}="7";
	(42,-8)*+{Z_2}="8";
	{\ar^{f_1} "1";"3"};
	{\ar^{g_1} "3";"5"};
	{\ar^{h_1} "5";"7"};
	{\ar^{h_2^{\pr}} "2";"4"};
	{\ar^{f_2} "4";"6"};
	{\ar^{g_2} "6";"8"};
	{\ar_{u^{\pr}} "7";"8"};
	{\ar_{t^{\pr}} "5";"6"};
	{\ar_{s^{\pr}} "3";"4"};
	{\ar_{\Omega u^{\pr} \circ \alpha_{X_1}} "1";"2"};
	{\ar@{}|\car "1";"4"};
	{\ar@{}|\car "3";"6"};
	{\ar@{}|\car "5";"8"};
	\endxy
	\]
	\end{enumerate}
\end{defi}

\begin{ex} \cite[II.1]{BR07}
	\begin{enumerate}
	\item A triangulated category $(\CC, [1], \triangle)$ is a pretriangulated category $(\CC, [1], [-1],$ 
	$\triangle, \triangle)$.
	\item Assume that $\AA$ is an abelian category. 
	Let $\mathsf{REx}$ be the collection of right exact sequences and 
	$\mathsf{LEx}$ be the collection of left exact sequences. 
	Then $(\AA,$ $0,$ $0,$ $\mathsf{REx},$ $\mathsf{LEx})$ is a pretriangulated category.
	\end{enumerate}
\end{ex}

\section{Pretriangulated structures induced by premutation triples} \label{pretri}

We fix an ET category $(\CC, \bbE, \fraks)$ in this section.

\subsection{The condition (MT1) and (MT2)}
In the following definition, we use notations which are compatible with previous section and \cite{Nak18}.

\begin{condi} \label{MT1-2}
Let $\SS, \VV, \ZZ$ be subcategories of $\CC$.
We denote $\SS \cap \ZZ, \VV \cap \ZZ$ by $\II, \JJ$, respectively.
We consider the following two conditions.
	\begin{itemize}[leftmargin=40pt]
	\item[(rMT1)] $\II$ is strongly contravariantly finite in $\ZZ$.
	\item[(rMT2)] $\bbE^{\II}(\SS, \ZZ) = 0$ and $\bbE_{\II}(\SS, \ZZ \bracket{-1}) = 0$.
	\end{itemize}

Dually, we also consider the following two conditions.
	\begin{itemize}[leftmargin=40pt]
	\item[(lMT1)] $\JJ$ is strongly covariantly finite in $\ZZ$.
	\item[(lMT2)] $\bbE_{\JJ}(\ZZ, \VV) = 0$ and $\bbE^{\JJ}(\ZZ\bracket{1}, \VV) = 0$.
	\end{itemize}

For convenience, we also use the following conditions.
	\begin{itemize}[leftmargin=40pt]
	\item[(MT1)] $\II = \JJ$, (rMT1) and (lMT1).
	\item[(MT2)] (rMT2) and (lMT2).
	\end{itemize}
\end{condi}

\begin{rmk} \label{rmk_MT2_third_vanish}
	\begin{enumerate}
	\item By definition of strongly contravariantly \resp{covariantly} finite, 
	we assume that $\II$ \resp{$\JJ$} is closed under direct summands. 
	On the other hand, recall that we do not always assume that $\SS, \ZZ$ and $\VV$ are closed under direct summands.
	\item \label{rmk_MT2_third_vanish_2}
	$\Cone_{\bbE^{\II}}(\ZZ,\ZZ)$ $=$ $\Cone_{\bbE^{\II}_{\II}}(\ZZ,\ZZ)$ if (rMT2) holds.
	Dually, $\CoCone_{\bbE_{\II}}(\ZZ,\ZZ)$ $=$ $\CoCone_{\bbE^{\II}_{\II}}(\ZZ,\ZZ)$ if (lMT2) holds.
	\item From Lemma \ref{bracket_equiv},
	$\bbE^{\II}(\SS, \ZZ\bracket{-1}) =0$ and $\bbE_{\JJ}(\ZZ\bracket{1}, \VV) =0$ under (MT1) and (MT2).
	That is because $\delta \in \bbE^{\II}(S, Z^{\pr})$ where $S \in \SS, Z^{\pr} \in \ZZ\bracket{-1}$ factors through $\lambda \in \bbE^{\II}_{\II}(Z, Z^{\pr})$ where $Z \in \ZZ$.
	We can show $\bbE_{\JJ}(\ZZ\bracket{1}, \VV) =0$ in the same way.
	\item From Lemma \ref{bracket_equiv}, 
	$\bbE_{\II}(\SS, \ZZ) =0$ and $\bbE^{\JJ}(\ZZ, \VV)=0$ under (MT1) and (MT2).
	That is because $\delta \in \bbE_{\II}(S, Z)$ where $S \in \SS, Z \in \ZZ$ factors through 
	$\lambda^Z \in \bbE^{\II}_{\II}(Z\bracket{1}, Z)$ from 
	$\bbE_{\II}(\SS, \II) \subset \bbE_{\II}(\SS, \ZZ\bracket{-1}) =0$.
	We can show $\bbE^{\JJ}(\ZZ, \VV)=0$ in the same way.
	\end{enumerate}
\end{rmk}

The definitions of rigid mutation pairs and orthogonal mutation pairs are in Appendix \ref{tri_str_by_MP}.
The definition of concentric twin cotorsion pairs is in Appendix \ref{ccTCP}.

\begin{ex} \phantom{X} \label{example_oneside_mut}
	\begin{enumerate}
	\item \cite{NP19} \label{example_oneside_mut_1}
	Assume that $(\CC, \bbE, \fraks)$ is Frobenius with $\PP = \Proj_{\bbE} \CC$. 
	Then $(\PP, \CC, \PP)$ satisfies (MT1) and (MT2).
	\item \cite{IY08} \label{example_oneside_mut_2}
	Assume that $\CC$ is a triangulated category.
	Let $\II$ be a functorially finite rigid subcategory of $\CC$ 
	and $(\XX, \YY)$ be a rigid $\II$-mutation pair.
	We define $\ZZ = \XX \cap \YY$. 
	Then $(\II, \ZZ, \II)$ satisfies (MT1) and (MT2).
	\item \cite{SP20} \label{example_oneside_mut_3}
	Assume that $\CC$ is a Hom-finite Krull-Schmidt triangulated $k$-category with a Serre functor $\bbS$.
	Let $\MM$ be a collection of $\Ob(\CC)$ which satisfies the condition (SP1) in Condition \ref{SP_condi1}
	and $(\XX, \YY)$ be an orthogonal $\bracket{\MM}$-mutation pair.
	We define $\ZZ = \XX \cap \YY$. 
	Then $(\bracket{\MM [1]}, \ZZ, \bracket{\MM [-1]})$ satisfies (MT1) and (MT2).
	\item \cite{Nak18} \label{example_oneside_mut_4}
	Let $((\SS, \TT), (\UU, \VV))$ be a concentric twin cotorsion pairs.
	We define $\ZZ = \TT \cap \UU$. Then $(\SS, \ZZ, \VV)$ satisfies (MT1) and (MT2).
	\end{enumerate}
\end{ex}
\begin{proof}
(1) (MT1) follows from the definition of Frobenius ET categories.
Since $\PP$ is both projective and injective, 
$\bbE = \bbE^{\PP} = \bbE_{\PP}$ and $\bbE(\PP,-) = 0, \bbE(-,\PP) = 0$.
Thus, (MT2) holds.

(2) By definition of rigid $\II$-mutation pairs, $\II$ is functorially finite in $\ZZ$. In particular, (MT1) holds.
$\ZZ \subset \lpp{\II[1]} \cap \rpp{\II[-1]}$ also follows from definition of rigid $\II$-mutation pairs.
Note that $\ZZ\bracket{1} \subset \YY, \ZZ\bracket{-1} \subset \XX$.
Thus, $\bbE(\II, \ZZ) = 0, \bbE(\II, \ZZ\bracket{-1}) = 0, \bbE(\ZZ, \II) = 0$ and $\bbE(\ZZ\bracket{1}, \II) = 0$.
In particular, (MT2) holds.

(3) Note that $\II = \JJ = 0$ since $(\XX, \YY)$ is an orthogonal mutation pair, in particular,
$\bbE = \bbE^{\II} = \bbE_{\II}$ and $\bracket{1} = [1]$, then (MT1) hold.
(MT2) follows from $\ZZ \subset \dpp{\MM} \cap {}^{\perp} \MM[-1] \cap \MM[1]^{\perp}$.

(4) $\II = \JJ$ since $((\SS, \TT), (\UU, \VV))$ is concentric. 
By definition of twin cotorsion pairs, $\bbE(\SS, \ZZ) = 0$ and $\bbE(\ZZ, \VV) = 0$.
From $\ZZ \bracket{1} \subset \UU$ and $\ZZ \bracket{-1} \subset \TT$,
$\bbE(\SS, \ZZ\bracket{-1}) = 0$ and $\bbE(\ZZ\bracket{1}, \VV) = 0$.
Thus, we obtain (MT2).
(MT1) is direct from the definition of twin cotorsion pairs (for details, see \cite{LN19}).
\end{proof}

From Example \ref{example_oneside_mut}(\ref{example_oneside_mut_4}), 
we may consider a triplet $(\SS, \ZZ, \VV)$ satisfying (MT1) and (MT2) as a generalization of concentric twin cotorsion pairs.
We define new subcategories of $\CC$, $\wt{\UU}$ and $\wt{\TT}$, 
which are denoted by $\CC^-$ and $\CC^+$ in Definition \ref{defi_plus-minus_ccTCP}, respectively.
We use these notations because $\UU \subset \wt{\UU} = \CC^-$ and $\TT \subset \wt{\TT} = \CC^+$ 
hold for any concentric twin cotorsion pair $((\SS, \TT), (\UU, \VV))$.

\begin{nota} \label{nota_wtU}
	\begin{enumerate}
	\item Let $(\SS, \ZZ)$ be a pair of subcategories which satisfies (rMT1) and (rMT2). 
	We define $\widetilde{\UU}$ as $\CoCone_{\bbE^{\II}}(\ZZ, \SS)$.
	\item Let $(\ZZ, \VV)$ be a pair of subcategories which satisfies (lMT1) and (lMT2).
	We define $\widetilde{\TT}$ as $\Cone_{\bbE_{\JJ}}(\VV, \ZZ)$.
	\end{enumerate}
\end{nota}

\begin{nota} \label{nota_ul}
	\begin{enumerate}
	\item Let $(\SS, \ZZ)$ be a pair of subcategories which satisfies (rMT1) and (rMT2). 
	We denote $\XX/[\II]$ by $\ul{\XX}$ for a subcategory $\XX$ containing $\II$.
	\item Let $(\ZZ, \VV)$ be a pair of subcategories which satisfies (lMT1) and (lMT2). 
	We denote $\YY/[\JJ]$ by $\ul{\YY}$ for a subcategory $\YY$ containing $\JJ$.
	\item For a morphism $x$ in $\ZZ$, we denote $\ul{x\bracket{1}}$ by $\ul{x}\bracket{1}$.
	\end{enumerate}
\end{nota}

\begin{lem} \label{adjoint_pair} \cite[Fact 2.1, Definition 3.10]{Nak18}
	\begin{enumerate}
	\item Let $(\SS,\ZZ)$ be a pair of subcategories which satisfies (rMT1) and (rMT2).
	For $U \in \wt{\UU}$, take an $\fraks^{\II}$-triangle $U \xrar{h^U} Z^U \xrar{g^U} S \xdrar{} U$. 
	Then $- \circ \ul{h^U} \colon \ul{\ZZ}(Z^U\!, \ZZ) \to \ul{\wt{\UU}}(U, \ZZ)$ is a natural isomorphism.
	In particular, there exists an additive functor $\sigma \colon \ul{\wt{\UU}} \to \ul{\ZZ} \, ; U \mapsto Z^U$ 
	which is a left adjoint of the inclusion functor $\iota_{\sigma} \colon \ul{\ZZ} \to \ul{\wt{\UU}}$.
	\label{adjoint_pair_sigma}
	\item Let $(\ZZ, \VV)$ be a pair of subcategories which satisfies (lMT1) and (lMT2).
	For $T \in \wt{\TT}$, take an $\fraks_{\II}$-triangle $T \xdrar{} V \xrar{} Z_T \xrar{h_T} T$. 
	Then $\ul{h_T} \circ - \colon \ul{\ZZ}(\ZZ, Z_T) \to \ul{\wt{\TT}}(\ZZ, T)$ is a natural isomorphism.
	In particular, there exists an additive functor $\omega \colon \ul{\wt{\TT}} \to \ul{\ZZ} \, ; T \mapsto Z_T$ 
	which is a right adjoint of the inclusion functor $\iota_{\omega} \colon \ul{\ZZ} \to \ul{\wt{\TT}}$.
	\label{adjoint_pair_omega}
	\end{enumerate}
\end{lem}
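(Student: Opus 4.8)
The plan is to establish part (1) first, working throughout inside the relative extriangulated category $(\CC,\bbE^{\II},\fraks^{\II})$ and using the long exact sequences of Remark~\ref{long_exact_seq}, Lemma~\ref{epic_monic}, and the two vanishings packaged in (RM2); part (2) will then be its formal dual. Note first that $\ZZ\subseteq\wt{\UU}$ (realize $Z\xrar{\id}Z\to 0\xdrar{}Z$ with $0\in\SS$), so for $Z\in\ZZ$ both $\ul{\ZZ}(Z^U,Z)$ and $\ul{\wt{\UU}}(U,Z)$ are just $\CC(-,-)$ modulo morphisms factoring through $\II$. For surjectivity of $-\circ\ul{h^U}$, take $f\in\CC(U,Z)$ with $Z\in\ZZ$ and apply $\CC(-,Z)$ to the chosen $\fraks^{\II}$-triangle $U\xrar{h^U}Z^U\xrar{g^U}S\xdrar{\delta}U$; this yields the exact sequence $\CC(Z^U,Z)\xrar{-\circ h^U}\CC(U,Z)\xrar{-\circ\delta}\bbE^{\II}(S,Z)$, whose last term is $0$ by the first half of (RM2) since $S\in\SS$ and $Z\in\ZZ$. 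Hence $f$ already lifts along $h^U$ inside $\CC$, so $-\circ\ul{h^U}$ is surjective after passing to $\CC/[\II]$.

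For injectivity, suppose $g\in\CC(Z^U,Z)$ satisfies $\ul{g}\circ\ul{h^U}=0$, i.e.\ $gh^U=\beta\alpha$ with $\alpha\colon U\to I$ and $\beta\colon I\to Z$ for some $I\in\II$. As the chosen triangle is an $\fraks^{\II}$-triangle, $h^U$ is $\II$-monic by Lemma~\ref{epic_monic}, so $\alpha=\bar\alpha h^U$ for some $\bar\alpha\colon Z^U\to I$; replacing $g$ by $g_1:=g-\beta\bar\alpha$, which differs from $g$ by a morphism factoring through $\II$, we may assume $g_1h^U=0$. Exactness of $\CC(S,Z)\xrar{-\circ g^U}\CC(Z^U,Z)\xrar{-\circ h^U}\CC(U,Z)$ then produces $g'\colon S\to Z$ with $g_1=g'g^U$. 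Now invoke (RM1): choose a right $\II$-approximation $p_Z\colon I_Z\to Z$, giving an $\fraks_{\II}$-triangle $Z\bracket{-1}\to I_Z\xrar{p_Z}Z\xdrar{\lambda_Z}Z\bracket{-1}$ with $Z\bracket{-1}\in\ZZ\bracket{-1}$ and $\lambda_Z\in\bbE_{\II}(Z,Z\bracket{-1})$. Pulling $\lambda_Z$ back along $g'$ lands in $\bbE_{\II}(S,Z\bracket{-1})$, which vanishes by the second half of (RM2); hence $\lambda_Z\circ g_1=(\lambda_Z g')\,g^U=0$, and exactness of $\CC(Z^U,I_Z)\xrar{p_Z\circ-}\CC(Z^U,Z)\xrar{\lambda_Z\circ-}\bbE_{\II}(Z^U,Z\bracket{-1})$ then forces $g_1=p_Z\phi$ for some $\phi\colon Z^U\to I_Z$. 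Therefore $g_1\in[\II]$, so $g\in[\II]$, and $-\circ\ul{h^U}$ is injective; naturality in $Z$ is immediate from the construction of the maps.

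To extract the adjoint, set $\sigma(U):=Z^U$ and, for $\ul{\phi}\colon U\to U'$ in $\ul{\wt{\UU}}$, let $\sigma(\ul{\phi})\colon Z^U\to Z^{U'}$ be the unique preimage of $\ul{h^{U'}}\circ\ul{\phi}$ under the isomorphism $-\circ\ul{h^U}\colon\ul{\ZZ}(Z^U,Z^{U'})\xrar{\sim}\ul{\wt{\UU}}(U,Z^{U'})$; uniqueness forces this assignment to be an additive functor, and the isomorphism $\ul{\ZZ}(\sigma U,Z)\cong\ul{\wt{\UU}}(U,\iota_\sigma Z)$, natural in both arguments, is precisely the statement that $\sigma$ is left adjoint to the inclusion $\iota_\sigma$. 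Part (2) is obtained by running the same argument in $(\CC,\bbE_{\JJ},\fraks_{\JJ})$: surjectivity of $\ul{h_T}\circ-$ uses $\bbE_{\JJ}(\ZZ,\VV)=0$, injectivity uses that $h_T$ is $\JJ$-epic together with a left $\JJ$-approximation $Z\to I^Z$ and the vanishing $\bbE^{\JJ}(\ZZ\bracket{1},\VV)=0$, and $\omega$ comes out as a right adjoint of $\iota_\omega$. The only step that is not routine manipulation of long exact sequences is the injectivity argument, and its heart is the trick of transporting the obstruction class $\lambda_Z$ (resp.\ $\lambda^Z$) through $g'$ so that the second vanishing in (RM2) (resp.\ (LM2)), the one featuring $\ZZ\bracket{-1}$ (resp.\ $\ZZ\bracket{1}$), can be brought to bear.
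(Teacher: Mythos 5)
Your proof is correct and follows essentially the same route as the paper's: surjectivity from $\bbE^{\II}(\SS,\ZZ)=0$ via the long exact sequence, and injectivity by splitting the morphism into a piece factoring through $\II$ plus a piece of the form $g'g^U$ with $g'\colon S\to Z$, which is then killed using the right $\II$-approximation of $Z$ together with $\bbE_{\II}(\SS,\ZZ\bracket{-1})=0$. The only cosmetic difference is that you extend the $\II$-factorization of $gh^U$ along the $\II$-monic $h^U$ directly, whereas the paper first lifts it through the right $\II$-approximation $p_{Z'}$ and then through $h^U$; these are equivalent manipulations of the same exact sequences.
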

\begin{proof}
We only prove (1). We denote $h^U, g^U$ by $h, g$ respectively.
Since $- \circ \ul{h} \colon \ul{\ZZ}(Z^U\!, \ZZ) \to \ul{\wt{\UU}}(U, \ZZ)$ is clearly well-defined and functorial,
we only have to show that this is bijective for any $Z^{\pr} \in \ZZ$.
Take a morphism $z \colon Z^U \to Z^{\pr}$ with $\ul{zh} = 0$. 
There exists an $\fraks_{\II}$-triangle $Z^{\pr} \bracket{-1} \xrar{} I \xrar{p_{Z^{\pr}}} Z^{\pr} \xdrar{\lambda_{Z^{\pr}}} Z^{\pr} \bracket{-1}$ because $\II$ is strongly contravariantly finite in $\ZZ$.
We denote $p_{Z^{\pr}}, \lambda_{Z^{\pr}}$ by $p, \lambda$, respectively.
Since $\lambda$ is an $\bbE_{\II}$-extension and $zh \in [\II]$, there exists a morphism 
$a \colon U \to I$ with $zh = p a$. 
Since $\bbE^{\II}(\SS, \ZZ) = 0$, there exists a morphism $a^{\pr} \colon Z^U \to I$ such that $a = a^{\pr} h$. 
Then $0 = zh - pa = (z- p a^{\pr})h$. 
Thus, there exists a morphism $z^{\pr} \colon S \to Z^{\pr}$ where $z - p a^{\pr} = z^{\pr} g$.
From $\bbE_{\II}(\SS, \ZZ \bracket{-1}) = 0$, then $\lambda z^{\pr} = 0$ and there exists a morphism 
$z^{\pr\pr} \colon S \to I$ where $z^{\pr} = p z^{\pr\pr}$.
Therefore $\ul{z} = \ul{pa^{\pr} + z^{\pr}g} = \ul{p(a^{\pr} + z^{\pr\pr}g)} = 0$, that is, $-\circ \ul{h}$ is injective.
On the other hand, $- \circ \ul{h}$ is surjective because $\bbE^{\II}(\SS,\ZZ) = 0$.
\end{proof}

\begin{rmk} \label{rmk_adj_of_sigma}
	\begin{enumerate}
	\item \label{rmk_adj_of_sigma_1}
	From Lemma \ref{adjoint_pair}(\ref{adjoint_pair_sigma}), for a morphism $z \colon Z^U \to Z$ in $\ZZ$,
	\[
	\ul{z} = 0 \text{ in } \ul{\ZZ} \iff \ul{z h^U} = 0 \text{ in } \wt{\ul{\UU}} \, .
	\]
	\item From the above proof, for any morphism $u \colon U \to Z^{\pr}$ where $U \in \wt{\UU}$ and $Z^{\pr} \in \ZZ$, 
	there exists a morphism $z \colon Z^U \to Z^{\pr}$ which satisfies $u = zh^U$ in $\wt{\UU}$ 
	and such $z$ is unique up to $[\II]$.
	\label{uniq_sigma}
	\item  \label{rmk_adj_of_sigma_3}
	We can construct $\sigma \colon \wt{\ul{\UU}} \to \ul{\ZZ}$ directly like $\bracket{1}$.
		\begin{enumerate}
		\item For $U \in \wt{\UU}$, there exists an $\fraks^{\II}$-triangle
		\[
		U \xrar{h^U} \sigma U \xrar{g^U} S \xdrar{\rho^U} U
		\]
		where $\sigma U \in \ZZ$ and $S \in \SS$.
		\item For a morphism $u \colon U_1 \to U_2$ in $\wt{\UU}$, there exist $\fraks^{\II}$-triangles 
		$U_1 \xrar{h^{U_1}} \sigma U_1 \xrar{g^{U_1}} S_1 \xdrar{\rho^{U_1}} U_1$ and
		$U_2 \xrar{h^{U_2}} \sigma U_2 \xrar{g^{U_2}} S_2 \xdrar{\rho^{U_2}} U_2$
		where $S_1, S_2 \in \SS$.
		We denote $\sigma U_1, \sigma U_2$ by $Z_1, Z_2$, respectively. 
		Then there exists a unique morphism $z \colon Z_1 \to Z_2$ in $\CC$ up to $[\II]$ which makes the following diagram commutative from (\ref{uniq_sigma}).
		\[
		\xy
		(16,8)*+{U_1}="12";
		(32,8)*+{Z_1}="13";
		(16,-8)*+{U_2}="22";
		(32,-8)*+{Z_2}="23";
		{\ar@{->}^{h^{U_1}} "12";"13"};	
		{\ar@{->}^{h^{U_2}} "22";"23"};
		{\ar^{u} "12";"22"};
		{\ar^{z} "13";"23"};
		{\ar@{}|\car "12";"23"};
		\endxy
		\]
		 We define $\sigma (u)$ as $z$.
		\end{enumerate}
	Then $\sigma $ induces an additive functor $\sigma \colon \ul{\wt{\UU}} \to \ul{\ZZ}$. 
	From uniqueness of left adjoint functor up to natural isomorphisms, 
	$\sigma$ does not depend on the choices of $\fraks^{\II}$-triangle
	$U \xrar{h^U} Z^U \rar S \drar U$ in Lemma \ref{adjoint_pair}
	up to natural isomorphisms.
	\end{enumerate}
\end{rmk}

\begin{nota}
	\begin{enumerate}
	\item For a morphism $u$ in $\wt{\UU}$, we denote $\ul{\sigma(u)}$ by $\sigma(\ul{u})$.
	\item We denote the unit of the adjoint pair $(\sigma, \iota_{\sigma})$ by $\eta$.
	\item We denote the counit of the adjoint pair $(\iota_{\omega}, \omega)$ by $\varepsilon$.
	\end{enumerate}
\end{nota}

\subsection{Definition of premutation triples}

\begin{condi} \label{MT3}
We consider the following conditions.
\begin{enumerate}
\item Assume (rMT1) and (rMT2).
	\begin{itemize}[leftmargin=40pt]
	\item[(rMT3)]
		\begin{enumerate}[label=(\roman*)]
		\item $\Cone_{\bbE^{\II}}(\ZZ,\ZZ) \subset \wt{\UU}$. \label{RM3_1}
		\item $\SS$ and $\ZZ$ are closed under extensions in $(\CC, \bbE^{\II}, \fraks^{\II})$.
		\end{enumerate}
	\end{itemize}
\item Assume (lMT1) and (lMT2).
	\begin{itemize}[leftmargin=40pt]
	\item[(lMT3)]
		\begin{enumerate}[label=(\roman*)]
		\item $\CoCone_{\bbE_{\JJ}}(\ZZ,\ZZ) \subset \wt{\TT}$. \label{LM3_1}
		\item $\ZZ$ and $\VV$ are closed under extensions in $(\CC, \bbE_{\JJ}, \fraks_{\JJ})$.
		\end{enumerate}
	\end{itemize}
\item Assume (MT1) and (MT2).
	\begin{itemize}[leftmargin=40pt]
	\item[(MT3)] (rMT3) and (lMT3).
	\end{itemize}
\end{enumerate}
\end{condi}

\begin{rmk}
If (rMT2) holds, then $\Cone_{\bbE^{\II}}(\ZZ, \ZZ)$ $=$ $\Cone_{\bbE^{\II}_{\II}}(\ZZ, \ZZ)$.
On the other hand, $\CoCone_{\bbE^{\II}_{\II}}(\ZZ, \SS) \subsetneq \CoCone_{\bbE^{\II}}(\ZZ, \SS) = \wt{\UU}$ in general.
In Example \ref{ex_HoveyTCP_ARquiv}(\ref{ex_HoveyTCP_ARquiv_1}),
 $Y \in \CoCone_{\bbE^{\II}}(\ZZ, \SS) = \UU$ but $Y \notin \CoCone_{\bbE^{\II}_{\II}}(\ZZ, \SS)$.
That is because there exists a triangle $Y \xrar{f} I_1 \xrar{g} I_2 \rar Y[1]$ 
where $f$ is $\II$-monic but $g$ is not $\II$-epic.

The condition $\Cone_{\bbE^{\II}}(\ZZ, \ZZ) \subset \CoCone_{\bbE^{\II}_{\II}}(\ZZ, \SS)$ is contained in Condition \ref{condi_rMT3}.
\end{rmk}

\begin{defi} \label{mutation_triple}
Let $\SS, \ZZ, \VV$ be subcategories of $\CC$.
	\begin{enumerate}
	\item $(\SS, \ZZ)$ is a \emph{right mutation double} if $(\SS, \ZZ, \II)$ satisfies (MT1), (rMT2) and (rMT3).
	\item $(\ZZ, \VV)$ is a \emph{left mutation double} if $(\JJ, \ZZ, \VV)$ satisfies (MT1), (lMT2) and (lMT3).
	\item $(\SS, \ZZ,\VV)$ is a \emph{premutation triple} if it satisfies from (MT1) to (MT3).
	\end{enumerate}
\end{defi}

\begin{rmk}
	\begin{enumerate}
	\item Note that both right and left mutation doubles are required to satisfy (MT1).
	That is because right \resp{left} mutation doubles are required to have
	the additive functor $\bracket{1}$ \resp{$\bracket{-1}$}
	so that we may define $\Sigma$ \resp{$\Omega$} in Definition \ref{def_mutations}.
	\item A triplet $(\SS, \ZZ, \VV)$ which satisfies (MT1), (MT2) and (MT3), that is
	$(\SS, \ZZ)$ is a right mutation double and $(\ZZ, \VV)$ is a left mutation double with $\II = \JJ$, is \emph{not} mutation triple.
	(We define mutation triples in Section \ref{triangulated}.) 
	Later, we check that right mutation doubles \resp{left mutation doubles, premutation triples, mutation triples}
	induce right triangulated \resp{left triangulated, pretriangulated, triangulated} categories.
	\end{enumerate}
\end{rmk}

From the following examples, Frobenius ET categories, rigid mutation pairs, orthogonal mutation pairs and concentric twin cotorsion pairs are examples of premutation triples.

\begin{ex} \phantom{X}\label{ex_MT}
	\begin{enumerate}
	\item \label{ex_MT_enough}
	Assume that $\CC$ has enough projectives and $\Proj \CC$ is strongly \emph{functorially} finite in $\CC$.
	Then $(\Proj \CC, \CC)$ is a right mutation double.
	Dually, assume that $\CC$ has enough injectives and $\Inj \CC$ is strongly \emph{functorially} finite in $\CC$.
	Then $(\CC, \Inj \CC)$ is a left mutation double.
	\item \cite{NP19} \label{ex_MT_Frob}
	Assume that $\CC$ is Frobenius with $\PP = \Proj \CC$. 
	Then $(\PP, \CC, \PP)$ is a premutation triple. 
	More generally, for any strongly functorially finite subcategory $\XX$ in $\CC$, 
	$(\XX, \CC, \XX)$ in $(\CC, \bbE^{\XX}_{\XX}, \fraks^{\II}_{\II})$ is a premutation triple.
	\item \cite{IY08} \label{ex_MT_IY}
	In the case of Example \ref{example_oneside_mut}(\ref{example_oneside_mut_2}), 
	we additionally assume that $\XX = \YY$ and (IY3) in Condition \ref{IY_condi}.
	Then $(\II, \ZZ, \II)$ is a premutation triple.
	\item \cite{SP20} \label{ex_MT_SP}
	In the case of Example \ref{example_oneside_mut}(\ref{example_oneside_mut_3}),
	we additionally assume that $\XX = \YY$ and (SP3) in Condition \ref{SP_condi2}.
	Then $(\bracket{\MM[1]}, \ZZ, \bracket{\MM[-1]})$ is a premutation triple.
	\item \cite{Nak18} \label{ex_MT_CTP}
	In the case of Example \ref{example_oneside_mut}(\ref{example_oneside_mut_4}), 
	$(\SS, \ZZ, \VV)$ is a premutation triple.
	\end{enumerate}
\end{ex}
\begin{proof}
(\ref{ex_MT_enough}) 
We only prove when $\CC$ has enough projectives.
From assumptions, (MT1) holds. 
(rMT2) follows from $\bbE(\PP, -) = 0$.
(rMT3) is also true because $\wt{\UU} = \CC$ and $\PP$ is closed under extensions in $(\CC, \bbE, \fraks)$.
(\ref{ex_MT_Frob}) is direct from (\ref{ex_MT_enough}).
(\ref{ex_MT_IY}) follows from Lemma \ref{epic_monic} and \cite[Lemma 4.3]{IY08}. 
(\ref{ex_MT_SP}) is by definition of (SP3).
(\ref{ex_MT_CTP}) follows from Lemma \ref{conic} and the definition of twin cotorsion pairs.
\end{proof}

Because of the condition (rMT3)\ref{RM3_1} and (lMT3)\ref{LM3_1}, we can finally define mutation functors.

\begin{defi} \label{def_mutations}
	\begin{enumerate}
	\item Let $(\SS, \ZZ)$ be a right mutation double.
		We define an additive functor $\Sigma = \sigma \circ \bracket{1} \colon \ul{\ZZ} \to \ul{\ZZ}$,
		called a \emph{right mutation functor}.
	\item Let $(\ZZ, \VV)$ be a left mutation double.
		We define an additive functor $\Omega = \omega \circ \bracket{-1} \colon \ul{\ZZ} \to \ul{\ZZ}$,
		called a \emph{left mutation functor}.
	\end{enumerate}
\end{defi}

\begin{ex}
	\begin{enumerate}
	\item In the case of Example \ref{ex_MT}(\ref{ex_MT_Frob}),
	a right \resp{left} mutation functor is exactly a cosyzygy \resp{syzygy} functor.
	\item In the case of Example \ref{ex_MT}(\ref{ex_MT_IY}), 
	a right \resp{left} mutation functor is exactly right \resp{left} mutation functor 
	in Lemma \ref{lem_bracket_rigidver}.
	\item In the case of Example \ref{ex_MT}(\ref{ex_MT_SP}),
	a right \resp{left} mutation functor is exactly right \resp{left} mutation functor
	in Definition \ref{sigma_and_omega_inSP}.
	\item In the case of Example \ref{ex_MT}(\ref{ex_MT_CTP}),
	a right \resp{left} mutation functor is exactly right \resp{left} mutation functor 
	in Definition \ref{defi_sigma-omega_CTP}.
	\end{enumerate}
\end{ex}

\if0
\begin{rmk} \label{rmk_left-right-reversed}
In general, mutations defined by left \resp{right} approximations are called left \resp{right} mutations.
However, if we use this convention, we should call $\Sigma$ left mutation in the case of rigid mutation pairs 
(Definition \ref{mutations_IY})
and right mutation in the case of orthogonal mutation pairs
(Definition \ref{sigma_and_omega_inSP}).

To avoid this confusion, we define left and right mutations 
so that left \resp{right} mutations induce left \resp{right} triangulated structures.
\end{rmk}
\fi

In the last part of this section, we collect some lemmas we use later.

\begin{lem} \label{adjoint_sigma_omega} \cite[Proposition 4.3]{Nak18}
Let $(\SS, \ZZ, \VV)$ be a premutation triple.
Then $(\Sigma, \Omega)$ is an adjoint pair.
\end{lem}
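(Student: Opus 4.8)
The plan is to obtain the adjunction isomorphism $\ul{\ZZ}(\Sigma Z, Z') \cong \ul{\ZZ}(Z, \Omega Z')$ by composing the two adjunctions of Lemma \ref{adjoint_pair} with one ``dimension--shifting'' identification that trades precomposition with $\bracket{1}$ for postcomposition with $\bracket{-1}$, everything taken modulo $[\II]$. This is the extriangulated counterpart of \cite[Proposition 4.3]{Nak18}.

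First I would record the relevant consequences of the axioms for $Z, Z' \in \ZZ$, using the defining triangles $Z \xrar{i^Z} I^Z \xrar{p^Z} Z\bracket{1} \xdrar{\lambda^Z} Z$ and $Z'\bracket{-1} \xrar{i_{Z'}} I_{Z'} \xrar{p_{Z'}} Z' \xdrar{\lambda_{Z'}} Z'\bracket{-1}$ with $i^Z$ a left $\II$-approximation and $p_{Z'}$ a right $\II$-approximation. Since $\II \subset \SS$, (RM2) gives $\bbE^{\II}(\II, \ZZ) = 0$, and since $\II = \JJ \subset \VV$, (LM2) gives $\bbE_{\II}(\ZZ, \II) = 0$; hence by Remark \ref{upper_and_lower} and its dual both triangles are $\fraks^{\II}_{\II}$-triangles, so $p^Z$ and $i_{Z'}$ are at once (right/left) $\II$-approximations and $\fraks_{\II}$-deflations / $\fraks^{\II}$-inflations. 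Reading these two triangles inside both relative structures and comparing via (the dual of) Corollary \ref{uniqueness_of_cone} then yields natural isomorphisms $(Z\bracket{1})\bracket{-1} \cong Z$ and $(Z'\bracket{-1})\bracket{1} \cong Z'$ in $\ul{\CC}$. Moreover $Z\bracket{1} \in \Cone_{\bbE^{\II}}(\ZZ, \ZZ) \subset \wt{\UU}$ by (RM3)\ref{RM3_1} and $Z'\bracket{-1} \in \CoCone_{\bbE_{\II}}(\ZZ, \ZZ) \subset \wt{\TT}$ by (LM3)\ref{LM3_1}, and $\ZZ \subset \wt{\UU} \cap \wt{\TT}$ via split conflations; so on $\XX := \add(\ZZ \cup \ZZ\bracket{-1})$ one has $\II \subset \XX$ strongly covariantly finite and $\bbE^{\II}(\II, \XX) = 0$ (again using Remark \ref{rmk_MT2_third_vanish} for $\bbE^{\II}(\II, \ZZ\bracket{-1}) = 0$), whence $\bracket{1} \colon \XX/[\II] \to \XX\bracket{1}/[\II]$ is an equivalence by Lemma \ref{bracket_equiv}, and $\wt{\UU}/[\II]$, $\wt{\TT}/[\II]$, $\XX/[\II]$, $\XX\bracket{1}/[\II]$ are full subcategories of $\ul{\CC}$.

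Now I would assemble the isomorphism. By Lemma \ref{adjoint_pair}(\ref{adjoint_pair_sigma}), $\ul{\ZZ}(\Sigma Z, Z') = \ul{\ZZ}(\sigma(Z\bracket{1}), Z') \cong \ul{\wt{\UU}}(Z\bracket{1}, Z') = \ul{\CC}(Z\bracket{1}, Z')$, naturally in $Z$ (by functoriality of $\bracket{1}$) and in $Z'$; dually $\ul{\ZZ}(Z, \Omega Z') \cong \ul{\CC}(Z, Z'\bracket{-1})$, naturally. Using $(Z'\bracket{-1})\bracket{1} \cong Z'$ and then the equivalence $\bracket{1} \colon \XX/[\II] \to \XX\bracket{1}/[\II]$ (with $Z, Z'\bracket{-1} \in \XX$), one gets
\[
\ul{\CC}(Z\bracket{1}, Z') \cong \ul{\CC}\big(Z\bracket{1}, (Z'\bracket{-1})\bracket{1}\big) \cong \ul{\XX}(Z, Z'\bracket{-1}) = \ul{\CC}(Z, Z'\bracket{-1}),
\]
and composing the three steps produces a bijection $\ul{\ZZ}(\Sigma Z, Z') \cong \ul{\ZZ}(Z, \Omega Z')$. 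A more hands-on route to the middle isomorphism is the connecting-morphism recipe: given $f \colon Z\bracket{1} \to Z'$, lift $f p^Z$ through the $\II$-epimorphism $p_{Z'}$ to $\wt{f} \colon I^Z \to I_{Z'}$, note $p_{Z'}\wt{f}i^Z = f p^Z i^Z = 0$, and factor $\wt{f}i^Z$ through $i_{Z'}$ to obtain $g \colon Z \to Z'\bracket{-1}$, with the dual construction as inverse; alternatively one can exhibit the unit/counit of $(\Sigma,\Omega)$ directly from $\eta$, $\varepsilon$ pasted with the $\lambda$'s.

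The main obstacle will be naturality of the middle isomorphism in $Z$ and $Z'$ as they range over $\ZZ$: in the equivalence-based argument this forces one to track the natural isomorphisms $\mu$ of Remark \ref{rmk_natiso_bracket} that witness independence of the chosen $\fraks^{\II}_{\II}$-triangles, and to check that $\bracket{1}$ on $\XX/[\II]$ is compatible with the inclusions $\ZZ \hookrightarrow \XX \hookleftarrow \ZZ\bracket{-1}$; in the connecting-morphism variant the obstacle is instead well-definedness modulo $[\II]$ and mutual inverseness, which consume exactly the vanishings $\bbE^{\II}(\SS, \ZZ) = \bbE_{\II}(\ZZ, \VV) = 0$ and $\bbE^{\II}(\SS, \ZZ\bracket{-1}) = \bbE_{\II}(\ZZ\bracket{1}, \VV) = 0$ of (MT2) and Remark \ref{rmk_MT2_third_vanish} together with the $\II$-monic/$\II$-epic properties above. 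Note that only (MT1), (MT2), and the containments (RM3)\ref{RM3_1}, (LM3)\ref{LM3_1} of (MT3) are used; the extension-closedness clauses of (RM3), (LM3) play no role in this lemma.
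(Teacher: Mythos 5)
Your proposal is correct and follows the paper's strategy exactly: reduce via Lemma \ref{adjoint_pair} on both sides to a bifunctorial isomorphism $\ul{\wt{\UU}}(Z\bracket{1},Z^{\pr})\cong\ul{\wt{\TT}}(Z,Z^{\pr}\bracket{-1})$, then construct that middle isomorphism by comparing the two $\fraks^{\II}_{\II}$-triangles $Z\to I^Z\to Z\bracket{1}\dashrightarrow Z$ and $Z^{\pr}\bracket{-1}\to I_{Z^{\pr}}\to Z^{\pr}\dashrightarrow Z^{\pr}\bracket{-1}$. The paper implements this middle step precisely as your ``hands-on'' connecting-morphism recipe (producing $z^{\pr}$ with $z^{\pr}\lambda^Z=-\lambda_{Z^{\pr}}z$, the sign being a convention you omit but which does not affect bijectivity; well-definedness, injectivity and surjectivity are read off from the commutativity of the right-hand square together with the approximation properties of $i^Z$ and $p_{Z^{\pr}}$), while your leading variant via Lemma \ref{bracket_equiv} applied to $\add(\ZZ\cup\ZZ\bracket{-1})$ is a correct repackaging of the same isomorphism.
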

\begin{proof}
From Lemma \ref{adjoint_pair}, we only have to show that there exists a bifunctorial isomorphism
$\Phi \colon \ul{\wt{\UU}}(Z\bracket{1}, Z^{\pr}) \xrar{\sim} \wt{\ul{\TT}}(Z, Z^{\pr}\bracket{-1})$ for any $Z,Z^{\pr} \in \ZZ$.
For a morphism $z \colon Z\bracket{1} \to Z^{\pr}$, 
we take an $\fraks^{\II}$-triangle $Z \xrar{i^Z} I^Z \xrar{p^Z} Z\bracket{1} \xdrar{\lambda^Z} Z$ and
an $\fraks_{\II}$-triangle $Z^{\pr}\bracket{-1} \xrar{i_{Z^{\pr}}} I_{Z^{\pr}} \xrar{p_{Z^{\pr}}} Z^{\pr} \xdrar{\lambda_{Z^{\pr}}} Z^{\pr}\bracket{-1}$.
Then there exists a morphism $z^{\pr} \colon Z \to Z^{\pr} \bracket{-1}$ which makes the following diagram commutative since $p_{Z^{\pr}}$ is a right $\II$-approximation.
\[
\xy
(0,8)*+{Z}="11";
(0,-8)*+{Z^{\pr} \bracket{-1}}="21";
(16,8)*+{I^Z}="12";
(16,-8)*+{I_{Z^{\pr}}}="22";
(32,8)*+{Z\bracket{1}}="13";
(32,-8)*+{Z^{\pr}}="23";
(48,8)*+{Z}="14";
(48,-8)*+{Z^{\pr} \bracket{-1}}="24";
{\ar^{i^Z} "11";"12"};
{\ar^{p^Z} "12";"13"};	
{\ar@{-->}^{\lambda^{Z}} "13";"14"};
{\ar^{i_{Z^{\pr}}} "21";"22"};
{\ar^{-p_{Z^{\pr}}} "22";"23"};
{\ar@{-->}^{-\lambda_{Z^{\pr}}} "23";"24"};
{\ar^{z^{\pr}} "11";"21"};
{\ar^{} "12";"22"};
{\ar^{z} "13";"23"};
{\ar^{z^{\pr}} "14";"24"};
{\ar@{}|\car "11";"22"};
{\ar@{}|\car "12";"23"};
{\ar@{}|\car "13";"24"};
\endxy
\]
 
Then we define $\Phi(\ul{z}) = \ul{z^{\pr}}$.
This correspondence is well-defined and injective from the commutativity of the right most square.
In particular, $z^{\pr}$ is unique up to $[\II]$ which satisfies $z^{\pr} \lambda^Z = - \lambda_{Z^{\pr}} z$.
This correspondence is also surjective since $i^Z$ is a left $\II$-approximation.
Finally, this is bifunctorial because $z^{\pr}$ is uniquely determined by the commutativity of right square.
\end{proof}

\begin{nota}
We denote the unit \resp{counit} of $(\Sigma, \Omega)$ by $\alpha$ \resp{$\beta$}.
\end{nota}

\begin{rmk} \label{rmk_adjoint}
From Lemma \ref{adjoint_pair} and \ref{adjoint_sigma_omega}, we have the following isomorphisms for $Z, Z^{\pr} \in \ZZ$.
\begin{align}
\ul{\ZZ}(\Sigma Z, Z^{\pr}) \xrar{- \circ \ul{h^{Z\bracket{1}}}} 
\ul{\wt{\UU}}(Z \bracket{1}, Z^{\pr}) \xrar{\phantom{x} \Phi \phantom{x}}
\ul{\wt{\TT}}(Z, Z^{\pr}\bracket{-1}) \xleftarrow{\ul{h_{Z^{\pr}\bracket{-1}}} \circ -}
\ul{\ZZ}(Z, \Omega Z^{\pr})
\label{rmk1}
\end{align}

Then for a morphism $f \colon Z \to \Omega Z^{\pr}$ in $\ZZ$, the corresponding morphism 
$f^{\pr} \colon \Sigma Z \to Z^{\pr}$ in \eqref{rmk1} is uniquely determined by the following commutative diagram
up to $[\II]$.
\begin{align}
\xy
(-10,-12)*+{}="10";
(0,8)*+{Z}="11";
(16,8)*+{Z\bracket{1}}="12";
(32,8)*+{\Sigma Z}="13";
(-16,-8)*+{\Omega Z^{\pr}}="20";
(0,-8)*+{Z^{\pr} \bracket{-1}}="21";
(16,-8)*+{Z^{\pr}}="22";
(26,12)*+{}="23";
{\ar@{-->}_{\lambda^Z} "12";"11"};
{\ar@{->}^{h^{Z\bracket{1}}} "12";"13"};
{\ar@{->}_{h_{Z^{\pr}\bracket{-1}}} "20";"21"};
{\ar@{-->}^-{-\lambda_{Z^{\pr}}} "22";"21"};
{\ar_{f} "11";"20"};
{\ar^{} "11";"21"};
{\ar_{} "12";"22"};
{\ar^{f^{\pr}} "13";"22"};
{\ar@{}|\car "11";"10"};
{\ar@{}|\car "11";"22"};
{\ar@{}|\car "23";"22"};
\endxy
\label{rmk2}
\end{align}

Note that $\ul{f^{\pr}} = \beta_{Z^{\pr}} \circ \Sigma(\ul{f})$ and $\ul{f} = \Omega(\ul{f^{\pr}}) \circ \alpha_Z$.
\end{rmk}

\begin{rmk}
We assume that $\CC$ is a triangulated category and 
let $(\SS, \ZZ, \VV)$ be a premutation triple induced by a concentric twin cotorsion pair.
The negative sign in the proof of Lemma \ref{adjoint_sigma_omega} comes from the following isomorphic correspondence.
\[
\begin{array}{ccc}
\bbE(X\bracket{1}, X) & \xrar{\sim} & \CC(X\bracket{1}, X[1]) \\[3pt]
\lambda^X & \mapsto & l^X
\end{array}
\]
\[
\begin{array}{ccccc}
\bbE(Y, Y\bracket{-1}) & \xrar{\sim} & \CC(Y[-1], Y\bracket{-1}) & \xrar{[1]} & \CC(Y, Y\bracket{-1}[1]) \\[3pt]
 \lambda_Y & \mapsto & -l_Y[-1] & \mapsto & -l_Y
\end{array}
\]
Then $\Phi$ in Lemma \ref{adjoint_sigma_omega} is defined by the following commutative diagram in $\ul{\CC}$.
\[
\xy
(0,8)*+{Z\bracket{1}}="11";
(24,8)*+{Z[1]}="12";
(0,-8)*+{Z^{\pr}}="21";
(24,-8)*+{Z^{\pr}\bracket{-1}[1]}="22";
{\ar^{l^Z} "11";"12"};
{\ar^-{l_{Z^{\pr}}} "21";"22"};
{\ar^{z} "11";"21"};
{\ar^{(\Phi(z))[1]} "12";"22"};
{\ar@{}|\car "11";"22"};
\endxy
\]

This correspondence is used in \cite[Definition 4.1]{Nak18}.
\end{rmk}

\begin{lem} \label{isomorphism} \cite[Lemma 3.11]{Nak18}
Let $(\SS, \ZZ)$ be a right mutation double.
Assume that there exists an $\fraks^{\II}$-triangle $U \xrar{u} U^{\pr} \rar S \drar U$
 where $U,U^{\pr} \in \wt{\UU}$.
Then $\sigma(\ul{u}) \colon \sigma U \to \sigma U^{\pr}$ is an isomorphism. 
\end{lem}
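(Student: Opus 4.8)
The plan is to build, using the octahedral axiom (ET4) in the relative extriangulated category $(\CC, \bbE^{\II}, \fraks^{\II})$, a second $\fraks^{\II}$-triangle of the form $U \xrar{h^{U^{\pr}} u} \sigma U^{\pr} \rar E \xdrar{} U$ with $E \in \SS$, thereby recognizing $\sigma U^{\pr}$ (together with the left part $h^{U^{\pr}} u$) as an alternative realization of the data defining $\sigma U$. The comparison of the two realizations will be exactly $\sigma(\ul u)$, and it will be forced to be an isomorphism by a Yoneda argument.

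First I would fix the chosen $\fraks^{\II}$-triangles $U \xrar{h^U} \sigma U \xrar{g^U} S_U \xdrar{} U$ and $U^{\pr} \xrar{h^{U^{\pr}}} \sigma U^{\pr} \xrar{g^{U^{\pr}}} S_{U^{\pr}} \xdrar{} U^{\pr}$ with $S_U, S_{U^{\pr}} \in \SS$ (Remark \ref{rmk_adj_of_sigma}(3)), and recall that $\sigma(\ul u)$ is, up to $[\II]$, the unique $z \colon \sigma U \to \sigma U^{\pr}$ with $z h^U = h^{U^{\pr}} u$. Next I would apply (ET4) in $(\CC, \bbE^{\II}, \fraks^{\II})$ to the composite $h^{U^{\pr}} \circ u$, using the $\fraks^{\II}$-triangle $U \xrar{u} U^{\pr} \rar S \xdrar{} U$ (the given one, so $S \in \SS$) and the $\fraks^{\II}$-triangle $U^{\pr} \xrar{h^{U^{\pr}}} \sigma U^{\pr} \rar S_{U^{\pr}} \xdrar{} U^{\pr}$. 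This produces an $\fraks^{\II}$-triangle $U \xrar{h^{U^{\pr}} u} \sigma U^{\pr} \rar E \xdrar{} U$ together with an $\fraks^{\II}$-triangle $S \rar E \rar S_{U^{\pr}} \xdrar{} S$. Since $\SS$ is closed under extensions in $\bbE^{\II}$ by (RM3)(ii) and $S, S_{U^{\pr}} \in \SS$, I conclude $E \in \SS$, so the first triangle exhibits $U \in \wt{\UU} = \CoCone_{\bbE^{\II}}(\ZZ, \SS)$ with middle term in $\ZZ$ and cone in $\SS$, via the left part $h^{U^{\pr}} u$.

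Finally I would invoke Lemma \ref{adjoint_pair}(1) twice. Applied to the canonical triangle of $U$ it gives that $- \circ \ul{h^U} \colon \ul{\ZZ}(\sigma U, -) \rar \ul{\wt{\UU}}(U, -)$ is a natural isomorphism of functors on $\ul{\ZZ}$, and applied to the triangle just constructed it gives the same for $- \circ \ul{h^{U^{\pr}} u} \colon \ul{\ZZ}(\sigma U^{\pr}, -) \rar \ul{\wt{\UU}}(U, -)$. From $h^{U^{\pr}} u \equiv z h^U \pmod{[\II]}$ one gets the factorization $(- \circ \ul{h^{U^{\pr}} u}) = (- \circ \ul{h^U}) \circ (- \circ \sigma(\ul u))$ as natural transformations $\ul{\ZZ}(\sigma U^{\pr}, -) \rar \ul{\wt{\UU}}(U, -)$; hence $- \circ \sigma(\ul u) \colon \ul{\ZZ}(\sigma U^{\pr}, -) \rar \ul{\ZZ}(\sigma U, -)$ is a natural isomorphism, and the Yoneda lemma in $\ul{\ZZ}$ yields that $\sigma(\ul u) \colon \sigma U \rar \sigma U^{\pr}$ is an isomorphism.

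The main obstacle I anticipate is purely the bookkeeping in the octahedron step: one must ensure that (ET4) is legitimately applied inside the relative structure $(\CC, \bbE^{\II}, \fraks^{\II})$ — which is valid because $\bbE^{\II}$ is a closed subfunctor, so this triplet is itself extriangulated — and that the morphism appearing as the left part of the resulting triangle is genuinely the composite $h^{U^{\pr}} u$ while the new cone $E$ genuinely sits in an $\fraks^{\II}$-extension of $S_{U^{\pr}}$ by $S$. Once that is set up correctly, closedness of $\SS$ under $\bbE^{\II}$-extensions and Lemma \ref{adjoint_pair}(1) carry out the rest of the argument formally.
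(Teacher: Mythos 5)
Your proposal is correct and follows essentially the same route as the paper: apply (ET4) in $(\CC,\bbE^{\II},\fraks^{\II})$ to the composite of the given inflation with $h^{U^{\pr}}$, use closedness of $\SS$ under $\bbE^{\II}$-extensions to see the new cone lies in $\SS$, and conclude by the uniqueness of $\sigma$. Your final Yoneda argument is just an explicit unwinding of the paper's appeal to "uniqueness of $\sigma$" (i.e.\ Lemma \ref{adjoint_pair} and Remark \ref{rmk_adj_of_sigma}), so nothing further is needed.
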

\begin{proof}
By definition of right mutation doubles, there exists an $\fraks^{\II}$-triangle $S^{\pr} \drar U^{\pr} \rar Z^{\pr} \rar S^{\pr}$ where $Z^{\pr} \in \ZZ$ and $S^{\pr} \in \SS$. Since $\SS$ is closed under extensions in $(\CC, \bbE^{\II}, \fraks^{\II})$,
there exists the following commutative diagram in $\CC$ where $S^{\pr\pr} \in \SS$. 
\[
\xy
(0,8)*+{U}="21";
(0,-8)*+{U}="31";
(16,8)*+{U^{\pr}}="22";
(16,-8)*+{Z^{\pr}}="32";
(16,-24)*+{S^{\pr}}="42";
(32,8)*+{S}="23";
(32,-8)*+{S^{\pr\pr}}="33";
(32,-24)*+{S^{\pr}}="43";
(48,8)*+{U}="24";
(48,-8)*+{U}="34";
{\ar^{u} "21";"22"};
{\ar^{} "22";"23"};
{\ar@{-->}^{} "23";"24"};
{\ar^{} "31";"32"};
{\ar^{} "32";"33"};
{\ar@{-->}^{} "33";"34"};
{\ar@{=} "42";"43"};
{\ar@{=} "21";"31"};
{\ar^{} "22";"32"};
{\ar^{} "32";"42"};
{\ar^{} "23";"33"};
{\ar^{} "33";"43"};
{\ar@{=} "24";"34"};
%
{\ar@{}|\car "21";"32"};
{\ar@{}|\car "22";"33"};
{\ar@{}|\car "23";"34"};
{\ar@{}|\car "32";"43"};
\endxy
\]
Thus, we obtain an $\fraks^{\II}$-triangle $U \rar Z^{\pr} \rar S^{\pr\pr} \drar U$.
From uniqueness of $\sigma$, $\sigma(\ul{u})$ is an isomorphism.
\end{proof}

\subsection{Right triangles induced by a right mutation double}
In this subsection, we consider right triangulated \resp{left triangulated, pretriangulated} structures induced by right mutation doubles \resp{left mutation doubles, premutation triples}.
We assume that $(\SS, \ZZ)$ is a right mutation double.

First, we fix the following $\fraks^{\II}$-triangles to define $\bracket{1}$ and $\sigma$.

\begin{enumerate}
\item For $X \in \ZZ$, there exists the following $\fraks^{\II}$-triangle (and also an $\fraks_{\II}$-triangle) 
where $I^X \in \II$ and we fix it.
	\[
	X \xrar{i^X} I^X \xrar{p^X} X\bracket{1} \xdrar{\lambda^X} X
	\]
	Then we define $i^X, p^X, \lambda^X$ by the above fixed $\fraks^{\II}$-triangle.
\item For $U \in \wt{\UU}$, there exists the following $\fraks^{\II}$-triangle (this is not an $\fraks_{\II}$-triangle in general)
where $\sigma U \in \ZZ, S^U \in \SS$ and we fix it.
	\[
	U \xrar{h^U} \sigma U \xrar{g^U} S^U \xdrar{\rho^U} U
	\]
	Then we define $h^U, g^U, \rho^U$ by the above fixed $\fraks^{\II}$-triangle.
	For $Z \in \ZZ$, we always take $\sigma Z$ and $h^Z$ so that $\sigma Z = Z$ and $h^Z = \mathrm{id}_Z$.
\end{enumerate}
We remind readers that $\bracket{1} \colon \ul{\ZZ} \to \ul{\wt{\UU}}$ and $\sigma \colon \ul{\wt{\UU}} \to \ul{\ZZ}$ 
do not depend on the choices of the above $\fraks^{\II}$-triangles up to natural isomorphisms.

\begin{rmk} \label{rmk_sigma_of_h}
Let $U \in \wt{\UU}$. From Remark \ref{rmk_adj_of_sigma}(\ref{rmk_adj_of_sigma_3}),
$\sigma(h^U)$ is one of the morphisms which makes the following diagram in $\CC$ commutative.
Note that $h^{\sigma U} = \mathrm{id}_{\sigma U}$.
\[
\xy
(0,8)*+{U}="11";
(16,8)*+{\sigma U}="12";
(0,-8)*+{\sigma U}="21";
(16,-8)*+{\sigma U}="22";
{\ar^{h^U} "11";"12"};
{\ar^{h^{\sigma U}} "21";"22"};
{\ar^{h^U} "11";"21"};
{\ar^{\sigma(h^U)} "12";"22"};
{\ar@{}|\car "11";"22"};
\endxy
\]
Then $(\sigma(h^U) - \id_{\sigma U}) h^U = 0$.
From Remark \ref{rmk_adj_of_sigma}(\ref{rmk_adj_of_sigma_1}),
$\sigma(\ul{h^U}) = \ul{\mathrm{id}_{\sigma U}}$.
\end{rmk}

\begin{nota} \label{notation_right_tri}
	\begin{enumerate}
	\item Let $a \colon X \to Y$ be a morphism in $\ZZ$. 
	From Lemma \ref{inflation}, there exists the following $\fraks^{\II}$-triangle (this is also an $\fraks_{\II}$-triangle from Remark \ref{rmk_MT2_third_vanish}(\ref{rmk_MT2_third_vanish_2}))
	and we fix it.
	\vspace{-3pt}
	\[
	X \xrar{\msize{0.6}{\begin{bmatrix} a \\ i^X \\ \end{bmatrix}}} Y \oplus I^X \xrar{\wt{b}} C^{a} \xdrar{\wt{\delta}} X
	\]
	Then we define $\wt{b}, C^a$ and $\wt{\delta}$ by the above $\fraks^{\II}_{\II}$-triangle. 
	We also define $\wt{a} = \msize{0.8}{\begin{bmatrix} a \\ i^X \\ \end{bmatrix}}$ and $b$ as the composition of $Y \xrar{\msize{0.6}{\begin{bmatrix} 1 \\ 0 \end{bmatrix}}} Y \oplus I^X \xrar{\wt{b}} C^a$.
	\item \label{notation_right_tri_2}
	From Lemma \ref{happel_diagram}, there exists the following commutative diagram in $(\CC, \bbE^{\II}_{\II}, \fraks^{\II}_{\II})$ and we fix it.
	\begin{align}
	\xy
	(0,24)*+{}="11";
	(0,8)*+{X}="21";
	(0,-8)*+{X}="31";
	(0,-24)*+{}="41";
	(16,24)*+{Y}="12";
	(16,8)*+{Y \! \oplus \! I^X}="22";
	(16,-8)*+{I^X}="32";
	(16,-24)*+{Y}="42";
	(32,24)*+{Y}="13";
	(32,8)*+{C^a}="23";
	(32,-8)*+{X \bracket{1}}="33";
	(32,-24)*+{Y}="43";
	(48,24)*+{}="14";
	(48,8)*+{X}="24";
	(48,-8)*+{X}="34";
	(48,-24)*+{Y \! \oplus \! I^X}="44";
	{\ar@{=} "12";"13"};
	{\ar^-{\wt{a}} "21";"22"};
	{\ar^-{\wt{b}} "22";"23"};
	{\ar@{-->}^{\wt{\delta}} "23";"24"};
	{\ar^{i^X} "31";"32"};
	{\ar^{p^X} "32";"33"};
	{\ar@{-->}^{\lambda^X} "33";"34"};
	{\ar@{=} "42";"43"};
	{\ar^{\msize{0.6}{\begin{bmatrix} -1 \\ 0 \\ \end{bmatrix}}} "43";"44"};
	{\ar@{=} "21";"31"};
	{\ar^{\msize{0.6}{\begin{bmatrix} 1 \\ 0 \\ \end{bmatrix}}} "12";"22"};
	{\ar^{\msize{0.6}{[0 \ 1]}} "22";"32"};
	{\ar@{-->}^{0} "32";"42"};
	{\ar^{b} "13";"23"};
	{\ar^{c^a} "23";"33"};
	{\ar@{-->}^{\gamma^a} "33";"43"};
	{\ar@{=} "24";"34"};
	{\ar^{\wt{a}} "34";"44"};
	{\ar@{}|\car "12";"23"};
	{\ar@{}|\car "21";"32"};
	{\ar@{}|\car "22";"33"};
	{\ar@{}|\car "23";"34"};
	{\ar@{}|\car "32";"43"};
	{\ar@{}^\car "33";"44"};
	\endxy
	\label{THE_shifted_oct}
	\end{align}
	Then we define $c^a, \gamma^a$ by the above diagram. We often drop ``$a$'' if there is no confusion.
	\end{enumerate}
\end{nota}

There exists the following sequence in $\ul{\ZZ}$.
\[
X \xrar{\, \ul{\wt{a}} \,} Y \oplus I^X \xrar{\, \ul{h^{C^a} \wt{b}} \,} \sigma C^a \xrar{\sigma({\ul{c}})} \Sigma X
\label{diag:right_tri}
\]

We show that the sequence \eqref{diag:right_tri} in Notation \ref{notation_right_tri} does not depend on choices of morphism $c$ and $\fraks^{\II}_{\II}$-triangle $X \rar I^X \rar X \bracket{1} \xdrar{\lambda^X \vphantom{X^{X^x}}} X$.

\begin{lem}
	\begin{enumerate}
	\item The morphism $c$ in Notation \ref{notation_right_tri}(\ref{notation_right_tri_2}) 
	is uniquely determined  in $\ul{\ZZ}$.
	\item The sequence \eqref{diag:right_tri} in Notation \ref{notation_right_tri} does not depend on the choices of $\bracket{1}$, up to isomorphisms of sequences in $\ul{\ZZ}$.
	\end{enumerate}
\end{lem}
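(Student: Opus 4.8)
The plan is to read everything off the commutativity forced in the diagram \eqref{THE_shifted_oct}, combined with the long exact sequences of Remark~\ref{long_exact_seq} and the uniqueness statements already at hand (Corollary~\ref{uniqueness_of_cone}, Remark~\ref{rmk_natiso_bracket}, Remark~\ref{rmk_adj_of_sigma}). For part~(1), I keep all the $\fraks^{\II}$-triangles of Notation~\ref{notation_right_tri}(1) fixed and let $(c_{1},\gamma_{1})$ and $(c_{2},\gamma_{2})$ be two outputs of Lemma~\ref{happel_diagram} completing \eqref{THE_shifted_oct}. The lower-right square of \eqref{THE_shifted_oct} forces $\lambda^{X}c_{1}=\wt{\delta}=\lambda^{X}c_{2}$, hence $\lambda^{X}(c_{1}-c_{2})=0$ in $\bbE^{\II}(C^{a},X)$. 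Applying $\CC(C^{a},-)$ to the fixed $\fraks^{\II}$-triangle $X\xrar{i^{X}}I^{X}\xrar{p^{X}}X\bracket{1}\xdrar{\lambda^{X}}X$ and using Remark~\ref{long_exact_seq}, the kernel of $\lambda^{X}\circ(-)\colon\CC(C^{a},X\bracket{1})\to\bbE^{\II}(C^{a},X)$ is the image of $p^{X}\circ(-)$; thus $c_{1}-c_{2}=p^{X}\circ k$ for some $k\colon C^{a}\to I^{X}$, so $c_{1}-c_{2}$ factors through $I^{X}\in\II$ and $\ul{c_{1}}=\ul{c_{2}}$ in $\ul{\CC}$. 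Since $\sigma$ is a functor, $\sigma(\ul{c})$ — and with it the whole sequence \eqref{diag:right_tri} — is independent of the choice in Lemma~\ref{happel_diagram}. (The same square additionally pins $\gamma^{a}=-a\lambda^{X}$ down as an honest $\bbE^{\II}$-extension, by composing $\wt{a}\lambda^{X}=\begin{bmatrix}-1\\0\end{bmatrix}\gamma^{a}$ with the retraction $\begin{bmatrix}-1&0\end{bmatrix}$.)

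For part~(2), let $X\xrar{j^{X}}J^{X}\xrar{q^{X}}X\bracket{1}^{\pr}\xdrar{{\lambda^{\pr}}^{X}}X$ be a second choice, defining a functor $\bracket{1}^{\pr}$, with induced data $C^{a^{\pr}}$, $\wt{b^{\pr}}$, $c^{a^{\pr}}$ and $\Sigma^{\pr}X:=\sigma(X\bracket{1}^{\pr})$. I would construct an isomorphism in $\ul{\ZZ}$ between the two copies of \eqref{diag:right_tri} rung by rung: the first term $X$ is common; the second terms are matched through $\ul{Y\oplus I^{X}}\cong\ul{Y}\cong\ul{Y\oplus J^{X}}$ (projection followed by inclusion); the third terms through the isomorphism $\ul{C^{a}}\cong\ul{C^{a^{\pr}}}$ of Corollary~\ref{uniqueness_of_cone} which, being produced there via the combined cone built from $\begin{bmatrix}a\\i^{X}\\j^{X}\end{bmatrix}$, can be taken compatible with $\wt{b},\wt{b^{\pr}}$ and with $\wt{\delta},\wt{\delta^{\pr}}$; and the fourth terms through the natural isomorphism $\mu_{X}\colon\ul{X\bracket{1}}\cong\ul{X\bracket{1}^{\pr}}$ of Remark~\ref{rmk_natiso_bracket}, which satisfies $\lambda^{X}={\lambda^{\pr}}^{X}\circ m^{X}$. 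Commutativity of the square relating $c^{a}$ and $c^{a^{\pr}}$ under these identifications is then exactly the argument of part~(1): compose the discrepancy with ${\lambda^{\pr}}^{X}$ and use $\lambda^{X}={\lambda^{\pr}}^{X}\circ m^{X}$, $\lambda^{X}c^{a}=\wt{\delta}$, ${\lambda^{\pr}}^{X}c^{a^{\pr}}=\wt{\delta^{\pr}}$ and the compatibility of $\wt{\delta},\wt{\delta^{\pr}}$ to see that it is killed by $q^{X}\circ(-)$, hence factors through $J^{X}\in\II$. Finally applying the functor $\sigma$, which is independent of all these choices up to natural isomorphism (Remark~\ref{rmk_adj_of_sigma}), transports every identification into $\ul{\ZZ}$ and yields the desired isomorphism of sequences.

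The content of part~(1) is a two-line exactness argument. In part~(2) the only genuine work is the bookkeeping that makes the ladder commute — namely that the isomorphism $\ul{C^{a}}\cong\ul{C^{a^{\pr}}}$ of Corollary~\ref{uniqueness_of_cone} can be chosen simultaneously compatible with $\wt{b}$ (for the middle square) and with $c^{a}$ (for the right square); routing through the combined cone exactly as in the proof of that corollary, and then re-using the $\lambda$-plus-exactness trick of part~(1) for the $c$-compatibility, is what keeps this routine. Beyond Remark~\ref{long_exact_seq}, Corollary~\ref{uniqueness_of_cone} and Remark~\ref{rmk_adj_of_sigma} no new idea is needed; it is only a matter of keeping the signs straight and chasing arrows through the passage $\CC\rightsquigarrow\ul{\CC}\xrightarrow{\sigma}\ul{\ZZ}$.
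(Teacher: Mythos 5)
Your proposal is correct and follows essentially the same route as the paper: part (1) is the same two-line argument that $\lambda^X(c_1-c_2)=0$ forces $c_1-c_2$ to factor through $I^X$, and part (2) likewise routes through the combined cone $C^{a^{\pr\pr}}$ of Corollary \ref{uniqueness_of_cone}, uses $\mu_X$ from Remark \ref{rmk_natiso_bracket} with $\lambda^X={\lambda^{\pr}}^X m^X$, verifies the right-hand square by the same composition-with-$\lambda$ trick, and transports everything by $\sigma$.
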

\begin{proof}
(1) If morphisms $c, c^{\pr} \colon C^a \to X\bracket{1}$ satisfies $\wt{\delta} = \lambda c = \lambda c^{\pr}$, 
then $c-c^{\pr}$ factors through $I^X$.

(2) Take another $\fraks^{\II}_{\II}$-triangle $X \xrar{j^X} J^X \xrar{q^X} X\bracket{1}^{\pr} \xdrar{{\lambda^{\pr}}^X} X$ where $j^X$ is a left $\II$-approximation. 
Let $\mu \colon \bracket{1} \Rightarrow \bracket{1}^{\pr}$ be a natural isomorphism defined in Remark \ref{rmk_natiso_bracket} and $\Sigma^{\pr} = \sigma \circ \bracket{1}^{\pr}$.
Then $\lambda^X = {\lambda^{\pr}}^X m^X$ holds.
From Corollary \ref{uniqueness_of_cone}, there exists the following commutative diagrams
where $\ul{f_i}$ and $\ul{g_i}$ are isomorphisms for $i =1,2$. 
\begin{align}
\xy
(-16,16)*+{X}="11";
(8,16)*+{Y \! \oplus \! I^X}="12";
(32,16)*+{C^a}="13";
(48,16)*+{X}="14";
(-16,0)*+{X}="21";
(8,0)*+{Y \! \oplus \! I^X \! \oplus \! J^X}="22";
(32,0)*+{C^{a^{\pr\pr}}}="23";
(48,0)*+{X}="24";
(-16,-16)*+{X}="31";
(8,-16)*+{Y \! \oplus \! J^X}="32";
(32,-16)*+{C^{a^{\pr}}}="33";
(48,-16)*+{X}="34";
{\ar^-{\wt{a}} "11";"12"};
{\ar^-{\wt{b}} "12";"13"};	
{\ar@{-->}^-{\wt{\delta}} "13";"14"};
{\ar^-{\wt{{a}^{\pr\pr}}} "21";"22"};
{\ar^-{\wt{{b}^{\pr\pr}}} "22";"23"};
{\ar@{-->}^-{\wt{{\delta}^{\pr\pr}}} "23";"24"};
{\ar^-{\wt{{a}^{\pr}}} "31";"32"};
{\ar^-{\wt{{b}^{\pr}}} "32";"33"};
{\ar@{-->}^-{\wt{{\delta}^{\pr}}} "33";"34"};
{\ar@{=}^{} "11";"21"};
{\ar_{f_1} "22";"12"};
{\ar_{g_1} "23";"13"};
{\ar@{=}^{} "14";"24"};
{\ar@{=}^{} "21";"31"};
{\ar^{f_2} "22";"32"};
{\ar^{g_2} "23";"33"};
{\ar@{=}^{} "24";"34"};
{\ar@{}|\car "11";"22"};
{\ar@{}|\car "12";"23"};
{\ar@{}|\car "13";"24"};
{\ar@{}|\car "21";"32"};
{\ar@{}|\car "22";"33"};
{\ar@{}|\car "23";"34"};
\endxy
\qquad
\xy
(32,16)*+{C^a}="13";
(48,16)*+{\sigma C^a}="14";
(32,0)*+{C^{a^{\pr\pr}}}="23";
(48,0)*+{\sigma {C^{a^{\pr\pr}}}}="24";
(32,-16)*+{C^{a^{\pr}}}="33";
(48,-16)*+{\sigma{C^{a^{\pr}}}}="34";
{\ar@{->}^-{\ul{h^{C^a}}} "13";"14"};
{\ar@{->}^-{\ul{h^{C^{a^{\pr\pr}}}}} "23";"24"};
{\ar@{->}^-{\ul{h^{C^{a^{\pr}}}}} "33";"34"};
{\ar@{<-}^{\ul{g_1}} "13";"23"};
{\ar@{<-}^{\sigma(\ul{g_1})} "14";"24"};
{\ar^{\ul{g_2}} "23";"33"};
{\ar@{->}^{\sigma(\ul{g_2})} "24";"34"};
{\ar@{}|\car "13";"24"};
{\ar@{}|\car "23";"34"};
\endxy \label{diag:uniqueness_of_righttri}
\end{align}
Let $c \colon C^a \to X\bracket{1}$ and $c^{\pr} \colon C^{a^{\pr}} \to X\bracket{1}^{\pr}$ be the morphisms
defined in Notation \ref{notation_right_tri}(\ref{notation_right_tri_2}), 
then ${\lambda^{\pr}}^X m^X c g_1 = \wt{\delta} g_1 = \wt{\delta^{\pr}} g_2 = {\lambda^{\pr}}^X c^{\pr} g_2$
from \eqref{diag_natural-iso-bracket}, \eqref{THE_shifted_oct} and \eqref{diag:uniqueness_of_righttri}.
Thus, 
$\ul{m^X c} = \ul{c^{\pr} g_2} \ul{{g_1}^{-1}}$. 
Applying $\sigma$, $\sigma(\ul{m^X}) \sigma(\ul{c}) = \sigma(\ul{c^{\pr}}) \sigma(\ul{g_2 {g_1}^{-1}})$.
Therefore, the following commutative diagram exists in $\ul{\CC}$.
\[
\xy
(0,8)*+{X}="11";
(16,8)*+{Y \!\oplus\! I^X}="12";
(32,8)*+{C^a}="13";
(48,8)*+{\Sigma X}="14";
(0,-8)*+{X}="21";
(16,-8)*+{Y \!\oplus\! J^X}="22";
(32,-8)*+{C^{a^{\pr}}}="23";
(48,-8)*+{\Sigma^{\pr}X}="24";
{\ar^-{\ul{\wt{a}}} "11";"12"};
{\ar^-{\ul{h^{C^a} \wt{b}}} "12";"13"};	
{\ar@{->}^-{\sigma(\ul{c})} "13";"14"};
{\ar^-{\ul{\wt{a^{\pr}}}} "21";"22"};
{\ar^-{\ul{h^{C^{a^{\pr}}} \wt{b^{\pr}}}} "22";"23"};
{\ar@{->}^-{\sigma(\ul{c^{\pr}})} "23";"24"};
{\ar@{=}^{} "11";"21"};
{\ar^{\msize{0.6}{\ul{f_2 {f_1}\!^{-1}}}}_{\vsim} "12";"22"};
{\ar^{\msize{0.6}{\sigma(\ul{g_2 {g_1}\!^{-1}})}}_{\vsim} "13";"23"};
{\ar^{\sigma(\ul{m^X})}_{\vsim} "14";"24"};
{\ar@{}|\car "11";"22"};
{\ar@{}|\car@<12pt> "12";"23"};
{\ar@{}|\car@<12pt> "13";"24"};
\endxy
\]
\end{proof}

\begin{lem} \label{previous_lem}
Let $a \colon X \to Y$ be a morphism in $\ZZ$ and $X \xrar{a^{\pr}} Y \xrar{b^{\pr}} U^{\pr} \xdrar{\delta^{\pr}} X$ 
be an $\fraks^{\II}_{\II}$-triangle where $\ul{a} = \ul{a^{\pr}}$.
Then there exists a morphism $s \colon \sigma C^a \to \sigma U^{\pr}$ which makes the following diagram in $\ul{\ZZ}$ commutative where $\ul{s}$ is an isomorphism.
\[
\xy
(0,8)*+{X}="11";
(16,8)*+{Y}="12";
(32,8)*+{\sigma C^a}="13";
(48,8)*+{\Sigma X}="14";
(0,-8)*+{X}="21";
(16,-8)*+{Y}="22";
(32,-8)*+{\sigma U^{\pr}}="23";
(48,-8)*+{\Sigma X}="24";
{\ar^{\ul{a}} "11";"12"};
{\ar^{\ul{h^{C^a} b}} "12";"13"};	
{\ar@{->}^{\sigma(\ul{c})} "13";"14"};
{\ar^{\ul{a^{\pr}}} "21";"22"};
{\ar^{\ul{h^{C^{a^{\pr}}}b^{\pr}}} "22";"23"};
{\ar@{->}^{\sigma(\ul{c^{\pr})}} "23";"24"};
{\ar@{=}^{} "11";"21"};
{\ar@{=}^{} "12";"22"};
{\ar^{\ul{s}}_{\vsim} "13";"23"};
{\ar@{=}^{} "14";"24"};
{\ar@{}|\car "11";"22"};
{\ar@{}|\car "12";"23"};
{\ar@{}|\car "13";"24"};
\endxy
\]

In particular, the isomorphism class of the sequence \eqref{diag:right_tri} in $\ZZ$ does not depend on the choices of $a$ up to $[\II]$.
\end{lem}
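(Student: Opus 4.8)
The statement to prove is Lemma~\ref{previous_lem}: given $a\colon X\to Y$ in $\ZZ$ and an $\fraks^{\II}$-triangle $X\xrar{a^{\pr}}Y\xrar{b^{\pr}}U^{\pr}\xdrar{\delta^{\pr}}X$ with $\ul{a}=\ul{a^{\pr}}$, there is an isomorphism $\ul{s}\colon\sigma C^a\xrar{\sim}\sigma U^{\pr}$ in $\ul{\ZZ}$ compatible with the sequences \eqref{diag:right_tri}. The key observation is that $C^a$ and $U^{\pr}$ both lie in $\wt{\UU}=\CoCone_{\bbE^{\II}}(\ZZ,\SS)$ (for $C^a$ this is the bottom row $X\rar Z^{\pr}\rar S^{\pr\pr}$-type triangle produced in Notation~\ref{notation_right_tri}\eqref{notation_right_tri_2} together with (RM3); for $U^{\pr}$ it is immediate from the given triangle with $X\in\ZZ$, $U^{\pr}\in$ something, wait — rather, $U^{\pr}\in\Cone_{\bbE^{\II}}(X,Y)\subset\Cone_{\bbE^{\II}}(\ZZ,\ZZ)\subset\wt{\UU}$ by (RM3)\ref{RM3_1}), so that $\sigma$ is defined on both.

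The first step is to replace $a^{\pr}$ by $a$: since $\ul{a}=\ul{a^{\pr}}$, the difference $a-a^{\pr}$ factors through some $I\in\II$, so $a^{\pr}=a-ki$ for $i\colon X\to I$, $k\colon I\to Y$. Using the fixed $\fraks^{\II}$-triangle $X\xrar{i^X}I^X\xrar{p^X}X\bracket{1}\drar X$ and that $i^X$ is a left $\II$-approximation, I would lift $i$ through $i^X$ and compare the two conflations; more cleanly, I would apply Lemma~\ref{inflation} to $a^{\pr}$ exactly as it was applied to $a$ in Notation~\ref{notation_right_tri}, obtaining an $\fraks^{\II}$-triangle $X\xrar{\widetilde{a^{\pr}}}Y\oplus I^X\rar C^{a^{\pr}}\drar X$ with $\Cone(Y\oplus I^X,X)$-style cone, and then invoke Lemma~\ref{isomorphism}: since $\widetilde a$ and $\widetilde{a^{\pr}}$ differ by a morphism factoring through $I^X\in\II$ in the relevant component, the induced map on $C^a$ is an isomorphism mod $[\II]$, hence $\sigma$ sends it to an isomorphism $\sigma C^a\xrar{\sim}\sigma C^{a^{\pr}}$. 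The second step is to compare $C^{a^{\pr}}$ with $U^{\pr}$: both sit in $\fraks^{\II}$-triangles over the same morphism $a^{\pr}\colon X\to Y$ up to adding $I^X$, namely $X\xrar{\widetilde{a^{\pr}}}Y\oplus I^X\rar C^{a^{\pr}}\drar X$ and $X\xrar{a^{\pr}}Y\rar U^{\pr}\drar X$. There is a standard morphism $\wt b^{\pr}\colon C^{a^{\pr}}\to ?$; actually I would use Lemma~\ref{inflation}(2) again, which gives an $\fraks^{\II}$-triangle expressing $C^{a^{\pr}}$ as built from $U^{\pr}$ and $I^X$: concretely the cone of $X\xrightarrow{[a^{\pr}\ i^X]^{\top}}Y\oplus I^X$ is an extension of $C^{a^{\pr}}$-type object sitting between $U^{\pr}$ and $X\bracket{1}$, or dually via the obvious split inclusion $Y\hookrightarrow Y\oplus I^X$. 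The clean route: the commutative diagram from Lemma~\ref{happel_diagram} applied to $X\xrar{a^{\pr}}Y\xrar{\left[\begin{smallmatrix}1\\0\end{smallmatrix}\right]}Y\oplus I^X$ and $X\xrar{i^X}I^X$ exhibits an $\fraks^{\II}$-triangle $U^{\pr}\rar C^{a^{\pr}}\rar X\bracket{1}\drar U^{\pr}$, i.e. $C^{a^{\pr}}\in U^{\pr}\ast X\bracket{1}$; but since $X\bracket{1}\in\II\subset\ZZ$ and the triangle is an $\fraks^{\II}$-conflation, applying $\sigma$ and Lemma~\ref{isomorphism} (with the role of $S$ played by $X\bracket{1}$... no — $X\bracket{1}\in\II$, not necessarily in $\SS$).

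Here is the main obstacle, and how I would get around it: Lemma~\ref{isomorphism} requires the third term of the $\fraks^{\II}$-triangle $U\to U^{\pr}\to S$ to lie in $\SS$, whereas in the comparison $U^{\pr}\to C^{a^{\pr}}\to X\bracket{1}$ the third term is in $\II\subset\SS\cap\ZZ\subset\SS$ — so in fact $X\bracket{1}\in\II\subset\SS$ and Lemma~\ref{isomorphism} \emph{does} apply directly, giving $\sigma(\ul{U^{\pr}\to C^{a^{\pr}}})$ an isomorphism $\sigma U^{\pr}\xrar{\sim}\sigma C^{a^{\pr}}$. Composing with the isomorphism $\sigma C^a\xrar{\sim}\sigma C^{a^{\pr}}$ from the first step (inverted) yields the desired $\ul s\colon\sigma C^a\xrar{\sim}\sigma U^{\pr}$. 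The remaining work is bookkeeping: checking that the square with $\ul{h^{C^a}b}$ and $\ul{h^{C^{a^{\pr}}}b^{\pr}}$ commutes and that $\ul s$ is compatible with $\sigma(\ul c)$ and $\sigma(\ul{c^{\pr}})$ to $\Sigma X$. Both follow from the functoriality of $\sigma$, from Remark~\ref{rmk_sigma_of_h} (so that the various $h$'s behave coherently), and from the uniqueness clauses: $c$ is unique in $\ul{\ZZ}$ by the previous lemma, so the two maps $\sigma C^a\rightrightarrows\Sigma X$ agree once we know they become equal after precomposing with the epimorphic-type map from $Y$ — which is exactly the content of $\ul a=\ul{a^{\pr}}$ fed through $\sigma$. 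The last sentence of the statement, that the isomorphism class of \eqref{diag:right_tri} is independent of the choice of representative $a$ modulo $[\II]$, is then an immediate corollary of the displayed commutative diagram. I expect the genuinely delicate point to be verifying the compatibility of $\ul s$ with the connecting maps to $\Sigma X$, since that requires tracking how $c$ and $c^{\pr}$ interact across the Happel-type octahedron; everything else is formal manipulation of the adjunction defining $\sigma$ together with (RM2) and (RM3).
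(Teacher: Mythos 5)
Your overall strategy matches the paper's: produce a comparison morphism between $C^a$ and $U^{\pr}$ that becomes an isomorphism in $\ul{\CC}$, apply $\sigma$, and then get the compatibilities with $\ul{h^{C^a}b}$ and $\sigma(\ul{c})$ from Claim~\ref{induced_right_tri}. But the central comparison step, as you state it, is wrong. There is in general no $\fraks^{\II}$-triangle $U^{\pr}\rar C^{a^{\pr}}\rar X\bracket{1}\drar U^{\pr}$: the triangle with third term $X\bracket{1}$ coming from the analogue of \eqref{THE_shifted_oct} has \emph{first} term $Y$, not $U^{\pr}$, and the square you would need to compare $a^{\pr}$ with $\wt{a^{\pr}}$ does not commute, since $\left[\begin{smallmatrix}1\\0\end{smallmatrix}\right]\circ a^{\pr}\neq\wt{a^{\pr}}=\left[\begin{smallmatrix}a^{\pr}\\i^X\end{smallmatrix}\right]$. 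More seriously, the claim $X\bracket{1}\in\II$ on which your resolution of the ``main obstacle'' rests is false: $X\bracket{1}$ is the cone of the left $\II$-approximation $X\to I^X$, so it lies in $\ZZ\bracket{1}$, not in $\II$ (already in the case $\II=0$ of simple-minded mutation one has $X\bracket{1}=X[1]\neq 0$). So Lemma~\ref{isomorphism} cannot be invoked there, and the step collapses.

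The correct comparison, which is what the paper does in a single application of Lemma~\ref{happel_diagram}, is the other octahedron. Write $a-a^{\pr}=k^Xi^X$ (possible since $i^X$ is a left $\II$-approximation) and observe that $[\,1\ \ -k^X\,]\circ\wt{a}=a^{\pr}$. Applying Lemma~\ref{happel_diagram} to this factorization of $a^{\pr}$ through $Y\oplus I^X$, together with the split $\fraks^{\II}$-triangle $I^X\xrar{\left[\begin{smallmatrix}k^X\\1\end{smallmatrix}\right]}Y\oplus I^X\xrar{[\,1\ \,-k^X\,]}Y\lxdrar{0}I^X$, produces an $\fraks^{\II}$-triangle $I^X\rar C^{a}\xrar{u}U^{\pr}\lxdrar{\delta}I^X$ whose extension $\delta$ vanishes: $\delta b^{\pr}=0$ forces $\delta=f\delta^{\pr}$ for some $f\colon X\to I^X$ by the long exact sequence, and $f\delta^{\pr}=0$ because $\delta^{\pr}\in\bbE^{\II}(U^{\pr},X)$. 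Hence $C^{a}\cong U^{\pr}\oplus I^X$ and $\ul{u}$ is an isomorphism in $\ul{\CC}$ outright, so $\sigma(\ul{u})$ is an isomorphism by mere functoriality of $\sigma$ --- no appeal to Lemma~\ref{isomorphism} is needed anywhere. Your preliminary step replacing $a^{\pr}$ by $a$ via the automorphism $\left[\begin{smallmatrix}1&-k^X\\0&1\end{smallmatrix}\right]$ of $Y\oplus I^X$ is correct in substance but is absorbed into this one diagram; the remaining compatibilities with $\sigma(\ul{c})$ and $\sigma(\ul{c^{\pr}})$ are exactly the content of Claim~\ref{induced_right_tri}, as you anticipated.
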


Before we show the above statement, we prove the following claim.

\begin{cla} \label{induced_right_tri}
Assume that there exists a commutative diagram in $\CC$ with two $\fraks^{\II}_{\II}$-triangles
$X_1 \xrar{a_1} Y_1 \xrar{b_1} U_1 \lxdrar{\delta_1} X_1$ and 
$X_2 \xrar{a_2} Y_2 \xrar{b_2} U_2 \lxdrar{\delta_2} X_2$ where $X_i,Y_j \in \ZZ$ for $1 \leq i,j \leq 2$.
\[
\xy
(0,8)*+{X_1}="11";
(16,8)*+{Y_1}="12";
(32,8)*+{U_1}="13";
(48,8)*+{X_1}="14";
(0,-8)*+{X_2}="21";
(16,-8)*+{Y_2}="22";
(32,-8)*+{U_2}="23";
(48,-8)*+{X_2}="24";
{\ar^{a_1} "11";"12"};
{\ar^{b_1} "12";"13"};	
{\ar@{-->}^{\delta_1} "13";"14"};
{\ar^{a_2} "21";"22"};
{\ar^{b_2} "22";"23"};
{\ar@{-->}^{\delta_2} "23";"24"};
{\ar^{x} "11";"21"};
{\ar^{y} "12";"22"};
{\ar^{u} "13";"23"};
{\ar^{x} "14";"24"};
{\ar@{}|\car "11";"22"};
{\ar@{}|\car "12";"23"};
{\ar@{}|\car "13";"24"};
\endxy
\]
This induces the following commutative diagram in $\ul{\ZZ}$.
\[
\xy
(0,8)*+{X_1}="11";
(16,8)*+{Y_1}="12";
(32,8)*+{\sigma U_1}="13";
(48,8)*+{\Sigma X_1}="14";
(0,-8)*+{X_2}="21";
(16,-8)*+{Y_2}="22";
(32,-8)*+{\sigma U_2}="23";
(48,-8)*+{\Sigma X_2}="24";
{\ar^{\ul{a_1}} "11";"12"};
{\ar^{\ul{h^{U_1} b_1}} "12";"13"};	
{\ar@{->}^{\sigma{(\ul{c_1})}} "13";"14"};
{\ar^{\ul{a_2}} "21";"22"};
{\ar^{\ul{h^{U_2} b_2}} "22";"23"};
{\ar@{->}^{\sigma{(\ul{c_2})}} "23";"24"};
{\ar^{\ul{x}} "11";"21"};
{\ar^{\ul{y}} "12";"22"};
{\ar^{\sigma(\ul{u})} "13";"23"};
{\ar^{\Sigma \ul{x}} "14";"24"};
{\ar@{}|\car "11";"22"};
{\ar@{}|\car "12";"23"};
{\ar@{}|{\phantom{XX}\car} "13";"24"};
\endxy
\]
\end{cla}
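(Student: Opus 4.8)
The plan is to check the three squares of the induced diagram one at a time, reducing each to a defining property of $\bracket{1}$, of $\sigma$, or of the comparison morphisms $c_i$. First I would fix the data. Since $X_i,Y_i\in\ZZ$, each $U_i$ lies in $\Cone_{\bbE^{\II}}(\ZZ,\ZZ)\subseteq\wt{\UU}$ by (RM3)\ref{RM3_1}, so $\sigma U_i$ is defined; likewise $X_i\bracket{1}\in\Cone_{\bbE^{\II}}(\ZZ,\ZZ)\subseteq\wt{\UU}$ because $I^{X_i}\in\II\subseteq\ZZ$. As $\delta_i\in\bbE^{\II}(U_i,X_i)$ we have $i^{X_i}\delta_i=0$, so the long exact sequence (Remark \ref{long_exact_seq}) of $X_i\xrar{i^{X_i}}I^{X_i}\xrar{p^{X_i}}X_i\bracket{1}\xdrar{\lambda^{X_i}}X_i$ produces a morphism $c_i\colon U_i\to X_i\bracket{1}$ with $\lambda^{X_i}c_i=\delta_i$; such a $c_i$ is unique modulo $[\II]$, since two choices differ by a morphism factoring through $p^{X_i}$, hence through $I^{X_i}\in\II$. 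This is the $c_i$ occurring in the statement, and it makes $\sigma(\ul{c_i})$, $\sigma(\ul{u})$, and $\Sigma\ul{x}$ all meaningful.

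The left square $\ul{y}\,\ul{a_1}=\ul{a_2}\,\ul{x}$ is immediate from $ya_1=a_2x$ in $\CC$. For the middle square, recall that $\sigma(\ul{u})=\ul{\sigma(u)}$, where $\sigma(u)\colon\sigma U_1\to\sigma U_2$ is any morphism of $\CC$ with $\sigma(u)\,h^{U_1}=h^{U_2}u$ (Remark \ref{rmk_adj_of_sigma}(\ref{rmk_adj_of_sigma_3})); then $\sigma(u)\,h^{U_1}b_1=h^{U_2}u\,b_1=h^{U_2}b_2\,y$ using $ub_1=b_2y$, and passing to $\ul{\ZZ}$ gives $\sigma(\ul{u})\circ\ul{h^{U_1}b_1}=\ul{h^{U_2}b_2}\circ\ul{y}$.

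The right square carries the actual content. The rightmost square of the hypothesis (the one involving the extensions) says $x\delta_1=\delta_2u$ in $\bbE(U_1,X_2)$. Substituting $\delta_i=\lambda^{X_i}c_i$ and using the defining relation $x\lambda^{X_1}=\lambda^{X_2}(x\bracket{1})$ of $x\bracket{1}$ (Lemma \ref{def_of_bracket}), I get $\lambda^{X_2}\bigl((x\bracket{1})c_1-c_2u\bigr)=x\delta_1-\delta_2u=0$. By the long exact sequence of $X_2\xrar{i^{X_2}}I^{X_2}\xrar{p^{X_2}}X_2\bracket{1}\xdrar{\lambda^{X_2}}X_2$, the morphism $(x\bracket{1})c_1-c_2u\colon U_1\to X_2\bracket{1}$ therefore factors through $p^{X_2}$, hence through $I^{X_2}\in\II$, so $\ul{(x\bracket{1})c_1}=\ul{c_2u}$ in $\ul{\CC}$. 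Since $U_1$, $X_1\bracket{1}$, and $X_2\bracket{1}$ all lie in $\wt{\UU}$, I may apply the functor $\sigma\colon\ul{\wt{\UU}}\to\ul{\ZZ}$, and with $\Sigma=\sigma\circ\bracket{1}$ and $\bracket{1}(\ul{x})=\ul{x\bracket{1}}$ this yields $\Sigma\ul{x}\circ\sigma(\ul{c_1})=\sigma(\ul{c_2})\circ\sigma(\ul{u})$, i.e.\ the right square commutes.

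I expect the only genuine obstacle to be bookkeeping in the extension-composition notation — in particular, invoking $x\lambda^{X_1}=\lambda^{X_2}(x\bracket{1})$ in the correct variance, and making sure that both $c_i$ and $x\bracket{1}$ are morphisms between objects of $\wt{\UU}$ so that $\sigma$ may legitimately be applied to them; once these points are nailed down, the remainder is a routine diagram chase.
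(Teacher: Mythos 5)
Your argument is correct and follows essentially the same route as the paper: construct $c_i$ with $\lambda^{X_i}c_i=\delta_i$, use $x\lambda^{X_1}=\lambda^{X_2}(x\bracket{1})$ and $x\delta_1=\delta_2 u$ to get $\lambda^{X_2}\bigl((x\bracket{1})c_1-c_2u\bigr)=0$, conclude $\ul{x\bracket{1}}\,\ul{c_1}=\ul{c_2u}$ since the difference factors through $I^{X_2}\in\II$, and apply $\sigma$. Your extra checks (existence and $[\II]$-uniqueness of $c_i$, the left and middle squares, and that all relevant objects lie in $\wt{\UU}$ so $\sigma$ applies) are points the paper leaves implicit, but the core computation is identical.
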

\begin{proof}
First, we recall the following commutative diagrams \eqref{diag:two-brackets} in $\CC$.

Then $\lambda^{X_2} x\bracket{1} c_1 = x \lambda^{X_1} c_1 = x \delta_1 = \lambda^{X_2} c_2 u$.
Thus, $x\bracket{1} c_1 - c_2 u$ factors through $I^{X_2}$ and $\ul{x}\bracket{1} \ul{c_1} = \ul{c_2 u}$.
Applying $\sigma$, $\Sigma(\ul{x}) \sigma(\ul{c_1}) = \sigma(\ul{c_2}) \sigma(\ul{u})$.
Therefore, the claim holds.
\begin{align}
\xy
(0,24)*+{X_1}="11";
(16,24)*+{I^{X_1}}="12";
(32,24)*+{X_1\bracket{1}}="13";
(48,24)*+{X_1}="14";
(0,8)*+{X_1}="21";
(16,8)*+{Y_1}="22";
(32,8)*+{U_1}="23";
(48,8)*+{X_1}="24";
(0,-8)*+{X_2}="31";
(16,-8)*+{Y_2}="32";
(32,-8)*+{U_2}="33";
(48,-8)*+{X_2}="34";
(0,-24)*+{X_2}="41";
(16,-24)*+{I^{X_2}}="42";
(32,-24)*+{X_2\bracket{1}}="43";
(48,-24)*+{X_2}="44";
{\ar^{i^{X_1}} "11";"12"};
{\ar^{p^{X_1}} "12";"13"};	
{\ar@{-->}^{\lambda^{X_1}} "13";"14"};
{\ar^{a_1} "21";"22"};
{\ar^{b_1} "22";"23"};
{\ar@{-->}^{\delta_1} "23";"24"};
{\ar^{a_2} "31";"32"};
{\ar^{b_2} "32";"33"};
{\ar@{-->}^{\delta_2} "33";"34"};
{\ar^{i^{X_2}} "41";"42"};
{\ar^{p^{X_2}} "42";"43"};
{\ar@{-->}^{\lambda^{X_2}} "43";"44"};
{\ar@{=}^{} "11";"21"};
{\ar@{<-}^{} "12";"22"};
{\ar@{<-}^{c_1} "13";"23"};
{\ar@{=}^{} "14";"24"};
{\ar^{x} "21";"31"};
{\ar^{y} "22";"32"};
{\ar^{u} "23";"33"};
{\ar^{x} "24";"34"};
{\ar@{=}^{} "31";"41"};
{\ar^{} "32";"42"};
{\ar^{c_2} "33";"43"};
{\ar@{=}^{} "34";"44"};
{\ar@{}|\car "11";"22"};
{\ar@{}|\car "12";"23"};
{\ar@{}|\car "13";"24"};
{\ar@{}|\car "21";"32"};
{\ar@{}|\car "22";"33"};
{\ar@{}|\car "23";"34"};
{\ar@{}|\car "31";"42"};
{\ar@{}|\car "32";"43"};
{\ar@{}|\car "33";"44"};
\endxy
\qquad
\xy
(0,8)*+{X_1}="11";
(16,8)*+{I^{X_1}}="12";
(32,8)*+{X_1\bracket{1}}="13";
(48,8)*+{X_1}="14";
(0,-8)*+{X_2}="21";
(16,-8)*+{I^{X_2}}="22";
(32,-8)*+{X_2\bracket{1}}="23";
(48,-8)*+{X_2}="24";
{\ar^{i^{X_1}} "11";"12"};
{\ar^{p^{X_1}} "12";"13"};	
{\ar@{-->}^{\lambda^{X_1}} "13";"14"};
{\ar^{i^{X_2}} "21";"22"};
{\ar^{p^{X_2}} "22";"23"};
{\ar@{-->}^{\lambda^{X_2}} "23";"24"};
{\ar^{x} "11";"21"};
{\ar^{} "12";"22"};
{\ar^{x\bracket{1}} "13";"23"};
{\ar^{x} "14";"24"};
{\ar@{}|\car "11";"22"};
{\ar@{}|\car "12";"23"};
{\ar@{}|{\phantom{XX}\car} "13";"24"};
\endxy
\label{diag:two-brackets}
\end{align}
\end{proof}

\noindent {\textbf{Proof of Lemma \ref{previous_lem}.}}
Since $i^X \colon X \to I^X$ is a left $\II$-approximation, $a-a^{\pr}$ factors through $i^X$.
Let $k^X \colon I^X \to Y$ be a morphism where  $a-a^{\pr} = k^X i^X$. 
There exists an $\fraks$-triangle
$I^X \xrar{\msize{0.6}{\begin{bmatrix} k^X \\ 1  \end{bmatrix}}} Y \oplus I^X 
\xrar{\msize{0.6}{\begin{bmatrix} 1 & -k^X  \end{bmatrix}}} Y \lxdrar{0} I^X$.
Then we obtain the following commutative diagram from Lemma \ref{happel_diagram}.
\[
\xy
(0,24)*+{}="11";
(16,24)*+{I^X}="12";
(32,24)*+{I^X}="13";
(48,24)*+{}="14";
(0,8)*+{X}="21";
(16,8)*+{Y \!\oplus\! I^X}="22";
(32,8)*+{C^a}="23";
(48,8)*+{X}="24";
(0,-8)*+{X}="31";
(16,-8)*+{Y}="32";
(32,-8)*+{U^{\pr}}="33";
(48,-8)*+{X}="34";
(0,-24)*+{}="41";
(16,-24)*+{I^X}="42";
(32,-24)*+{I^X}="43";
{\ar@{=} "12";"13"};
{\ar^-{\wt{a}} "21";"22"};
{\ar^-{\wt{b}} "22";"23"};
{\ar@{-->}^{\wt{\delta}} "23";"24"};
{\ar^{a^{\pr}} "31";"32"};
{\ar^{b^{\pr}} "32";"33"};
{\ar@{-->}^{\delta^{\pr}} "33";"34"};
{\ar@{=} "42";"43"};
{\ar@{=} "21";"31"};
{\ar^{\msize{0.6}{\begin{bmatrix} k^X \\ 1  \end{bmatrix}}} "12";"22"};
{\ar^{\msize{0.6}{\begin{bmatrix} 1 & -k^X  \end{bmatrix}}} "22";"32"};
{\ar@{-->}^{0} "32";"42"};
{\ar^{} "13";"23"};
{\ar^{u} "23";"33"};
{\ar@{-->}^{0} "33";"43"};
{\ar@{=} "24";"34"};
{\ar@{}|\car "12";"23"};
{\ar@{}|\car "21";"32"};
{\ar@{}|\car@<12pt> "22";"33"};
{\ar@{}|\car "23";"34"};
{\ar@{}|\car "32";"43"};
\endxy
\]
From the above claim, we only have to prove $\sigma(\ul{u})$ is an isomorphism.
However, this is clear because $\ul{u}$ is isomorphic from the above diagram.
$\qed$

\hspace{8pt}

Therefore, the sequence \eqref{diag:right_tri} does not depend on the choices of $a$ up to $[\II]$. 
Thus, the following definition makes sense.

\begin{defi}
Let $\ul{a} \colon X \to Y$ be a morphism in $\ul{\ZZ}$. 
Then there exists the following commutative diagram in $\ul{\ZZ}$.
That is because 
$\ul{h^{C^a}\wt{b}}$ 
$=$ $\ul{h^{C^a}\wt{b} \msize{0.8}{\begin{bmatrix} 1 & 0 \\ 0 & 0 \end{bmatrix}}}$
$=$ $\ul{h^{C^a} b \msize{0.8}{\begin{bmatrix} 1 & 0 \end{bmatrix}}}$.
\[
\xy
(0,8)*+{X}="11";
(16,8)*+{Y\oplus I^X}="12";
(40,8)*+{\sigma C^a}="13";
(56,8)*+{\Sigma X}="14";
(0,-8)*+{X}="21";
(16,-8)*+{Y}="22";
(40,-8)*+{\sigma C^a}="23";
(56,-8)*+{\Sigma X}="24";
{\ar^-{\ul{\wt{a}}} "11";"12"};
{\ar^{\ul{h^{C^a}\wt{b}}} "12";"13"};
{\ar^{\sigma(\ul{c})} "13";"14"};
{\ar^{\ul{a}} "21";"22"};
{\ar^{\ul{h^{C^a}b}} "22";"23"};
{\ar^{\sigma(\ul{c})} "23";"24"};
{\ar@{=} "11";"21"};
{\ar^{\msize{0.6}{\begin{bmatrix} 1 & 0 \end{bmatrix}}}_{\vsim} "12";"22"};
{\ar@{=} "13";"23"};
{\ar@{=} "14";"24"};
{\ar@{}|\car "11";"22"};
{\ar@{}|\car "12";"23"};
{\ar@{}|\car "13";"24"};
\endxy
\]
Then the following sequence in $\ul{\ZZ}$ is unique up to isomorphisms.
It is called the \emph{standard right triangle} of $\ul{a}$.
\begin{align}
X \xrar{\, \ul{a} \,} Y \xrar{\, \ul{h^{C^a} b} \,} \sigma C^a \xrar{\sigma(\ul{c})} \Sigma X
\end{align}
We define
\[
\nabla = 
\left(
\begin{array}{ll}
\text{sequences} &\text{in } \ul{\ZZ} \text{ isomorphic to one in } \\
&\{ X \xrar{\ul{a}} Y \xrar{\ul{h^{C^a}b}} \sigma C^a \xrar{\sigma(\ul{c})} \Sigma X \mid a \text{ is a morphism in } \ZZ \}
\end{array}
\right)
\]
and a sequence in $\nabla$ is called a \emph{right triangle} in $\ul{\ZZ}$.
\end{defi}

\begin{ex} \label{ex_righttri}
The diagram \eqref{THE_shifted_oct} is induced by $a = \mathrm{id}_Z$ for $Z \in \ZZ$.
\[
	\xy
	(0,24)*+{}="11";
	(0,8)*+{Z}="21";
	(0,-8)*+{Z}="31";
	(0,-24)*+{}="41";
	(16,24)*+{Z}="12";
	(16,8)*+{Z \! \oplus \! I^Z}="22";
	(16,-8)*+{I^Z}="32";
	(16,-24)*+{Z}="42";
	(32,24)*+{Z}="13";
	(32,8)*+{I^Z}="23";
	(32,-8)*+{Z \bracket{1}}="33";
	(32,-24)*+{Z}="43";
	(48,24)*+{}="14";
	(48,8)*+{Z}="24";
	(48,-8)*+{Z}="34";
	(48,-24)*+{Z \! \oplus \! I^Z}="44";
	{\ar@{=} "12";"13"};
	{\ar^-{\msize{0.6}{\begin{bmatrix} 1 \\ i^Z  \end{bmatrix}}} "21";"22"};
	{\ar^-{\msize{0.6}{\begin{bmatrix} -i^Z & 1  \end{bmatrix}}} "22";"23"};
	{\ar@{-->}^{0} "23";"24"};
	{\ar^{i^Z} "31";"32"};
	{\ar^{p^Z} "32";"33"};
	{\ar@{-->}^{\lambda^Z} "33";"34"};
	{\ar@{=} "42";"43"};
	{\ar^{\msize{0.6}{\begin{bmatrix} -1 \\ 0 \\ \end{bmatrix}}} "43";"44"};
	{\ar@{=} "21";"31"};
	{\ar^{\msize{0.6}{\begin{bmatrix} 1 \\ 0 \\ \end{bmatrix}}} "12";"22"};
	{\ar^{\msize{0.6}{[0 \ 1]}} "22";"32"};
	{\ar@{-->}^{0} "32";"42"};
	{\ar^{-i^Z} "13";"23"};
	{\ar^{p^Z} "23";"33"};
	{\ar@{-->}^{-\lambda^Z} "33";"43"};
	{\ar@{=} "24";"34"};
	{\ar^{\msize{0.6}{\begin{bmatrix} 1 \\ i^Z  \end{bmatrix}}} "34";"44"};
	{\ar@{}|\car "12";"23"};
	{\ar@{}|\car "21";"32"};
	{\ar@{}|\car "22";"33"};
	{\ar@{}|\car "23";"34"};
	{\ar@{}|\car "32";"43"};
	{\ar@{}|\car@<10pt> "33";"44"};
	\endxy
\]
Thus, $Z \xrar{\ul{\mathrm{id}_Z}} Z \xrar{\ul{0}} 0 \xrar{\ul{0}} \Sigma Z$ is a right triangle.
\end{ex}

\subsection{Right triangulated structures induced by right mutation doubles}

We assume that $(\SS, \ZZ)$ is a right mutation double.
Now, we check the triplet $(\ul{\ZZ}, \Sigma, \nabla)$ satisfies the axioms of right triangulated category.

\begin{lem} \label{proof_of_RT0-1}
The triplet $(\ul{\ZZ}, \Sigma, \nabla)$ satisfies (rTR0) and (rTR1) in Remark \ref{axioms_of_RTri}.
\end{lem}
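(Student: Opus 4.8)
The plan is to read off (RT0) and (RT1) directly from the construction of $\nabla$ carried out in the previous subsection, together with the computation already performed in Example \ref{ex_righttri}; there is no genuine obstacle here. For (RT0), recall that by the definition of $\nabla$ a sequence in $\ul{\ZZ}$ lies in $\nabla$ precisely when it is isomorphic, as a sequence, to a standard right triangle $X \xrar{\ul{a}} Y \xrar{\ul{h^{C^a} \circ b}} \sigma C^a \xrar{\sigma(\ul{c})} \Sigma X$. Since being isomorphic to such a sequence is a transitive relation (one composes isomorphisms of sequences), $\nabla$ is automatically closed under isomorphisms, which is exactly (RT0).

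For (RT1)(ii), given $\ul{f} \colon X \to Y$ in $\ul{\ZZ}$ I would pick a representative $f \colon X \to Y$ in $\ZZ$ — legitimate because $\ul{\ZZ} = \ZZ/[\II]$ is an ideal quotient and $X, Y$ are objects of $\ZZ$ — and then run the construction of Notation \ref{notation_right_tri}: Lemma \ref{inflation} produces the $\fraks^{\II}$-triangle with cone $C^f$ and Lemma \ref{happel_diagram} produces $c$, whence the standard right triangle $X \xrar{\ul{f}} Y \xrar{\ul{h^{C^f} \circ b}} \sigma C^f \xrar{\sigma(\ul{c})} \Sigma X$ (the middle term is $Y$ after using $Y \oplus I^X \cong Y$ in $\ul{\ZZ}$, as in the passage from \eqref{diag:right_tri} to the definition of $\nabla$). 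This sequence lies in $\nabla$ and its first morphism is $\ul{f}$, which is exactly what (RT1)(ii) demands; note that only existence is needed here, so the independence statement Lemma \ref{previous_lem} is not invoked.

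For (RT1)(i), I would specialise the previous construction to $a = \mathrm{id}_X$, which is worked out in Example \ref{ex_righttri}: there $C^{\mathrm{id}_X}$ may be taken to be $I^X$, so that $\sigma C^{\mathrm{id}_X} = I^X$, which is a zero object of $\ul{\ZZ}$ since $I^X \in \II$; consequently the standard right triangle attached to $\mathrm{id}_X$ is isomorphic to $X \xrar{\ul{\mathrm{id}_X}} X \rar 0 \rar \Sigma X$, and by (RT0) this sequence belongs to $\nabla$. The only mild point of care is precisely this identification of the cone object with a zero object of $\ul{\ZZ}$; beyond that the lemma is just the bookkeeping observation that the data assembled so far satisfies the first two axioms listed in Remark \ref{axioms_of_RTri}, so I expect no real difficulty.
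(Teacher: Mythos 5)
Your proof is correct and follows essentially the same route as the paper: (RT0) from the definition of $\nabla$ as an isomorphism-closed class, (RT1)(ii) by choosing a representative of $\ul{f}$ and applying Lemma \ref{inflation} (using that $\II\subset\ZZ$ is strongly covariantly finite), and (RT1)(i) from the computation in Example \ref{ex_righttri} with $a=\mathrm{id}$. The paper's own proof is just a terser version of exactly this argument.
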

\begin{proof}
(rTR0) is by definition of $\nabla$.
(rTR1)(i) follows from Example \ref{ex_righttri}.
Since $\II$ is strongly covariantly finite in $\ZZ$, (rTR1)(ii) holds from Lemma \ref{inflation}.
\end{proof}

\begin{lem} \label{proof_of_RT2}
$(\ul{\ZZ}, \Sigma, \nabla)$ satisfies (rTR2) in Remark \ref{axioms_of_RTri}.
\end{lem}
\begin{proof}
We only have to show for the standard right triangles 
$X \xrar{\ul{a}} Y \xrar{\ul{h^{C^a} b}} \sigma C^a \xrar{\sigma({\ul{c})}} \Sigma X$.
In the rest of this proof, we denote $\sigma C^a$ by $Z$.
Recall that both $b \colon Y \to C^a$ and $h^{C^a} \colon C^a \to Z$ are $\fraks^{\II}$-inflations. 
Thus $h^{C^a} b$, now denoted by $b^{\pr}$, is also an $\fraks^{\II}$-inflation and 
there exists an $\fraks^{\II}$-triangle $Y \xrar{b^{\pr}} Z \xrar{c^{\pr}} U \xdrar{\delta^{\pr}} Y$
(this is also an $\fraks_{\II}$-triangle since $Y \in \ul{\ZZ}$).
We define a morphism $a^{\pr} \colon U \to Y \bracket{1}$ by the following commutative diagram.
\begin{align}
\xy
(0,8)*+{Y}="11";
(16,8)*+{Z}="12";
(32,8)*+{U}="13";
(48,8)*+{Y}="14";
(0,-8)*+{Y}="21";
(16,-8)*+{I^Y}="22";
(32,-8)*+{Y \bracket{1}}="23";
(48,-8)*+{Y}="24";
{\ar^{b^{\pr}} "11";"12"};
{\ar^{c^{\pr}} "12";"13"};	
{\ar@{-->}^{\delta^{\pr}} "13";"14"};
{\ar^{i^Y} "21";"22"};
{\ar^{p^Y} "22";"23"};
{\ar@{-->}^{\lambda^{Y}} "23";"24"};
{\ar@{=}^{} "11";"21"};
{\ar^{} "12";"22"};
{\ar^{a^{\pr}} "13";"23"};
{\ar@{=}^{} "14";"24"};
{\ar@{}|\car "11";"22"};
{\ar@{}|\car "12";"23"};
{\ar@{}|\car "13";"24"};
\endxy \label{diag:pullback_7-2}
\end{align}
This $\fraks^{\II}_{\II}$-triangle induces the following right triangle.
\[
Y \xrar{\ul{b^{\pr}}} Z \xrar{\ul{h^U c^{\pr}}} \sigma U \xrar{\sigma(\ul{a^{\pr}})} \Sigma Y
\]

It is enough to show that the above sequence is isomorphic to 
$Y \xrar{\ul{b^{\pr}}} Z \xrar{\sigma(\ul{c})} \Sigma X \xrar{-\Sigma \ul{a}} \Sigma Y$.
From (ET4), we obtain a morphism $u \colon X\bracket{1} \to U$ 
which is defined by the following left commutative diagram.
We also obtain the following right commutative diagram.

\begin{align}
\xy
(0,24)*+{}="11";
(16,24)*+{S^{C^a}}="12";
(32,24)*+{S^{C^a}}="13";
(48,24)*+{}="14";
(0,8)*+{Y}="21";
(16,8)*+{C^a}="22";
(32,8)*+{X\bracket{1}}="23";
(48,8)*+{Y}="24";
(0,-8)*+{Y}="31";
(16,-8)*+{Z}="32";
(32,-8)*+{U}="33";
(48,-8)*+{Y}="34";
(0,-24)*+{}="41";
(16,-24)*+{S^{C^a}}="42";
(32,-24)*+{S^{C^a}}="43";
{\ar@{=} "12";"13"};
{\ar^{b} "21";"22"};
{\ar^{c} "22";"23"};
{\ar@{-->}^{\gamma} "23";"24"};
{\ar^{b^{\pr}} "31";"32"};
{\ar^{c^{\pr}} "32";"33"};
{\ar@{-->}^{\delta^{\pr}} "33";"34"};
{\ar@{=} "42";"43"};
{\ar@{=} "21";"31"};
{\ar@{-->}^{} "12";"22"};
{\ar^{h^{C^a}} "22";"32"};
{\ar^{} "32";"42"};
{\ar@{-->}^{} "13";"23"};
{\ar^{u} "23";"33"};
{\ar^{} "33";"43"};
{\ar@{=} "24";"34"};
{\ar@{}|\car "12";"23"};
{\ar@{}|\car "21";"32"};
{\ar@{}|\car "22";"33"};
{\ar@{}|\car "23";"34"};
{\ar@{}|\car "32";"43"};
\endxy
\qquad
\xy
(0,16)*+{C^a}="11";
(16,16)*+{Z}="12";
(0,0)*+{X\bracket{1}}="21";
(16,0)*+{\Sigma X}="22";
(0,-16)*+{U}="31";
(16,-16)*+{\sigma U}="32";
{\ar^{h^{C^a}} "11";"12"};
{\ar^{h^{X\bracket{1}}} "21";"22"};	
{\ar^{h^U} "31";"32"};
{\ar^{c} "11";"21"};
{\ar^{u} "21";"31"};
{\ar^{\sigma (c)} "12";"22"};
{\ar^{\sigma (u)} "22";"32"};
{\ar@{}|\car "11";"22"};
{\ar@{}|\car "21";"32"};
\endxy \label{diag:rotated-righttri}
\end{align}
Then $\sigma(u) \sigma(c) h^{C^a} = h^U u c = h^U c^{\pr} h^{C^a}$. 
From Remark \ref{rmk_adj_of_sigma}(\ref{rmk_adj_of_sigma_1}), $\sigma(\ul{u}) \sigma(\ul{c}) = \ul{h^U c^{\pr}}$.

On the other hand, from \eqref{THE_shifted_oct}, \eqref{diag:pullback_7-2} and \eqref{diag:rotated-righttri}, 
$a \lambda^X = -\gamma = -\delta^{\pr} u = -\lambda^Y a^{\pr} u$.
Thus, there exists the following commutative diagram.
\[
\xy
(0,8)*+{X}="11";
(16,8)*+{I^X}="12";
(32,8)*+{X\bracket{1}}="13";
(48,8)*+{X}="14";
(0,-8)*+{Y}="21";
(16,-8)*+{I^Y}="22";
(32,-8)*+{Y\bracket{1}}="23";
(48,-8)*+{Y}="24";
{\ar^{i^X} "11";"12"};
{\ar^{p^X} "12";"13"};	
{\ar@{-->}^{\lambda^X} "13";"14"};
{\ar^{i^Y} "21";"22"};
{\ar^{p^Y} "22";"23"};
{\ar@{-->}^{\lambda^Y} "23";"24"};
{\ar^{a} "11";"21"};
{\ar^{} "12";"22"};
{\ar^{-a^{\pr} u} "13";"23"};
{\ar^{a} "14";"24"};
{\ar@{}|\car "11";"22"};
{\ar@{}|\car "12";"23"};
{\ar@{}^\car "13";"24"};
\endxy
\]
So, $\ul{a}\bracket{1} = - \ul{a^{\pr}u}$ and $\Sigma \ul{a} = -\sigma(\ul{a^{\pr}}) \sigma(\ul{u})$.
From Lemma \ref{isomorphism}, $\sigma(\ul{u})$ is an isomorphism. 
Therefore, there exists the following commutative diagram we wanted.
\[
\xy
(0,8)*+{Y}="11";
(16,8)*+{Z}="12";
(32,8)*+{\Sigma X}="13";
(48,8)*+{\Sigma Y}="14";
(0,-8)*+{Y}="21";
(16,-8)*+{Z}="22";
(32,-8)*+{\sigma U}="23";
(48,-8)*+{\Sigma Y}="24";
{\ar^{\ul{b^{\pr}}} "11";"12"};
{\ar^{\sigma(\ul{c})} "12";"13"};	
{\ar^{- \Sigma\ul{a}} "13";"14"};
{\ar^{\ul{b^{\pr}}} "21";"22"};
{\ar^{\ul{h^U c^{\pr}}} "22";"23"};
{\ar^{\sigma(\ul{a^{\pr}})} "23";"24"};
{\ar@{=} "11";"21"};
{\ar@{=} "12";"22"};
{\ar^{\sigma(\ul{u})}_{\vsim} "13";"23"};
{\ar@{=} "14";"24"};
{\ar@{}|\car "11";"22"};
{\ar@{}|\car "12";"23"};
{\ar@{}|{\phantom{XX}\car} "13";"24"};
\endxy
\]
\end{proof}

\begin{lem} \label{proof_of_RT3}
$(\ul{\ZZ}, \Sigma, \nabla)$ satisfies (rTR3) in Remark \ref{axioms_of_RTri}, 
that is, the following statement holds.

Assume that there exists the following commutative diagram where each row is a right triangle.
	\[
	\xy
	(0,8)*+{X_1}="11";
	(20,8)*+{Y_1}="12";
	(40,8)*+{\sigma C^{a_1}}="13";
	(60,8)*+{\Sigma X_1}="14";
	(0,-8)*+{X_2}="21";
	(20,-8)*+{Y_2}="22";
	(40,-8)*+{\sigma C^{a_2}}="23";
	(60,-8)*+{\Sigma X_2}="24";
	{\ar^{\ul{a_1}} "11";"12"};
	{\ar^{\ul{h^{C^{a_1}} b_1}} "12";"13"};	
	{\ar^{\sigma(\ul{c_1})} "13";"14"};
	{\ar^{\ul{a_2}} "21";"22"};
	{\ar^{\ul{h^{C^{a_2}} b_2}} "22";"23"};
	{\ar^{\sigma(\ul{c_2})} "23";"24"};
	{\ar^{\ul{x}} "11";"21"};
	{\ar^{\ul{y}} "12";"22"};
	{\ar^{\Sigma \ul{x}} "14";"24"};
	{\ar@{}|\car "11";"22"};
	\endxy
	\]
	Then there exists a morphism $z \colon \sigma C^{a_1} \to \sigma C^{a_2}$ 
	which makes the following diagram 
	commutative.
	\[
	\xy
	(0,8)*+{X_1}="11";
	(20,8)*+{Y_1}="12";
	(40,8)*+{\sigma C^{a_1}}="13";
	(60,8)*+{\Sigma X_1}="14";
	(0,-8)*+{X_2}="21";
	(20,-8)*+{Y_2}="22";
	(40,-8)*+{\sigma C^{a_2}}="23";
	(60,-8)*+{\Sigma X_2}="24";
	{\ar^{\ul{a_1}} "11";"12"};
	{\ar^{\ul{h^{C^{a_1}} b_1}} "12";"13"};	
	{\ar^{\sigma(\ul{c_1})} "13";"14"};
	{\ar^{\ul{a_2}} "21";"22"};
	{\ar^{\ul{h^{C^{a_2}} b_2}} "22";"23"};
	{\ar^{\sigma(\ul{c_2})} "23";"24"};
	{\ar^{\ul{x}} "11";"21"};
	{\ar^{\ul{y}} "12";"22"};
	{\ar^{\ul{z}} "13";"23"};
	{\ar^{\Sigma \ul{x}} "14";"24"};
	{\ar@{}|\car "11";"22"};
	{\ar@{}|\car "12";"23"};
	{\ar@{}|\car "13";"24"};
	\endxy
	\]
\end{lem}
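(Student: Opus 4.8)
The plan is to lift the given diagram in $\ul{\ZZ}$ to a genuine commutative diagram in $\CC$ at the level of $\fraks^{\II}$-triangles, and then invoke Claim \ref{induced_right_tri} to transport it back to $\ul{\ZZ}$. First I would recall that for $i=1,2$ the right triangle in the bottom/top row arises from an $\fraks^{\II}$-triangle $X_i \xrar{a_i} Y_i \xrar{b_i} C^{a_i} \xdrar{\wt{\delta}_i} X_i$ (after absorbing the $I^{X_i}$-summand, using that $C^{a_i}$ is well-defined up to $[\II]$). So it suffices to produce a morphism $u \colon C^{a_1} \to C^{a_2}$ in $\CC$ making
\[
\xy
(0,8)*+{X_1}="11";
(16,8)*+{Y_1}="12";
(32,8)*+{C^{a_1}}="13";
(48,8)*+{X_1}="14";
(0,-8)*+{X_2}="21";
(16,-8)*+{Y_2}="22";
(32,-8)*+{C^{a_2}}="23";
(48,-8)*+{X_2}="24";
{\ar^{a_1} "11";"12"};
{\ar^{b_1} "12";"13"};
{\ar@{-->}^{\wt{\delta}_1} "13";"14"};
{\ar^{a_2} "21";"22"};
{\ar^{b_2} "22";"23"};
{\ar@{-->}^{\wt{\delta}_2} "23";"24"};
{\ar^{x} "11";"21"};
{\ar^{y} "12";"22"};
{\ar^{u} "13";"23"};
{\ar^{x} "14";"24"};
{\ar@{}|\car "11";"22"};
{\ar@{}|\car "12";"23"};
{\ar@{}|\car "13";"24"};
\endxy
\]
commutative; then $z := \sigma(\ul{u})$ works, provided the two outer squares in the conclusion commute, and they do by Claim \ref{induced_right_tri}.

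To get $u$, the issue is that the given commutativity $\ul{y a_1} = \ul{a_2 x}$ holds only in $\ul{\ZZ}$, i.e.\ $y a_1 - a_2 x = \kappa i^{X_1}$ for some $\kappa \colon I^{X_1} \to Y_2$ (since $i^{X_1}$ is a left $\II$-approximation and $Y_2 \in \ZZ$). This is exactly the situation handled by the homotopy-correction trick in the proof of Lemma \ref{previous_lem}: I would replace $a_2 x$ by the honestly-equal morphism $a_2 x + \kappa i^{X_1} = y a_1$, at the cost of enlarging $Y_2$ by a copy of $I^{X_1}$ (harmless, since $C^{a}$ does not depend on the choice of the $\fraks^{\II}$-triangle $X \to I^X \to X\bracket{1}$ and adding injective summands to the middle term changes the cone only by an injective summand, cf.\ Corollary \ref{uniqueness_of_cone}). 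After this adjustment the left square commutes strictly in $\CC$, and then (ET3) applied to the two $\fraks^{\II}$-triangles — note the ambient structure here is $(\CC, \bbE^{\II}, \fraks^{\II})$, which is an ET category, so (ET3) is available — yields a morphism $u \colon C^{a_1} \to C^{a_2}$ completing the diagram, with the rightmost square commuting automatically up to the naturality of $\wt{\delta}$.

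Concretely I would carry out the steps in this order: (i) write down $\fraks^{\II}$-triangle representatives $X_i \xrar{a_i} Y_i \xrar{b_i} C^{a_i} \xdrar{\wt\delta_i} X_i$ for the two right triangles, using that $\nabla$ is closed under isomorphism so WLOG the rows are the canonical ones; (ii) lift $\ul x, \ul y$ to $x, y$ in $\CC$ and record the defect $y a_1 - a_2 x = \kappa i^{X_1}$; (iii) perform the $I^{X_1}$-enlargement of the bottom triangle so the square becomes strictly commutative, noting this does not change $\sigma C^{a_2}$ in $\ul\ZZ$; (iv) apply (ET3) in $(\CC,\bbE^{\II},\fraks^{\II})$ to obtain $u$; (v) apply $\sigma(\bracket{1}(-))$ via Claim \ref{induced_right_tri} to get the commutative diagram in $\ul\ZZ$ with $z = \sigma(\ul u)$. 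The main obstacle is step (iii): one must check carefully that the homotopy correction and the resulting $I^{X_1}$-enlargement are compatible with the chosen $\sigma$ and $\bracket{1}$ so that the bottom right triangle is genuinely unchanged in $\ul\ZZ$ — but this is precisely the bookkeeping already established in Corollary \ref{uniqueness_of_cone}, Lemma \ref{isomorphism}, and the independence lemma preceding Lemma \ref{previous_lem}, so it amounts to assembling those facts rather than proving anything new.
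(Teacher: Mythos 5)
Your proposal is correct and follows essentially the same route as the paper: lift $\ul{y a_1}=\ul{a_2 x}$ to a strictly commutative square by absorbing the defect $a_2x - ya_1 = j_1 i^{X_1}$ into the $I^{X_1}$-summand of the canonical middle term, apply (ET3) in $(\CC,\bbE^{\II},\fraks^{\II})$ to get $u\colon C^{a_1}\to C^{a_2}$, and conclude via Lemma \ref{previous_lem} and Claim \ref{induced_right_tri}. The only slip is bookkeeping: the correction is carried by the summand $I^{X_1}$ already present in the \emph{top} row's middle term $Y_1\oplus I^{X_1}$ (the vertical map being the matrix $\bigl[\begin{smallmatrix} y & j_1 \\ 0 & j_2\end{smallmatrix}\bigr]$ into $Y_2\oplus I^{X_2}$), not by enlarging $Y_2$.
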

\begin{proof}
From $\ul{y a_1} = \ul{a_2 x}$ and $i^{X_1}$ is a left $\II$-approximation, 
there exist morphisms $j_1 \colon I^{X_1} \to Y_2$ and $j_2 \colon I^{X_1} \to I^{X_2}$
where $a_2 x - y a_1 = j_1 i^{X_1}$ and $i^{X_2} x = j_2 i^{X_1}$. 
Then $\msize{0.8}{\begin{bmatrix} y & j_1 \\ 0 & j_2  \end{bmatrix}} \wt{a_1} = 
\msize{0.8}{\begin{bmatrix} y a_1 + j_1 i^{X_1} \\ j_2 i^{X_1} \end{bmatrix}} = \wt{a_2} x$.
Thus, there exists the following commutative diagram in $\CC$.
\[
\xy
(0,8)*+{X_1}="11";
(16,8)*+{Y_1 \!\oplus\! I^{X_1}}="12";
(32,8)*+{C^{a_1}}="13";
(48,8)*+{X_1}="14";
(0,-8)*+{X_2}="21";
(16,-8)*+{Y_2 \!\oplus\! I^{X_2}}="22";
(32,-8)*+{C^{a_2}}="23";
(48,-8)*+{X_2}="24";
{\ar^-{\wt{a_1}} "11";"12"};
{\ar^-{\wt{b_1}} "12";"13"};	
{\ar@{-->}^{\wt{\delta_1}} "13";"14"};
{\ar^-{\wt{a_2}} "21";"22"};
{\ar^-{\wt{b_2}} "22";"23"};
{\ar@{-->}^{\wt{\delta_2}} "23";"24"};
{\ar^{x} "11";"21"};
{\ar^{\msize{0.6}{\begin{bmatrix} y & j_1 \\ 0 & j_2 \end{bmatrix}}} "12";"22"};
{\ar^{} "13";"23"};
{\ar^{x} "14";"24"};
{\ar@{}|\car "11";"22"};
{\ar@{}|{\phantom{Xx}\car} "12";"23"};
{\ar@{}|\car "13";"24"};
\endxy
\]
The statement follows from Lemma \ref{previous_lem}.
\end{proof}

To show $(\ul{\ZZ}, \Sigma, \nabla)$ satisfies (rTR4) in Remark \ref{axioms_of_RTri}, 
we need some preparations.

\begin{lem} \label{lem_for_RT4}
Let $U_1 \xrar{a} U_2 \xrar{b} U_3 \xdrar{\delta} U_1$ be an $\fraks^{\II}$-triangle where 
$U_1, U_2, U_3 \in \Cone_{\bbE^{\II}}(\ZZ, \ZZ)$. Then there exist the following three $\fraks^{\II}$-triangles.
\[
\begin{array}{l}
	S_2 \drar U_2 \xrar{z_2} Z_2 \rar S_2 \\
	S_3 \drar U_3 \xrar{u_3} {U_3}^{\pr} \rar S_3 \\
	\sigma U_1 \xrar{x} Z_2 \xrar{y} {U_3}^{\pr} \lxdrar{\epsilon} Z_1
\end{array}
\]
where $S_2, S_3 \in \SS$, $Z_2 \in \ZZ$, ${U_3}^{\pr} \in \Cone_{\bbE^{\II}}(\ZZ, \ZZ)$ and make the following diagram commutative.
\[
\xy
(0,8)*+{U_1}="11";
(16,8)*+{U_2}="12";
(32,8)*+{U_3}="13";
(48,8)*+{U_1}="14";
(0,-8)*+{\sigma U_1}="21";
(16,-8)*+{Z_2}="22";
(32,-8)*+{{U_3}^{\pr}}="23";
(48,-8)*+{\sigma U_1}="24";
{\ar^{a} "11";"12"};
{\ar^{b} "12";"13"};	
{\ar@{-->}^{\delta} "13";"14"};
{\ar^{x} "21";"22"};
{\ar^{y} "22";"23"};
{\ar@{-->}^{\epsilon} "23";"24"};
{\ar^{h^{U_1}} "11";"21"};
{\ar^{z_2} "12";"22"};
{\ar^{u_3} "13";"23"};
{\ar^{h^{U_1}} "14";"24"};
{\ar@{}|\car "11";"22"};
{\ar@{}|\car "12";"23"};
{\ar@{}|\car "13";"24"};
\endxy
\]
Moreover, there exists the following commutative diagram.
\[
\xy
(0,8)*+{\sigma U_1}="11";
(20,8)*+{\sigma U_2}="12";
(40,8)*+{\sigma U_3}="13";
(60,8)*+{\Sigma(\sigma U_1)}="14";
(0,-8)*+{\sigma U_1}="21";
(20,-8)*+{Z_2}="22";
(40,-8)*+{\sigma {U_3}^{\pr}}="23";
(60,-8)*+{\Sigma(\sigma U_1)}="24";
{\ar^-{\sigma(\ul{a})} "11";"12"};
{\ar^-{\sigma(\ul{b})} "12";"13"};	
{\ar^-{\sigma(\ul{c^x u_3})} "13";"14"};
{\ar^{\ul{x}} "21";"22"};
{\ar^{\ul{h^{{U_3}^{\pr}} y}} "22";"23"};
{\ar^{\sigma(\ul{c^x})} "23";"24"};
{\ar@{=}^{} "11";"21"};
{\ar^{\sigma(\ul{z_2})} "12";"22"};
{\ar^{\sigma(\ul{u_3})} "13";"23"};
{\ar@{=}^{} "14";"24"};
{\ar@{}|\car "11";"22"};
{\ar@{}|\car "12";"23"};
{\ar@{}|{\phantom{Xx}\car} "13";"24"};
\endxy
\]

In particular,
\[
\sigma U_1 \xrar{\sigma(\ul{a})} \sigma U_2 \xrar{\sigma(\ul{b})} \sigma U_3 \xrar{\sigma(\ul{c^x u_3})} \Sigma(\sigma U_1)
\]
is a right triangle.
\end{lem}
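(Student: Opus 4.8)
The plan is to build the three $\fraks^{\II}$-triangles by repeated use of the shifted-octahedron/Happel machinery, and then to reinterpret the resulting commutative diagram in $\ul{\ZZ}$ via $\sigma$. First I would fix, for each $U_i$, the chosen $\fraks^{\II}$-triangle $U_i \xrar{h^{U_i}} \sigma U_i \xrar{g^{U_i}} S^{U_i} \xdrar{\rho^{U_i}} U_i$ with $\sigma U_i \in \ZZ$, $S^{U_i} \in \SS$. Since $h^{U_2}$ is an $\fraks^{\II}$-inflation, Lemma~\ref{inflation} applied to the composite $h^{U_2}\colon U_2 \to \sigma U_2$ (or directly the fixed triangle, after absorbing an injective summand as in Corollary~\ref{uniqueness_of_cone}) produces the desired $S_2 \drar U_2 \xrar{z_2} Z_2 \rar S_2$ with $Z_2 \in \ZZ$; concretely one may take $Z_2 = \sigma U_2$ and $z_2 = h^{U_2}$, or any representative, since $\sigma$ is insensitive to the choice up to $[\II]$. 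Likewise I fix $S_3 \drar U_3 \xrar{u_3} U_3^\pr \rar S_3$, and here the key point is that $U_3^\pr \in \Cone_{\bbE^{\II}}(\ZZ,\ZZ)$: this follows from (RM3)\ref{RM3_1} together with the fact that $\SS$ is closed under extensions in $\bbE^{\II}$ (argue as in the proof of Lemma~\ref{isomorphism}, stacking the triangle for $U_3$ defining $S_3$ on top of the defining conflation of $U_3 \in \Cone_{\bbE^{\II}}(\ZZ,\ZZ)$).

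Next I would construct the large commutative diagram. Apply (ET4) (or Lemma~\ref{happel_diagram}) to the composable pair $U_1 \xrar{a} U_2$ and $U_2 \xrar{z_2} Z_2$, and separately compare with $U_1 \xrar{h^{U_1}} \sigma U_1$; the standard octahedral diagram then yields a third $\fraks^{\II}$-triangle relating $\sigma U_1$, $Z_2$ and a cone, which I identify with $U_3^\pr$ after using $\bbE^{\II}(\SS,\ZZ)=0$ (RM2) to split off the $\SS$-summands coming from $S_2, S_3$. This gives the $\fraks^{\II}$-triangle $\sigma U_1 \xrar{x} Z_2 \xrar{y} U_3^\pr \lxdrar{\epsilon} \sigma U_1$ and the first commutative square diagram in the statement, with vertical maps $h^{U_1}, z_2, u_3, h^{U_1}$. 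The only care needed is to track that the cone of the composite really is $U_3^\pr$ and not merely $U_3^\pr$ up to an injective summand; Corollary~\ref{uniqueness_of_cone} handles exactly this, since everything is determined up to $[\II]$.

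For the second diagram I would pass to $\ul{\ZZ}$ by applying $\sigma$ to the first diagram, using Remark~\ref{rmk_sigma_of_h} (so that $\sigma(\ul{h^{U_1}}) = \ul{\id_{\sigma U_1}}$, making the outer verticals identities) and functoriality of $\sigma$ on $\ul{\wt\UU}$ from Lemma~\ref{adjoint_pair}(\ref{adjoint_pair_sigma}). The middle square becomes $\sigma(\ul{z_2})$ and $\sigma(\ul{u_3})$, and the bottom row of the first diagram, namely $\sigma U_1 \xrar{x} Z_2 \xrar{y} U_3^\pr \lxdrar{\epsilon} \sigma U_1$ — which is an $\fraks^{\II}$-triangle with $\sigma U_1, Z_2 \in \ZZ$ — is, after applying the Happel-type diagram \eqref{THE_shifted_oct} to $x$, precisely the data defining the right triangle $\sigma U_1 \xrar{\ul x} Z_2 \xrar{\ul{h^{U_3^\pr} y}} \sigma U_3^\pr \xrar{\sigma(\ul{c^x})} \Sigma(\sigma U_1)$ by the definition of $\nabla$ and Lemma~\ref{previous_lem}. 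It remains to check commutativity of the top row of the second diagram: that $\sigma(\ul{z_2})\circ\sigma(\ul a) = \ul x$, that $\sigma(\ul{u_3})\circ\sigma(\ul b) = \ul{h^{U_3^\pr} y}\circ\sigma(\ul{z_2})$, and that $\sigma(\ul{c^x u_3})\circ\sigma(\ul b) = \sigma(\ul{c^x})\circ\sigma(\ul{u_3})$; the first two are functoriality applied to the commuting squares already established, and the last is functoriality of $\sigma$ together with $c^x u_3$ being the evident composite. Finally, since $Z_2 \cong \sigma U_2$ in $\ul{\ZZ}$ via $\sigma(\ul{z_2})$ (Lemma~\ref{isomorphism}, as $z_2$ is an $\fraks^{\II}$-inflation with $\SS$-cokernel), the right triangle rewrites as $\sigma U_1 \xrar{\sigma(\ul a)} \sigma U_2 \xrar{\sigma(\ul b)} \sigma U_3^\pr \xrar{\sigma(\ul{c^x u_3})} \Sigma(\sigma U_1)$; and $\sigma U_3^\pr \cong \sigma U_3$ in $\ul{\ZZ}$ again by Lemma~\ref{isomorphism} applied to $u_3$, giving the displayed right triangle.

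The main obstacle I anticipate is the bookkeeping in the octahedral construction: getting the cone of the composite to be exactly $U_3^\pr$ (rather than up to an $\II$-summand) and verifying that the identifications respect the chosen $\fraks^{\II}$-triangles consistently, so that when one applies $\sigma$ the squares genuinely commute in $\ul{\ZZ}$ and not merely up to a correction term. This is where Corollary~\ref{uniqueness_of_cone}, Lemma~\ref{previous_lem}, Remark~\ref{rmk_sigma_of_h}, and the uniqueness of $\sigma$ up to natural isomorphism must all be invoked carefully; the rest is a routine but lengthy diagram chase.
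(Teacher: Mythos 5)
Your overall shape (iterated octahedra, then apply $\sigma$, use Remark \ref{rmk_sigma_of_h} to make the outer verticals identities, and Lemma \ref{isomorphism} to identify $\sigma Z_2 \cong \sigma U_2$ and $\sigma U_3^{\pr} \cong \sigma U_3$) matches the paper, and your treatment of the second diagram is essentially correct. But the core construction has a genuine gap: you cannot ``fix'' the first two triangles in advance and in particular you cannot take $Z_2 = \sigma U_2$, $z_2 = h^{U_2}$. In the paper $Z_2$ is $\sigma({U_2}^{\pr})$ for an auxiliary object ${U_2}^{\pr}$ produced by a first (dual shifted) octahedron applied to $U_1 \xrar{a} U_2 \xrar{b} U_3$ and $U_1 \xrar{h^{U_1}} \sigma U_1 \rar S^{U_1}$; the conflation $U_2 \xrar{u_2} {U_2}^{\pr} \rar S^{U_1}$ and the conflation $\sigma U_1 \to {U_2}^{\pr} \to U_3$ come out of that octahedron together. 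Only then does a second application of (ET4) to $\sigma U_1 \to {U_2}^{\pr} \to \sigma {U_2}^{\pr}$ produce $x \colon \sigma U_1 \to \sigma {U_2}^{\pr}$ as an honest $\fraks^{\II}$-inflation with cone ${U_3}^{\pr}$. With your choice, the lift $x^{\pr} \colon \sigma U_1 \to \sigma U_2$ of $h^{U_2}a$ along $h^{U_1}$ (which exists by Lemma \ref{adjoint_pair}) is merely a morphism; nothing guarantees it is an inflation, so the third required $\fraks^{\II}$-triangle need not exist with those objects. The objects $\sigma U_2$ and $Z_2 = \sigma({U_2}^{\pr})$ are isomorphic only in $\ul{\ZZ}$ (via $\sigma(\ul{u_2})$), not in $\CC$, and Corollary \ref{uniqueness_of_cone} does not bridge this: the discrepancies here are $\SS$-summands coming from $S^{U_1}$ and $S^{{U_2}^{\pr}}$, not $\II$-summands, and $\bbE^{\II}(\SS,\ZZ)=0$ does not let you ``split them off'' since the relevant conflations have first term in $\Cone_{\bbE^{\II}}(\ZZ,\ZZ)$, not in $\ZZ$.

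A second concrete problem is your justification that ${U_3}^{\pr} \in \Cone_{\bbE^{\II}}(\ZZ,\ZZ)$. Arguing ``as in Lemma \ref{isomorphism}'' shows closure properties of $\wt{\UU} = \CoCone_{\bbE^{\II}}(\ZZ,\SS)$, not that an extension $U_3 \to {U_3}^{\pr} \to S_3$ with $U_3 \in \Cone_{\bbE^{\II}}(\ZZ,\ZZ)$ stays in $\Cone_{\bbE^{\II}}(\ZZ,\ZZ)$; I do not see how to prove that directly. In the paper this is free: ${U_3}^{\pr}$ is by construction the cone of the inflation $x$ between the objects $\sigma U_1, \sigma {U_2}^{\pr} \in \ZZ$, so it lies in $\Cone_{\bbE^{\II}}(\ZZ,\ZZ)$ by definition (the only extension-closure fact needed is $\ZZ \ast \Cone_{\bbE^{\II}}(\ZZ,\ZZ) = \Cone_{\bbE^{\II}}(\ZZ,\ZZ)$, used for ${U_2}^{\pr}$, plus closure of $\SS$ under extensions to see $S_2, S_3 \in \SS$). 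The fix is to reverse your logic: run the two octahedra first, let them dictate what $Z_2$, ${U_3}^{\pr}$, $z_2$, $u_3$ are, and only afterwards use Lemma \ref{isomorphism} to replace $\sigma({U_2}^{\pr})$ and $\sigma({U_3}^{\pr})$ by $\sigma U_2$ and $\sigma U_3$ in $\ul{\ZZ}$.
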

\begin{proof}
First, $\ZZ \!\ast\! (\Cone(\ZZ, \ZZ)) = \Cone(\ZZ,\ZZ)$ in $(\CC, \bbE^{\II}, \fraks^{\II})$ from Proposition \ref{shifted_octahedrons}.
From the dual of Proposition \ref{shifted_octahedrons} with $\fraks^{\II}$-triangles 
$U_1 \xrar{a} U_2 \xrar{b} U_3 \lxdrar{\delta} U_3$ and 
$U_1 \xrar{h^{U_1}} \sigma U_1 \rar S^{U_1} \xdrar{\rho^{U_1}} U_1$,
 we obtain the following commutative diagram.
\[
\xy
(0,24)*+{}="11";
(16,24)*+{U_3}="12";
(32,24)*+{U_3}="13";
(48,24)*+{}="14";
(0,8)*+{S^{U_1}}="21";
(16,8)*+{U_1}="22";
(32,8)*+{\sigma U_1}="23";
(48,8)*+{S^{U_1}}="24";
(0,-8)*+{S^{U_1}}="31";
(16,-8)*+{U_2}="32";
(32,-8)*+{{U_2}^{\pr}}="33";
(48,-8)*+{S^{U_1}}="34";
(0,-24)*+{}="41";
(16,-24)*+{U_3}="42";
(32,-24)*+{U_3}="43";
{\ar@{=} "12";"13"};
{\ar@{-->}^{\rho^{U_1}} "21";"22"};
{\ar^{h^{U_1}} "22";"23"};
{\ar@{->}^{} "23";"24"};
{\ar@{-->}^{} "31";"32"};
{\ar^{u_2} "32";"33"};
{\ar@{->}^{} "33";"34"};
{\ar@{=} "42";"43"};
{\ar@{=} "21";"31"};
{\ar@{-->}^{\delta} "12";"22"};
{\ar^{a} "22";"32"};
{\ar@{->}^{b} "32";"42"};
{\ar@{-->}^{h^{U_1} \delta} "13";"23"};
{\ar^{{u_2}^{\pr}} "23";"33"};
{\ar@{->}^{{v_2}^{\pr}} "33";"43"};
{\ar@{=} "24";"34"};
%
{\ar@{}|\car "12";"23"};
{\ar@{}|\car "21";"32"};
{\ar@{}|\car "22";"33"};
{\ar@{}|\car "23";"34"};
{\ar@{}|\car "32";"43"};
\endxy
\]
Note that ${U_2}^{\pr} \in \Cone_{\bbE_{\II}}(\ZZ, \ZZ)$ by the first remark.

Next, from (ET4) with $\fraks^{\II}$-triangles
$\sigma U_1 \xrar{{u_2}^{\pr}} {U_2}^{\pr} \xrar{{v_2}^{\pr}} U_3 \xdrar{h^{U_1} \delta} \sigma U_1$ and
${U_2}^{\pr} \xrar{h^{{U_2}^{\pr}}} \sigma {U_2}^{\pr} \rar S^{{U_2}^{\pr}} \xdrar{\rho^{{U_2}^{\pr}}} {U_2}^{\pr} \vphantom{\Big(}$,
there exists the following commutative diagram where ${U_3}^{\pr} \in \Cone_{\bbE^{\II}}(\ZZ, \ZZ)$.

\[
\xy
(0,24)*+{}="11";
(16,24)*+{\sigma U_1}="12";
(32,24)*+{\sigma U_1}="13";
(48,24)*+{}="14";
(0,8)*+{S^{{U_2}^{\pr}}}="21";
(16,8)*+{{U_2}^{\pr}}="22";
(32,8)*+{\sigma {U_2}^{\pr}}="23";
(48,8)*+{S^{{U_2}^{\pr}}}="24";
(0,-8)*+{S^{{U_2}^{\pr}}}="31";
(16,-8)*+{U_3}="32";
(32,-8)*+{{U_3}^{\pr}}="33";
(48,-8)*+{S^{{U_2}^{\pr}}}="34";
(0,-24)*+{}="41";
(16,-24)*+{\sigma U_1}="42";
(32,-24)*+{\sigma U_1}="43";
{\ar@{=} "12";"13"};
{\ar@{-->}^{} "21";"22"};
{\ar^{h^{{U_2}^{\pr}}} "22";"23"};
{\ar@{->}^{} "23";"24"};
{\ar@{-->}^{} "31";"32"};
{\ar^{u_3} "32";"33"};
{\ar@{->}^{} "33";"34"};
{\ar@{=} "42";"43"};
{\ar@{=} "21";"31"};
{\ar@{->}^{{u_2}^{\pr}} "12";"22"};
{\ar^{{v_2}^{\pr}} "22";"32"};
{\ar@{-->}^{h^{U_1} \delta} "32";"42"};
{\ar@{->}^{x} "13";"23"};
{\ar^{y} "23";"33"};
{\ar@{-->}^{\epsilon} "33";"43"};
{\ar@{=} "24";"34"};
%
{\ar@{}|\car "12";"23"};
{\ar@{}|\car "21";"32"};
{\ar@{}|\car "22";"33"};
{\ar@{}|\car "23";"34"};
{\ar@{}|\car@<10pt> "32";"43"};
\endxy
\]

Since ${U_3}^{\pr} \in \Cone_{\bbE^{\II}}(\ZZ, \ZZ)$, we obtain an $\fraks^{\II}$-triangle
${U_3}^{\pr} \xrar{h^{{U_3}^{\pr}}} \sigma {U_3}^{\pr} \rar S^{{U_3}^{\pr}} \xdrar{\rho^{{U_3}^{\pr}}} {U_3}^{\pr}$.
From (ET4), there exist the following commutative diagrams.
\[
\xy
(0,24)*+{}="11";
(16,24)*+{S^{{U_2}^{\pr}}}="12";
(32,24)*+{S^{{U_2}^{\pr}}}="13";
(48,24)*+{}="14";
(0,8)*+{U_2}="21";
(16,8)*+{{U_2}^{\pr}}="22";
(32,8)*+{S^{U_1}}="23";
(48,8)*+{U_2}="24";
(0,-8)*+{U_2}="31";
(16,-8)*+{\sigma {U_2}^{\pr}}="32";
(32,-8)*+{S_2}="33";
(48,-8)*+{U_2}="34";
(0,-24)*+{}="41";
(16,-24)*+{S^{{U_2}^{\pr}}}="42";
(32,-24)*+{S^{{U_2}^{\pr}}}="43";
{\ar@{=} "12";"13"};
{\ar^{u_2} "21";"22"};
{\ar^{} "22";"23"};
{\ar@{-->}^{} "23";"24"};
{\ar^{z_2} "31";"32"};
{\ar^{} "32";"33"};
{\ar@{-->}^{} "33";"34"};
{\ar@{=} "42";"43"};
{\ar@{=} "21";"31"};
{\ar@{-->}^{} "12";"22"};
{\ar^{h^{{U_2}^{\pr}}} "22";"32"};
{\ar^{} "32";"42"};
{\ar@{-->}^{} "13";"23"};
{\ar^{} "23";"33"};
{\ar^{} "33";"43"};
{\ar@{=} "24";"34"};
{\ar@{}|\car "12";"23"};
{\ar@{}|\car "21";"32"};
{\ar@{}|{\phantom{Xx}\car} "22";"33"};
{\ar@{}|\car "23";"34"};
{\ar@{}|\car "32";"43"};
\endxy
\qquad
\xy
(0,24)*+{}="11";
(16,24)*+{S^{{U_3}^{\pr}}}="12";
(32,24)*+{S^{{U_3}^{\pr}}}="13";
(48,24)*+{}="14";
(0,8)*+{U_3}="21";
(16,8)*+{{U_3}^{\pr}}="22";
(32,8)*+{S^{{U_2}^{\pr}}}="23";
(48,8)*+{U_3}="24";
(0,-8)*+{U_3}="31";
(16,-8)*+{\sigma {U_3}^{\pr}}="32";
(32,-8)*+{S_3}="33";
(48,-8)*+{U_3}="34";
(0,-24)*+{}="41";
(16,-24)*+{S^{{U_3}^{\pr}}}="42";
(32,-24)*+{S^{{U_3}^{\pr}}}="43";
{\ar@{=} "12";"13"};
{\ar^{u_3} "21";"22"};
{\ar^{} "22";"23"};
{\ar@{-->}^{} "23";"24"};
{\ar^{z_3} "31";"32"};
{\ar^{} "32";"33"};
{\ar@{-->}^{} "33";"34"};
{\ar@{=} "42";"43"};
{\ar@{=} "21";"31"};
{\ar@{-->}^{} "12";"22"};
{\ar^{h^{{U_3}^{\pr}}} "22";"32"};
{\ar^{} "32";"42"};
{\ar@{-->}^{} "13";"23"};
{\ar^{} "23";"33"};
{\ar^{} "33";"43"};
{\ar@{=} "24";"34"};
{\ar@{}|\car "12";"23"};
{\ar@{}|\car "21";"32"};
{\ar@{}|{\phantom{Xx}\car} "22";"33"};
{\ar@{}|\car "23";"34"};
{\ar@{}|\car "32";"43"};
\endxy
\]
We denote $h^{{U_2}^{\pr}} u_2, h^{{U_3}^{\pr}} u_3$ by $z_2, z_3$, respectively.
Note that $S_2, S_3 \in \SS$ since $\SS$ is closed under extensions in $(\CC, \bbE^{\II}, \fraks^{\II})$.

Then we finally obtain three $\fraks^{\II}$-triangles
\[
\begin{array}{l}
	S_2 \drar U_2 \xrar{z_2} \sigma {U_2}^{\pr} \rar S_2 \\
	S^{{U_2^{\pr}}} \drar U_3 \xrar{u_3} {U_3}^{\pr} \rar S^{{U_2^{\pr}}} \\
	\sigma U_1 \xrar{x} \sigma {U_2}^{\pr} \xrar{y} {U_3}^{\pr} \lxdrar{\epsilon} \sigma U_1
\end{array}
\]
and the following commutative diagram in $\CC$.
\[
\xy
(0,8)*+{U_1}="11";
(16,8)*+{U_2}="12";
(32,8)*+{U_3}="13";
(48,8)*+{U_1}="14";
(0,-8)*+{\sigma U_1}="21";
(16,-8)*+{\sigma {U_2}^{\pr}}="22";
(32,-8)*+{{U_3}^{\pr}}="23";
(48,-8)*+{\sigma U_1}="24";
{\ar^{a} "11";"12"};
{\ar^{b} "12";"13"};	
{\ar@{-->}^{\delta} "13";"14"};
{\ar^{x} "21";"22"};
{\ar^{y} "22";"23"};
{\ar@{-->}^{\epsilon} "23";"24"};
{\ar^{h^{U_1}} "11";"21"};
{\ar^{z_2} "12";"22"};
{\ar^{u_3} "13";"23"};
{\ar^{h^{U_1}} "14";"24"};
{\ar@{}|\car "11";"22"};
{\ar@{}|\car "12";"23"};
{\ar@{}|\car "13";"24"};
\endxy
\]

There exists the following commutative diagram
\[
\xy
(0,16)*+{U_1}="11";
(16,16)*+{U_2}="12";
(36,16)*+{U_3}="13";
(64,16)*+{\sigma {U_3}}="14";
(0,0)*+{\sigma U_1}="21";
(16,0)*+{\sigma {U_2}^{\pr}}="22";
(36,0)*+{{U_3}^{\pr}}="23";
(64,0)*+{\sigma {U_3}^{\pr}}="24";
(0,-16)*+{\sigma U_1}="31";
(16,-16)*+{I^{\sigma U_1}}="32";
(36,-16)*+{(\sigma U_1)\bracket{1}}="33";
(64,-16)*+{\Sigma (\sigma U_1)}="34";
{\ar^{a} "11";"12"};
{\ar^{b} "12";"13"};	
{\ar^{h^{U_3}} "13";"14"};	
{\ar^{x} "21";"22"};
{\ar^{y} "22";"23"};
{\ar^{h^{{U_3}^{\pr}}} "23";"24"};	
{\ar^{i^{\sigma U_1}} "31";"32"};
{\ar^{p^{\sigma U_1}} "32";"33"};
{\ar^{h^{(\sigma U_1)\bracket{1}}} "33";"34"};
{\ar^{h^{U_1}} "11";"21"};
{\ar@{=}^{} "21";"31"};
{\ar^{z_2} "12";"22"};
{\ar^{} "22";"32"};
{\ar^{u_3} "13";"23"};
{\ar^{c^x} "23";"33"};
{\ar^{\sigma(u_3)} "14";"24"};
{\ar^{\sigma(c^x)} "24";"34"};
{\ar@{}|\car "11";"22"};
{\ar@{}|\car "12";"23"};
{\ar@{}|\car "13";"24"};
{\ar@{}|\car "21";"32"};
{\ar@{}|\car "22";"33"};
{\ar@{}|\car "23";"34"};
\endxy
\]
and $\sigma U_1 \xrar{\ul{x}} \sigma {U_2}^{\pr} \xrar{\ul{h^{{U_3}^{\pr}} y}} \sigma {U_3}^{\pr} \xrar{\sigma(\ul{c^x})} \Sigma (\sigma U_1)$ is a right triangle.

Since we define $\sigma Z = Z$ for $Z \in \ZZ$, 
$\ul{x} = \sigma(\ul{x}) = \sigma(\ul{x h^{U_1}}) = \sigma(\ul{z_2 a})$ and
$\ul{h^{{U_3}^{\pr}} y} \sigma(\ul{z_2})
= \sigma(\ul{y})\sigma(\ul{z_2})= \sigma(\ul{u_3}) \sigma(\ul{b})$ from Remark \ref{rmk_sigma_of_h}.
Thus, the following diagram in $\ul{\CC}$ is commutative.
\[
\xy
(0,8)*+{\sigma U_1}="11";
(16,8)*+{\sigma U_2}="12";
(36,8)*+{\sigma U_3}="13";
(56,8)*+{\Sigma(\sigma U_1)}="14";
(0,-8)*+{\sigma U_1}="21";
(16,-8)*+{\sigma {U_2}^{\pr}}="22";
(36,-8)*+{\sigma {U_3}^{\pr}}="23";
(56,-8)*+{\Sigma (\sigma U_1)}="24";
{\ar^-{\sigma(\ul{a})} "11";"12"};
{\ar^-{\sigma(\ul{b})} "12";"13"};	
{\ar^-{\sigma(\ul{c^x u_3})} "13";"14"};
{\ar^-{\ul{x}} "21";"22"};
{\ar^-{\ul{h^{{U_3}^{\pr}} y}} "22";"23"};
{\ar^-{\sigma(\ul{c^x})} "23";"24"};
{\ar@{=}^{} "11";"21"};
{\ar^{\sigma(\ul{z_2})} "12";"22"};
{\ar^{\sigma(\ul{u_3})} "13";"23"};
{\ar@{=}^{} "14";"24"};
{\ar@{}|\car "11";"22"};
{\ar@{}|{\phantom{Xx}\car} "12";"23"};
{\ar@{}|{\phantom{Xx}\car} "13";"24"};
\endxy
\]

From Lemma \ref{isomorphism}, $\sigma(\ul{z_2})$ and $\sigma(\ul{u_3})$ are isomorphisms.
Therefore, the statement holds.
\end{proof}

\begin{prop} \label{proof_of_RT4}
$(\ul{\ZZ}, \Sigma, \nabla)$ satisfies (rTR4) in Remark \ref{axioms_of_RTri}.
\end{prop}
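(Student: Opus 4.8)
The plan is to produce the octahedron inside the relative extriangulated category $(\CC,\bbE^{\II},\fraks^{\II})$ by one application of (ET4) to a pair of composable $\fraks^{\II}$-inflations, and then to transport the resulting diagram down to $\ul{\ZZ}$ via $\sigma$, using Lemma \ref{lem_for_RT4} for the new triangle and Claim \ref{induced_right_tri} for the rest.

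First I would normalise the data. Since $\nabla$ is closed under isomorphisms and, by Lemma \ref{previous_lem}, the isomorphism class of a right triangle depends only on the $[\II]$-class of its first map, I may assume that the three given right triangles are the standard ones attached to genuinely chosen morphisms $a\colon X\to Y$ and $a'\colon Y\to Z$ of $\ZZ$ with $a''=a'a$, built from the fixed $\fraks^{\II}$-triangles $X\xrar{\wt a}Y\oplus I^X\xrar{\wt b}C^a\drar X$, $Y\xrar{\wt{a'}}Z\oplus I^Y\rar C^{a'}\drar Y$ and $X\xrar{\wt{a''}}Z\oplus I^X\rar C^{a''}\drar X$, where $\wt a=\begin{bmatrix}a\\ i^X\end{bmatrix}$ and $\wt{a'},\wt{a''}$ are defined analogously.

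Next I would apply (ET4), which is available in the ET category $(\CC,\bbE^{\II},\fraks^{\II})$ because $\bbE^{\II}$ is a closed subfunctor, to the composable $\fraks^{\II}$-inflations $X\xrar{\wt a}Y\oplus I^X\xrar{\,\wt{a'}\oplus\id_{I^X}\,}Z\oplus I^Y\oplus I^X$ (the second arrow is the direct sum of the $\fraks^{\II}$-inflation $\wt{a'}$ with $\id_{I^X}$, hence an $\fraks^{\II}$-inflation, and composites of $\fraks^{\II}$-inflations are again $\fraks^{\II}$-inflations). The cone of the second arrow is $C^{a'}$, the composite equals $\begin{bmatrix}a''\\ i^Ya\\ i^X\end{bmatrix}$, and its cone $W$ satisfies $W\cong C^{a''}$ in $\CC/[\II]$ by Corollary \ref{uniqueness_of_cone}, since $\begin{bmatrix}i^Ya\\ i^X\end{bmatrix}$ is a left $\II$-approximation of $X$; in particular $\sigma W\cong\sigma C^{a''}$ in $\ul{\ZZ}$. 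Thus (ET4) produces a commutative octahedral diagram in $\CC$ containing an $\fraks^{\II}$-triangle $C^a\xrar{c_0}W\xrar{d_0}C^{a'}\drar C^a$ together with morphisms of $\fraks^{\II}$-triangles $(\id_X,\wt{a'}\oplus\id_{I^X},c_0)$ and $(\wt a,\id,d_0)$ linking it to the triangles of $a$, $a''$ and $a'$. As $C^a$, $W$ and $C^{a'}$ all lie in $\Cone_{\bbE^{\II}}(\ZZ,\ZZ)\subset\wt{\UU}$, Lemma \ref{lem_for_RT4} turns the triangle $C^a\to W\to C^{a'}$ into a right triangle $\sigma C^a\xrar{\sigma(\ul{c_0})}\sigma W\xrar{\sigma(\ul{d_0})}\sigma C^{a'}\xrar{u}\Sigma(\sigma C^a)$. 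Setting $C=\sigma C^a$, $D=\sigma C^{a'}$ and identifying $\sigma W$ with $E:=\sigma C^{a''}$ yields the object and maps $s,t,u$ demanded by (RT4); the sequence $C\xrar{s}E\xrar{t}D\xrar{u}\Sigma C$ lies in $\nabla$ by (RT0), and its commutativity with the three given right triangles follows by applying $\sigma$ to the octahedral diagram in $\CC$: Claim \ref{induced_right_tri} takes care of the two side morphisms above (observing that $\ul{\wt a}$ and $\ul{\wt{a'}\oplus\id_{I^X}}$ become $\ul{a}$ and $\ul{a'}$ under the isomorphisms $Y\oplus I^X\cong Y$ and $Z\oplus I^Y\oplus I^X\cong Z$ in $\ul{\ZZ}$), while the compatibility of the connecting maps is read off from the internal commutativities recorded inside the proof of Lemma \ref{lem_for_RT4}.

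I expect the main obstacle to be precisely this last bookkeeping: one must carry the two fudge isomorphisms---replacing $W$ by $C^{a''}$, and absorbing the spurious direct summands $I^X$ and $I^Y$---coherently through the entire octahedral diagram and verify that every square of the (RT4) diagram commutes in $\ul{\ZZ}$. No individual verification is deep, but keeping all the identifications consistent across the diagram is the delicate point.
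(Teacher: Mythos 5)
Your proposal follows essentially the same route as the paper's proof: normalise via Lemma \ref{previous_lem} so that the given maps become honestly composable $\fraks^{\II}$-inflations, apply (ET4) in $(\CC,\bbE^{\II},\fraks^{\II})$ to obtain an $\fraks^{\II}$-triangle linking the three cones, then descend to $\ul{\ZZ}$ using Lemma \ref{lem_for_RT4} for the new right triangle and Claim \ref{induced_right_tri} for the side squares. The one step you gloss over is the square comparing $\ul{c'}\circ \ul{t}$ with $\Sigma\ul{a}\circ\ul{c''}$: this is not among the commutativities recorded in Lemma \ref{lem_for_RT4} and needs the short separate diagram chase relating the connecting morphisms of $C^{a''}$ and $C^{a'}$ through $\ul{a}\bracket{1}$, which is exactly the computation the paper carries out at the end of its proof.
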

\begin{proof}
Let
\[ 
\begin{array}{c}
	X_1 \xrar{\,\ul{x_1}\,} X_2 \xrar{\,\ul{h^{C^{x_1} y_1}}\,} \sigma C^{x_1} \xrar{\sigma(\ul{c^{x_1}})} \Sigma X_1 \\
	X_1 \xrar{\,\ul{x_2}\,} X_3 \xrar{\,\ul{h^{C^{x_2} y_2}}\,} \sigma C^{x_2} \xrar{\sigma(\ul{c^{x_2}})} \Sigma X_2 \\
	X_2 \xrar{\,\ul{x_3}\,} X_3 \xrar{\,\ul{h^{C^{x_3} y_3}}\,} \sigma C^{x_3} \xrar{\sigma(\ul{c^{x_3}})} \Sigma X_3
\end{array}
\]
be right triangles where $\ul{x_2} = \ul{x_3 x_1}, X_i \in \ZZ$ for $1 \leq i \leq 3$. 
From Lemma \ref{previous_lem}, we may assume that $x_i$ is an $\fraks^{\II}$-inflation for $1 \leq i \leq 3$ 
and $x_2 = x_3 x_1$.
Indeed, there exists a morphism $j \colon I^{X_1} \to X_3$ which satisfies $x_2 = x_3 x_1 + j i^{X_1}$
since $i^{X_1}$ is a left $\II$-approximation.
Then $\wt{x_2} = \msize{0.8}{\begin{bmatrix} x_2 \\ i^{X_1} \end{bmatrix}}
= \msize{0.8}{\begin{bmatrix} x_3 & j \\ 0 & 1 \end{bmatrix}} \wt{x_1}$.
Thus, we may replace $x_1, x_2$ and $x_3$ by $\fraks^{\II}$-inflations $\wt{x_1}, \wt{\msize{0.8}{\begin{bmatrix} x_3 & j \\ 0 & 1 \end{bmatrix}}}\wt{x_1}$ and $\wt{\msize{0.8}{\begin{bmatrix} x_3 & j \\ 0 & 1 \end{bmatrix}}}$, respectively, up to $[\II]$.

In the rest of the proof, we denote $C^{x_i}$ by $U_i$. 
Then there exist $\fraks^{\II}_{\II}$-triangles
\[ 
\begin{array}{c}
	X_1 \xrar{x_1} X_2 \xrar{y_1} U_1 \xdrar{\delta_1} X_1 \\
	X_1 \xrar{x_2} X_3 \xrar{y_2} U_2 \xdrar{\delta_2} X_2 \\
	X_2 \xrar{x_3} X_3 \xrar{y_3} U_3 \xdrar{\delta_3} X_3
\end{array}
\]
with $x_2 = x_3 x_1$.
From (ET4), 
\begin{align}
\xy
(0,24)*+{}="11";
(16,24)*+{U_3}="12";
(32,24)*+{U_3}="13";
(48,24)*+{}="14";
(0,8)*+{X_1}="21";
(16,8)*+{X_2}="22";
(32,8)*+{U_1}="23";
(48,8)*+{X_1}="24";
(0,-8)*+{X_1}="31";
(16,-8)*+{X_3}="32";
(32,-8)*+{U_2}="33";
(48,-8)*+{X_1}="34";
(0,-24)*+{}="41";
(16,-24)*+{U_3}="42";
(32,-24)*+{U_3}="43";
(48,-24)*+{X_2}="44";
{\ar@{=} "12";"13"};
{\ar^{x_1} "21";"22"};
{\ar^{y_1} "22";"23"};
{\ar@{-->}^{\delta_1} "23";"24"};
{\ar^{x_2} "31";"32"};
{\ar^{y_2} "32";"33"};
{\ar@{-->}^{\delta_2} "33";"34"};
{\ar@{=} "42";"43"};
{\ar@{-->}^{\delta_3} "43";"44"};
{\ar@{=} "21";"31"};
{\ar@{-->}^{\delta_3} "12";"22"};
{\ar^{x_3} "22";"32"};
{\ar^{y_3} "32";"42"};
{\ar@{-->}^{\delta} "13";"23"};
{\ar^{a} "23";"33"};
{\ar^{b} "33";"43"};
{\ar@{=} "24";"34"};
{\ar^{x_1} "34";"44"};
{\ar@{}|\car "12";"23"};
{\ar@{}|\car "21";"32"};
{\ar@{}|\car "22";"33"};
{\ar@{}|\car "23";"34"};
{\ar@{}|\car "32";"43"};
{\ar@{}|\car "33";"44"};
\endxy \label{diag:proof_RT4-1}
\end{align}
and we obtain an $\fraks^{\II}_{\II}$-triangle $U_1 \xrar{a} U_2 \xrar{b} U_3 \lxdrar{\delta} U_1$. 
By Claim \ref{induced_right_tri},
\[
\xy
(0,8)*+{X_1}="11";
(16,8)*+{X_2}="12";
(32,8)*+{U_1}="13";
(48,8)*+{X_1}="14";
(0,-8)*+{X_1}="21";
(16,-8)*+{X_3}="22";
(32,-8)*+{U_2}="23";
(48,-8)*+{X_1}="24";
{\ar^{x_1} "11";"12"};
{\ar^{y_1} "12";"13"};	
{\ar@{-->}^{\delta_1} "13";"14"};
{\ar^{x_2} "21";"22"};
{\ar^{y_2} "22";"23"};
{\ar@{-->}^{\delta_2} "23";"24"};
{\ar@{=}^{} "11";"21"};
{\ar^{x_3} "12";"22"};
{\ar^{a} "13";"23"};
{\ar@{=}^{} "14";"24"};
{\ar@{}|\car "11";"22"};
{\ar@{}|\car "12";"23"};
{\ar@{}|\car "13";"24"};
\endxy
\]
induces the following commutative diagram in $\ul{\CC}$.
\begin{align}
\xy
(0,8)*+{X_1}="11";
(16,8)*+{X_2}="12";
(32,8)*+{\sigma U_1}="13";
(48,8)*+{\Sigma X_1}="14";
(0,-8)*+{X_1}="21";
(16,-8)*+{X_3}="22";
(32,-8)*+{\sigma U_2}="23";
(48,-8)*+{\Sigma X_1}="24";
(52,-8)*+{\vphantom{X}}="piri";
{\ar^{\ul{x_1}} "11";"12"};
{\ar^{\ul{h^{U_1} y_1}} "12";"13"};	
{\ar@{->}^{\sigma(\ul{c^{x_1}})} "13";"14"};
{\ar^{\ul{x_2}} "21";"22"};
{\ar^{\ul{h^{U_2} y_2}} "22";"23"};
{\ar@{->}^{\sigma(\ul{c^{x_2}})} "23";"24"};
{\ar@{=}^{} "11";"21"};
{\ar^{\ul{x_3}} "12";"22"};
{\ar^{\sigma(\ul{a})} "13";"23"};
{\ar@{=}^{} "14";"24"};
{\ar@{}|\car "11";"22"};
{\ar@{}|\car "12";"23"};
{\ar@{}|{\phantom{Xx}\car} "13";"24"};
\endxy
\label{diag:1}
\end{align}

From Lemma \ref{lem_for_RT4}, there exists the following commutative diagram
\begin{align}
\xy
(0,8)*+{U_1}="11";
(16,8)*+{U_2}="12";
(32,8)*+{U_3}="13";
(48,8)*+{U_1}="14";
(0,-8)*+{\sigma U_1}="21";
(16,-8)*+{Z_2}="22";
(32,-8)*+{{U_3}^{\pr}}="23";
(48,-8)*+{Z_1}="24";
{\ar^{a} "11";"12"};
{\ar^{b} "12";"13"};	
{\ar@{-->}^{\delta} "13";"14"};
{\ar^{x} "21";"22"};
{\ar^{y} "22";"23"};
{\ar@{-->}^{\epsilon} "23";"24"};
{\ar^{h^{U_1}} "11";"21"};
{\ar^{z_2} "12";"22"};
{\ar^{u_3} "13";"23"};
{\ar^{z_1} "14";"24"};
{\ar@{}|\car "11";"22"};
{\ar@{}|\car "12";"23"};
{\ar@{}|\car "13";"24"};
\endxy \label{diag:proof_RT4-2}
\end{align}
where $Z_2 \in \ZZ$ and ${U_3}^{\pr} \in \Cone_{\bbE^{\II}}(\ZZ, \ZZ)$.
Moreover, there exists the following commutative diagram from Claim \ref{induced_right_tri}.
Note that $\sigma(\ul{z_2})$ is an isomorphism from Lemma \ref{isomorphism}.
\begin{align}
\xy
(0,8)*+{\sigma U_1}="11";
(16,8)*+{\sigma U_2}="12";
(36,8)*+{\sigma U_3}="13";
(56,8)*+{\Sigma(\sigma U_1)}="14";
(0,-8)*+{\sigma U_1}="21";
(16,-8)*+{Z_2}="22";
(36,-8)*+{\sigma {U_3}^{\pr}}="23";
(56,-8)*+{\Sigma (\sigma U_1)}="24";
{\ar^-{\sigma(\ul{a})} "11";"12"};
{\ar^-{\sigma(\ul{b})} "12";"13"};	
{\ar^-{\sigma(\ul{c^x u_3})} "13";"14"};
{\ar^-{\ul{x}} "21";"22"};
{\ar^-{\ul{h^{{U_3}^{\pr}} y}} "22";"23"};
{\ar^-{\sigma(\ul{c^x})} "23";"24"};
{\ar@{=}^{} "11";"21"};
{\ar^{\sigma(\ul{z_2})} "12";"22"};
{\ar^{\sigma(\ul{u_3})} "13";"23"};
{\ar@{=}^{} "14";"24"};
{\ar@{}|\car "11";"22"};
{\ar@{}|\car "12";"23"};
{\ar@{}|{\phantom{Xx}\car} "13";"24"};
\endxy \label{diag:proof_RT4-3}
\end{align}
From (\ref{diag:proof_RT4-1}) and (\ref{diag:proof_RT4-2}), there exists the following commutative diagram in $\CC$.
\[
\xy
(0,8)*+{X_2}="11";
(16,8)*+{X_3}="12";
(32,8)*+{U_3}="13";
(48,8)*+{X_2}="14";
(0,-8)*+{\sigma U_1}="21";
(16,-8)*+{Z_2}="22";
(32,-8)*+{U_3^{\pr}}="23";
(48,-8)*+{\sigma U_1}="24";
{\ar^{x_3} "11";"12"};
{\ar^{y_3} "12";"13"};	
{\ar@{-->}^{\delta_3} "13";"14"};
{\ar^{x} "21";"22"};
{\ar^{y} "22";"23"};
{\ar@{-->}^{\epsilon} "23";"24"};
{\ar^{h^{U_1} y_1} "11";"21"};
{\ar^{z_2 y_2} "12";"22"};
{\ar^{u_3} "13";"23"};
{\ar^{h^{U_1} y_1} "14";"24"};
{\ar@{}|\car@<10pt> "11";"22"};
{\ar@{}|\car@<10pt> "12";"23"};
{\ar@{}|\car "13";"24"};
\endxy
\]
By Claim \ref{induced_right_tri} and (\ref{diag:proof_RT4-3}), we obtain the following commutative diagram.
\begin{align}
\xy
(0,16)*+{X_2}="11";
(16,16)*+{X_3}="12";
(32,16)*+{\sigma U_3}="13";
(52,16)*+{\Sigma X_2}="14";
(0,0)*+{\sigma U_1}="21";
(16,0)*+{Z_2}="22";
(32,0)*+{\sigma U_3^{\pr}}="23";
(52,0)*+{\Sigma (\sigma U_1)}="24";
(0,-16)*+{\sigma U_1}="31";
(16,-16)*+{\sigma U_2}="32";
(32,-16)*+{\sigma U_3}="33";
(52,-16)*+{\Sigma(\sigma U_1)}="34";
{\ar^{\ul{x_3}} "11";"12"};
{\ar^{\ul{h^{U_3} y_3}} "12";"13"};	
{\ar@{->}^{\sigma(\ul{c^{x_3}})} "13";"14"};
{\ar^{\ul{x}} "21";"22"};
{\ar^{\ul{h^{{U_3}^{\pr}} y}} "22";"23"};
{\ar@{->}^-{\sigma(\ul{c^x})} "23";"24"};
{\ar^{\sigma(\ul{a})} "31";"32"};
{\ar^{\sigma(\ul{b})} "32";"33"};
{\ar^{\sigma(\ul{c^x u_3})} "33";"34"};
{\ar^{\ul{h^{U_1} y_1}} "11";"21"};
{\ar^{\ul{z_2 y_2}} "12";"22"};
{\ar^{\sigma(\ul{u_3})} "13";"23"};
{\ar^{\Sigma \ul{h^{U_1} y_1}} "14";"24"};
{\ar@{=}^{} "21";"31"};
{\ar@{<-}^{\sigma(\ul{z_2})} "22";"32"};
{\ar@{<-}^{\sigma(\ul{u_3})} "23";"33"};
{\ar@{=}^{} "24";"34"};
{\ar@{}|\car@<10pt> "11";"22"};
{\ar@{}|\car@<10pt> "12";"23"};
{\ar@{}|{\phantom{Xx}\car} "13";"24"};
{\ar@{}|\car "21";"32"};
{\ar@{}|{\phantom{XX}\car} "22";"33"};
{\ar@{}|{\phantom{Xx}\car} "23";"34"};
\endxy
\label{diag:2}
\end{align}

Note that 
$\sigma(\ul{z_2})^{-1} \ul{z_2 y_2} = \sigma(\ul{z_2})^{-1} \sigma(\ul{z_2 y_2}) = \sigma(\ul{y_2}) = \ul{h^{U_2} y_2}$.
Thus, we obtain the following commutative diagram from \eqref{diag:1} and \eqref{diag:2}.
\[
\xy
(0,24)*+{X_1}="11";
(20,24)*+{X_2}="12";
(40,24)*+{\sigma U_1}="13";
(60,24)*+{\Sigma X_1}="14";
(0,8)*+{X_1}="21";
(20,8)*+{X_3}="22";
(40,8)*+{\sigma U_2}="23";
(60,8)*+{\Sigma X_1}="24";
(0,-8)*+{}="31";
(20,-8)*+{\sigma U_3}="32";
(40,-8)*+{\sigma U_3}="33";
(0,-24)*+{}="41";
(20,-24)*+{\Sigma X_2}="42";
(40,-24)*+{\Sigma(\sigma U_1)}="43";
(60,-24)*+{}="44";
{\ar^{\msize{0.6}{\Sigma(\ul{h^{U_1} y_1})}} "42";"43"};
{\ar^{\ul{x_1}} "11";"12"};
{\ar^{\ul{h^{U_1} y_1}} "12";"13"};
{\ar^{\sigma(\ul{c^{x_1}})} "13";"14"};
{\ar^{\ul{x_2}} "21";"22"};
{\ar^{\ul{h^{U_2} y_2}} "22";"23"};
{\ar^{\sigma(\ul{c^{x_2}})} "23";"24"};
{\ar@{=} "32";"33"};
{\ar@{=} "11";"21"};
{\ar^{\ul{x_3}} "12";"22"};
{\ar^{\ul{h^{U_3} y_3}} "22";"32"};
{\ar^{\sigma(\ul{c^{x_3}})} "32";"42"};
{\ar^{\sigma(\ul{a})} "13";"23"};
{\ar^{\sigma(\ul{b})} "23";"33"};
{\ar^{\sigma(\ul{c^x u_3})} "33";"43"};
{\ar@{=} "14";"24"};
%
{\ar@{}|\car "12";"23"};
{\ar@{}|\car "11";"22"};
{\ar@{}|{\phantom{XX}\car} "22";"33"};
{\ar@{}|{\phantom{XX}\car} "32";"43"};
{\ar@{}|\car "13";"24"};
\endxy
\]

So, it is enough to show that $\Sigma \ul{x_1} \sigma(\ul{c^{x_2}}) = \sigma(\ul{c^{x_3}})\sigma(\ul{b})$. 
From the following commutative diagram and (\ref{diag:proof_RT4-1}), $\lambda^{X_2} c^{x_3} b = \delta_3 b = x_1 \delta_2 = x_1 \lambda^{X_1} c^{x_2} = \lambda^{X_2} x_1\bracket{1} c^{x_2}$.
\[
\xy
(0,24)*+{X_1}="11";
(16,24)*+{X_3}="12";
(32,24)*+{U_2}="13";
(48,24)*+{X_1}="14";
(0,8)*+{X_1}="21";
(16,8)*+{I^{X_1}}="22";
(32,8)*+{X_1\bracket{1}}="23";
(48,8)*+{X_1}="24";
(0,-8)*+{X_2}="31";
(16,-8)*+{I^{X_2}}="32";
(32,-8)*+{X_2 \bracket{1}}="33";
(48,-8)*+{X_2}="34";
(0,-24)*+{X_2}="41";
(16,-24)*+{X_3}="42";
(32,-24)*+{U_3}="43";
(48,-24)*+{X_2}="44";
{\ar^{x_2} "11";"12"};
{\ar^{y_2} "12";"13"};	
{\ar@{-->}^{\delta_2} "13";"14"};
{\ar^{i^{X_1}} "21";"22"};
{\ar^{p^{X_1}} "22";"23"};
{\ar@{-->}^{\lambda^{X_1}} "23";"24"};
{\ar^{i^{X_2}} "31";"32"};
{\ar^{p^{X_2}} "32";"33"};
{\ar@{-->}^{\lambda^{X_2}} "33";"34"};
{\ar^{x_3} "41";"42"};
{\ar^{y_3} "42";"43"};
{\ar@{-->}^{\delta_3} "43";"44"};
{\ar@{=}^{} "11";"21"};
{\ar^{} "12";"22"};
{\ar^{c^{x_2}} "13";"23"};
{\ar@{=}^{} "14";"24"};
{\ar^{x_1} "21";"31"};
{\ar^{} "22";"32"};
{\ar^{x_1 \bracket{1}} "23";"33"};
{\ar^{x_1} "24";"34"};
{\ar@{=}^{} "31";"41"};
{\ar@{<-}^{} "32";"42"};
{\ar@{<-}^{c^{x_3}} "33";"43"};
{\ar@{=}^{} "34";"44"};
{\ar@{}|\car "11";"22"};
{\ar@{}|\car "12";"23"};
{\ar@{}|\car "13";"24"};
{\ar@{}|\car "21";"32"};
{\ar@{}|\car "22";"33"};
{\ar@{}|{\phantom{XX}\car} "23";"34"};
{\ar@{}|\car "31";"42"};
{\ar@{}|\car "32";"43"};
{\ar@{}|\car "33";"44"};
\endxy
\]
So, $\ul{x_1}\bracket{1} \ul{c^{x_2}} = \ul{c^{x_3} b}$. 
Applying $\sigma$, $\Sigma \ul{x_1} \sigma(\ul{c^{x_2}}) = \sigma(\ul{c^{x_3}})\sigma(\ul{b})$ holds.
\end{proof}

To sum up, we obtain the following result.

\begin{cor} \label{RT}
Let $(\SS, \ZZ)$ be a right mutation double.
Then $(\ul{\ZZ}, \Sigma, \nabla)$ is a right triangulated category.
\end{cor}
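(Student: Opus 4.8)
The plan is to recognize that this corollary is simply the synthesis of the four preceding results, so the proof reduces to checking that nothing in the definition of a right triangulated category has been left unverified. First I would recall from Definition \ref{def_mutations} that $\Sigma = \sigma \circ \bracket{1} \colon \ul{\ZZ} \to \ul{\ZZ}$ is an additive endofunctor, and that $\nabla$ was constructed in the previous subsection as a class of diagrams of the form $X \xrar{f} Y \xrar{g} Z \xrar{h} \Sigma X$ in $\ul{\ZZ}$, hence determines a full subcategory of $\mathsf{RTri}$. Thus the only remaining points are the axioms (RT0)--(RT4) listed in Remark \ref{axioms_of_RTri}.

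Then I would invoke the established statements directly: (RT0) and (RT1) hold by Lemma \ref{proof_of_RT0-1}; (RT2) holds by Lemma \ref{proof_of_RT2}; (RT3) holds by Lemma \ref{proof_of_RT3}; and (RT4) holds by Proposition \ref{proof_of_RT4}. Since these are precisely the axioms defining a right triangulated category, with $\Sigma$ not required to be an equivalence, the triplet $(\ul{\ZZ}, \Sigma, \nabla)$ is a right triangulated category, which is the assertion.

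The substance of the argument has already been absorbed into the preceding lemmas and proposition; in particular the genuinely delicate step is (RT4), which relied on the decomposition provided by Lemma \ref{lem_for_RT4} together with (ET4) and Proposition \ref{shifted_octahedrons}, and on the invariance results Lemma \ref{previous_lem} and Claim \ref{induced_right_tri} which make the definition of $\nabla$ unambiguous in the first place. Consequently I do not expect any real obstacle at the level of the corollary itself; the only thing requiring care is to confirm that the list of axioms being cited matches exactly the list in the definition of a right triangulated category, so that no hidden compatibility condition is silently omitted.
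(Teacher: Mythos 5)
Your proposal is correct and is essentially identical to the paper's own proof, which likewise just assembles Lemmas \ref{proof_of_RT0-1}, \ref{proof_of_RT2}, \ref{proof_of_RT3} and Proposition \ref{proof_of_RT4} to cover axioms (RT0)--(RT4). The additional remarks about $\Sigma$ being an additive endofunctor and $\nabla$ being a full subcategory of $\mathsf{RTri}$ are harmless and already implicit in the earlier constructions.
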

\begin{proof}
From Lemma \ref{proof_of_RT0-1}, \ref{proof_of_RT2}, \ref{proof_of_RT3} and Proposition \ref{proof_of_RT4}.
\end{proof}

Dually, we can construct a left triangulated structure from a left mutation double $(\ZZ, \VV)$.

First, we fix the following $\fraks_{\II}$-triangles to define $\bracket{-1}$ and $\omega$.

\begin{enumerate}
\item For $X \in \ZZ$, there exists the following $\fraks_{\II}$-triangle (and also an $\fraks^{\II}$-triangle) where $I_X \in \II$ and we fix it.
	\[
	X \xdrar{\lambda_X} X\bracket{-1} \xrar{i_X} I_X \xrar{p_X} X
	\]
	Then we define $i_X, p_X, \lambda_X$ by the above fixed $\fraks_{\II}$-triangle.
\item For $T \in \wt{\TT}$, there exists the following $\fraks_{\II}$-triangle (this is not an $\fraks^{\II}$-triangle in general)
	where $\omega T \in \ZZ, V_T \in \VV$ and we fix it.
	\[
	T \lxdrar{\rho_T} V_T \xrar{g_T} \omega T \xrar{h_T} T
	\]
	Then we define $h_T, g_T, \rho_T$ by the above fixed $\fraks_{\II}$-triangle.
	For $Z \in \ZZ$, we always take $\omega Z$ so that $\omega Z = Z$ and $h_Z = \id_Z$.
\end{enumerate}

\begin{nota} \label{notation_left_tri} 
	\begin{enumerate}
	\item For a morphism $b \colon Y \to Z$ in $\ZZ$, there exists the following $\fraks_{\II}$-triangle 
	(this is also an $\fraks^{\II}$-triangle ) from dual of Lemma \ref{inflation} and we fix it.
	\[
	Z \xdrar{\wh{\delta}} C_b \xrar{\wh{a}} Y \oplus I_Z 
	\xrar{\msize{0.6}{\begin{bmatrix} b & p_Z \end{bmatrix}}} Z
	\]
	Then we define $\wh{a}$ and $\wh{\delta}$ by the above $\fraks_{\II}^{\II}$-triangle. 
	We also define $\wh{b} = \msize{0.8}{\begin{bmatrix} b & p_Z \end{bmatrix}}$ and 
	$a$ as the composition of $C_b \xrar{\wh{a}} Y \oplus I_X \xrar{\msize{0.6}{\begin{bmatrix} 1 & 0 \end{bmatrix}}} Y$.
	\item There exists the following commutative diagram in $(\CC, \bbE^{\II}_{\II}, \fraks^{\II}_{\II})$ and we fix it.
	\begin{align}
	\xy
	(0,24)*+{}="11";
	(0,8)*+{Z}="21";
	(0,-8)*+{Z}="31";
	(0,-24)*+{}="41";
	(16,24)*+{Y}="12";
	(16,8)*+{Z \bracket{-1}}="22";
	(16,-8)*+{C_b}="32";
	(16,-24)*+{Y}="42";
	(32,24)*+{Y}="13";
	(32,8)*+{I_Z}="23";
	(32,-8)*+{Y \! \oplus \! I_Z}="33";
	(32,-24)*+{Y}="43";
	(48,24)*+{}="14";
	(48,8)*+{Z}="24";
	(48,-8)*+{Z}="34";
	(48,-24)*+{Z \bracket{-1}}="44";
	{\ar@{=} "12";"13"};
	{\ar@{-->}^-{\lambda_Z} "21";"22"};
	{\ar^-{i_Z} "22";"23"};
	{\ar@{->}^{p_Z} "23";"24"};
	{\ar@{-->}^{\wh{\delta}} "31";"32"};
	{\ar^-{\wh{a}} "32";"33"};
	{\ar@{->}^-{\wh{b}} "33";"34"};
	{\ar@{=} "42";"43"};
	{\ar@{-->}^-{-\gamma_b} "43";"44"};
	{\ar@{=} "21";"31"};
	{\ar@{-->}^{\gamma_b} "12";"22"};
	{\ar^{c_b} "22";"32"};
	{\ar@{->}^{a} "32";"42"};
	{\ar@{-->}^{0} "13";"23"};
	{\ar^{\msize{0.6}{\begin{bmatrix} 0 \\ 1 \\ \end{bmatrix}}} "23";"33"};
	{\ar@{->}^{\msize{0.6}{\begin{bmatrix} 1 & 0 \\ \end{bmatrix}}} "33";"43"};
	{\ar@{=} "24";"34"};
	{\ar@{-->}^-{\lambda_Z} "34";"44"};
	{\ar@{}|\car "12";"23"};
	{\ar@{}|\car "21";"32"};
	{\ar@{}|\car "22";"33"};
	{\ar@{}|\car "23";"34"};
	{\ar@{}|\car "32";"43"};
	{\ar@{}|\car "33";"44"};
	\endxy
	\label{THE_shifted_oct_2}
	\end{align}
	Then we define $c_b, \gamma_b$ by the above diagram. (We often drop ``$b$'' if there is no confusion.)
\end{enumerate}
There exists the following sequence in $\ul{\ZZ}$.
	 \begin{align}
	 \Omega Z \xrar{\omega(\ul{c})} \omega C_b \xrar{\,\ul{\wh{a} h_{C_b}}\,} Y \oplus I_Z \xrar{\,\ul{\wh{b}}\,} Z
	 \end{align}
\end{nota}

\begin{defi}
Let $\ul{b} \colon Y \to Z$ be a morphism in $\ul{\ZZ}$. Then there exists the following unique sequence in $\ul{\ZZ}$ up to isomorphisms. 
This sequence is called the \emph{standard left triangle} of $\ul{b}$.
\[
\Omega Z \xrar{\omega(\ul{c})} \omega C_b \xrar{\,\ul{a h_{C_b}}\,} Y \xrar{\,\ul{b}\,} Z
\]
We define
\[
\Delta = 
\left(
\begin{array}{ll}
\text{sequences} &\text{in } \ul{\ZZ} \text{ isomorphic to one in } \\
&\{ \Omega Z \xrar{\omega(\ul{c})} \omega C_b \xrar{\,\ul{a h_{C_b}}\,} Y \xrar{\,\ul{b}\,} Z \mid b \text{ is a morphism in }\ZZ \}
\end{array}
\right)
\]
and a sequence in $\Delta$ is called a \emph{left triangle} in $\ul{\ZZ}$.
\end{defi}

We omit the proof of the following statement, which is the dual of Corollary \ref{RT}.

\begin{prop}
Let $(\ZZ, \VV)$ be a left mutation double.
Then $(\ul{\ZZ}, \Omega, \Delta)$ is a left triangulated category.
\end{prop}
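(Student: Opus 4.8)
The plan is to obtain this Proposition from Corollary~\ref{RT} by duality, working in the opposite extriangulated category $(\CC^{\op},\bbE^{\op},\fraks^{\op})$, where $\bbE^{\op}(X,Y)=\bbE(Y,X)$ and every conflation is reversed. Under this passage the two relative structures interchange, $\bbE_{\DD}\leftrightarrow\bbE^{\DD}$, so that ``strongly contravariantly finite'' and ``strongly covariantly finite'' swap, $\Cone_{\bbE^{\II}}(\XX,\YY)$ becomes $\CoCone_{\bbE_{\II}}(\YY,\XX)$ and hence $\XX\bracket{1}$ becomes $\XX\bracket{-1}$, and a left adjoint of an inclusion becomes a right adjoint. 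The first thing I would check is that a left mutation double $(\ZZ,\VV)$ in $\CC$ is exactly a right mutation double $(\VV,\ZZ)$ in $\CC^{\op}$: conditions (LM1), (LM2), (LM3) imposed on $(\JJ,\ZZ,\VV)$ read in $\CC^{\op}$ as precisely (RM1), (RM2), (RM3) for $(\VV,\ZZ)$, using $\II=\JJ$, the identification of $\ZZ\bracket{1}$ in $\CC$ with $\ZZ\bracket{-1}$ in $\CC^{\op}$ together with Remark~\ref{rmk_MT2_third_vanish}, and the equality of $\wt{\TT}=\Cone_{\bbE_{\II}}(\VV,\ZZ)$ in $\CC$ with $\CoCone_{\bbE^{\II}}(\ZZ,\VV)=\wt{\UU}$ for $(\VV,\ZZ)$ in $\CC^{\op}$.

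Granting this, Corollary~\ref{RT} applied to $\CC^{\op}$ produces a right triangulated category $(\ul{\ZZ}^{\op},\Sigma_{\op},\nabla_{\op})$, and the remaining work is to identify it with the opposite of $(\ul{\ZZ},\Omega,\Delta)$. On functors this is immediate from the definitions: $\Sigma_{\op}=\sigma_{\op}\circ\bracket{1}_{\op}$, where $\bracket{1}_{\op}$ is the functor $\bracket{-1}\colon\ul{\ZZ}\to\ul{\CC}$ of Lemma~\ref{def_of_bracket}(2) read in $\CC^{\op}$, while $\sigma_{\op}$, being the left adjoint of $\ul{\ZZ}^{\op}\hookrightarrow\ul{\wt{\TT}}^{\op}$, is the opposite of the right adjoint $\omega$ of Lemma~\ref{adjoint_pair}(\ref{adjoint_pair_omega}); hence $\Sigma_{\op}=(\omega\circ\bracket{-1})^{\op}=\Omega^{\op}$. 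For the distinguished classes one checks that the $\fraks^{\II}$-triangles $X\xrar{i^X}I^X\xrar{p^X}X\bracket{1}\drar X$ and $U\xrar{h^U}\sigma U\xrar{g^U}S^U\drar U$ fixed when building $\nabla$ become, in $\CC^{\op}$, the reversals of the $\fraks_{\II}$-triangles $X\drar X\bracket{-1}\xrar{i_X}I_X\xrar{p_X}X$ and $T\drar V_T\xrar{g_T}\omega T\xrar{h_T}T$ fixed in Notation~\ref{notation_left_tri}, and that the shifted octahedron \eqref{THE_shifted_oct} attached to a morphism $a$ of $\ZZ$ dualizes to \eqref{THE_shifted_oct_2} attached to $a$ regarded as a morphism of $\ZZ$ in $\CC^{\op}$. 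Consequently the sequence \eqref{diag:right_tri} formed in $\CC^{\op}$ is the reversal of the one defining $\Delta$, so $\nabla_{\op}=\Delta^{\op}$, and therefore $(\ul{\ZZ},\Omega,\Delta)$ is a left triangulated category.

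The main, and in effect only, obstacle is this last matching: verifying that the concrete bookkeeping of Notation~\ref{notation_left_tri} really agrees with the opposite of that in Notation~\ref{notation_right_tri}. The delicate points here are purely the sign conventions — the $-\gamma_b$ in \eqref{THE_shifted_oct_2} and the sign occurring in Lemma~\ref{adjoint_sigma_omega} — which have however been chosen precisely so that the right-hand and left-hand constructions are mutually dual, so no new argument is required. As an alternative to the categorical duality above, one may instead dualize the proofs of Lemma~\ref{proof_of_RT0-1}, Lemma~\ref{proof_of_RT2}, Lemma~\ref{proof_of_RT3} and Proposition~\ref{proof_of_RT4} line by line, replacing $\bracket{1},\sigma,\Sigma,\wt{\UU},\nabla$ by $\bracket{-1},\omega,\Omega,\wt{\TT},\Delta$ and reversing every $\fraks^{\II}$-triangle; this needs no ideas beyond those already used for the right triangulated case.
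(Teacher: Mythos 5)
Your proposal is correct and matches the paper's approach: the paper omits the proof precisely because the statement is the dual of Corollary \ref{RT}, which is the duality argument you spell out. Your careful identification of a left mutation double in $\CC$ with a right mutation double in $\CC^{\op}$ and the matching of Notation \ref{notation_left_tri} with the opposite of Notation \ref{notation_right_tri} is exactly the bookkeeping the paper leaves implicit.
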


\subsection{Pretriangulated structures induced by premutation triples}
We assume that $(\SS, \ZZ, \VV)$ is a premutation triple.

\begin{thm} \label{main_thm1}
$(\ul{\ZZ}, \Sigma, \Omega, \nabla, \Delta)$ is a pretriangulated category, that is, 
for the following commutative diagrams in $\ul{\ZZ}$.
\[
\xy
(0,8)*+{X}="11";
(15,8)*+{Y}="12";
(30,8)*+{Z}="13";
(45,8)*+{\Sigma X}="14";
(0,-8)*+{\Omega Z^{\pr}}="21";
(15,-8)*+{X^{\pr}}="22";
(30,-8)*+{Y^{\pr}}="23";
(45,-8)*+{Z^{\pr}}="24";
{\ar^{\ul{a}} "11";"12"};
{\ar^{\ul{h^{C^a} b}} "12";"13"};	
{\ar^{\sigma(\ul{c})} "13";"14"};
{\ar^{\omega(\ul{c^{\pr}})} "21";"22"};
{\ar^{\ul{a^{\pr} h_{C_{b^{\pr}}}}} "22";"23"};
{\ar^{\ul{b^{\pr}}} "23";"24"};
{\ar^{\ul{s}} "11";"21"};
{\ar^{\ul{t}} "12";"22"};
{\ar^{\beta_{Z^{\pr}} \circ \Sigma \ul{s}} "14";"24"};
{\ar@{}|\car "11";"22"};
\endxy
\xy
(0,8)*+{X}="11";
(15,8)*+{Y}="12";
(30,8)*+{Z}="13";
(45,8)*+{\Sigma X}="14";
(0,-8)*+{\Omega Z^{\pr}}="21";
(15,-8)*+{X^{\pr}}="22";
(30,-8)*+{Y^{\pr}}="23";
(45,-8)*+{Z^{\pr}}="24";
{\ar^{\ul{a}} "11";"12"};
{\ar^{\ul{h^{C^a} b}} "12";"13"};	
{\ar^{\sigma(\ul{c})} "13";"14"};
{\ar^{\omega(\ul{c^{\pr}})} "21";"22"};
{\ar^{\ul{a^{\pr} h_{C_{b^{\pr}}}}} "22";"23"};
{\ar^{\ul{b^{\pr}}} "23";"24"};
{\ar_{\Omega \ul{u^{\pr}} \circ \alpha_X} "11";"21"};
{\ar_{\ul{t^{\pr}}} "13";"23"};
{\ar_{\ul{u^{\pr}}} "14";"24"};
{\ar@{}|\car "13";"24"};
\endxy
\]
where 
$Z = \sigma C^a, c = c^a, X^{\pr} = \omega C_{b^{\pr}}$ and $c^{\pr} = c_{b^{\pr}}$,
there exist morphisms $u \colon Z \to Y^{\pr}$ and $s^{\pr} \colon Y \to X^{\pr}$ 
which make the following diagrams commutative.
\[
\xy
(0,8)*+{X}="11";
(15,8)*+{Y}="12";
(30,8)*+{Z}="13";
(45,8)*+{\Sigma X}="14";
(0,-8)*+{\Omega Z^{\pr}}="21";
(15,-8)*+{X^{\pr}}="22";
(30,-8)*+{Y^{\pr}}="23";
(45,-8)*+{Z^{\pr}}="24";
{\ar^{\ul{a}} "11";"12"};
{\ar^{\ul{h^{C^a} b}} "12";"13"};	
{\ar^{\sigma(\ul{c})} "13";"14"};
{\ar^{\omega(\ul{c^{\pr}})} "21";"22"};
{\ar^{\ul{a^{\pr} h_{C_{b^{\pr}}}}} "22";"23"};
{\ar^{\ul{b^{\pr}}} "23";"24"};
{\ar^{\ul{s}} "11";"21"};
{\ar^{\ul{t}} "12";"22"};
{\ar^{\ul{u}} "13";"23"};
{\ar^{\beta_{Z^{\pr}} \circ \Sigma \ul{s}} "14";"24"};
{\ar@{}|\car "11";"22"};
{\ar@{}|\car "12";"23"};
{\ar@{}|\car "13";"24"};
\endxy
\xy
(0,8)*+{X}="11";
(15,8)*+{Y}="12";
(30,8)*+{Z}="13";
(45,8)*+{\Sigma X}="14";
(0,-8)*+{\Omega Z^{\pr}}="21";
(15,-8)*+{X^{\pr}}="22";
(30,-8)*+{Y^{\pr}}="23";
(45,-8)*+{Z^{\pr}}="24";
{\ar^{\ul{a}} "11";"12"};
{\ar^{\ul{h^{C^a} b}} "12";"13"};	
{\ar^{\sigma(\ul{c})} "13";"14"};
{\ar^{\omega(\ul{c^{\pr}})} "21";"22"};
{\ar^{\ul{a^{\pr} h_{C_{b^{\pr}}}}} "22";"23"};
{\ar^{\ul{b^{\pr}}} "23";"24"};
{\ar_{\Omega \ul{u^{\pr}} \circ \alpha_X} "11";"21"};
{\ar^{\ul{s^{\pr}}} "12";"22"};
{\ar^{\ul{t^{\pr}}} "13";"23"};
{\ar^{\ul{u^{\pr}}} "14";"24"};
{\ar@{}|\car "11";"22"};
{\ar@{}|\car "12";"23"};
{\ar@{}|\car "13";"24"};
\endxy
\]
\end{thm}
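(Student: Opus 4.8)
The plan is to observe that conditions (i)–(iii) of a pretriangulated category are already in hand — $(\Sigma,\Omega)$ is an adjoint pair by Lemma \ref{adjoint_sigma_omega}, and $(\ul\ZZ,\Sigma,\nabla)$, $(\ul\ZZ,\Omega,\Delta)$ are respectively right and left triangulated by Corollary \ref{RT} and its dual — so that only the compatibility condition (iv), i.e. the existence of the two fillers $u$ and $s^\pr$, remains to be proved. The two assertions are exchanged by the duality $(\CC,\bbE,\fraks)\mapsto(\CC^\op,\bbE^\op,\fraks^\op)$, which sends a mutation triple $(\SS,\ZZ,\VV)$ to $(\VV,\ZZ,\SS)$ and interchanges $\bbE^\II\leftrightarrow\bbE_\II$, $\Sigma\leftrightarrow\Omega$, $\bracket{1}\leftrightarrow\bracket{-1}$, $\sigma\leftrightarrow\omega$, $\nabla\leftrightarrow\Delta$ and $\alpha\leftrightarrow\beta$; hence it suffices to construct the first filler $u$.

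First I would normalise the two rows. By Lemma \ref{previous_lem} I may assume $a\colon X\to Y$ is an $\fraks^\II$-inflation, so the top right triangle comes from an $\fraks^\II$-triangle $X\xrar{a}Y\xrar{b}C^a\xdrar{\delta}X$ together with the fixed $\fraks^\II$-triangle $C^a\xrar{h^{C^a}}\sigma C^a\rar S^{C^a}\xdrar{}C^a$; dually (dual of Lemma \ref{previous_lem}) I may assume $b^\pr\colon Y^\pr\to Z^\pr$ is an $\fraks_\II$-deflation, so the bottom left triangle comes from an $\fraks_\II$-triangle $Z^\pr\xdrar{}C_{b^\pr}\xrar{}Y^\pr\xrar{b^\pr}Z^\pr$ together with the fixed $\fraks_\II$-triangle $C_{b^\pr}\xdrar{}V_{C_{b^\pr}}\rar\omega C_{b^\pr}\xrar{}C_{b^\pr}$. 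As in the proof of Lemma \ref{proof_of_RT2}, $Y\xrar{h^{C^a}\circ b}\sigma C^a$ is then an $\fraks^\II$-inflation and, dually, $\omega C_{b^\pr}\xrar{a^\pr\circ h_{C_{b^\pr}}}Y^\pr$ is an $\fraks_\II$-deflation, so both rows — including the objects $\sigma C^a$ and $\omega C_{b^\pr}$ occurring in them — sit inside genuine $\fraks$-triangles of $\CC$ that can be fed to (ET3), (ET3)$^\op$ and the exact sequences of Remark \ref{long_exact_seq}.

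The filler $u$ is then built in two steps. Abbreviating $\beta^\pr=\ul{h^{C^a}b}$, $\gamma^\pr=\sigma(\ul c)$ for the last two maps of the right triangle and $f^\pr=\ul{a^\pr h_{C_{b^\pr}}}$ for the middle map of the left triangle, commutativity of the given left square together with $f^\pr\circ\omega(\ul{c^\pr})=0$ (consecutive maps of the left triangle) gives $f^\pr\circ\ul t\circ\ul a=0$, so by the contravariant exact sequence attached to the right triangle $X\xrar{\ul a}Y\xrar{\beta^\pr}\sigma C^a\xrar{\gamma^\pr}\Sigma X$ (deduced from (RT0)–(RT3) in the usual way, cf. Remark \ref{rmk_facts_for_rtri}) there is $u_0\colon\sigma C^a\to Y^\pr$ with $\ul{u_0}\circ\beta^\pr=f^\pr\circ\ul t$, which already gives the middle square. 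Using $\ul{b^\pr}\circ f^\pr=0$ and $\gamma^\pr\circ\beta^\pr=0$ one finds $\bigl(\ul{b^\pr}\ul{u_0}-(\beta_{Z^\pr}\circ\Sigma\ul s)\circ\gamma^\pr\bigr)\circ\beta^\pr=0$, hence $\ul{b^\pr}\ul{u_0}-(\beta_{Z^\pr}\circ\Sigma\ul s)\circ\gamma^\pr=v\circ\gamma^\pr$ for some $v\colon\Sigma X\to Z^\pr$; one must then produce $e\colon\Sigma X\to Y^\pr$ with $\ul{b^\pr}\ul e=v$ and put $u=u_0-e\circ\gamma^\pr$. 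The point that makes such an $e$ exist is that $v$ is not arbitrary: $s$ satisfies $\omega(\ul{c^\pr})\circ\ul s=\ul t\circ\ul a$ and $\beta_{Z^\pr}\circ\Sigma\ul s$ is its adjunct, realised by the zig-zag diagram \eqref{rmk2} of Remark \ref{rmk_adjoint} through $X\bracket{1}$ and $Z^\pr\bracket{-1}$; tracking this zig-zag against the $\fraks_\II$-triangle $Z^\pr\xdrar{}C_{b^\pr}\xrar{}Y^\pr\xrar{b^\pr}Z^\pr$ and the identity $\ul s=\Omega(\beta_{Z^\pr}\circ\Sigma\ul s)\circ\alpha_X$ exhibits $v$ as a composite factoring through $b^\pr$. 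The second filler $s^\pr$ then follows by the dual argument.

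The hard part will be exactly this last matching: a bare application of the right-triangulated axioms produces a candidate $u_0$ but gives no reason for it to respect the fourth vertical map $\beta_{Z^\pr}\circ\Sigma\ul s$, and since $\Sigma$ is not an equivalence the usual "rotate and lift" device of triangulated categories is unavailable, so the compatibility has to be forced through the adjunction $(\Sigma,\Omega)$ and the explicit bifunctorial isomorphism of Lemma \ref{adjoint_sigma_omega}/Remark \ref{rmk_adjoint}. Everything else — the normalisation in Step 2, the construction of $u_0$, and all the auxiliary commutativities — reduces, after fixing the signs and the auxiliary objects ($I^X$, $S^{C^a}$, $I_{Z^\pr}$, $V_{C_{b^\pr}}$, …) from diagrams \eqref{THE_shifted_oct}, \eqref{THE_shifted_oct_2} and \eqref{rmk2}, to routine diagram chases in $\CC/[\II]$ using (ET3), (ET3)$^\op$ and Remarks \ref{long_exact_seq} and \ref{rmk_facts_for_rtri}.
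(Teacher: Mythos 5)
Your reduction to axiom (iv) and the duality argument for $s^{\pr}$ match the paper, and your candidate $u_0$ (a factorization of $\ul{a^{\pr}h_{C_{b^{\pr}}}}\circ\ul{t}$ through $\ul{h^{C^a}b}$) does exist; but there are two problems. First, the ``contravariant exact sequence'' $\ul{\ZZ}(\sigma C^a,-)\to\ul{\ZZ}(Y,-)\to\ul{\ZZ}(X,-)$ is \emph{not} deducible ``in the usual way'' from (RT0)--(RT3): the usual way rotates the triangle backwards, which is unavailable when $\Sigma$ is not an equivalence, and Remark \ref{rmk_facts_for_rtri} records only the covariant sequence $\CC(-,X)\to\CC(-,Y)\to\CC(-,Z)$. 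The factorization you need must instead be extracted from the explicit construction of right triangles: lift $f^{\pr}\ul{t}$ through $\wt{b}\colon Y\oplus I^X\to C^a$ via the long exact sequence of Remark \ref{long_exact_seq}, then through $h^{C^a}\colon C^a\to\sigma C^a$ via $\bbE^{\II}(\SS,\ZZ)=0$. Second, and decisively, the compatibility with the fourth vertical map $\beta_{Z^{\pr}}\circ\Sigma\ul{s}$ --- the step where you must show that $v$ ``factors through $b^{\pr}$'' by ``tracking the zig-zag'' --- is the entire content of the theorem, and your proposal does not carry it out; it only asserts that the adjunction data should force it. As written this is a restatement of the goal, not a proof, and you yourself flag it as the hard part.

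The paper sidesteps this correction step entirely by building the adjunct into the construction of $u$ from the start: writing $\ul{t a}=\omega(\ul{c^{\pr}})\ul{s}$ as an honest equality $ta+ji^X=\omega(c^{\pr})s$ in $\CC$, it applies (ET3) to the square with rows the $\fraks$-triangle $X\xrar{\wt{a}}Y\oplus I^X\to C^a\drar X$ and the $\fraks$-triangle $Z^{\pr}\bracket{-1}\xrar{c^{\pr}}C_{b^{\pr}}\xrar{a^{\pr}}Y^{\pr}\drar Z^{\pr}\bracket{-1}$ from \eqref{THE_shifted_oct_2}, with \emph{left} vertical $h_{Z^{\pr}\bracket{-1}}s$. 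The resulting filler $u^{\pr}\colon C^a\to Y^{\pr}$ then automatically satisfies $\gamma_{b^{\pr}}u^{\pr}=h_{Z^{\pr}\bracket{-1}}s\,\wt{\delta}$, and after factoring $u^{\pr}=uh^{C^a}$ (again by $\bbE^{\II}(\SS,\ZZ)=0$) the right square follows from the explicit chain $\lambda_{Z^{\pr}}s^{\pr}\sigma(c)h^{C^a}=\cdots=\lambda_{Z^{\pr}}\wh{b^{\pr}}uh^{C^a}$ built from \eqref{THE_shifted_oct}, \eqref{THE_shifted_oct_2} and the description of $s^{\pr}=\beta_{Z^{\pr}}\circ\Sigma\ul{s}$ in Remark \ref{rmk_adjoint}, together with the injectivity of $-\circ\ul{h^{C^a}}$ from Remark \ref{rmk_adj_of_sigma}. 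If you want to salvage your two-step route, you would have to reproduce essentially this computation to identify your correction term $v$; it is cleaner to choose $u$ correctly in the first place.
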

\begin{proof}
We only prove the existence of $u$ because one can show that of $s^{\pr}$ by dual argument.
We divide our proof into two steps.

\vspace{5pt} 
\noindent {\ul{\textbf{Step.1 Construct the morphism $u \colon Z \to Y^{\pr}$ }}}
\vspace{5pt} 

From $\ul{ta} = \omega(\ul{c^{\pr}}) \ul{s}$, there exists a morphism $j \colon I^X \to X^{\pr}$ where 
$ta + j i^X = \omega(c^{\pr})s$. By (ET3) of ($\CC, \bbE, \fraks$), 
we obtain a morphism $u^{\pr} \colon C^a \to Y^{\pr}$ which makes the following diagram commutative.
\begin{align}
\xy
(0,8)*+{X}="11";
(16,8)*+{Y \!\oplus\! I^X}="12";
(35,8)*+{C^a}="13";
(51,8)*+{X}="14";
(0,-8)*+{Z^{\pr}\bracket{-1}}="21";
(16,-8)*+{C_{b^{\pr}}}="22";
(35,-8)*+{Y^{\pr}}="23";
(51,-8)*+{Z^{\pr}\bracket{-1}}="24";
{\ar^-{\wt{a}} "11";"12"};
{\ar^-{\wt{b}} "12";"13"};	
{\ar@{-->}^-{\wt{\delta}} "13";"14"};
{\ar^-{c^{\pr}} "21";"22"};
{\ar^-{a^{\pr}} "22";"23"};
{\ar@{-->}^-{\gamma_{b^{\pr}}} "23";"24"};
{\ar_{h_{Z^{\pr}\bracket{-1}} s} "11";"21"};
{\ar^{h_{C_{b^{\pr}}} \msize{0.6}{\begin{bmatrix} t & j \end{bmatrix}}} "12";"22"};
{\ar^{u^{\pr}} "13";"23"};
{\ar^{h_{Z^{\pr}\bracket{-1}} s} "14";"24"};
{\ar@{}|\car "11";"22"};
{\ar@{}|{\phantom{XXXX}\car} "12";"23"};
{\ar@{}|\car "13";"24"};
\endxy \label{diag:step1}
\end{align}
Since we assumed that $\bbE^{\II}(\SS, \ZZ) = 0$, $u^{\pr}$ factors through 
the morphism $h^{C^a} \colon C^a \to Z$. 
Then there exists a morphism $u \colon Z \to Y^{\pr}$ where $u^{\pr} = u h^{C^a}$.
Thus, we obtain the following commutative diagram.
\[
\xy
(0,8)*+{Y \!\oplus\! I^X}="11";
(16,8)*+{C^a}="12";
(32,8)*+{Z}="13";
(0,-8)*+{C_{b^{\pr}}}="21";
(16,-8)*+{Y^{\pr}}="22";
{\ar^-{\wt{b}} "11";"12"};
{\ar^-{h^{C^a}} "12";"13"};	
{\ar^-{a^{\pr}} "21";"22"};
{\ar_{h_{C_{b^{\pr}}} \msize{0.6}{\begin{bmatrix} t & j \end{bmatrix}}} "11";"21"};
{\ar^{u^{\pr}} "12";"22"};
{\ar^{u} "13";"22"};
{\ar@{}|\car "11";"22"};
{\ar@{}_\car "13";"22"};
\endxy
\]

\vspace{5pt} 
\noindent {\ul{\textbf{Step.2 Checks for commutativity }}}
\vspace{5pt} 

Since $u h^{C^a} \wt{b} = a^{\pr} h_{C_{b^{\pr}}} \msize{0.8}{\begin{bmatrix} t & j \end{bmatrix}}$,  $\ul{u h^{C^a} b} = \ul{a^{\pr} h_{C_{b^{\pr}}} t}$. 
We only have to show that $\beta_{Z^{\pr}}$ $\Sigma(\ul{s})$ $\sigma(\ul{c})$ $= \ul{b^{\pr} u}$.
Recall that we obtain a morphism $s^{\pr} \colon \Sigma X \to Z^{\pr}$ where $\ul{s^{\pr}} = \beta_{Z^{\pr}} 
\Sigma(\ul{s})$ from Remark \ref{rmk_adjoint} and the following commutative diagram exists.
\[
\xy
(-10,-12)*+{}="10";
(0,8)*+{X}="11";
(16,8)*+{X\bracket{1}}="12";
(32,8)*+{\Sigma X}="13";
(-16,-8)*+{\Omega Z^{\pr}}="20";
(0,-8)*+{Z^{\pr} \bracket{-1}}="21";
(16,-8)*+{Z^{\pr}}="22";
(26,12)*+{}="23";
{\ar@{-->}_{\lambda^X} "12";"11"};
{\ar@{->}^{h^{X\bracket{1}}} "12";"13"};
{\ar@{->}_{h_{Z^{\pr}\bracket{-1}}} "20";"21"};
{\ar@{-->}^-{- \lambda_{Z^{\pr}}} "22";"21"};
{\ar_{s} "11";"20"};
{\ar^{} "11";"21"};
{\ar_{} "12";"22"};
{\ar^{s^{\pr}} "13";"22"};
{\ar@{}|\car "11";"10"};
{\ar@{}|\car "11";"22"};
{\ar@{}|\car "23";"22"};
\endxy
\]
Then
$\lambda_{Z^{\pr}} s^{\pr} \sigma(c) h^{C^a} = 
\lambda_{Z^{\pr}} s^{\pr} h^{X\bracket{1}} c =
- h_{Z^{\pr}\bracket{-1}} s \lambda^X c =
- h_{Z^{\pr}\bracket{-1}} s \wt{\delta} =
- \gamma_{b^{\pr}} u^{\pr} =
\lambda_{Z^{\pr}} \wh{b^{\pr}} u h^{C^a}$ from \eqref{THE_shifted_oct}, \eqref{THE_shifted_oct_2} and \eqref{diag:step1}.
Thus, $\lambda_{Z^{\pr}} (s^{\pr} \sigma(c) - \wh{b^{\pr}} u) h^{C^a} = 0$
and $(\beta_{Z^{\pr}} \Sigma(\ul{s}) \sigma(\ul{c}) - \ul{b^{\pr} u}) \ul{h^{C^a}} = 0$.
From Remark \ref{rmk_adj_of_sigma}(\ref{rmk_adj_of_sigma_1}), 
$\beta_{Z^{\pr}} \Sigma(\ul{s}) \sigma(\ul{c}) - \ul{{b^{\pr}} u} = 0$.
\end{proof}

\begin{cor} \label{cor_trictg}
Let $(\SS, \ZZ, \VV)$ be a premutation triple where $\Omega$ and $\Sigma$ are mutually quasi-inverse. 
Then $(\ul{\ZZ}, \Sigma, \nabla)$ is a triangulated category.
\end{cor}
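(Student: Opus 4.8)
The starting point is Theorem \ref{main_thm1}: it already gives that $(\ul{\ZZ}, \Sigma, \Omega, \nabla, \Delta)$ is a pretriangulated category, and in particular (Corollary \ref{RT}) that $(\ul{\ZZ}, \Sigma, \nabla)$ is a right triangulated category, so axioms (RT0)--(RT4) of Remark \ref{axioms_of_RTri} are in hand. The extra hypothesis is that $\Sigma$ and $\Omega$ are mutually quasi-inverse; combined with the adjunction $(\Sigma, \Omega)$ of Lemma \ref{adjoint_sigma_omega}, this forces the unit $\alpha$ and the counit $\beta$ to be natural isomorphisms, so $(\Sigma, \Omega)$ is an adjoint equivalence and $\Sigma$ is an autoequivalence of $\ul{\ZZ}$. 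Hence the only Verdier axiom still to be verified is that $\nabla$ is closed under \emph{negative} rotation (positive rotation being (RT2)); once this is done, (RT0), (RT1), (RT3), (RT4) supply Verdier's (TR0), (TR1), (TR3) and the octahedral axiom (its standard reformulations following formally once both rotations are available), and $\Sigma$ being an equivalence completes the list.

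The plan for negative rotation is to identify $\nabla$ and $\Delta$ up to a single shift. Concretely, I would first prove: a four-term sequence $A \xrar{f} B \xrar{g} C \xrar{h} \Sigma A$ lies in $\nabla$ if and only if $\Omega C \xrar{k} A \xrar{f} B \xrar{g} C$ lies in $\Delta$, where $k \colon \Omega C \to A$ is the morphism corresponding to $h$ under the adjunction $(\Sigma, \Omega)$ (the correspondence goes both ways since $\alpha$ and $\beta$ are isomorphisms). For the implication ``$\Delta \Rightarrow \nabla$'' I would complete $f$ to a right triangle $A \xrar{f} B \xrar{g_1} C_1 \xrar{h_1} \Sigma A$ via (RT1)(ii), feed this right triangle and the given left triangle into the compatibility condition (iv) of the definition of pretriangulated category (the ``staircase'' data it requires being supplied by $\alpha, \beta$), and then invoke uniqueness of cones in right triangulated categories (Remark \ref{rmk_facts_for_rtri}(\ref{rmk_uniqueness_of_cone})) to get a compatible isomorphism $C_1 \cong C$. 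The reverse implication is symmetric, using the dual of (RT1)(ii) in the left triangulated category $(\ul{\ZZ}, \Omega, \Delta)$ together with uniqueness of \emph{cocones}, which is legitimate here precisely because $\Sigma$ is an equivalence (rotate once, apply cone-uniqueness, apply $\Sigma^{-1}$). Granting this compatibility, negative rotation follows: starting from $A \xrar{f} B \xrar{g} C \xrar{h} \Sigma A \in \nabla$, pass to the left triangle $\Omega C \xrar{k} A \xrar{f} B \xrar{g} C \in \Delta$, rotate it once inside $\Delta$ (the dual of (RT2), valid in any left triangulated category) to $\Omega B \xrar{-\Omega g} \Omega C \xrar{k} A \xrar{f} B \in \Delta$, and translate this back across the compatibility, using $\beta_C \colon \Sigma\Omega C \xrar{\sim} C$, to reach the four-term sequence $\Omega C \xrar{k} A \xrar{f} B \xrar{g} C \in \nabla$; this is exactly the negative rotation of the original triangle, modulo the canonical identification $\Sigma^{-1} \cong \Omega$ and the usual signs. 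Since $\nabla$ is closed under isomorphisms (RT0), it is genuinely closed under rotation in both directions, and $(\ul{\ZZ}, \Sigma, \nabla)$ is triangulated.

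The main obstacle is the shift-compatibility claim in the second paragraph: it requires careful bookkeeping of the adjunction isomorphisms $\alpha, \beta$ and of the signs coming from the rotation axioms, and a correct application of the pretriangulated compatibility condition (iv) to transfer data between a right and a left triangle; everything else is formal. Alternatively, once one has reduced to a pretriangulated category with adjoint equivalence $(\Sigma, \Omega)$, this entire argument can be replaced by a reference to \cite{BR07}, where it is established in general that such a pretriangulated category is a triangulated category.
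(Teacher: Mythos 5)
Your conclusion is correct, and your closing remark---reduce to the general fact that a pretriangulated category whose suspension is an equivalence is triangulated, citing \cite{BR07}---is essentially all the paper does: its entire proof reads ``by definition of pretriangulated categories.'' You also correctly isolate the one axiom not already supplied by Corollary \ref{RT}: since Remark \ref{axioms_of_RTri} states (RT2) in the forward direction only, what remains is closure of $\nabla$ under negative rotation.

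However, the detailed route you sketch is both more roundabout than necessary and not correctly set up at its key step. Condition (iv) of the definition of a pretriangulated category takes as input a right triangle $X_1 \to Y_1 \to Z_1 \to \Sigma X_1$, a left triangle $\Omega Z_2 \to X_2 \to Y_2 \to Z_2$, and either morphisms $s \colon X_1 \to \Omega Z_2$, $t \colon Y_1 \to X_2$ making the leftmost square commute, or morphisms $t' \colon Z_1 \to Y_2$, $u' \colon \Sigma X_1 \to Z_2$ making the rightmost square commute. In your configuration (top row $A \xrar{f} B \to C_1 \to \Sigma A$, bottom row $\Omega C \to A \xrar{f} B \to C$) the required datum would be a morphism $B \to A$ (resp.\ $C_1 \to B$), and neither is supplied by $\alpha$ or $\beta$; the axiom does not apply as you invoke it. The detour through $\Delta$ is also avoidable: given $B \xrar{\ul{g}} C \xrar{\ul{h}} \Sigma A \xrar{-\Sigma \ul{f}} \Sigma B$ in $\nabla$, complete $\ul{f}$ to a right triangle $A \xrar{\ul{f}} B \to C' \to \Sigma A$ by (RT1)(ii), apply (RT2) to both until each begins with $-\Sigma \ul{f} \colon \Sigma A \to \Sigma B$, use Remark \ref{rmk_facts_for_rtri}(\ref{rmk_uniqueness_of_cone}) to produce a compatible isomorphism $\Sigma C \cong \Sigma C'$, and descend it along the fully faithful $\Sigma$ to an isomorphism $C \cong C'$ commuting with $\ul{g}, \ul{h}$; then (RT0) puts $A \xrar{\ul{f}} B \xrar{\ul{g}} C \xrar{\ul{h}} \Sigma A$ in $\nabla$. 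This uses only the right triangulated structure together with $\Sigma$ being an equivalence, so neither $\Delta$ nor the compatibility axiom is needed.
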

\begin{proof}
By definition of pretriangulated categories.
\end{proof}

We consider sufficient conditions that $\Omega$ and $\Sigma$ are mutually quasi-inverse in the next section.

\section{Triangulated structures induced by mutation triples}\label{triangulated}
In this section, we finally achieve the aim of this paper, 
which is finding sufficient conditions for premutation triples to induce triangulated categories.
We fix an ET category $(\CC, \bbE, \fraks)$ in this section.

\subsection{Mutation triples and main theorem}

Next, we consider another condition (MT4) to induce triangulated structures for premutation triples in ET categories.
We assume that $(\SS, \ZZ, \VV)$ is a premutation triple.

To begin with, we construct new functors $(\, \cdot \,)^+$ and $(\, \cdot \,)^-$, 
which are analogies of those in Definition \ref{def_of_plus}.

\begin{defi}
	\begin{enumerate}
	\item We define $\wt{\UU}^-$ as $\CoCone_{\bbE^{\II}}(\II, \SS)$.
	Note that $\wt{\UU}^- \subset \wt{\UU}$ by definitions.
	\item We define $\wt{\TT}^+$ as $\Cone_{\bbE_{\II}}(\VV, \II)$.
	Note that $\wt{\TT}^+ \subset \wt{\TT}$ by definitions.
	\end{enumerate}
\end{defi}

\begin{rmk}
Let $((\SS, \TT), (\UU, \VV))$ be a concentric twin cotorsion pair of $\CC$.
Then $((\SS, \TT), (\SS, \TT))$ and $((\UU, \VV), (\UU, \VV))$ are also concentric twin cotorsion pairs of $\CC$.

$\wt{\UU}^-$ corresponds to $\CC^-$ in Definition \ref{defi_plus-minus_ccTCP} determined by $((\SS, \TT), (\SS, \TT))$ and 
$\wt{\TT}^+$ corresponds to $\CC^+$ in Definition \ref{defi_plus-minus_ccTCP} determined by $((\UU, \VV), (\UU, \VV))$.
\end{rmk}

We prepare the following lemma before we define those functors.

\begin{lem} \label{lem_sign_ext_vanish}
	\begin{enumerate}
	\item $\bbE^{\II}(\wt{\UU}^-, \ZZ\bracket{-1}) = 0$.
	\item $\bbE_{\II}(\ZZ\bracket{1}, \wt{\TT}^+) = 0$.
	\end{enumerate}
\end{lem}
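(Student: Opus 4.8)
The plan is to prove (1) by a short element chase through the long exact sequences of Remark~\ref{long_exact_seq}, carried out in the ambient ET category $(\CC,\bbE,\fraks)$; this is legitimate because every $\fraks^{\II}$- or $\fraks_{\II}$-triangle is in particular an $\fraks$-triangle, since $\fraks^{\II}=\fraks|_{\bbE^{\II}}$ and $\fraks_{\II}=\fraks|_{\bbE_{\II}}$. Part (2) then follows by the $\op$-dual argument, under $\SS\leftrightarrow\VV$, $\bbE^{\II}\leftrightarrow\bbE_{\II}$, $\wt{\UU}^-\leftrightarrow\wt{\TT}^+$, $\ZZ\bracket{-1}\leftrightarrow\ZZ\bracket{1}$ and (RM2)$\leftrightarrow$(LM2), so I would write out only (1).

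To prove (1), fix $U^-\in\wt{\UU}^-$ and $Z'\in\ZZ\bracket{-1}$ and let $\xi\in\bbE^{\II}(U^-,Z')$ be arbitrary; the goal is $\xi=0$. From $\wt{\UU}^-=\CoCone_{\bbE^{\II}}(\II,\SS)$ and $\ZZ\bracket{-1}=\CoCone_{\bbE_{\II}}(\II,\ZZ)$ we obtain $\fraks$-triangles
\[
U^-\xrar{f}I\xrar{g}S\xdrar{\delta_0}U^-,\qquad Z'\xrar{a}I'\xrar{b}Z\xdrar{\zeta}Z'
\]
with $I,I'\in\II$, $S\in\SS$, $Z\in\ZZ$, $\delta_0\in\bbE^{\II}(S,U^-)$ and $\zeta\in\bbE_{\II}(Z,Z')$. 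Since $a$ is a morphism into $\II$ and $\xi\in\bbE^{\II}(U^-,Z')$, the definition of $\bbE^{\II}$ gives $a\xi=0$; applying $\CC(U^-,-)$ and $\bbE(U^-,-)$ to the second triangle, exactness of $\CC(U^-,Z)\xrar{\zeta\circ-}\bbE(U^-,Z')\xrar{}\bbE(U^-,I')$ yields $\xi=\zeta\phi$ for some $\phi\colon U^-\to Z$. Next, $\phi\delta_0\in\bbE^{\II}(S,Z)$ because $\bbE^{\II}$ is a subfunctor, and $\bbE^{\II}(S,Z)=0$ by (RM2), so $\phi\delta_0=0$; applying $\CC(-,Z)$ to the first triangle, exactness of $\CC(I,Z)\xrar{}\CC(U^-,Z)\xrar{-\circ\delta_0}\bbE(S,Z)$ gives $\phi=\psi f$ for some $\psi\colon I\to Z$. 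Finally $\psi$ is a morphism out of $\II$ and $\zeta\in\bbE_{\II}(Z,Z')$, so the definition of $\bbE_{\II}$ gives $\zeta\psi=0$, hence $\xi=\zeta\phi=(\zeta\psi)f=0$. As $\xi$ was arbitrary, $\bbE^{\II}(\wt{\UU}^-,\ZZ\bracket{-1})=0$.

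For (2) the dual chase runs as follows: given $Z''\in\ZZ\bracket{1}$ with $\fraks$-triangle $Z\to I\to Z''\xdrar{\eta}Z$ ($\eta\in\bbE^{\II}(Z'',Z)$), $T^+\in\wt{\TT}^+$ with $\fraks$-triangle $V\to I_0\xrar{c}T^+\xdrar{\mu}V$ ($\mu\in\bbE_{\II}(T^+,V)$), and $\xi\in\bbE_{\II}(Z'',T^+)$, one uses $\xi b=0$ (with $b\colon I\to Z''$ into $\II$) to write $\xi=\theta\eta$ with $\theta\colon Z\to T^+$; then $\mu\theta\in\bbE_{\II}(\ZZ,\VV)=0$ by (LM2) (noting $\bbE_{\II}$ is a subfunctor), so $\theta=c\theta'$ with $\theta'\colon Z\to I_0$; and then $\theta'\eta=0$ because $\theta'$ is into $\II$ and $\eta\in\bbE^{\II}$, whence $\xi=c(\theta'\eta)=0$.

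There is no deep obstacle here; the argument is purely a diagram chase. The only thing that needs care is to keep the two relative substructures apart: at each of the three cancellations one must know whether one is invoking a hypothesis from (RM2)/(LM2) --- which first requires observing that the extension in question already lies in $\bbE^{\II}$ resp.\ $\bbE_{\II}$, true because these are subfunctors --- or instead the defining property of $\bbE^{\II}$ resp.\ $\bbE_{\II}$, namely that pushing out along a morphism into $\II$ (resp.\ pulling back along a morphism out of $\II$) vanishes. Note in particular that neither (MT3) nor the approximation conditions (RM1)/(LM1) enter the proof.
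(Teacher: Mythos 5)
Your proof of (1) is correct and is essentially the paper's own argument: the same three-step factorization ($\xi=\zeta\phi$ via the defining property of $\bbE^{\II}$, then $\phi=\psi f$ via $\bbE^{\II}(\SS,\ZZ)=0$ from (RM2), then $\zeta\psi=0$ via the defining property of $\bbE_{\II}$), and the paper likewise leaves (2) to duality. No issues.
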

\begin{proof}
We only prove (1).
Let $Z^{\pr} \in \ZZ\bracket{-1}, U^- \in \wt{\UU}^-$ and $\delta \in \bbE^{\II}(U^-, Z^{\pr})$.
Since $Z^{\pr} \in \ZZ\bracket{-1}$, there exists
an $\fraks^{\II}_{\II}$-triangle $Z^{\pr} \rar I \rar Z \lxdrar{\lambda} Z^{\pr}$ where $I \in \II$ and $Z \in \ZZ$.

Then $\delta$ factors through $\lambda$ and let $f \colon U^- \to Z$ be a morphism where $\delta = \lambda f$.
From $U^- \in \wt{\UU}^-$, there exists an $\fraks^{\II}$-triangle $U^- \xrar{h} I^{\pr} \rar S \drar U^-$
where $I^{\pr} \in \II, S \in \SS$.

Then $f$ factors through $h$ since $\bbE^{\II}(\SS, \ZZ) =0$, 
and let $g \colon I^{\pr} \to Z$ be a morphism where $f = gh$.

Thus, $\delta = \lambda g h = 0$ because $\lambda g \in \bbE^{\II}_{\II}(\II, \ZZ) = 0$.
\end{proof}

\begin{nota} \label{nota_+_and_-}
\begin{enumerate}
\item
	\begin{enumerate}
	\item For $U \in \wt{\UU}$, take the fixed triangles $U \xrar{h^U} Z^U \xrar{g^U} S^U \xdrar{\rho^U}$ and
	$Z^U\bracket{-1} \xrar{i_{Z^U}} I_{Z^U} \xrar{p_{Z^U}} Z^U \xdrar{\lambda_{Z^U}} Z^U\bracket{-1}$. 
	Then there exists the following commutative diagram (\ref{diag:-_1}) in $(\CC, \bbE^{\II}, \fraks^{\II})$.
	\begin{align}
	\xy
	(16,24)*+{S^U}="12";
	(32,24)*+{S^U}="13";
	(0,8)*+{Z^U\bracket{-1}}="21";
	(16,8)*+{U^-}="22";
	(32,8)*+{U}="23";
	(48,8)*+{Z^U\bracket{-1}}="24";
	(0,-8)*+{Z^U\bracket{-1}}="31";
	(16,-8)*+{I_{Z^U}}="32";
	(32,-8)*+{Z^U}="33";
	(48,-8)*+{Z^U\bracket{-1}}="34";
	(16,-24)*+{S^U}="42";
	(32,-24)*+{S^U}="43";
	{\ar@{=} "12";"13"};
	{\ar^-{r_U} "21";"22"};
	{\ar^-{s_U} "22";"23"};
	{\ar@{-->}^-{\chi_U} "23";"24"};
	{\ar^-{i_{Z^U}} "31";"32"};
	{\ar^-{p_{Z^U}} "32";"33"};
	{\ar@{-->}^-{\lambda_{Z^U}} "33";"34"};
	{\ar@{=} "42";"43"};
	{\ar@{=} "21";"31"};
	{\ar@{-->}^-{\pi^U} "12";"22"};
	{\ar^-{} "22";"32"};
	{\ar^-{} "32";"42"};
	{\ar@{-->}^-{\rho^U} "13";"23"};
	{\ar^-{h^U} "23";"33"};
	{\ar^-{g^U} "33";"43"};
	{\ar@{=} "24";"34"};
	{\ar@{}|\car "12";"23"};
	{\ar@{}|\car "21";"32"};
	{\ar@{}|\car "22";"33"};
	{\ar@{}|\car "23";"34"};
	{\ar@{}|\car "32";"43"};
	\endxy
	\label{diag:-_1}
	\end{align}
	We define $U^-$ and a morphism $s_U \colon U^- \to U$ by the $\fraks^{\II}_{\II}$-triangle.
	\[
	Z^U\bracket{-1} \xrar{r_U} U^- \xrar{s_U} U \lxdrar{\chi_U} Z^U\bracket{-1}
	\]
	\item For a morphism $u \colon U_1 \to U_2$ in $\wt{\UU}$, 
	there exists the following commutative diagram (\ref{diag:-_2}) in $\CC$ 
	and define a morphism $u^- \colon U^-_1 \to U^-_2$ by it.
	\begin{align}
	\xy
	(0,8)*+{U^-_1}="11";
	(16,8)*+{U_1}="12";
	(0,-8)*+{U^-_2}="21";
	(16,-8)*+{U_2}="22";
	{\ar^{s_{U_1}} "11";"12"};
	{\ar^{s_{U_2}} "21";"22"};
	{\ar^{u^-} "11";"21"};
	{\ar^{u} "12";"22"};
	{\ar@{}|\car "11";"22"};
	\endxy
	\label{diag:-_2}
	\end{align}
	\end{enumerate}
\item 
	\begin{enumerate}
	\item 
	For $T \in \wt{\TT}$, take the fixed triangles $V_T \xrar{g_T} Z_T \xrar{h_T} T \lxdrar{\rho_T}$ and
	$Z_T \xrar{i^{Z_T}} I^{Z_T} \xrar{p^{Z_T}} Z_T\bracket{1} \xdrar{\lambda^{Z_T}} Z_T$. 
	Then there exists the following commutative diagram (\ref{diag:+_1}) in $(\CC, \bbE_{\II}, \fraks_{\II})$.
	\begin{align}
	\xy
	(16,24)*+{Z_T\bracket{1}}="12";
	(32,24)*+{Z_T\bracket{1}}="13";
	(0,8)*+{V_T}="21";
	(16,8)*+{Z_T}="22";
	(32,8)*+{T}="23";
	(48,8)*+{V_T}="24";
	(0,-8)*+{V_T}="31";
	(16,-8)*+{I^{Z_T}}="32";
	(32,-8)*+{T^+}="33";
	(48,-8)*+{V_T}="34";
	(16,-24)*+{Z_T\bracket{1}}="42";
	(32,-24)*+{Z_T\bracket{1}}="43";
	{\ar@{=} "12";"13"};
	{\ar^-{g_T} "21";"22"};
	{\ar^-{h_T} "22";"23"};
	{\ar@{-->}^-{\rho_T} "23";"24"};
	{\ar^-{} "31";"32"};
	{\ar^-{} "32";"33"};
	{\ar@{-->}^-{\pi_T} "33";"34"};
	{\ar@{=} "42";"43"};
	{\ar@{=} "21";"31"};
	{\ar@{-->}^-{\lambda^{Z_T}} "12";"22"};
	{\ar^-{i^{Z_T}} "22";"32"};
	{\ar^-{p^{Z_T}} "32";"42"};
	{\ar@{-->}^-{\chi^T} "13";"23"};
	{\ar^-{s^T} "23";"33"};
	{\ar^-{r^T} "33";"43"};
	{\ar@{=} "24";"34"};
	{\ar@{}|{\phantom{Xx}\car} "12";"23"};
	{\ar@{}|\car "21";"32"};
	{\ar@{}|\car "22";"33"};
	{\ar@{}|\car "23";"34"};
	{\ar@{}|\car "32";"43"};
	\endxy
	\label{diag:+_1}
	\end{align}
	We define $T^+$ and a morphism $s^T \colon T \to T^+$ by the $\fraks^{\II}_{\II}$-triangle.
	\[
	T \xrar{s^T} T^+ \xrar{r^T} Z_T\bracket{1} \xdrar{\chi^T} T
	\]
	
	\item
	For a morphism $t \colon T_1 \to T_2$ in $\wt{\TT}$,
	there exists the following commutative diagram (\ref{diag:+_2}) in $\CC$ 
	and define a morphism $t^+ \colon T^+_1 \to T^+_2$ by it.
	\begin{align}
	\xy
	(0,8)*+{T_1}="11";
	(16,8)*+{T_1^+}="12";
	(0,-8)*+{T_2}="21";
	(16,-8)*+{T_2^+}="22";
	{\ar^{s^{T_1}} "11";"12"};
	{\ar^{s^{T_2}} "21";"22"};
	{\ar^{t} "11";"21"};
	{\ar^{t^+} "12";"22"};
	{\ar@{}|\car "11";"22"};
	\endxy
	\label{diag:+_2}
	\end{align}
	\end{enumerate}
\end{enumerate}
\end{nota}

\begin{prop} \label{prop_+_and_-}
We use the notations in Notation \ref{nota_+_and_-}.
\begin{enumerate} 
\item 
	\begin{enumerate}
	\item $\ul{s_U} \circ - \colon \ul{\wt{\UU}^-}(\wt{\UU}^-, U^-) \to \ul{\wt{\UU}}(\wt{\UU}^-, U)$ 
	is a natural isomorphism.
	\item The correspondence $(\, \cdot \,)^- \colon \ul{\wt{\UU}} \to \ul{\wt{\UU}^-}$ 
	is a right adjoint functor of the inclusion functor $\ul{\wt{\UU}^-} \to \ul{\wt{\UU}}$.
	\end{enumerate}
\item
	\begin{enumerate}
	\item $- \circ \ul{s^T} \colon \ul{\wt{\TT}^+}(T^+, \wt{\TT}^+) \to \ul{\wt{\TT}}(T, \wt{\TT}^+)$ 
	is a natural isomorphism.
	\item The correspondence $(\, \cdot \,)^+ \colon \ul{\wt{\TT}} \to \ul{\wt{\TT}^+}$ 
	is a left adjoint functor of the inclusion functor $\ul{\wt{\TT}^+} \to \ul{\wt{\TT}}$.
	\end{enumerate}
\end{enumerate}
\end{prop}
\begin{proof}
We only prove (1).
(i) From Lemma \ref{lem_sign_ext_vanish}, $\ul{s_U} \circ -$ is surjective, so we only have to show its injectivity.
Take a morphism $u^{\pr} \colon U^{\pr} \to U^-$ where $\ul{s_U u^{\pr}} = 0$ and $U^{\pr} \in \wt{\UU}^-$.
Let $i$ be a morphism contained in $[\II]$ where $s_U u^{\pr} = i$.
Since $\chi_U$ is an $\bbE^{\II}_{\II}$-extension, $i$ factors through $s_U$.
Thus, there exists a morphism $f \colon U^{\pr} \to Z^U \bracket{-1}$ where $\ul{r_U f} = \ul{u^{\pr}}$.
Take an $\fraks_{\II}$-triangle $U^{\pr} \rar I \rar S \lxdrar{\delta} U^{\pr}$ where $I \in \II$ and $S \in \SS$.
Since $\bbE_{\II}(\SS, \ZZ\bracket{-1}) =0$, $f \delta =0$.
Then $f$ factors through $I$, in particular, 
$\ul{u^{\pr}}=\ul{r_{U_2} f}=0$.

(ii) From (i), $u^-$ is uniquely determined by the right commutative diagram in (\ref{diag:-_2}) up to $[\II]$.
Then $(\, \cdot \,)^-$ induces a functor.
\end{proof}

We define the following condition which is named (MT4)$^{\pr}$ before (MT4).
In Remark \ref{MT4}, we define (MT4) and check that the two conditions are equivalent.

\begin{condi} \label{MT4_prime}
Let $(\SS, \ZZ, \VV)$ be a premutation triple and we consider the following conditions.
	\begin{itemize}[leftmargin=50pt]
	\item[(MT4)$^{\pr}$]
		\begin{enumerate}[label=(\roman*)]
		\item $\ul{\ZZ\bracket{1}}^- \subset \ul{\wt{\TT}}{}^+$.
		\item $\ul{\ZZ\bracket{-1}}^+ \subset \ul{\wt{\UU}}{}^-$.
		\end{enumerate}
	\end{itemize}
\end{condi}

\begin{defi}
Let $(\SS, \ZZ, \VV)$ be a premutation triple.
A triplet $(\SS, \ZZ, \VV)$ is a \emph{mutation triple} if it satisfies the condition (MT4)$^{\pr}$.
\end{defi}

\begin{thm} \label{main_thm2}
Let $(\SS, \ZZ, \VV)$ be a mutation triple. 
Then the quintuplet $(\ul{\ZZ},$ $\Sigma,$ $\Omega,$ $\nabla,$ $\Delta)$ in Theorem \ref{main_thm1} is a triangulated category.
\end{thm}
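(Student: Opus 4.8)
By Corollary \ref{cor_trictg} it is enough to show that $\Sigma$ and $\Omega$ are mutually quasi-inverse; since $(\Sigma,\Omega)$ is already an adjoint pair (Lemma \ref{adjoint_sigma_omega}), this is equivalent to showing that its unit $\alpha$ and counit $\beta$ are natural isomorphisms. Two preliminary reductions streamline the argument. First, by Lemma \ref{bracket_equiv} — valid because $\bbE^{\II}(\II,\ZZ)\subset\bbE^{\II}(\SS,\ZZ)=0$ by (RM2) — the functor $\bracket{1}\colon\ul{\ZZ}\to\ul{\ZZ\bracket{1}}$ is an equivalence, and dually so is $\bracket{-1}\colon\ul{\ZZ}\to\ul{\ZZ\bracket{-1}}$ (using $\bbE_{\II}(\ZZ,\II)\subset\bbE_{\II}(\ZZ,\VV)=0$); hence $\Sigma=\sigma\circ\bracket{1}$ is an equivalence iff $\sigma|_{\ul{\ZZ\bracket{1}}}$ is, and $\Omega=\omega\circ\bracket{-1}$ iff $\omega|_{\ul{\ZZ\bracket{-1}}}$ is. Second, since the $\fraks^{\II}_{\II}$-deflation $p^{X}\colon I^{X}\to X\bracket{1}$ is $\II$-epic (Lemma \ref{epic_monic}), hence a right $\II$-approximation, the triangle $X\xrightarrow{i^{X}}I^{X}\xrightarrow{p^{X}}X\bracket{1}\xdrar{\lambda^{X}}X$ may be used to compute $(X\bracket{1})\bracket{-1}$, so $(X\bracket{1})\bracket{-1}\cong X$ naturally in $\ul{\CC}$.

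The role of (MT4)$^{\pr}$ is to make the functors of Proposition \ref{prop_+_and_-} close the four adjunctions $(\sigma,\iota_{\sigma})$, $(\iota_{\omega},\omega)$, $(i^{-},(\,\cdot\,)^{-})$, $((\,\cdot\,)^{+},i^{+})$ into a compatible web. By (MT4)$^{\pr}$(i) the functor $(\,\cdot\,)^{-}$ sends $\ul{\ZZ\bracket{1}}$ into $\ul{\wt{\TT}^{+}}\subset\ul{\wt{\TT}}$, so $\omega$ may be applied there, and dually by (MT4)$^{\pr}$(ii) the functor $(\,\cdot\,)^{+}$ sends $\ul{\ZZ\bracket{-1}}$ into $\ul{\wt{\UU}^{-}}\subset\ul{\wt{\UU}}$, so $\sigma$ may be applied there. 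Using this, together with the $\fraks^{\II}$-triangles
\begin{gather*}
(\Sigma Z)\bracket{-1}\longrightarrow (Z\bracket{1})^{-}\longrightarrow Z\bracket{1}\dashrightarrow(\Sigma Z)\bracket{-1},\\
(Z\bracket{1})^{-}\longrightarrow I_{\Sigma Z}\longrightarrow S^{Z\bracket{1}}\dashrightarrow(Z\bracket{1})^{-}
\end{gather*}
from diagram \eqref{diag:-}, the defining triangles of $\bracket{1},\bracket{-1},\sigma,\omega$, and the shifted octahedra (Proposition \ref{shifted_octahedrons}, Lemma \ref{happel_diagram}, (ET4)), I would build the morphism $Z\to\Omega\Sigma Z$ obtained by applying $\omega$ to the composite $Z\cong (Z\bracket{1})\bracket{-1}\xrightarrow{\bracket{-1}(h^{Z\bracket{1}})}(\Sigma Z)\bracket{-1}$, where $h^{Z\bracket{1}}\colon Z\bracket{1}\to\Sigma Z$ is the unit of $(\sigma,\iota_{\sigma})$ at $Z\bracket{1}$ (its $\bbE^{\II}$-cofiber being $S^{Z\bracket{1}}\in\SS$), and I would check against the characterising diagram \eqref{rmk2} of Remark \ref{rmk_adjoint} that this morphism is the unit $\alpha_{Z}$.

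It then remains to show $\alpha_{Z}$ is invertible, and this is where (MT4)$^{\pr}$ enters decisively: the obstruction to invertibility is an $\SS$-term which, after being transported through $\bracket{-1}$ and compared via $(\,\cdot\,)^{-}$ and $(\,\cdot\,)^{+}$, lands in the common subcategory $\wt{\TT}^{+}\cap\wt{\UU}^{-}$ supplied by (MT4)$^{\pr}$ and is therefore annihilated by $\omega$ — in the same way that the first-order obstructions are annihilated by the vanishings (RM2), (LM2) and the extension-closedness in (RM3), (LM3). The dual construction, built from diagram \eqref{diag:+} and (MT4)$^{\pr}$(ii), produces the counit $\beta$ and shows it is an isomorphism as well; naturality of both families then follows formally from functoriality of $\sigma$, $\omega$, $\bracket{\pm 1}$, $(\,\cdot\,)^{\pm}$ and of their units and counits, after which Corollary \ref{cor_trictg} concludes. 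I expect this invertibility step to be the main obstacle: isolating the precise second-order obstruction, recognising that it lives in the heart $\wt{\TT}^{+}\cap\wt{\UU}^{-}$ governed by $(\,\cdot\,)^{\pm}$, and organising the diagram chase so that it stays natural — the bookkeeping being delicate since one must work simultaneously with the two relative extriangulated structures $(\CC,\bbE^{\II},\fraks^{\II})$ and $(\CC,\bbE_{\II},\fraks_{\II})$, keep $\fraks^{\II}$-, $\fraks_{\II}$- and $\fraks^{\II}_{\II}$-triangles apart, and repeatedly invoke (ET4)-type axioms while tracking which maps are $\II$-epic or $\II$-monic.
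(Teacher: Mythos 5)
Your top-level framing is correct --- Corollary \ref{cor_trictg} reduces the theorem to showing that $\Sigma$ and $\Omega$ are mutually quasi-inverse, and your preliminary reductions (that $\bracket{1}$ and $\bracket{-1}$ are equivalences onto their images by Lemma \ref{bracket_equiv}, and that $(X\bracket{1})\bracket{-1}\cong X$) are sound. But the proposal stops exactly where the proof has to start: you never actually prove that the unit (equivalently, that $\Sigma\Omega\cong\mathrm{Id}$ or $\Omega\Sigma\cong\mathrm{Id}$) is invertible. The sentence claiming that ``the obstruction to invertibility is an $\SS$-term which \dots\ lands in $\wt{\TT}^{+}\cap\wt{\UU}^{-}$ and is therefore annihilated by $\omega$'' is a heuristic, not an argument: no obstruction is identified, no diagram is produced in which it sits, and no reason is given why membership in $\wt{\TT}^{+}\cap\wt{\UU}^{-}$ would force it to vanish under $\omega$. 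Since this invertibility is the entire content of the theorem (everything before it holds for any mutation triple, without (MT4)$^{\pr}$), the proposal has a genuine gap rather than an alternative proof.

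For comparison, the paper's argument for $\Sigma\Omega\cong\mathrm{Id}$ uses (MT4)$^{\pr}$(ii) in a very specific way. Starting from the $\fraks_{\II}$-triangles defining $\Omega Z=\omega(Z\bracket{-1})$ and $(\Omega Z)\bracket{1}$, one application of (ET4) produces an $\fraks^{\II}$-triangle $Z\bracket{-1}\rar Z\bracket{-1}^{+}\rar(\Omega Z)\bracket{1}\drar Z\bracket{-1}$. Condition (MT4)$^{\pr}$(ii) guarantees $Z\bracket{-1}^{+}\in\wt{\UU}^{-}=\CoCone_{\bbE^{\II}}(\II,\SS)$, i.e.\ there is an $\fraks^{\II}$-triangle $Z\bracket{-1}^{+}\rar I^{\pr}_{Z}\rar S$ with $I^{\pr}_{Z}\in\II$ and $S\in\SS$; a second (ET4) octahedron on the composite $Z\bracket{-1}\rar Z\bracket{-1}^{+}\rar I^{\pr}_{Z}$ then yields an object $\Psi(Z)$ sitting in two $\fraks^{\II}$-triangles $Z\bracket{-1}\rar I^{\pr}_{Z}\rar\Psi(Z)\drar Z\bracket{-1}$ and $(\Omega Z)\bracket{1}\rar\Psi(Z)\rar S\drar(\Omega Z)\bracket{1}$. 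Because $Z\bracket{-1}\rar I^{\pr}_{Z}$ is again a left $\II$-approximation, the first triangle gives $\Psi(Z)\cong Z$ in $\ul{\CC}$ (so $\Psi(Z)\in\ZZ$); because $S\in\SS$ and $\Psi(Z)\in\ZZ$, the second triangle identifies $\Psi(Z)$ with $\sigma\bigl((\Omega Z)\bracket{1}\bigr)=\Sigma\Omega Z$ by the uniqueness of $\sigma$. That double use of (ET4), pivoting on the triangle supplied by (MT4)$^{\pr}$, is the mechanism your sketch would need to supply before it can be called a proof.
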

\begin{proof}
We only prove $\Sigma \Omega \cong \mathrm{Id}$. 
Let $Z \in \ZZ$ and take the following commutative diagram in $(\CC, \bbE_{\II}, \fraks_{\II})$ in Notation \ref{nota_+_and_-}.
Note that $h_{Z\bracket{-1}} \lambda^{\Omega Z}$ is an $\fraks^{\II}_{\II}$-triangle
since $\lambda^{\Omega Z}$ is an $\fraks^{\II}_{\II}$-triangle from Lemma \ref{bracket_equiv}.
\[
\xy
(0,24)*+{}="11";
(16,24)*+{(\Omega Z)\bracket{1}}="12";
(36,24)*+{(\Omega Z)\bracket{1}}="13";
(52,24)*+{}="14";
(0,8)*+{V}="21";
(16,8)*+{\Omega Z}="22";
(36,8)*+{Z\bracket{-1}}="23";
(52,8)*+{V}="24";
(0,-8)*+{V}="31";
(16,-8)*+{I^{\Omega Z}}="32";
(36,-8)*+{Z\bracket{-1}^+}="33";
(52,-8)*+{V}="34";
(0,-24)*+{}="41";
(16,-24)*+{(\Omega Z)\bracket{1}}="42";
(36,-24)*+{(\Omega Z)\bracket{1}}="43";
{\ar@{=} "12";"13"};
{\ar^{} "21";"22"};
{\ar^-{h_{Z\bracket{-1}}} "22";"23"};
{\ar@{-->}^-{\rho_{Z\bracket{-1}}} "23";"24"};
{\ar^{} "31";"32"};
{\ar^{} "32";"33"};
{\ar@{-->}^{} "33";"34"};
{\ar@{=} "42";"43"};
{\ar@{=} "21";"31"};
{\ar@{-->}^{\lambda^{\Omega Z}} "12";"22"};
{\ar^{} "22";"32"};
{\ar^{} "32";"42"};
{\ar@{-->}^{} "13";"23"};
{\ar^{f} "23";"33"};
{\ar^{g} "33";"43"};
{\ar@{=} "24";"34"};
{\ar@{}|\car "12";"23"};
{\ar@{}|\car "21";"32"};
{\ar@{}|\car "22";"33"};
{\ar@{}|\car "23";"34"};
{\ar@{}|\car "32";"43"};
\endxy
\]
From (MT4)$^{\pr}$, there exists an $\fraks^{\II}$-triangle 
$S \lxdrar{\delta} Z\bracket{-1}^+ \xrar{s} I^{\pr}_Z \xrar{t} S$ where $I^{\pr}_Z \in \II, S \in \SS$.
By (ET4), we obtain the following commutative diagram in $(\CC, \bbE^{\II}, \fraks^{\II})$
and define $\Psi(Z)$ by this diagram.
\begin{align}
\xy
(0,24)*+{}="11";
(16,24)*+{Z\bracket{-1}}="12";
(40,24)*+{Z\bracket{-1}}="13";
(56,24)*+{}="14";
(0,8)*+{S}="21";
(16,8)*+{Z\bracket{-1}^+}="22";
(40,8)*+{I^{\pr}_Z}="23";
(56,8)*+{S}="24";
(0,-8)*+{S}="31";
(16,-8)*+{(\Omega Z)\bracket{1}}="32";
(40,-8)*+{\Psi(Z)}="33";
(56,-8)*+{S}="34";
(0,-24)*+{}="41";
(16,-24)*+{Z\bracket{-1}}="42";
(40,-24)*+{Z\bracket{-1}}="43";
{\ar@{=} "12";"13"};
{\ar@{-->}^-{\delta} "21";"22"};
{\ar^-{s} "22";"23"};
{\ar@{->}^{t} "23";"24"};
{\ar@{-->}^{} "31";"32"};
{\ar^-{\hbar^{(\Omega Z)\bracket{1}}} "32";"33"};
{\ar@{->}^{} "33";"34"};
{\ar@{=} "42";"43"};
{\ar@{=} "21";"31"};
{\ar@{->}^{f} "12";"22"};
{\ar^{g} "22";"32"};
{\ar@{-->}_{h_{Z\bracket{-1}} \lambda^{\Omega Z}} "32";"42"};
{\ar@{->}^{i^{\pr}_Z} "13";"23"};
{\ar^{p^{\pr}_Z} "23";"33"};
{\ar@{-->}^{\epsilon_Z} "33";"43"};
{\ar@{=} "24";"34"};
{\ar@{}|\car "12";"23"};
{\ar@{}|\car "21";"32"};
{\ar@{}|\car "22";"33"};
{\ar@{}|\car "23";"34"};
{\ar@{}|\car "32";"43"};
\endxy
\label{diag:before-replace}
\end{align}
Thus, we obtain an $\fraks^{\II}$-triangle
$Z\bracket{-1} \xrar{i^{\pr}_Z} I^{\pr}_Z \xrar{p^{\pr}_Z} \Psi(Z) \lxdrar{\epsilon_Z} Z\bracket{-1}$.
Then we obtain the following commutative diagram in $(\CC, \bbE^{\II}, \fraks^{\II})$
and define $\psi_Z$ by this diagram.
\begin{align}
\xy
(0,8)*+{Z\bracket{-1}}="11";
(16,8)*+{I_Z}="12";
(32,8)*+{Z}="13";
(48,8)*+{Z\bracket{-1}}="14";
(0,-8)*+{Z\bracket{-1}}="21";
(16,-8)*+{I^{\pr}_Z}="22";
(32,-8)*+{\Psi(Z)}="23";
(48,-8)*+{Z\bracket{-1}}="24";
{\ar^{i_Z} "11";"12"};
{\ar^{p_Z} "12";"13"};	
{\ar@{-->}^-{\lambda_Z} "13";"14"};
{\ar^{i^{\pr}_Z} "21";"22"};
{\ar^{p^{\pr}_Z} "22";"23"};
{\ar@{-->}^-{\epsilon_Z} "23";"24"};
{\ar@{=}^{} "11";"21"};
{\ar^{} "12";"22"};
{\ar^{\psi_Z} "13";"23"};
{\ar@{=}^{} "14";"24"};
{\ar@{}|\car "11";"22"};
{\ar@{}|\car "12";"23"};
{\ar@{}|\car "13";"24"};
\endxy
\label{diag:5-1}
\end{align}
One can show that $\ul{\psi_Z}$ is an isomorphism, like as proof of Lemma \ref{def_of_bracket}
since both $i_Z$ and $i^{\pr}_Z$ are left $\II$-approximations.
Then $\Psi(Z)$ is isomorphic to an object in $\ul{\ZZ}$.
From Corollary \ref{cor_iso-in-ul}, there exists $I \in \II$ where $\Psi(Z) \oplus I \in \ZZ$.
We may assume that $\Psi(Z) \in \ZZ$ by replacing diagram \eqref{diag:before-replace} with the following one.

\[
\xy
(-4,24)*+{}="11";
(16,24)*+{Z\bracket{-1}}="12";
(40,24)*+{Z\bracket{-1}}="13";
(56,24)*+{}="14";
(-4,8)*+{S \oplus I}="21";
(16,8)*+{Z\bracket{-1}^+}="22";
(40,8)*+{I^{\pr}_Z \oplus I}="23";
(60,8)*+{S \oplus I}="24";
(-4,-8)*+{S \oplus I}="31";
(16,-8)*+{(\Omega Z)\bracket{1}}="32";
(40,-8)*+{\Psi(Z) \oplus I}="33";
(60,-8)*+{S \oplus I}="34";
(-4,-24)*+{}="41";
(16,-24)*+{Z\bracket{-1}}="42";
(40,-24)*+{Z\bracket{-1}}="43";
{\ar@{=} "12";"13"};
{\ar@{-->}^-{\msize{0.6}{\begin{bmatrix} \delta & 0 \end{bmatrix}}} "21";"22"};
{\ar^-{\msize{0.6}{\begin{bmatrix} s \\ 0 \end{bmatrix}}} "22";"23"};
{\ar@{->}^{\msize{0.6}{\begin{bmatrix} t & 0 \\ 0 & 1 \end{bmatrix}}} "23";"24"};
{\ar@{-->}^{} "31";"32"};
{\ar^-{\msize{0.6}{\begin{bmatrix} \hbar^{(\Omega Z)\bracket{1}} \\ 0 \end{bmatrix}}} "32";"33"};
{\ar@{->}^{} "33";"34"};
{\ar@{=} "42";"43"};
{\ar@{=} "21";"31"};
{\ar@{->}^{f} "12";"22"};
{\ar^{g} "22";"32"};
{\ar@{-->}_{h_{Z\bracket{-1}} \lambda^{\Omega Z}} "32";"42"};
{\ar@{->}^{\msize{0.6}{\begin{bmatrix} i^{\pr}_Z \\ 0 \end{bmatrix}}} "13";"23"};
{\ar^{\msize{0.6}{\begin{bmatrix} p^{\pr}_Z & 0 \\ 0 & 1 \end{bmatrix}}} "23";"33"};
{\ar@{-->}^{\msize{0.6}{\begin{bmatrix} \epsilon_Z & 0 \end{bmatrix}}} "33";"43"};
{\ar@{=} "24";"34"};
{\ar@{}|\car "12";"23"};
{\ar@{}|\car "21";"32"};
{\ar@{}|\car "22";"33"};
{\ar@{}|\car "23";"34"};
{\ar@{}|\car "32";"43"};
\endxy
\]

For a morphism $z \colon Z_1 \to Z_2$ in $\ZZ$, there exists the following commutative diagram in $(\CC, \bbE, \fraks)$ and define $\Psi(z)$ by this diagram.
\begin{align}
\xy
(0,8)*+{Z_1\bracket{-1}}="11";
(16,8)*+{I^{\pr}_{Z_1}}="12";
(32,8)*+{\Psi(Z_1)}="13";
(52,8)*+{Z_1\bracket{-1}}="14";
(0,-8)*+{Z_2\bracket{-1}}="21";
(16,-8)*+{I^{\pr}_{Z_2}}="22";
(32,-8)*+{\Psi(Z_2)}="23";
(52,-8)*+{Z_2\bracket{-1}}="24";
{\ar^-{i^{\pr}_{Z_1}} "11";"12"};
{\ar^-{p^{\pr}_{Z_1}} "12";"13"};	
{\ar@{-->}^-{\epsilon_{Z_1}} "13";"14"};
{\ar^-{i^{\pr}_{Z_2}} "21";"22"};
{\ar^-{p^{\pr}_{Z_2}} "22";"23"};
{\ar@{-->}^-{\epsilon_{Z_2}} "23";"24"};
{\ar^{z\bracket{-1}} "11";"21"};
{\ar^{} "12";"22"};
{\ar^{\Psi(z)} "13";"23"};
{\ar^{z\bracket{-1}} "14";"24"};
{\ar@{}|{\phantom{XXx}\car} "11";"22"};
{\ar@{}|\car "12";"23"};
{\ar@{}|\car "13";"24"};
\endxy
\label{diag:5-2}
\end{align}
Then we define a morphism $\Psi(\ul{z}) \colon \Psi(Z_1) \to \Psi(Z_2)$ as $\ul{\Psi(z)}$.
One can show that $\Psi \colon \ul{\ZZ} \to \ul{\ZZ}$ is an additive functor like $\bracket{1}$.
Moreover, $\ul{\psi_Z}$ induces a natural isomorphism $\ul{\psi} \colon \mathrm{Id} \Rightarrow \Psi$
from \eqref{diag:5-1} and \eqref{diag:5-2}.

Recall that $S \xdrar{g \delta} (\Omega Z)\bracket{1} \xrar{\hbar^{(\Omega Z)\bracket{1}}} \Psi(Z) \rar S$ is an $\fraks^{\II}$-triangle.
Then from Remark \ref{rmk_adj_of_sigma}(\ref{rmk_adj_of_sigma_3}) and $\Psi(Z) \in \ZZ$,
there exists a morphism $\varphi_Z \colon \Sigma\Omega Z \to \Psi(Z)$
which satisfies $\hbar^{(\Omega Z)\bracket{1}} = \varphi_Z h^{(\Omega Z)\bracket{1}}$
and $\varphi_Z$ is unique up to $[\II]$.
Because of this uniqueness, one can show that $\ul{\varphi} \colon \Psi \Rightarrow \Sigma\Omega$ is a
natural isomorphism like Remark \ref{rmk_adj_of_sigma}(\ref{rmk_adj_of_sigma_3}).

Therefore, $(\ul{\ZZ}, \Sigma,\Omega, \nabla, \Delta)$ is a triangulated category from Corollary \ref{cor_trictg}.
\end{proof}

\begin{rmk} \label{MT4}
	\begin{enumerate}
	\item From proof of Theorem \ref{main_thm2}, 
	$Z\bracket{-1}^+ \cong (\Omega Z)\bracket{1}^-$ in $\ul{\CC}$.
	Thus, (MT4)$^{\pr}$ is equivalent to the following condition.
		\begin{itemize}[leftmargin=44pt]
		\item[(MT4)] $\ul{\ZZ\bracket{1}}^- = \ul{\ZZ\bracket{-1}}^+$.
		\end{itemize}
	\item Since $\ul{\ZZ\bracket{1}}^- \subset \ul{\wt{\UU}}{}^-$ and $\ul{\ZZ\bracket{-1}}^+ \subset \ul{\wt{\TT}}{}^+$ always hold, the following condition is stronger than (MT4)$^{\pr}$.
		\begin{itemize}[leftmargin=50pt]
		\item[(MT4$^+$)] $\ul{\wt{\UU}}{}^- = \ul{\wt{\TT}}{}^+$.
		\end{itemize}
	\item
	We may also define the following conditions, which is the same conditions defined in Condition \ref{condi_MT4-} .
		\begin{itemize}[leftmargin=50pt]
		\item[(MT4$^-$)]
			\begin{enumerate}[label=(\roman*)]
			\item $\ul{\ZZ\bracket{1}}^- \subset \ul{\Cone_{\bbE}(\VV, \SS)} = 
			\ul{\Cone_{\bbE_{\II}}(\VV, \SS)}$.
			\item $\ul{\ZZ\bracket{-1}}^+ \subset \ul{\CoCone_{\bbE}(\VV, \SS)} = 
			\ul{\CoCone_{\bbE^{\II}}(\VV, \SS)}$.
			\end{enumerate}
		\end{itemize}
	By definition of (MT4)$^{\pr}$ and (MT4$^-$), (MT4)$^{\pr}$ is stronger than (MT4$^-$).
	To sum up,
	\[
	(\text{MT4}^+) \implies (\text{MT4})^{\pr} \iff (\text{MT4}) \implies (\text{MT4}^-).
	\]
	\end{enumerate}
\end{rmk}

\begin{ex} \label{ex_MT4}
	\begin{enumerate}
	\item \cite{NP19}
	Assume that $\CC$ is Frobenius with $\PP = \Proj_{\bbE} \CC$. 
	Then $(\PP, \CC, \PP)$ is a mutation triple satisfying (MT4$^+$). 
	More generally, for any strongly functorially finite  subcategory $\XX$ in $\CC$, 
	$(\XX, \CC, \XX)$ in $(\CC, \bbE^{\XX}_{\XX}, \fraks^{\II}_{\II})$ is also a mutation triple satisfying (MT4$^+$).
	\item \cite{IY08} \label{ex_MT4_IY}
	In the case of Example \ref{ex_MT}(\ref{ex_MT_IY}),
	then $(\II, \ZZ, \II)$ is a mutation triple.
	\item \cite{SP20} \label{ex_MT4_SP}
	In the case of Example \ref{ex_MT}(\ref{ex_MT_SP}),
	then $(\bracket{\MM[1]}, \ZZ, \bracket{\MM[-1]})$ is a mutation triple satisfying (MT4$^+$).
	\end{enumerate}
\end{ex}
\begin{proof}
(1) From $\ul{\wt{\UU}}{}^- = \ul{\XX} = 0$ and $\ul{\wt{\TT}}{}^+ = \ul{\XX} = 0$.
(2) Since $\ZZ = \ZZ\bracket{1} = \ZZ\bracket{-1}$, 
$\ul{\ZZ\bracket{-1}}^+ = \ul{\ZZ}^+ = 0$ and $\ul{\ZZ\bracket{1}}^- = \ul{\ZZ}^- = 0$.
(3) From $\wt{\UU}^- = \wt{\TT}^+ = \bracket{\MM}$.
\end{proof}

\begin{rmk}
We may generalize the example of the mutation triple in Example \ref{ex_MT4} (\ref{ex_MT4_IY}),
which does not assume $n$-rigidity of $\II$.
This is suggested by Professor Michael Wemyss and the readers can check it in \cite[Definition 2.4, Theorem 2.7]{IW18}.
\end{rmk}

\subsection{Comparisons with known results in concentric twin cotorsion pairs}
In this subsection, we recall the results in \cite{Nak18} 
and compare them with the previous subsection.

We assume that $\CC$ is a triangulated category and $((\SS,\TT), (\UU,\VV))$ is a concentric twin cotorsion pair.
See Appendix \ref{Twin cotorsion pairs which induces triangulated categories},
for heart-equivalent and Hovey.

\begin{rmk} \cite[Proposition 6.3]{Nak18} \label{rmk_HE}
	Let $((\SS, \TT), (\UU, \VV))$ be a heart-equivalent concentric twin cotorsion pair, 
	then $\ZZ$ and $\II$ satisfy Condition \ref{IY_condi}.
	Thus, from Example \ref{ex_MT4} (\ref{ex_MT4_IY}), $(\II, \ZZ, \II)$ is a mutation triple.
\end{rmk}

\begin{prop} \label{prop_hoveytwin cotorsion pair_MT}
	Assume that $((\SS, \TT), (\UU, \VV))$ is Hovey,
	then $(\SS, \ZZ, \VV)$ induced by $((\SS, \TT), (\UU, \VV))$ is a mutation triple.
\end{prop}
\begin{proof}
Take the $\fraks^{\II}$-triangle 
$T \xrar{} {Z^{\pr}}^- \xrar{} Z^{\pr} \lxdrar{\chi_{Z^{\pr}}} T$ for $Z^{\pr} \in \ZZ\bracket{1}$ where $T \in \ZZ\bracket{-1} \subset \TT$.
Since we assume that $((\SS, \TT), (\UU, \VV))$ is Hovey,
there exists an $\fraks_{\II}$-triangle (since $\bbE(\II, \VV)=0$)
$V \rar S \rar {Z^{\pr}}^- \lxdrar{\delta} V$ where $V \in \VV, S \in \SS$.
Thus, it is enough to show $S \in \II$.

By (ET4)$^{\op}$ and Proposition \ref{shifted_octahedrons}, 
there exist the following commutative diagrams in $(\CC, \bbE_{\II}, \fraks_{\II})$
where $I \in \II, Z \in \ZZ$.
\[
\xy
(16,24)*+{V}="12";
(32,24)*+{V}="13";
(0,8)*+{Z^{\pr}}="21";
(16,8)*+{T^{\pr}}="22";
(32,8)*+{S}="23";
(48,8)*+{Z^{\pr}}="24";
(0,-8)*+{Z^{\pr}}="31";
(16,-8)*+{T}="32";
(32,-8)*+{{Z^{\pr}}^-}="33";
(48,-8)*+{Z^{\pr}}="34";
(16,-24)*+{V}="42";
(32,-24)*+{V}="43";
{\ar@{=} "12";"13"};
{\ar@{-->}^{\chi^{\pr}} "21";"22"};
{\ar^{} "22";"23"};
{\ar@{->}^{} "23";"24"};
{\ar@{-->}^{\chi_{Z^{\pr}}} "31";"32"};
{\ar^{} "32";"33"};
{\ar@{->}^{} "33";"34"};
{\ar@{=} "42";"43"};
{\ar@{=} "21";"31"};
{\ar@{->}^{} "12";"22"};
{\ar^{} "22";"32"};
{\ar@{-->}^{} "32";"42"};
{\ar@{->}^{} "13";"23"};
{\ar^{} "23";"33"};
{\ar@{-->}^{\delta} "33";"43"};
{\ar@{=} "24";"34"};
{\ar@{}|\car "12";"23"};
{\ar@{}|\car "21";"32"};
{\ar@{}|\car "22";"33"};
{\ar@{}|\car "23";"34"};
{\ar@{}|\car "32";"43"};
\endxy
\quad
\xy
(16,24)*+{Z}="12";
(32,24)*+{Z}="13";
(0,8)*+{T^{\pr}}="21";
(16,8)*+{E}="22";
(32,8)*+{I}="23";
(48,8)*+{T^{\pr}}="24";
(0,-8)*+{T^{\pr}}="31";
(16,-8)*+{S}="32";
(32,-8)*+{Z^{\pr}}="33";
(48,-8)*+{T^{\pr}}="34";
(16,-24)*+{Z}="42";
(32,-24)*+{Z}="43";
{\ar@{=} "12";"13"};
{\ar^{} "21";"22"};
{\ar^{} "22";"23"};
{\ar@{-->}^{} "23";"24"};
{\ar^{} "31";"32"};
{\ar^{} "32";"33"};
{\ar@{-->}^{\chi^{\pr}} "33";"34"};
{\ar@{=} "42";"43"};
{\ar@{=} "21";"31"};
{\ar^{} "12";"22"};
{\ar^{} "22";"32"};
{\ar@{-->}^{} "32";"42"};
{\ar^{} "13";"23"};
{\ar^{} "23";"33"};
{\ar@{-->}^{\lambda} "33";"43"};
{\ar@{=} "24";"34"};
{\ar@{}|\car "12";"23"};
{\ar@{}|\car "21";"32"};
{\ar@{}|\car "22";"33"};
{\ar@{}|\car "23";"34"};
{\ar@{}|\car "32";"43"};
\endxy
\]
$T^{\pr} \in \TT$ from $V, T \in \TT$. Then $E \in \UU \cap \TT = \ZZ$.
Since $\bbE(\SS, \ZZ) =0$, $S$ is a direct summand of $E \in \ZZ$.
Thus, $S \in \SS \cap \ZZ = \II$.
\end{proof}

\begin{cor} (Theorem \ref{case_of_twin cotorsion pair})
If $((\SS,\TT), (\UU,\VV))$ is heart-equivalent or Hovey, then a pretriangulated category $(\ul{\ZZ}, \Sigma,\Omega, \nabla, \Delta)$ defined by premutation triple $(\SS,$ $\ZZ,$ $\VV)$ is a triangulated category.
\end{cor}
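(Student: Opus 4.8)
The plan is to deduce this corollary from Theorem~\ref{main_thm2}. First I would recall that, by Example~\ref{ex_MT}(\ref{ex_MT_CTP}), the concentric twin cotorsion pair $((\SS,\TT),(\UU,\VV))$ does induce a mutation triple $(\SS,\ZZ,\VV)$ with $\ZZ=\TT\cap\UU$; hence Theorem~\ref{main_thm1} applies and yields the pretriangulated category $(\ul{\ZZ},\Sigma,\Omega,\nabla,\Delta)$. By Corollary~\ref{cor_trictg} this pretriangulated category is a triangulated category as soon as $\Sigma$ and $\Omega$ are mutually quasi-inverse, and by Theorem~\ref{main_thm2} together with Remark~\ref{MT4} the latter holds whenever $(\SS,\ZZ,\VV)$ satisfies condition (MT4) (equivalently (MT4)$^{\pr}$). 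So the whole problem reduces to verifying (MT4) under each of the two hypotheses.

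In the case (HO), this verification is exactly the content of the Proposition stated just before this Corollary: a concentric twin cotorsion pair satisfying (HO) induces a mutation triple satisfying (MT4). In the case (HE), I would instead appeal to \cite[Proposition~6.3]{Nak18} (recalled in the Remark preceding this statement), which says that (HE) forces $(\ZZ,\ZZ)$ to be an $\II$-mutation pair in the sense of \cite{IY08}; this places us in the situation of Example~\ref{ex_MT}(\ref{ex_MT_IY}), and Example~\ref{ex_MT4}(\ref{ex_MT4_IY}) then gives (MT4) for $(\SS,\ZZ,\VV)$. Combining the two cases with Theorem~\ref{main_thm2} completes the argument.

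The proof is therefore essentially bookkeeping: all the genuine work has been done in Theorem~\ref{main_thm2}, in the previous Proposition, and in the cited results of \cite{Nak18,LN19}. The only point that needs a moment of care --- and is the closest thing here to an obstacle --- is to make sure that the functors $(\,\cdot\,)^{+}$ and $(\,\cdot\,)^{-}$ appearing in Condition~\ref{condi_in_Nak18} and in \cite{LN19} agree with those of Proposition~\ref{prop_+_and_-}, so that the implications ``(HO) $\Rightarrow$ (MT4)'' and ``(HE) $\Rightarrow$ (MT4)'' transport faithfully from the concentric-twin-cotorsion-pair setting to the present one; this compatibility has already been recorded in the discussion preceding the statement, so no new input is required.
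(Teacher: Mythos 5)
Your proposal is correct and follows exactly the route the paper intends: the preceding Remark handles (HE) via \cite[Proposition 6.3]{Nak18} and Example \ref{ex_MT4}(\ref{ex_MT4_IY}), the preceding Proposition handles (HO), and in both cases (MT4) combined with Theorem \ref{main_thm2} (or equivalently Corollary \ref{cor_trictg}) yields the triangulated structure. Your added remark about checking that $(\,\cdot\,)^{+}$ and $(\,\cdot\,)^{-}$ in Condition \ref{condi_in_Nak18} agree with those of Proposition \ref{prop_+_and_-} is a reasonable point of care but is not needed for this particular corollary, since both cases are resolved through (MT4) rather than through Condition \ref{condi_in_Nak18}.
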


Since $\wt{\TT}^+ \subset \Cone_{\bbE_{\II}}(\VV, \SS)$ and 
$\wt{\UU}^- \subset \CoCone_{\bbE^{\II}}(\VV, \SS)$, (MT4) is stronger than 
the conditions in Theorem \ref{case_of_twin cotorsion pair} for concentric twin cotorsion pairs.

\section{Reducible triples} \label{redMT}
We fix an ET category $(\CC, \bbE, \fraks)$ in this section.
We modify the condition (MT3) and (MT4) so as to make mutation triples suitable for the reduction theory.
Then we want to show that taking $\ul{(\cdot) \cap \ZZ}$ preserves extension closed subcategories.

First, we consider the following condition which is a different version of (MT3).

\begin{condi} \label{condi_rMT3}
Let $(\SS, \ZZ, \VV)$ be a triplet of subcategories of $\CC$ satisfying (MT1) and (MT2).
	\begin{itemize}[leftmargin=40pt]
	\item[(RT3)]
		\begin{enumerate}[label=(\roman*)]
		\item $\Cone_{\bbE^{\II}}(\ZZ, \ZZ) \subset \CoCone_{\bbE^{\II}_{\II}}(\ZZ, \SS)$.
		\item $\CoCone_{\bbE_{\II}}(\ZZ, \ZZ) \subset \Cone_{\bbE^{\II}_{\II}}(\VV, \ZZ)$.
		\item $\SS$, $\ZZ$ and $\VV$ are closed under extensions in $(\CC, \bbE^{\II}_{\II}, \fraks^{\II}_{\II})$.
		\end{enumerate}
	\end{itemize}
\end{condi}

\begin{rmk}
	\begin{enumerate}
	\item We are not sure that all concentric twin cotorsion pairs satisfy (RT3).
	\item The conditions ($\mathrm{i}$) and ($\mathrm{ii}$) are stronger than those in (MT3) of Definition \ref{defi_first}, but
	the condition ($\mathrm{iii}$) is weaker than that in (MT3) of Definition \ref{defi_first}.
	So, it is not also sure that there exist implications between (MT3) and (RT3).
	\end{enumerate}
\end{rmk}

\begin{defi}
Let $\SS, \ZZ$ and $\VV$ be subcategories of $\CC$.
$(\SS, \ZZ, \VV)$ is a \emph{prereducible triple} if it satisfies (MT1), (MT2) and (RT3).
\end{defi}

From (RT3), we may define $\bracket{\pm 1}$, $\sigma$ and $\omega$ by $\fraks^{\II}_{\II}$-triangles.
Then the proofs in section \ref{pretri} for premutation triple also work even if $(\SS, \ZZ, \VV)$ is a prereducible triple.
That is because all $\fraks^{\II}$-triangles and $\fraks_{\II}$-triangles can be replaced by $\fraks^{\II}_{\II}$-triangles.

\begin{cor}
Let $(\SS, \ZZ, \VV)$ be a prereducible triple.
We define $\Sigma, \Omega, \nabla$ and $\Delta$ same as premutation triples.
Then $(\ul{\ZZ}, \Sigma, \Omega, \nabla, \Delta)$ is a pretriangulated category.
\end{cor}

\begin{ex} \label{ex_redpreMT}
Let $(\SS, \ZZ, \VV)$ be a triplet of subcategories satisfying (MT1) and (MT2).
	\begin{enumerate}
	\item \label{ex_redpreMT_rigid}
	If $(\SS, \ZZ, \VV)$ satisfies the following conditions, then $(\SS, \ZZ, \VV)$ satisfies (RT3).
		\begin{enumerate}
		\item $\Cone_{\bbE^{\II}}(\ZZ, \ZZ) = \CoCone_{\bbE_{\II}}(\ZZ, \ZZ) = \ZZ$.
		\item $\II = \SS = \VV$.
		\item $\ZZ$ is closed under extensions in $(\CC, \bbE^{\II}_{\II}, \fraks^{\II}_{\II})$.
		\end{enumerate}
	The examples which satisfy these condition are as follows.
		\begin{enumerate}[label=(\alph*)]
		\item \cite[Lemma 4.3]{IY08}
		$(\II, \ZZ, \II)$ in Example \ref{ex_MT}(\ref{ex_MT_IY}).
		\item \cite[Lemma 4.3]{IY08}, \cite[Lemma 3.5]{IY18}
		The premutation triple 
		$\big( (\thick \II)_{\geq 0},$ $\rpp{\II[<\!0]} \cap \lpp{\II[>\!0]},$ $(\thick \II)_{\leq 0} \big)$
		which is defined by the Hovey twin cotorsion pair 
		in Example \ref{ex_HoveyTCP}(\ref{ex_HoveyTCP-presilt}).
		\end{enumerate}
	\item \label{ex_redpreMT_orth}
	If $(\SS, \ZZ, \VV)$ satisfies the following conditions, then $(\SS, \ZZ, \VV)$ satisfies (RT3).
		\begin{enumerate}
		\item $\II \subset \Proj \CC \cap \Inj \CC$.
		\item (MT3).
		\end{enumerate}
	The examples which satisfy these condition are as follows.
		\begin{enumerate}[label=(\alph*)]
		\item \cite[Theorem 4.1]{SP20}
		$(\bracket{\MM[1]}, \ZZ, \bracket{\MM[-1]})$ in Example \ref{ex_MT}(\ref{ex_MT_SP}).
		\item \cite[Proposition 3.2]{Jin23}
		The premutation triple
		$\big( (\thick \MM)^{<0},$ $\rpp{\MM[\geq\!\!0]} \cap \lpp{\MM[\leq\!0]},$ $(\thick \MM)^{>0} \big)$ 
		which is defined by the Hovey twin cotorsion pair 
		in Example \ref{ex_HoveyTCP}(\ref{ex_HoveyTCP-preSMC}).
		\end{enumerate}
	\end{enumerate}
\end{ex}

The following condition is an analogy of (MT4$^+$).

\begin{condi}
Let $(\SS, \ZZ, \VV)$ be a prereducible triple.
We consider the following conditions.
	\begin{itemize}[leftmargin=40pt]
	\item[(RT4)]
		\begin{enumerate}[label=(\roman*)]
		\item $\II$ is strongly contravariantly finite in $\SS$.
		\item $\II$ is strongly covariantly finite in $\VV$.
		\item $\SS\bracket{-1} = \VV\bracket{1}$, denoted by $\RR$.
		\end{enumerate}
	\end{itemize}
\end{condi}

\begin{rmk}
Let $(\SS, \ZZ, \VV)$ be a triplet of subcategories in $\CC$ satisfying (MT1) and (MT2).
	\begin{enumerate}
	\item From (MT2), 
	\begin{align*}
	\SS\bracket{-1} &= \CoCone_{\bbE_{\II}}(\II, \SS) = \CoCone_{\bbE^{\II}_{\II}}(\II, \SS) \\
	\VV\bracket{1} &= \Cone_{\bbE^{\II}}(\VV, \II) = \Cone_{\bbE^{\II}_{\II}}(\VV, \II).
	\end{align*}
	\item Even if $\SS\bracket{-1} = \VV\bracket{1}$ holds,
	$\wt{\UU}^- = \CoCone_{\bbE^{\II}}(\II, \SS)$ does not coincide with 
	$\wt{\TT}^+ = \Cone_{\bbE_{\II}}(\VV, \II)$ in general. 
	For example, let $\CC = \Db(k A_4)$ where $A_4$ is the linearly oriented Dynkin quiver of type $A_4$.
	We define $\II^{\pr} = \{I_1, I_2\}$ in the following pictures and denote $\add \II^{\pr}$ by $\II$.
	Then $\II$ induces a rigid mutation triple from Lemma \ref{lem_redMT}(\ref{lem_redMT_presilt}) 
	and Example \ref{ex_redMT}(\ref{ex_redMT_rigid}).
	The left \resp{right} picture is the AR quiver of $(\CC, \bbE^{\II}, \fraks^{\II})$ 
	\resp{$(\CC, \bbE_{\II}, \fraks_{\II})$} \cite[Proposition 5.10]{INP24}.
	Then $X \in \CoCone_{\bbE^{\II}}(\II, \II)$ but $X \notin \Cone_{\bbE_{\II}}(\II, \II)$, and  
	$Y \in \Cone_{\bbE_{\II}}(\II, \II)$ but $Y \notin \CoCone_{\bbE^{\II}}(\II, \II)$.
	On the other hand, $\II\bracket{1} = \II\bracket{-1} = \II$.
\[
\begin{tikzpicture}
	\foreach \x in {0,1,...,4}
		\foreach \y in {1,3}
		{
		\fill (\x,\y) circle[radius=0.04cm];
		\fill (\x+0.5,\y-1) circle[radius=0.04cm];
		\draw[-{stealth[scale=3]}] (\x,\y) -- (\x+0.48,\y-0.96);
		\draw[-{stealth[scale=3]}] (\x,1) -- (\x+0.48,1.96);
		\draw[-{stealth[scale=3]}, dashed] (\x+0.98,3) -- (\x+0.02,3);
		\draw[-{stealth[scale=3]}] (\x+0.5,\y-1) -- (\x+0.98,\y-0.04);
		\draw[-{stealth[scale=3]}] (\x+0.5,2) -- (\x+0.98,1.04);
		\ifnum \x<4
			\draw[-{stealth[scale=3]}, dashed] (\x+1.48,2) -- (\x+0.52,2);
		\fi
		\ifnum \x<3
			\draw[-{stealth[scale=3]}, dashed] (\x+0.98,1) -- (\x+0.02,1);
		\fi
		\ifnum \x>3
			\draw[-{stealth[scale=3]}, dashed] (\x+0.98,1) -- (\x+0.02,1);
		\fi
		\ifnum \x<2
			\draw[-{stealth[scale=3]}, dashed] (\x+1.48,0) -- (\x+0.52,0);
		\fi
		\ifnum 2<\x
			\ifnum \x<4
				\draw[-{stealth[scale=3]}, dashed] (\x+1.48,0) -- (\x+0.52,0);
			\fi
		\fi
		\draw[dashed] (0.5,2) -- (0,2);
		\draw[dashed] (0.5,0) -- (0,0);
		\draw[-{stealth[scale=3]}, dashed] (5,2) -- (4.52,2);
		\draw[-{stealth[scale=3]}, dashed] (5,0) -- (4.52,0);
		}
	\fill [black]
	(2.5,0) circle[radius=0.08cm]
	(3,1) circle[radius=0.08cm]
	(5,3) circle[radius=0.04cm]
	(5,1) circle[radius=0.04cm]
	(1,3) circle[radius=0.08cm];
	\node at (2.53,0) [below] {\msize{0.7}{I_1}};
	\node at (3,0.9) [below] {\msize{0.7}{I_2}};
	\node at (1,2.9) [below] {\msize{0.7}{X}};
\end{tikzpicture}
\qquad
\begin{tikzpicture}
	\foreach \x in {0,1,...,4}
		\foreach \y in {1,3}
		{
		\fill (\x,\y) circle[radius=0.04cm];
		\fill (\x+0.5,\y-1) circle[radius=0.04cm];
		\draw[-{stealth[scale=3]}] (\x,\y) -- (\x+0.48,\y-0.96);
		\draw[-{stealth[scale=3]}] (\x,1) -- (\x+0.48,1.96);
		\draw[-{stealth[scale=3]}, dashed] (\x+0.98,3) -- (\x+0.02,3);
		\draw[-{stealth[scale=3]}] (\x+0.5,\y-1) -- (\x+0.98,\y-0.04);
		\draw[-{stealth[scale=3]}] (\x+0.5,2) -- (\x+0.98,1.04);
		\ifnum \x<4
			\draw[-{stealth[scale=3]}, dashed] (\x+1.48,2) -- (\x+0.52,2);
		\fi
		\ifnum \x<2
			\draw[-{stealth[scale=3]}, dashed] (\x+0.98,1) -- (\x+0.02,1);
		\fi
		\ifnum \x>2
			\draw[-{stealth[scale=3]}, dashed] (\x+0.98,1) -- (\x+0.02,1);
		\fi
		\ifnum \x<1
			\draw[-{stealth[scale=3]}, dashed] (\x+1.48,0) -- (\x+0.52,0);
		\fi
		\ifnum 1<\x
			\ifnum \x<4
				\draw[-{stealth[scale=3]}, dashed] (\x+1.48,0) -- (\x+0.52,0);
			\fi
		\fi
		\draw[dashed] (0.5,2) -- (0,2);
		\draw[dashed] (0.5,0) -- (0,0);
		\draw[-{stealth[scale=3]}, dashed] (5,2) -- (4.52,2);
		\draw[-{stealth[scale=3]}, dashed] (5,0) -- (4.52,0);
		}
	\fill [black]
	(2.5,0) circle[radius=0.08cm]
	(3,1) circle[radius=0.08cm]
	(5,3) circle[radius=0.04cm]
	(5,1) circle[radius=0.04cm]
	(3.5,0) circle[radius=0.08cm];
	\node at (2.53,0) [below] {\msize{0.7}{I_1}};
	\node at (3,0.9) [below] {\msize{0.7}{I_2}};
	\node at (3.5,0) [below] {\msize{0.7}{Y}};
\end{tikzpicture}
\]
	\end{enumerate}
\end{rmk}

\begin{defi}
Let $(\SS, \ZZ, \VV)$ be a prereducible triple.
$(\SS, \ZZ, \VV)$ is called a \emph{reducible triple} if it satisfies (RT4).
\end{defi}

We are not sure reducible triples are always mutation triples.
However, the following statement also holds for reducible triples.

\begin{cor}
Let $(\SS, \ZZ, \VV)$ be a reducible triple.
Then the quintuplet $(\ul{\ZZ},$ $\Sigma,$ $\Omega,$ $\nabla,$ $\Delta)$ in Theorem \ref{main_thm1} is a triangulated category.
\end{cor}
\begin{proof}
From (RT3), we may assume that all diagrams in the proof of Theorem \ref{main_thm2} are in $(\CC, \bbE^{\II}_{\II}, \fraks^{\II}_{\II})$.
So, the proof also works under (RT4).
\end{proof}

Before we introduce some examples, we prepare the following lemma.
We use the notations in Notations \ref{nota_thick}.
See Appendix \ref{defi_rigid} and \ref{defi_orthogonal}, for the definitions of presilting and pre-simple-minded collections.

\begin{lem} \label{lem_redMT}
Assume that $\CC$ is a triangulated category.
	\begin{enumerate}
	\item \label{lem_redMT_presilt}
	Let $\II$ be a presilting subcategory. We assume that 
	$\big( ((\thick \II)_{\geq 0},$ $\rpp{\II[<\!\!0]}),$ $(\lpp{\II[>\!0]},$ $(\thick \II)_{\leq 0}) \big)$
	is a Hovey twin cotorsion pair and let $\ZZ = \rpp{\II[<\!0]} \cap \lpp{\II[>\!0]}$.
	Then $(\II, \ZZ, \II)$ satisfies (MT1), (MT2) and the conditions in Example \ref{ex_redpreMT}(\ref{ex_redpreMT_rigid}).
	In particular, it is a prereducible triple.
	\item \label{lem_redMT_preSMC}
	Let $\MM$ be a pre-simple-minded collection. We assume that 
	$\big( ((\thick \MM)^{<0},$ $\rpp{\MM[\geq\!0]}),$ $(\lpp{\MM[\leq \!0]},$ $(\thick \MM)^{>0}) \big)$
	is a Hovey twin cotorsion pair and let $\ZZ = \rpp{\MM[\geq\!0]} \cap \lpp{\MM[\leq\!0]}$.
	Then $(\bracket{\MM[1]}, \ZZ, \bracket{\MM[-1]})$ satisfies (MT1), (MT2) and the conditions in Example \ref{ex_redpreMT}(\ref{ex_redpreMT_orth}).
	In particular, it is a prereducible triple.
	\end{enumerate}
\end{lem}
\begin{proof}
(1)
Since $\big( (\thick \II)_{\geq 0},$ $\rpp{\II[<\!0]} \cap \lpp{\II[>\!0]},$ $(\thick \II)_{\leq 0} \big)$ 
is a premutation triple, (MT1) and (MT2) are clear.
$\SS = \VV = \II$ and $\II, \ZZ$ are closed under extensions in $(\CC, \bbE^{\II}_{\II}, \fraks^{\II}_{\II})$.
Because $\Cone_{\bbE^{\II}}(\ZZ, \ZZ) = \CoCone_{\bbE_{\II}}(\ZZ, \ZZ) = \ZZ$,
the statement holds.

(2)
Similar to (1), (MT1) and (MT2) hold.
Since $\II=0$, it is enough to show that (MT3) holds.
Take an $\fraks^{\II}$-triangle $Z_1 \rar Z_2 \rar C \rar Z_1[1]$ where $Z_1, Z_2 \in \ZZ$.
From the long exact sequence, $C \in \rpp{\MM[\geq\! 1]} \cap \lpp{\MM[\leq\!0]}$.
Since $((\thick \MM)^{\leq0}, \rpp{\MM[\geq0]})$ is a torsion pair,
there exists a triangle $C^0 \rar C \rar Z^C \rar C^0[1]$ where 
$C^0 \in (\thick \MM)^{\leq0}$ and $Z^C \in \rpp{\MM[\geq0]}$.
Note that $C^0 \in \rpp{\MM[\geq 1]}$ because $Z^C[-1], C \in \rpp{\MM[\geq 1]}$.
Then $C^0 \in \bracket{\MM}$ follows from $C^0 \in (\thick \MM)^{\leq0}$.
Since $C, C^0[1] \in \lpp{\MM[\leq 0]}$, then $Z^C \in \lpp{\MM[\leq 0]}$. 
Thus, $Z^C \in \ZZ$ and $\Cone_{\bbE}(\ZZ, \ZZ) \subset \CoCone_{\bbE}(\ZZ, \bracket{\MM[1]})$.
Dually,  $\CoCone_{\bbE}(\ZZ, \ZZ) \subset \Cone_{\bbE}(\bracket{\MM[-1]}, \ZZ)$ holds.
Because $\bracket{\MM[1]}, \bracket{\MM[-1]}$ and $\ZZ$ are closed under extensions,
(MT3) holds.
\end{proof}

\begin{ex} \label{ex_redMT}
Let $(\SS, \ZZ, \VV)$ be a prereducible triple.
	\begin{enumerate} 
	\item \label{ex_redMT_rigid}
	If $\SS = \VV = \II$, then (RT4) holds. The followings are these examples.
		\begin{enumerate}
		\item \cite[Lemma 4.3]{IY08}
		The prereducible triple $(\II, \ZZ, \II)$ in Example \ref{ex_MT}(\ref{ex_MT_IY}).
		\item \cite[Lemma 4.3]{IY08}, \cite[Lemma 3.5]{IY18}
		The prereducible triple $(\II, \ZZ, \II)$ in Lemma \ref{lem_redMT}(\ref{lem_redMT_presilt}).
		\end{enumerate}
	\item \label{ex_redMT_orth}
	The followings are examples satisfying (RT4) and $\II \subset \Proj\CC \cap \Inj\CC$.
		\begin{enumerate}
		\item \cite[Theorem 4.1]{SP20}
		The prereducible triple
		$(\bracket{\MM[1]}, \ZZ, \bracket{\MM[-1]})$ in Example \ref{ex_MT}(\ref{ex_MT_SP}).
		\item \!\!\cite[Proposition 3.2]{Jin23} 
		The prereducible triple
		$(\bracket{\MM[1]},$ $\ZZ,$ $\bracket{\MM[-1]} )$ in Lemma \ref{lem_redMT}(\ref{lem_redMT_preSMC}).
		\end{enumerate}
	\end{enumerate}
\end{ex}

From the observations in Example \ref{ex_redpreMT} and \ref{ex_redMT}, 
we define the following special class of reducible triples.

\begin{defi}
Let $(\SS, \ZZ, \VV)$ be a triplet of subcategories which satisfies (MT1) and (MT2).
	\begin{enumerate}
	\item $(\SS, \ZZ, \VV)$ is a \emph{rigid mutation triple} if it satisfies the following conditions, named (rigMT).
		\begin{enumerate}
		\item $\Cone_{\bbE^{\II}}(\ZZ, \ZZ) = \CoCone_{\bbE_{\II}}(\ZZ, \ZZ) = \ZZ$.
		\item $\II = \SS = \VV$.
		\item $\ZZ$ is closed under extensions in $(\CC, \bbE^{\II}_{\II}, \fraks^{\II}_{\II})$.
		\end{enumerate}
	\item $(\SS, \ZZ, \VV)$ is an \emph{orthogonal mutation triple} if it satisfies the following conditions, named (ortMT).
		\begin{enumerate}
		\item $\II \subset \Proj \CC \cap \Inj \CC$.
		\item (MT3) and (RT4).
		\end{enumerate}
	\end{enumerate}
\end{defi}

\begin{rmk}
If $(\SS, \ZZ, \VV)$ is a rigid mutation triple, 
(MT4) follows from $\ZZ\bracket{1}^-,$ $\ZZ\bracket{-1}^+$ $\subset \II$.
If $(\SS, \ZZ, \VV)$ is an orthogonal mutation triple,
(MT4$^+$) directly follows from (RT4) since $\bbE = \bbE^{\II}_{\II}$.
Therefore, both rigid mutation triples and orthogonal mutation triples are mutation triples.
\end{rmk}

\begin{rmk}
We illustrate the hierarchy of conditions for triplets of subcategories $(\SS, \ZZ, \VV)$ satisfying (MT1) and (MT2)
and concentric twin cotorsion pairs $((\SS, \TT), (\UU, \VV))$.
In the following picture, we abbreviate ``premutation triple'' to ``preMT'', ``prereducible triple'' to ``preRT'' and ``concentric twin cotorsion pair'' to ``cTCP''.
The condition (I)+(I\hspace{-1.2pt}I) is defined in \cite[Condition 5.2]{Nak18}.
(HE) and (Ho) are defined in Definition \ref{defi_HEandHo}.
(Nak) is defined in Condition \ref{condi_MT4-}.

Note that a heart-equivalent concentric twin cotorsion pair $((\SS, \TT), (\UU, \VV))$ induces 
a rigid mutation triple $(\II, \ZZ, \II)$ from Remark \ref{rmk_HE}.
Any concentric twin cotorsion pair $((\SS, \TT), (\UU, \VV))$ induces a premutation triple $(\SS, \ZZ, \VV)$.
This correspondence induces mutation triples from Hovey twin cotorsion pairs and the condition (MT4$^-$) from the condition (Nak).
Remark that all of the above $(\SS, \ZZ, \VV)$ and $((\SS, \TT), (\UU, \VV))$ induce triangulated categories \emph{except for} premutation triples satisfying only (MT4$^-$).
\[
\begin{tikzpicture}
	\draw[rounded corners] (-1, 5.4) -- (-1,-0.5) -- (3,-0.5) -- (3,5.8) -- (0.4,5.8);
	\draw[rounded corners] (5, 5.3) -- (5,-0.6) -- (1,-0.6) -- (1,5.7) -- (4,5.7);
	\draw[rounded corners] (6.9, 5.3) -- (6.9,-0.6) -- (10.7,-0.6) -- (10.7,5.7) -- (7.9,5.7);
	\node (r4) at (0,2) [centered] {\text{(RT4)}};
	\node (ortMT) at (2,5) [centered] {\text{(ortMT)}};
	\node (rigMT) at (2,3) [centered] {\text{(rigMT)}};
	\node (4+) at (4,4) [centered] {\text{(MT4$^+$)}};
	\node (4) at (4,2) [centered] {\text{(MT4)}};
	\node (4-) at (4,0) [centered] {\text{(MT4$^-$)}};
	\node (redMT) at (-0.4,5.8) [centered] {\text{preRT}};
	\node (preMT) at (4.8,5.7) [centered] {\text{preMT}};
	\node (cTCP) at (7.1,5.7) [centered] {\text{cTCP}};
	\node (HE) at (10.1,3) [centered] {\text{(HE)}};
	\node (Ho) at (8.3,2) [centered] {\text{(Ho)}};
	\node (Ho-l) at (8.1,2) [centered] {\phantom{\text{(Ho)}}};
	\node (Ho-r) at (8.5,2) [centered] {\phantom{\text{(Ho)}}};
	\node (1+2) at (9.1,1) [centered] {(I)+(I\hspace{-1.2pt}I)};
	\node (Nak) at (9.1,0) [centered] {\text{(Nak)}};
	\node (silt-l) at (8.1,4) [centered] {\phantom{\text{Lemma \ref{lem_redMT}(\ref{lem_redMT_presilt})}}};
	\node (smc-r) at (8.5,5) [centered] {\phantom{\text{Lemma \ref{lem_redMT}(\ref{lem_redMT_preSMC})}}};
	\node (smc) at (8.3,5) [centered, fill=white] {\text{Lemma \ref{lem_redMT}(\ref{lem_redMT_preSMC})}};
	\draw[-implies,double equal sign distance] (ortMT) -- (r4);
	\draw[-implies,double equal sign distance] (rigMT) -- (r4);
	\draw[-implies,double equal sign distance] (ortMT) -- (4+);
	\draw[-implies,double equal sign distance] (rigMT) -- (4);
	\draw[-implies,double equal sign distance] (4+) -- (4);
	\draw[-implies,double equal sign distance] (4) -- (4-);
	\draw[-implies,double equal sign distance] (HE) -- (1+2);
	\draw[-implies,double equal sign distance] (Ho) -- (1+2);
	\draw[-implies,double equal sign distance] (1+2) -- (Nak);
	\draw[-implies, double equal sign distance] (silt-l) -- (Ho-l);
	\draw[-implies, double equal sign distance] (smc-r) -- (Ho-r);
	\node (silt) at (8.3,4) [centered, fill=white] {\text{Lemma \ref{lem_redMT}(\ref{lem_redMT_presilt})}};
	\draw[dashed, ->] (cTCP) -- node[above] {$\msize{0.6}{(\SS, \ZZ, \VV)}$} (preMT);
	\draw[dashed, ->] (Ho) -- node[above, sloped] {$\msize{0.6}{(\SS, \ZZ, \VV)}$} (4);
	\draw[dashed, ->] (Nak) -- node[above, fill=white, near end] {$\msize{0.6}{(\SS, \ZZ, \VV)}$} (4-);
	\draw[dashed, ->] (HE) -- node[above, sloped] {$\msize{0.6}{(\II, \ZZ, \II)}$} (rigMT);
	\draw[dashed, ->] (silt) -- node[above, sloped, near start] {$\msize{0.6}{(\II, \ZZ, \II)}$} (rigMT);
	\draw[dashed, ->] (smc) -- node[above, sloped, fill=white] {$\msize{0.6}{(\bracket{\MM[1]}, \ZZ, \bracket{\MM[-1]})}$} (ortMT);
\end{tikzpicture}
\]
\end{rmk}

Now, we try to relate mutation functors of reducible triples to already known mutations.
We denote the extension closure in $(\CC, \bbE^{\II}_{\II}, \fraks^{\II}_{\II})$ by $\bracket{\cdot}^{\II}_{\II}$.

\begin{defi}
Let $(\SS, \ZZ, \VV)$ be a reducible triple.
$\RR^{\pr} = \{ R_i \}_{i \in I} \subset \Ob(\CC)$ is called a \emph{reducible collection} of $(\SS, \ZZ, \VV)$ if 
it satisfies the following conditions.
	\begin{enumerate}[label=(\roman*)]
	\item $\RR = \bracket{\RR^{\pr}}^{\II}_{\II}$ and 
	$\RR \neq \bracket{\RR^{\pr} \setminus R_i}^{\II}_{\II}$ for any $i \in I$.
	\item $R_i \cong R_j$ if and only if $i=j$.
	\item $R_i$ is indecomposable for any $i \in I$.
	\end{enumerate}
\end{defi}

\begin{ex} \phantom{X}\label{ex_rMT-red-coll}
	\begin{enumerate}
	\item \label{ex_rMT-red-coll_1}
	Assume that $\CC$ is a Krull-Schmidt category and let $(\II, \ZZ, \II)$ be a rigid mutation triple.
	We denote the set of isomorphism class of indecomposable objects in $\II$ by $\II^{\pr}$.
	Then a reducible collection of $(\II, \ZZ, \II)$ is $\II^{\pr}$.
	\item \label{ex_rMT-red-coll_2}
	Let $(\bracket{\MM[1]}, \ZZ, \bracket{\MM[-1]})$ be an orthogonal mutation triple 
	in Example \ref{ex_MT}(\ref{ex_MT_SP}) or Lemma \ref{lem_redMT}(\ref{lem_redMT_preSMC}).
	Then a reducible collection of $(\bracket{\MM[1]}, \ZZ, \bracket{\MM[-1]})$ is $\MM$.
	\end{enumerate}
\end{ex}

Now, we fix a reducible triple $(\SS, \ZZ, \VV)$.
We check that the definition below is a simultaneous generalization of silting mutation, 
cluster-tilting mutation, mutations of simple-minded collections and mutations of simple-minded systems.

\begin{defi} \label{defi_mu-rMT}
Assume that $(\SS, \ZZ, \VV)$ has a reducible collection $\RR^{\pr}$ and 
$\XX \subset \Ob(\ZZ)$ be a collection containing $\RR^{\pr}$.
We denote $\XX \setminus \RR^{\pr}$ by $\XX_{\RR^{\pr}}$.
	\begin{enumerate}
	\item We define \emph{right $\RR^{\pr}$-mutation} of $\XX$ as 
	$\RR^{\pr} \cup \Sigma \XX_{\RR^{\pr}}$, which is denoted by $\mu^-_{\RR^{\pr}}(\XX)$.
	\item We define \emph{left $\RR^{\pr}$-mutation} of $\XX$ as 
	$\RR^{\pr} \cup \Omega \XX_{\RR^{\pr}}$, which is denoted by $\mu^+_{\RR^{\pr}}(\XX)$.
	\end{enumerate}
\end{defi}

Next, we consider restricting mutations to extension closed subcategories such as 2-term silting in \cite{AIR14}.
\if0
We discuss the following correspondence of subcategories.
\begin{align}
\ul{(\cdot) \cap \ZZ} \colon \{\EE : \text{subcategory of $\CC$ containing $\RR$} \}
\to \{\EE^{\pr} : \text{subcategory of $\ul{\ZZ}$} \} \label{corr_reduction}
\end{align}
\fi

\begin{thm} \label{main_thm3}
Let $(\SS, \ZZ, \VV)$ be a reducible triple.
Let $\EE$ be an extension closed subcategory in $(\CC, \bbE^{\II}_{\II}, \fraks^{\II}_{\II})$ containing $\RR$.
Then $\ul{\EE \cap \ZZ}$ is an extension closed subcategory in $\ul{\ZZ}$.
\end{thm}
\begin{proof}
Take a triangle in $\ul{\ZZ}$
\begin{align}
E \xrar{\ul{a}} X \xrar{\ul{b^{\pr}}} E^{\pr} \xrar{\ul{c^{\pr}}} \Sigma E \label{tri_in_Z}
\end{align}
where $E, E^{\pr} \in \ul{\EE}$.
From Corollary \ref{cor_iso-in-ul}, we may assume $E, E^{\pr} \in \EE$ by replacing the triangle \eqref{tri_in_Z}
to isomorphic triangle in $\ul{\ZZ}$. 
There exists an $\fraks^{\II}_{\II}$-triangle 
$E \xrar{\msize{0.6}{\begin{bmatrix} a \\ i^E  \end{bmatrix}}} X\oplus I^E \xrar{\wt{b}} C^a \xdrar{\wt{\delta}} E$
and this $\fraks^{\II}_{\II}$-triangle determines the standard right triangle of $a$, 
\begin{align}
E \xrar{\ul{a}} X \xrar{\ul{h^{C^a} b}} \sigma C^a \xrar{\sigma(\ul{c})} \Sigma E. \label{standard_tri_in_Z}
\end{align}
By definition of triangles in $\ul{\ZZ}$, the triangle \eqref{tri_in_Z} is isomorphic to the standard right triangle \eqref{standard_tri_in_Z}, in particular, $\sigma C^a$ is isomorphic to $E^{\pr}$ in $\ul{\CC}$.
In particular, there exists $I \in \II$ where $\sigma C^a \oplus I \in \EE$ from Corollary \ref{cor_iso-in-ul}. 
By (RT3), there exists an $\fraks^{\II}_{\II}$-triangle 
$C^a \xrar{h^{C^a}} \sigma C^a \rar S \xdrar{\rho^{C^a}} C^a$ 
where $S \in \SS$.
From (RT4), there exists an $\fraks^{\II}_{\II}$-triangle
$S^{\pr} \xrar{i_S} I_S \xrar{p_S} S \lxdrar{\lambda_S} S^{\pr}$ where $I_S \in \II$.
Then there exists the following commutative diagrams in $(\CC, \bbE^{\II}_{\II}, \fraks^{\II}_{\II})$.
\[
\xy
(16,24)*+{C^a}="12";
(32,24)*+{C^a}="13";
(0,8)*+{S^{\pr}}="21";
(16,8)*+{Y}="22";
(32,8)*+{\sigma C^a}="23";
(48,8)*+{S^{\pr}}="24";
(0,-8)*+{S^{\pr}}="31";
(16,-8)*+{I_S}="32";
(32,-8)*+{S}="33";
(48,-8)*+{S^{\pr}}="34";
(16,-24)*+{C^a}="42";
(32,-24)*+{C^a}="43";
{\ar@{=} "12";"13"};
{\ar^{} "21";"22"};
{\ar^{} "22";"23"};
{\ar@{-->}^{\epsilon} "23";"24"};
{\ar^{i_S} "31";"32"};
{\ar^{p_S} "32";"33"};
{\ar@{-->}^{\lambda_S} "33";"34"};
{\ar@{=} "42";"43"};
{\ar@{=} "21";"31"};
{\ar^{} "12";"22"};
{\ar^{} "22";"32"};
{\ar@{-->}^{0} "32";"42"};
{\ar^{h^{C^a}} "13";"23"};
{\ar^{} "23";"33"};
{\ar@{-->}^{\rho^{C^a}} "33";"43"};
{\ar@{=} "24";"34"};
{\ar@{}|\car "12";"23"};
{\ar@{}|\car "21";"32"};
{\ar@{}|\car "22";"33"};
{\ar@{}|\car "23";"34"};
{\ar@{}|\car "32";"43"};
\endxy
\qquad
\xy
(16,24)*+{I}="12";
(32,24)*+{I}="13";
(0,8)*+{S^{\pr}}="21";
(16,8)*+{Y^{\pr}}="22";
(32,8)*+{\sigma C^a \oplus I}="23";
(48,8)*+{S^{\pr}}="24";
(0,-8)*+{S^{\pr}}="31";
(16,-8)*+{Y}="32";
(32,-8)*+{\sigma C^a}="33";
(48,-8)*+{S^{\pr}}="34";
(16,-24)*+{I}="42";
(32,-24)*+{I}="43";
{\ar@{=} "12";"13"};
{\ar^{} "21";"22"};
{\ar^{} "22";"23"};
{\ar@{-->}^{} "23";"24"};
{\ar^-{} "31";"32"};
{\ar^-{} "32";"33"};
{\ar@{-->}^{\epsilon} "33";"34"};
{\ar@{=} "42";"43"};
{\ar@{=} "21";"31"};
{\ar^{} "12";"22"};
{\ar^{} "22";"32"};
{\ar@{-->}^{0} "32";"42"};
{\ar^{} "13";"23"};
{\ar^{} "23";"33"};
{\ar@{-->}^{0} "33";"43"};
{\ar@{=} "24";"34"};
{\ar@{}|\car "12";"23"};
{\ar@{}|\car "21";"32"};
{\ar@{}|\car "22";"33"};
{\ar@{}|\car "23";"34"};
{\ar@{}|\car "32";"43"};
\endxy
\]
Since $S^{\pr} \in \SS\bracket{-1} = \RR \subset \EE$ and $\sigma C^a \oplus I \in \EE$, $Y^{\pr} \in \EE$.
Thus, $C^a \cong Y \cong Y^{\pr} \in \ul{\EE}$.
Note that $Y \cong C^a \oplus I_S$ and $Y^{\pr} \cong Y \oplus I$ because the above diagram is considered in 
$(\CC, \bbE^{\II}_{\II}, \fraks^{\II}_{\II})$.
From Corollary \ref{cor_iso-in-ul} again, there exists $I^{\pr} \in \II$ where $C^a \oplus I^{\pr} \in \EE$
and there exists the commutative diagram \eqref{diag:extension-closed} in $(\CC, \bbE^{\II}_{\II}, \fraks^{\II}_{\II})$.

Because $\EE$ is extension closed in $(\CC, \bbE^{\II}_{\II}, \fraks^{\II}_{\II})$, 
$X^{\pr} \in \EE$ and $X \cong X^{\pr} \in \ul{\EE}$ holds.

\begin{align}
\xy
(16,24)*+{I^{\pr}}="12";
(32,24)*+{I^{\pr}}="13";
(0,8)*+{E}="21";
(16,8)*+{X^{\pr}}="22";
(32,8)*+{C^a \oplus I^{\pr}}="23";
(48,8)*+{E}="24";
(0,-8)*+{E}="31";
(16,-8)*+{X\oplus I^E}="32";
(32,-8)*+{C^a}="33";
(48,-8)*+{E}="34";
(16,-24)*+{I^{\pr}}="42";
(32,-24)*+{I^{\pr}}="43";
{\ar@{=} "12";"13"};
{\ar^{} "21";"22"};
{\ar^{} "22";"23"};
{\ar@{-->}^{} "23";"24"};
{\ar^-{\wt{a}} "31";"32"};
{\ar^-{\wt{b}} "32";"33"};
{\ar@{-->}^{\wt{\delta}} "33";"34"};
{\ar@{=} "42";"43"};
{\ar@{=} "21";"31"};
{\ar^{} "12";"22"};
{\ar^{} "22";"32"};
{\ar@{-->}^{0} "32";"42"};
{\ar^{} "13";"23"};
{\ar^{} "23";"33"};
{\ar@{-->}^{0} "33";"43"};
{\ar@{=} "24";"34"};
{\ar@{}|\car "12";"23"};
{\ar@{}|\car "21";"32"};
{\ar@{}|\car "22";"33"};
{\ar@{}|\car "23";"34"};
{\ar@{}|\car "32";"43"};
\endxy \label{diag:extension-closed}
\end{align}
\end{proof}

\begin{cor}
In Theorem \ref{main_thm3}, $\ul{\EE \cap \ZZ}$ has an ET structure as an extension closed subcategory 
in the triangulated category $\ul{\ZZ}$.
\end{cor}

Now, we define the restriction of mutations on extension closed subcategories.

\begin{defi}
In Theorem \ref{main_thm3}, we assume that $(\SS, \ZZ, \VV)$ has a reducible collection $\RR^{\pr}$.
Let $\XX \subset \Ob(\ZZ \cap \EE)$ be a collection containing $\RR^{\pr}$.
We denote $\XX \setminus \RR^{\pr}$ by $\XX_{\RR^{\pr}}$.
	\begin{enumerate}
	\item If $\Sigma \XX_{\RR^{\pr}} \subset \EE$,
	we define $\mu^-_{\RR^{\pr}}(\XX) = \RR^{\pr} \cup \Sigma \XX_{\RR^{\pr}}$,
	which is called a \emph{right $\RR^{\pr}$-mutation} of $\XX$ on $\EE$.
	\item If $\Omega \XX_{\RR^{\pr}} \subset \EE$,
	we define $\mu^+_{\RR^{\pr}}(\XX) = \RR^{\pr} \cup \Omega \XX_{\RR^{\pr}}$ ,
	which is called a \emph{left $\RR^{\pr}$-mutation} of $\XX$ on $\EE$.
	\end{enumerate}
\end{defi}

\begin{ex} \cite{AIR14} 
The following picture is the AR quiver of $\CC = \Db(k A_4)$ 
where $A_4$ is the linearly oriented Dynkin quiver of type $A_4$ (we omitted AR translations).
Let $\II^{\pr} = \{I_1, I_2\}$ which are indicated in the following picture.
An extension closed subcategory $\EE = k A_4 \ast k A_4[1]$ is shaded light gray and
a presilting subcategory $\II = \add \II^{\pr}$ is shaded dark gray.
Then $\II$ induces a rigid mutation triple $(\II, \ZZ, \II)$ with a reducible collection $\II^{\pr}$.
Take $\XX = \{X_1, X_2 \} \cup \II^{\pr}$ which are indicated in the following picture.
Then $\mu^-_{\II^{\pr}} \circ \mu^-_{\II^{\pr}}(\XX) \setminus \II^{\pr}$, 
$\mu^-_{\II^{\pr}}(\XX) \setminus \II^{\pr}$, 
$\mu^+_{\II}(\XX) \setminus \II^{\pr}$ and 
$\mu^+_{\II^{\pr}} \circ \mu^+_{\II^{\pr}}(\XX) \setminus \II^{\pr}$ 
are indicated as follows.
A mutation on $\EE$ is only $\mu^-_{\II^{\pr}}(\XX \cup \II^{\pr})$.
\[
\begin{tikzpicture}
	\path[rounded corners,fill=gray,opacity=0.2] (2.75, -0.5) -- (4.75, 3.5) -- (6.25, 3.5) -- (8.25, -0.5) --cycle;
	\fill[rounded corners,gray,opacity=0.6,rotate around={63.3:(3.39,0)}] (3.14,-0.5) rectangle ++(1.7,0.8);
	\foreach \x in {0,1,...,11}
		\foreach \y in {1,3}
		{
		\fill (\x,\y) circle[radius=0.04cm];
		\fill (\x+0.5,\y-1) circle[radius=0.04cm];
		\draw[-{stealth[scale=3]}] (\x,\y) -- (\x+0.48,\y-0.96);
		\draw[-{stealth[scale=3]}] (\x,1) -- (\x+0.48,1.96);
		\ifnum \x<11
			\draw[-{stealth[scale=3]}] (\x+0.5,\y-1) -- (\x+0.98,\y-0.04);
			\draw[-{stealth[scale=3]}] (\x+0.5,2) -- (\x+0.98,1.04);
		\fi
		}
	\fill [red]
	(1, 1) circle[radius=0.08cm]
	(1.5, 0) circle[radius=0.08cm];
	\fill [red!50!white]
	(3, 3) circle[radius=0.08cm]
	(3.5, 2) circle[radius=0.08cm];
	\fill [black] 
	(3.5,0) circle[radius=0.08cm]
	(4,1) circle[radius=0.08cm]
	(4.5,2) circle[radius=0.08cm]
	(5,3) circle[radius=0.08cm];
	\fill [blue!50!white] 
	(7, 1) circle[radius=0.08cm] 
	(7.5, 0) circle[radius=0.08cm];
	\fill [blue] 
	(9.5,2) circle[radius=0.08cm] 
	(10,3) circle[radius=0.08cm];
	\node [text=red] at (1,1) [right] {\msize{0.7}{\Omega^2 X_1}};
	\node [text=red] at (1.5,0) [right] {\msize{0.7}{\Omega^2 X_2}};
	\node [text=red!50!white] at (3,3) [right] {\msize{0.7}{\Omega X_1}};
	\node [text=red!50!white] at (3.5,2) [right] {\msize{0.7}{\Omega X_2}};
	\node [text=black] at (4.5,2) [right] {\msize{0.7}{X_1}};
	\node [text=black] at (5,3) [right] {\msize{0.7}{X_2}};
	\node [text=blue!50!white] at (7,1) [right] {\msize{0.7}{\Sigma X_1}};
	\node [text=blue!50!white] at (7.5,0) [right] {\msize{0.7}{\Sigma X_2}};
	\node [text=blue] at (9.5,2) [right] {\msize{0.7}{\Sigma^2 X_1}};
	\node [text=blue] at (10,3) [right] {\msize{0.7}{\Sigma^2 X_2}};
	\node (D) at (3.8,0.4) [right] {\msize{0.7}{\II}};
	\node at (3.5,0) [right] {\msize{0.7}{I_1}};
	\node at (4,1) [right] {\msize{0.7}{I_2}};
\end{tikzpicture}
\]
\end{ex}

\appendix
\section{(Pre)triangulated structures induced by concentric twin cotorsion pairs} \label{ccTCP}
In this section, let $(\CC, \bbE, \fraks)$ be an ET category. 
We assume all subcategories are closed under direct summands.

\subsection{Properties of concentric twin cotorsion pairs}
Before we define cotorsion pairs in ET categories, we recall torsion pairs in triangulated categories.

\begin{defi} \cite[Definition 2.2]{IY08}
Assume that $\CC$ is triangulated. A pair of subcategories of $\CC$, $(\UU, \VV)$ is a \emph{torsion pair} of $\CC$ 
if it satisfies the following conditions.
\begin{enumerate}[label=(\roman*)]
	\item $\CC(\UU,\VV)=0$.
	\item $\CC = \UU \ast \VV$.
\end{enumerate}
\end{defi}

\begin{defi}\cite[Definition 2.1]{LN19}
Let $(\UU,\VV)$ be a pair of subcategories of $\CC$. 
$(\UU,\VV)$ is a \emph{cotorsion pair} of $\CC$ if it satisfies the following conditions.
\begin{enumerate}[label=(\roman*)]
	\item $\bbE(\UU,\VV)=0$.
	\item $\CC = \Cone(\VV,\UU)$.
	\item $\CC = \CoCone(\VV,\UU)$.
\end{enumerate}
\end{defi}

\begin{rmk}
\begin{enumerate}
	\item If $\CC$ is triangulated, there exists the following bijection.
	\[
	\{(\UU, \VV) \mid \mathrm{cotorsion \ pair}\} \longleftrightarrow \{(\UU[-1], \VV) \mid \mathrm{torsion\ pair}\}.
	\]
	\item  By definition of cotorsion pair, if $(\UU,\VV)$ is cotorsion pair, then
	\[
	\UU = \{ X \in \CC \mid \bbE(X, \VV)=0 \} \text{ and } \VV = \{X \in \CC \mid \bbE(\UU, X)=0\}.
	\]
	In particular, $\UU,\VV$ are closed under extensions.
	\item Let $(\UU, \VV)$ be a cotorsion pair. Then $\UU$ is strongly contravariantly finite in $\CC$ and $\VV$ is strongly covariantly finite  in $\CC$.
\end{enumerate}
\end{rmk}

We can consider ``resolutions'' of objects in $\CC$ with each cotorsion pair. 
Thus, we can deal with different kinds of resolutions at the same time when there exists a pair of cotorsion pairs. 
Next, we study special pairs of cotorsion pairs.

\begin{defi} \label{def_cctwin cotorsion pair} \cite[Definition 2.3]{LN19} \cite[Definition 3.3]{Nak18}
	\begin{enumerate}
	\item Let $(\SS, \TT)$ and $(\UU, \VV)$ be cotorsion pairs of $\CC$. 
	A pair of cotorsion pair $((\SS, \TT),$ $(\UU, \VV))$ is called a \emph{twin cotorsion pair} 
	if $\SS \subset \UU$.
	\item Let $((\SS, \TT), (\UU, \VV))$ be a twin cotorsion pair. 
	$((\SS, \TT), (\UU, \VV))$ is called \emph{concentric} if $\SS \cap \TT = \UU \cap \VV$.
	In this section, we denote $\SS \cap \TT$ by $\II$ 
	for a concentric twin cotorsion pair $((\SS, \TT), (\UU, \VV))$.
	\end{enumerate}
\end{defi}

\begin{ex} \label{ex_of_twin cotorsion pair}
In this example, we assume that all subcategories are closed under direct summands.
	\begin{enumerate}[listparindent=11pt] 
 	\item \label{ex_of_twin cotorsion pair_same}
	For any cotorsion pair $(\UU, \VV)$, then $((\UU, \VV), (\UU, \VV))$ is a trivial concentric twin cotorsion pair.
	\item Assume that $\CC$ has enough projectives and enough injectives. Then $((\Proj \CC, \CC), $
	$(\CC, \Inj \CC))$ is a twin cotorsion pair, which is concentric if and only if $\CC$ is Frobenius.
	\item \cite{IY08} \label{ex_of_twin cotorsion pair_IY}
	Assume that $\CC$ is a Krull-Schmidt triangulated category. 
	If there exists a strongly functorially finite rigid subcategory $\DD$ in $\CC$
	(so $\DD$ is closed under direct summands), 
	then $((\DD,$ $\rpp{\DD[-1]}),$ $(\lpp{\DD[1}],$ $\DD))$ is a concentric twin cotorsion pair.
	\item \cite{IY18, Jin23} \label{ex_of_twin cotorsion pair_thick}
	Assume that $\CC$ is a Krull-Schmidt triangulated category. 
	If there exists a strongly functorially finite thick subcategory $\NN$ in $\CC$ 
	and a cotorsion pair of $\NN$ $(\SS, \VV)$, 
	then $((\SS, {\SS[-1]}^{\perp_{\CC}}), ({}^{\perp_{\CC}}\VV[1], \VV))$ is a twin cotorsion pair.
\end{enumerate}
\end{ex}

We define some important subcategories.
\begin{defi} \cite[Definition 2.5,2.6]{LN19} \label{defi_plus-minus_ccTCP}
Let $((\SS, \TT), (\UU, \VV))$ be a twin cotorsion pair in $\CC$. 
	\begin{enumerate}
	\item $\ZZ = \TT \cap \UU$ is called a \emph{core} of the twin cotorsion pair.
	\item $\CC^{+} = \Cone(\VV, \ZZ)$.
	\item $\CC^{-} = \CoCone(\ZZ, \SS)$.
	\end{enumerate}
\end{defi}

\begin{ex}
\begin{enumerate}
	\item Let $A$ be a ring. 
	The derived category of $A$-modules $\sfD(A)$ 
	has the standard $t$-structure $(\sfD^{\leq0}(A), \sfD^{> 0}(A) )$. 
	
	This induces a cotorsion pair $(\sfD^{<0}(A), \sfD^{> 0}(A) )$. 
	We regard this cotorsion pair as a twin cotorsion pair like Example \ref{ex_of_twin cotorsion pair}(\ref{ex_of_twin cotorsion pair_same}).
	\begin{enumerate}[leftmargin=18pt]
		\item $\ZZ = \sfD^{> 0}(A) \cap \sfD^{< 0}(A) = 0$
		\item $\CC^{+} = \Cone(\sfD^{> 0}(A), 0) = \sfD^{\geq 0}(A)$
		\item $\CC^{-} = \CoCone(0, \sfD^{<0}(A)) = \sfD^{\leq 0}(A)$ (aisle of the $t$-structure)
	\end{enumerate}
	\item Let $A$ be a ring. The perfect derived category of $A$ in $\sf{D}(A)$, denoted by $\per(A)$, 
	has the standard co-$t$-structure $(\per_{\geq 0}(A), \per_{<0}(A))$. 
	
	This induces a cotorsion pair $(\per_{\geq 0}(A), \per_{\leq 0}(A))$. 
	We regard this cotorsion pair as a twin cotorsion pair like Example \ref{ex_of_twin cotorsion pair}(\ref{ex_of_twin cotorsion pair_same}).
	\begin{enumerate}[leftmargin=18pt]
		\item $\ZZ = \per_{\geq 0}(A) \cap \per_{\leq 0}(A) = \add A$ 
		(co-heart of the co-$t$-structure)
		\item $\CC^{+} = \Cone(\per_{\leq 0}(A), \add A) = \per_{\leq 0}(A)$ 
		\item $\CC^{-} = \CoCone(\add A, \per_{\geq 0}(A)) = \per_{\geq 0}(A)$ 
		(aisle of the co-$t$-structure)
	\end{enumerate}
\end{enumerate}
\end{ex}

In the rest of this section, we denote $\XX/[\II]$ by $\ul{\XX}$ for a subcategory $\XX$ containing $\II$.

\begin{lem} \label{lem_ccTCP_functors}
Let $((\SS, \TT), (\UU, \VV))$ be a concentric twin cotorsion pair.
	\begin{enumerate}
	\item \cite[Definition 4.1]{Nak18}
	$\II$ is strongly functorially finite in $\ZZ$. 
	Then we may define the following functors from Lemma \ref{def_of_bracket}.
		\[
		\bracket{1} \colon \ul{\ZZ} \to \ul{\UU}, \ \bracket{-1} \colon \ul{\ZZ} \to \ul{\TT}.
		\]
	Moreover, there exists the following bifunctorial isomorphism for $Z_1, Z_2 \in \ZZ$.
	\[
	\ul{\UU}(Z_1\bracket{1}, Z_2) \xrar{\sim} \ul{\TT}(Z_1, Z_2\bracket{-1})
	\]
	\item \cite[Corollary 4.11]{NP19} \label{lem_ccTCP_functors_2}
	Let $C \in \CC$.
		\begin{enumerate}
		\item \label{lem_ccTCP_functors_2-1}
		Take an $\fraks$-triangle (this is also an $\fraks_{\II}$-triangle) $V_C \rar U_C \xrar{u_C} C \drar V_C$ 
		where $U_C \in \UU, V_C \in \VV$.
		Then $\omega_{\msize{0.6}{\UU}} \colon \ul{\CC} \to \ul{\UU}$ defined by $C \mapsto U_C$ 
		is a right adjoint functor of the inclusion functor $\ul{\UU} \to \ul{\CC}$ and 
		$\varepsilon_{\msize{0.6}{\UU}} = \{u_C\}_{C \in \Ob(\ul{\CC})}$ gives the counit of this adjoint pair.
		\item \label{lem_ccTCP_functors_2-2}
		Take an $\fraks$-triangle (this is also an $\fraks^{\II}$-triangle) $S^C \drar C \xrar{t^C} T^C \rar S^C$ 
		where $S^C \in \SS, T^C \in \TT$.
		Then $\sigma_{\msize{0.6}{\TT}} \colon \ul{\CC} \to \ul{\TT}$ defined by $C \mapsto T^C$ 
		is a left adjoint functor of the inclusion functor $\ul{\TT} \to \ul{\CC}$ and 
		$\eta_{\msize{0.6}{\hspace{0.5pt}\TT}} = \{t^C\}_{C \in \Ob(\ul{\CC})}$ gives the unit of this adjoint pair.
		\end{enumerate}
	\item \cite[Remark 4.13]{NP19}
		\begin{enumerate}
		\item In (\ref{lem_ccTCP_functors_2-1}), 
		the restriction of $\omega_{\msize{0.6}{\UU}}$ gives 
		a right adjoint functor $\omega \colon \ul{\TT} \to \ul{\ZZ}$ of the inclusion functor $\ul{\ZZ} \to \ul{\TT}$.
		\item In (\ref{lem_ccTCP_functors_2-2}), 
		the restriction of $\sigma_{\msize{0.6}{\TT}}$ gives 
		a left adjoint functor $\sigma \colon \ul{\UU} \to \ul{\ZZ}$ of the inclusion functor $\ul{\ZZ} \to \ul{\UU}$.
		\end{enumerate}
	\end{enumerate}
\end{lem}

In section \ref{pretri}, 
We check that $\sigma$ and $\omega$ in Definition \ref{defi_sigma_omega_apdx} and
those in Lemma \ref{lem_ccTCP_functors} are examples of those in Lemma \ref{adjoint_pair}.

Now, we obtain two endofunctors of $\ul{\ZZ}$, which are also examples of right mutation functors and left mutation functors in Definition \ref{def_mutations}.

\begin{defi} \cite[Definition 4.2]{Nak18} \label{defi_sigma-omega_CTP}
Let $((\SS, \TT), (\UU, \VV))$ be a concentric twin cotorsion pair.
We define $\Sigma = \sigma \circ \bracket{1}$ and $\Omega = \omega \circ \bracket{-1}$, called 
a \emph{right mutation functor} and a \emph{left mutation functor}, respectively.
\end{defi}

\begin{rmk}
In \cite{Nak18}, $\Sigma$ and $\Omega$ do not have specific names.
\end{rmk}

Before we introduce examples of concentric twin cotorsion pairs, 
we define some special subcategories and collections.

\begin{defi} \cite[Section 3]{IY08} \cite[Section 2.3]{IY18} \label{defi_rigid}
Let $\II$ be a subcategory of $\CC$.
	\begin{enumerate}
	\item For $n > 1$, $\II$ is \emph{$n$-rigid} if $\CC(\II, \II[i]) =0$ for $0<i<n$.
	\item $\II$ is \emph{presilting}, or \emph{$\infty$-rigid} if $\CC(\II, \II[i]) =0$ for $i>0$.
	\end{enumerate}
\end{defi}

\begin{rmk} \cite[Section 3]{IY08}
We often use \emph{rigid} instead of 2-rigid.
\end{rmk}

\begin{ex} \cite{AI12, BMRRT06}
	\begin{enumerate}
	\item Let $A$ be a finite dimensional $k$-algebra and $\CC = \Db(A)$. Then $\add A$ is a presilting  subcategory (in this case, this subcategory is also a silting subcategory \cite{AI12, IY18}.)
	\item Let $Q$ be a Dynkin quiver and $H = kQ$. Let $\CC$ be the cluster category of $H$ (for details, see \cite{BMRRT06, IY08}).  Then $\add H$ is a rigid subcategory of $\CC$ (in this case, this subcategory is also a cluster-tilting subcategory).
	\end{enumerate}
\end{ex}

\begin{defi}\cite[Definition 2.1]{SP20} \cite[Definition 1.1]{Asa20} \cite[Definition 3.2]{KY14} \label{defi_orthogonal}

Let $\MM = \{M_i \mid i \in I\} \subset \Ob(\CC)$ where $M_i \neq M_j$ for $i \neq j$, and $n \geq 1$.
	\begin{enumerate}
	\item
	$
	\MM \text{ is a \emph{semibrick} if } 
	\CC(M_i, M_j) = 
	\left\{
		\begin{array}{ll}
		\mathrm{division \ ring} & (i = j)\\
		0 & (\mathrm{else})
		\end{array} \quad (M_i, M_j \in \MM)
	\right.
	$
	\item $\MM$ is \emph{$n$-orthogonal} if $\MM$ is a semibrick and $\CC(\MM[i], \MM) =0$ for $0<i<n$.
	\item $\MM$ is a \emph{pre-simple-minded collection}, or \emph{$\infty$-orthogonal} if
	$\MM$ is $n$-orthogonal for $n \geq 1$.
	\end{enumerate}
\end{defi}

\begin{rmk}
We often use \emph{orthogonal} instead of 2-orthogonal. 
Note that ``orthogonal'' means $1$-orthogonal in \cite{SP20}.
\end{rmk}

\begin{ex}
	\begin{enumerate}
	\item \cite{KY14} 
	Let $A$ be a finite dimensional $k$-algebra. Then $\MM = \{$simple $A$-modules$\}$ is a pre-simple-minded collection in $\Db(A)$ (in this case, this is also a simple-minded collection.)
	\item \cite{Jin20, Jin23} 
	Let $H$ be a path algebra of Dynkin diagram and $A = H \oplus DH[1-d]$ be the trivial extension dg algebra for $d \geq 2$. Let $\MM$ be a complete system of representatives of simple $H$-modules with respect to isomorphism class. 
	Then $\MM$ is $d$-orthogonal (in this case, this is also a $d$-simple-minded system) in $\mathsf{D}_{\mathrm{sg}}(A)$.
	Since there exists a triangle equivalence $\Db(H)/\nu[d] \to \mathsf{D}_{\mathrm{sg}}(A)$ induced by 
	the canonical morphism $\Db(H) \to \Db(A)$,
	 $\MM$ is $d$-orthogonal (in this case, this is also a $d$-simple-minded system) in $(-d)$-cluster category $\Db(H) / \nu [d]$ where $\nu$ is a Nakayama functor of $\Db(H)$.
	\end{enumerate}
\end{ex}

\begin{nota} \label{nota_thick}
	\begin{enumerate}
	\item For an $\infty$-rigid subcategory $\II$, we use the following notation.
		\begin{align*}
		(\thick \II)_{\geq i} = \bcup{l \geq i} \, \II[-l] \ast \cdots \ast \II[-i-1] \ast \II[-i] \\
		(\thick \II)_{\leq i} = \bcup{l \geq -i} \II[-i] \ast \II[1-i] \ast \cdots \ast \II[l]
		\end{align*}
	\item For an $\infty$-orthogonal collection $\MM$, we use the following notation.
		\begin{align*}
		(\thick \MM)^{\leq i} = \bcup{l \leq i} \, \bracket{\MM[-l]} \ast \cdots \ast \bracket{\MM[1-i]} \ast \bracket{\MM[-i]} \\
		(\thick \MM)^{\geq i} = \bcup{l \leq -i} \bracket{\MM[-i]} \ast \bracket{\MM[-i-1]} \ast \cdots \ast \bracket{\MM[l]}
		\end{align*}
	\end{enumerate}
\end{nota}

\begin{ex} \label{ex_HoveyTCP_ARquiv}
Let $H$ be the path algebra of  the linearly oriented Dynkin quiver of type $A_4$ over $k$ and
$\CC = \Db(H)$.
\begin{enumerate}
\item \label{ex_HoveyTCP_ARquiv_1}
Let $\II$ be the presilting subcategory which is shaded both red and blue.
We define the following subcategories of $\CC$.
\begin{gather*}
(\thick \II)_{\geq 0} \ (\text{shaded blue}), \ (\thick \II)_{\leq 0}  \ (\text{shaded red})\\
\rpp{\II[<\!0]} = \bcap{i>0}\, \rpp{\II[-i]} \ (\text{outlined in red}), \
\lpp{\II[>\!0]} = \bcap{i>0} \lpp{\II[i]} \ (\text{outlined in blue})
\end{gather*}
Then $\big(((\thick \II)_{\geq 0}, \rpp{\II[<\!\!0]}), (\lpp{\II[>\!\!0]}, (\thick \II)_{\leq 0}) \big)$ is a concentric twin cotorsion pair from \cite[Proposition 3.2]{IY18}.
$\Omega^2 X, \Omega X, X, \Sigma X$ and $\Sigma^2 X$ are indicated in the following picture.
\[
\begin{tikzpicture}
	\path[rounded corners,fill=red,opacity=0.3] (4.08,-0.2) -- (5, 1.64) -- (5.92, -0.2) --cycle;
	\path[rounded corners,fill=red,opacity=0.3] (6.58,3.2) -- (7.5, 1.36) -- (8.42, 3.2) --cycle;
	\path[rounded corners,fill=red,opacity=0.3] (9.08,-0.2) -- (10, 1.64) -- (10.92, -0.2) --cycle;
	\path[rounded corners,fill=blue,opacity=0.3] (1.58,3.2) -- (2.5, 1.36) -- (3.42, 3.2) --cycle;
	\fill[rounded corners=2mm,blue,fill=blue,opacity=0.3,rotate around={63.44:(4.5,0)}] (4.3,-0.2) rectangle ++(1.55,0.4);
	\fill[rounded corners=2mm,blue,fill=blue,opacity=0.3,rotate around={116.56:(0.5,0)}] (0.3,-0.2) rectangle ++(1.55,0.4);
	\draw[red,thick] (0,3) circle[radius=2mm];
	\draw[rounded corners,red,thick] (1.08,-0.2) -- (2, 1.64) -- (2.92, -0.2) --cycle;
	\draw[rounded corners,red,thick] (3.5,3.3) -- (4.65,1) -- (3.95, -0.4) -- (11.85,-0.4) -- (11.85,3.3) --cycle;
	\draw[rounded corners,blue,thick] (7.08,-0.2) -- (8, 1.64) -- (8.92, -0.2) --cycle;
	\draw[rounded corners,blue,thick] (9.58,3.2) -- (10.5, 1.36) -- (11.42, 3.2) --cycle;
	\draw[rounded corners,blue,thick] (-0.35,3.4) -- (6.55, 3.4) -- (4.8, -0.3) -- (-0.35,-0.3) --cycle;
	\foreach \x in {0,1,2,3,4,5,6,9,11}
		\foreach \y in {1,3}
		{
		\fill (\x,\y) circle[radius=0.04cm];
		\fill (\x+0.5,\y-1) circle[radius=0.04cm];
		\draw[-{stealth[scale=3]}] (\x,\y) -- (\x+0.48,\y-0.96);
		\draw[-{stealth[scale=3]}] (\x,1) -- (\x+0.48,1.96);
		\ifnum \x<11
			\draw[-{stealth[scale=3]}] (\x+0.5,\y-1) -- (\x+0.98,\y-0.04);
			\draw[-{stealth[scale=3]}] (\x+0.5,2) -- (\x+0.98,1.04);
		\fi
		}
	\foreach \x in {7,8,10}
		\foreach \y in {1,3}
		{
		\fill (\x,\y) circle[radius=0.04cm];
		\fill (\x+0.5,\y-1) circle[radius=0.04cm];
		}
		\draw[-{stealth[scale=3]}] (7,3) -- (7+0.48,2.04);
		\draw[-{stealth[scale=3]}] (7,1) -- (7+0.48,0.04);
		\draw[-{stealth[scale=3]}] (7,1) -- (7+0.48,1.96);
		\draw[-{stealth[scale=3]}] (7+0.5,2) -- (7+0.98,2.96);
		\draw[-{stealth[scale=3]}] (7+0.5,2) -- (7+0.98,1.04);
		\draw (7+0.5,0) -- (7+0.85,0.7);%
		\draw[-{stealth[scale=3]}] (8,3) -- (8+0.48,2.04);
		\draw[-{stealth[scale=3]}] (8+0.15,0.7) -- (8+0.48,0.04);%
		\draw[-{stealth[scale=3]}] (8,1) -- (8+0.48,1.96);
		\draw[-{stealth[scale=3]}] (8+0.5,2) -- (8+0.98,2.96);
		\draw[-{stealth[scale=3]}] (8+0.5,2) -- (8+0.98,1.04);
		\draw[-{stealth[scale=3]}] (8+0.5,0) -- (8+0.98,0.96);
		\draw (10,3) -- (10+0.35,2.3);%
		\draw[-{stealth[scale=3]}] (10,1) -- (10+0.48,0.04);
		\draw[-{stealth[scale=3]}] (10,1) -- (10+0.48,1.96);
		\draw[-{stealth[scale=3]}] (10+0.65,2.3) -- (10+0.98,2.96);%
		\draw[-{stealth[scale=3]}] (10+0.5,2) -- (10+0.98,1.04);
		\draw[-{stealth[scale=3]}] (10+0.5,0) -- (10+0.98,0.96);
	\fill (1.5, 0) circle[radius=0.08cm] 
	(4, 3) circle[radius=0.08cm] 
	(5.5, 2) circle[radius=0.08cm] 
	(8, 1) circle[radius=0.08cm]
	(10.5, 2) circle[radius=0.08cm]
	(3, 3) circle[radius=0.08cm]
	(4.5,0) circle[radius=0.08cm]
	(5,1) circle[radius=0.08cm]
	;
	\node (X_-2) at (1.5,0) [right] {\msize{0.6}{\Omega^2 X}};
	\node (X_-1) at (4,3) [right] {\msize{0.6}{\Omega X}};
	\node (X_0) at (5.5,2) [right] {\msize{0.6}{X}};
	\node (X_1) at (8,1) [below] {\msize{0.6}{\Sigma X}};
	\node (X_2) at (10.5,2) [above] {\msize{0.6}{\Sigma^2 X}};
	\node (Y) at (3,3) [right] {\msize{0.6}{Y}};
	\node (I_1) at (4.5,0) [right] {\msize{0.6}{I_1}};
	\node (I_2) at (5,1) [right] {\msize{0.6}{I_2}};
	\draw [-{stealth[scale=3]}, very thick] (3,3) -- (4.48,0.04);
	\draw [-{stealth[scale=3]}, very thick] (4.5,0) -- (4.98,0.96);
	\node (f) at (3.75,1.5) [right] {\msize{0.6}{f}};
	\node (f) at (4.75,0.5) [right] {\msize{0.6}{g}};
\end{tikzpicture}
\]
\item \label{ex_HoveyTCP_ARquiv_2}
Let $\MM$ be the pre-simple-minded collection which is marked gray and $\bracket{\MM}$ is shaded gray.
We define the following subcategories of $\CC$.
\begin{gather*}
(\thick \MM)^{<0} \ (\text{shaded blue}),\ (\thick \MM)^{>0} \ (\text{shaded red})\\
\rpp{\MM[\geq\!0]} = \bcap{i \geq 0} \, \rpp{\MM[i]} \ (\text{outlined in red}), \
\lpp{\MM[\leq\!0]} = \bcap{i \geq 0} \lpp{\MM[-i]} \ (\text{outlined in blue})
\end{gather*}
Then $\big(((\thick \MM)^{<0}, \rpp{\MM[\geq\!\!0]}), (\lpp{\MM[\leq\!\!0]}, (\thick \MM)^{>0}) \big)$ is a concentric twin cotorsion pair from \cite[Proposition 3.2]{Jin23}.
$\Omega X, X, X[1], \Sigma X, \Sigma^2 X$ and $\Sigma^3 X$ are indicated in the following picture.
\[
\begin{tikzpicture}
	\path[rounded corners,fill=gray,opacity=0.3] (4.08,-0.2) -- (5, 1.64) -- (5.92, -0.2) --cycle;
	\path[rounded corners,fill=blue,opacity=0.3] (6.58,3.2) -- (7.5, 1.36) -- (8.42, 3.2) --cycle;
	\path[rounded corners,fill=blue,opacity=0.3] (9.08,-0.2) -- (10, 1.64) -- (10.92, -0.2) --cycle;
	\fill[rounded corners=2mm,red,opacity=0.3,rotate around={116.56:(0.5,0)}] (0.3,-0.2) rectangle ++(1.55,0.4);
	\path[rounded corners,fill=red,opacity=0.3] (1.58,3.2) -- (2.5, 1.36) -- (3.42, 3.2) --cycle;
	\draw[rounded corners,red,thick] (6.08,-0.2) -- (7, 1.64) -- (7.92, -0.2) --cycle;
	\draw[rounded corners,red,thick] (8.58,3.2) -- (9.5, 1.36) -- (10.42, 3.2) --cycle;
	\draw[rounded corners,red,thick] (-0.4,3.4) -- (5.6, 3.4) -- (3.85, -0.3) -- (-0.4,-0.3) --cycle;
	\draw[red,thick] (11.5,0) circle[radius=1.2mm];
	\draw[rounded corners,blue,thick] (2.08,-0.2) -- (3, 1.64) -- (3.92, -0.2) --cycle;
	\draw[rounded corners,blue,thick] (-0.42,3.2) -- (0.5, 1.36) -- (1.42, 3.2) --cycle;
	\draw[rounded corners,blue,thick] (4.4,3.3) -- (6.25, -0.4) -- (11.85,-0.4) -- (11.85,3.3) --cycle;
	\foreach \x in {1,4,5,6,7,8,9,10,11}
		\foreach \y in {1,3}
		{
		\fill (\x,\y) circle[radius=0.04cm];
		\fill (\x+0.5,\y-1) circle[radius=0.04cm];
		\draw[-{stealth[scale=3]}] (\x,\y) -- (\x+0.48,\y-0.96);
		\draw[-{stealth[scale=3]}] (\x,1) -- (\x+0.48,1.96);
		\ifnum \x<11
			\draw[-{stealth[scale=3]}] (\x+0.5,\y-1) -- (\x+0.98,\y-0.04);
			\draw[-{stealth[scale=3]}] (\x+0.5,2) -- (\x+0.98,1.04);
		\fi
		}
		\foreach \x in {0,2,3}
		\foreach \y in {1,3}
		{
		\fill (\x,\y) circle[radius=0.04cm];
		\fill (\x+0.5,\y-1) circle[radius=0.04cm];
		}
		\draw[-{stealth[scale=3]}] (2,3) -- (2+0.48,2.04);
		\draw[-{stealth[scale=3]}] (2,1) -- (2+0.48,0.04);
		\draw[-{stealth[scale=3]}] (2,1) -- (2+0.48,1.96);
		\draw[-{stealth[scale=3]}] (2+0.5,2) -- (2+0.98,2.96);
		\draw[-{stealth[scale=3]}] (2+0.5,2) -- (2+0.98,1.04);
		\draw (2+0.5,0) -- (2+0.83,0.66);%
		\draw[-{stealth[scale=3]}] (3,3) -- (3+0.48,2.04);
		\draw[-{stealth[scale=3]}] (3+0.17,0.66) -- (3+0.48,0.04);%
		\draw[-{stealth[scale=3]}] (3,1) -- (3+0.48,1.96);
		\draw[-{stealth[scale=3]}] (3+0.5,2) -- (3+0.98,2.96);
		\draw[-{stealth[scale=3]}] (3+0.5,2) -- (3+0.98,1.04);
		\draw[-{stealth[scale=3]}] (3+0.5,0) -- (3+0.98,0.96);
		\draw (0,3) -- (0+0.33,2.34);%
		\draw[-{stealth[scale=3]}] (0,1) -- (0+0.48,0.04);
		\draw[-{stealth[scale=3]}] (0,1) -- (0+0.48,1.96);
		\draw[-{stealth[scale=3]}] (0+0.67,2.34) -- (0+0.98,2.96);%
		\draw[-{stealth[scale=3]}] (0+0.5,2) -- (0+0.98,1.04);
		\draw[-{stealth[scale=3]}] (0+0.5,0) -- (0+0.98,0.96);
	\fill (0.5, 2) circle[radius=0.08cm] 
	(3, 1) circle[radius=0.08cm] 
	(5.5, 2) circle[radius=0.08cm] 
	(6.5, 0) circle[radius=0.08cm] 
	(9, 3) circle[radius=0.08cm]
	(11.5, 0) circle[radius=0.08cm]
	;
	\draw [fill=gray] (4.5, 0) circle[radius=0.08cm] (5.5, 0) circle[radius=0.08cm];
	\node (X_-1) at (0.5,2) [above] {\msize{0.7}{\Omega X}};
	\node (X_0) at (3,1) [below] {\msize{0.7}{X}};
	\node (X_0.5) at (5.5,2) [right] {\msize{0.7}{X[1]}};
	\node (X_1) at (6.5,0) [right] {\msize{0.7}{\Sigma X}};
	\node (X_2) at (9,3) [right] {\msize{0.7}{\Sigma^2 X}};
	\node (X_3) at (11.5,0) [below] {\msize{0.7}{\Sigma^3 X}};
\end{tikzpicture}
\]
\end{enumerate}
\end{ex}

\begin{defi} \cite[Definition 4.4]{Nak18}
Let $((\SS, \TT), (\UU, \VV))$ be a concentric twin cotorsion pair. Let $A \xrar{x} B \xrar{y} C \lxdrar{\delta} A$ be an $\fraks$-triangle.
	\begin{enumerate}
	\item $A \xrar{x} B \xrar{y} C \lxdrar{\delta} A$ is called \emph{$\UU$-conic} 
	if $A,B \in \ZZ$ and $C \in \UU$.
	\item $A \xrar{x} B \xrar{y} C \lxdrar{\delta} A$ is called \emph{$\TT$-coconic} 
	if $B,C \in \ZZ$ and $A \in \TT$.
	\end{enumerate}
Note that $\UU$-conic is defined in triangulated categories in \cite[Definition 4.4]{Nak18}.
\end{defi}

The following statement is used in Definition \ref{def_right-left_tri} and Example \ref{ex_MT}.

\begin{lem} \label{conic}
Let $((\SS, \TT), (\UU, \VV))$ be a concentric twin cotorsion pair. 
Let $A \xrar{x} B \xrar{y} C \lxdrar{\delta} A$ be an $\fraks$-triangle.
	\begin{enumerate}
	\item Assume $A,B \in \ZZ$, then
	\[
	A \xrar{x} B \xrar{y} C \lxdrar{\delta} A \text{ is } \UU \text{-conic.} \iff x \text{ is } \II \text{-monic.}
	\]
	\item Assume $B,C \in \ZZ$, then
	\[
	A \xrar{x} B \xrar{y} C \lxdrar{\delta} A \text{ is } \TT \text{-coconic.} \iff y \text{ is } \II \text{-epic.}
	\]
	\end{enumerate}
\end{lem}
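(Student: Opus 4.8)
The plan is to reduce both statements to assertions about the extension $\delta$, and then play the two orthogonality conditions $\bbE(\UU,\VV)=0$ and $\bbE(\SS,\TT)=0$ off against the long exact sequences of Remark~\ref{long_exact_seq}. By Lemma~\ref{epic_monic}, ``$x$ is $\II$-monic'' is equivalent to $\delta\in\bbE^{\II}(C,A)$ and ``$y$ is $\II$-epic'' is equivalent to $\delta\in\bbE_{\II}(C,A)$; also, concentricity gives $\II=\SS\cap\TT=\UU\cap\VV$, so in particular $\II\subseteq\SS$ and $\II\subseteq\VV$. Since statements (1) and (2) are formally dual, I would prove only (1).

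For the direction ``$\UU$-conic $\Rightarrow$ $x$ is $\II$-monic'': assuming $C\in\UU$, every $a\colon A\to I$ with $I\in\II$ gives $a\delta\in\bbE(C,I)\subseteq\bbE(\UU,\VV)=0$ (here I use $\II\subseteq\VV$ and the cotorsion pair $(\UU,\VV)$), hence $\delta\in\bbE^{\II}(C,A)$.

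For the converse: assume $\delta\in\bbE^{\II}(C,A)$; since $(\UU,\VV)$ is a cotorsion pair it suffices to show $\bbE(C,V)=0$ for all $V\in\VV$. Fixing such a $V$, the long exact sequence of Remark~\ref{long_exact_seq} attached to $A\xrar{x}B\xrar{y}C\xdrar{\delta}A$ together with $\bbE(B,V)=0$ (valid because $B\in\ZZ\subseteq\UU$) identifies $\bbE(C,V)$ with the cokernel of $\CC(x,V)\colon\CC(B,V)\to\CC(A,V)$, so I only need every $v\colon A\to V$ to factor through $x$. The crux is the claim that every morphism $A\to V$ with $A\in\ZZ$, $V\in\VV$ already factors through $\II$: taking a left $\II$-approximation $i^A\colon A\to I^A$ that is an $\fraks$-inflation --- available for a concentric twin cotorsion pair, since $\II\subseteq\ZZ$ is strongly functorially finite --- with $\fraks$-triangle $A\xrar{i^A}I^A\to A\bracket{1}\xdrar{\lambda^A}A$, the obstruction to lifting $v$ along $i^A$ is the class $v\lambda^A\in\bbE(A\bracket{1},V)$, and since $A\bracket{1}\in\ZZ\bracket{1}\subseteq\UU$ and $V\in\VV$ this obstruction lies in $\bbE(\UU,\VV)=0$. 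Writing $v=\beta a$ with $a\colon A\to I\in\II$ and $\beta\colon I\to V$, the $\II$-monicity of $x$ lifts $a$ to $a'\colon B\to I$ with $a'x=a$, so $v=(\beta a')x$ factors through $x$, which is what we wanted; hence $\bbE(C,V)=0$ and $C\in\UU$. For (2) one runs the dual argument: $\bbE_{\II}$, right $\II$-approximations, the inclusion $\ZZ\bracket{-1}\subseteq\TT$, $\II\subseteq\SS$, and $\bbE(\SS,\TT)=0$ replace their counterparts.

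The step I expect to be the real obstacle is the intermediate factorization claim, since this is exactly where one must use that the twin cotorsion pair is \emph{concentric} and not merely a pair of cotorsion pairs: it relies on $\ZZ\bracket{1}\subseteq\UU$, $\ZZ\bracket{-1}\subseteq\TT$ and the strong functorial finiteness of $\II$ in $\ZZ$, all of which I would quote from the general theory of (concentric) twin cotorsion pairs (cf.\ \cite{LN19,Nak18}); the remainder is routine bookkeeping with the long exact sequences.
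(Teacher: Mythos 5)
Your proof is correct, and for the harder direction it takes a genuinely different route from the paper. The forward implication is identical in both (everything reduces to $\bbE(C,\II)\subseteq\bbE(\UU,\VV)=0$ via $\II\subseteq\VV$). For the converse, you verify the membership criterion $C\in\UU\iff\bbE(C,\VV)=0$ head-on: the long exact sequence plus $\bbE(B,\VV)=0$ reduces it to surjectivity of $\CC(x,V)$, which you get by factoring any $A\to V$ through a left $\II$-approximation $A\to I^A$ (using $\bbE(A\bracket{1},V)\subseteq\bbE(\UU,\VV)=0$, i.e.\ $\ZZ\bracket{1}\subseteq\UU$) and then lifting $A\to I^A$ along $x$ by $\II$-monicity. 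The paper instead takes a conflation $V\to U\to C$ with $U\in\UU$, $V\in\VV$, compares it with the given conflation via a shifted octahedron to obtain $V\to U'\to B$ with $U'\in\UU$, concludes $V\in\UU\cap\VV=\II$ by splitting, and then kills the class of $V\to U\to C$ using $\bbE(C,\II)=0$ (which is where $\II$-monicity enters there), exhibiting $C$ as a direct summand of $U$. Your version avoids the octahedral construction entirely and stays at the level of long exact sequences; both arguments rest on the same structural inputs (concentricity $\II=\UU\cap\VV$, extension-closedness of $\UU$, and the $\II$-approximation conflations for objects of $\ZZ$), so the difference is organizational, with yours being somewhat more elementary and the paper's more explicit about where $C$ sits inside $\UU$.
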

\begin{proof}
We only prove $(1)$.
Assume that $A \xrar{x} B \xrar{y} C \lxdrar{\delta} A$ is $\UU$-conic. 
Then $\bbE(C, \II) = 0$, so $x$ is $\II$-monic. 
On the other hand, assume that $x$ is $\II$-monic. 
From $\bbE(\ZZ, \II) = 0$ and the long exact sequence, $\bbE(C, \II) =0$.
Take a conflation $V \rar U \rar C \lxdrar{\delta^{\pr\pr}} V$ where $U \in \UU$ and $V \in \VV$.
Then we obtain the following diagram.
	\[
	\xy
	(0,8)*+{A}="01";
	(0,-8)*+{A}="02";
	(16,24)*+{V}="10";
	(16,8)*+{U^{\pr}}="11";
	(16,-8)*+{B}="12";
	(16,-24)*+{V}="13";
	(32,24)*+{V}="20";
	(32,8)*+{U}="21";
	(32,-8)*+{C}="22";
	(32,-24)*+{V}="23";
	(48,8)*+{A}="31";
	(48,-8)*+{A}="32";
	{\ar^{} "01";"11"};
	{\ar^{} "11";"21"};
	{\ar^{} "11";"12"};
	{\ar@{-->}^{\delta^{\pr}} "12";"13"};
	{\ar^{x} "02";"12"};
	{\ar^{y} "12";"22"};
	{\ar^{} "21";"22"};
	{\ar@{-->}^{\delta^{\pr\pr}} "22";"23"};
	{\ar@{=} "01";"02"};
	{\ar@{=} "10";"20"};
	{\ar@{=} "13";"23"};
	{\ar@{=} "31";"32"};
	{\ar@{->}^{} "10";"11"};
	{\ar@{->}^{} "20";"21"};
	{\ar@{-->}^{} "21";"31"};
	{\ar@{-->}^{\delta} "22";"32"};
	{\ar@{}|\car "01";"12"};
	{\ar@{}|\car "10";"21"};
	{\ar@{}|\car "11";"22"};
	{\ar@{}|\car "12";"23"};
	{\ar@{}|\car "21";"32"};
	\endxy
	\]
Note that $U^{\pr} \in \UU$ since $A, U \in \UU$ and $\UU$ is closed under extensions.
From $\delta^{\pr} \in \bbE(\ZZ, \VV) = 0$, $V \in \UU \cap \VV = \II$. Thus, $\delta^{\pr\pr} \in \bbE(C, \II) = 0$.
Therefore, $C \in \UU$.
\end{proof}

Note that $U$ \resp{$T$} in the following definition are contained in $\UU$ \resp{$\TT$} 
from Lemma \ref{inflation} and \ref{conic}.

\begin{defi} \cite[Definition 4.5]{Nak18} \label{def_right-left_tri}
	\begin{enumerate}
	\item Let $f \colon X \to Y$ be a morphism in $\ZZ$.
	Then there exists the following commutative diagram in $\CC$.
	\[
	\xy
	(0,8)*+{X}="21";
	(16,8)*+{I^X}="22";
	(32,8)*+{X\bracket{1}}="23";
	(48,8)*+{X}="24";
	(0,-8)*+{Y}="31";
	(16,-8)*+{U}="32";
	(32,-8)*+{X\bracket{1}}="33";
	(48,-8)*+{Y}="34";
	{\ar^{i^X} "21";"22"};
	{\ar^{p^X} "22";"23"};
	{\ar@{-->}^-{\lambda^X} "23";"24"};
	{\ar^{g} "31";"32"};
	{\ar^{h} "32";"33"};
	{\ar@{-->}^-{\delta} "33";"34"};
	{\ar^{f} "21";"31"};
	{\ar^{} "22";"32"};
	{\ar@{=}^{} "23";"33"};
	{\ar^{f} "24";"34"};
	{\ar@{}|\car "21";"32"};
	{\ar@{}|\car "22";"33"};
	{\ar@{}|\car "23";"34"};
	\endxy
	\]
	Then we define the \emph{standard right triangle} of $f$ as 
	$X \xrar{\ul{f}} Y \xrar{\ul{t^U \circ g}} \sigma U \xrar{\sigma(\ul{h})} \Sigma X$ in $\ul{\ZZ}$ and
	\[
	\nabla = 
	\left(
	\begin{array}{ll}
	\text{sequences} &\text{in } \ul{\ZZ} \text{ isomorphic to one in } \\
	&\{ X \xrar{\ul{f}} Y \xrar{\ul{t^U \circ g}} \sigma U \xrar{\sigma(\ul{h})} \Sigma X \mid 
	f \text{ is a morphism in } \ZZ \}
	\end{array}
	\right).
	\]
	\item Let $h^{\pr} \colon Y \to Z$ be a morphism in $\ZZ$.
	Then there exists the following commutative diagram in $\CC$.
	\[
	\xy
	(0,8)*+{Y}="21";
	(16,8)*+{Z\bracket{-1}}="22";
	(32,8)*+{T}="23";
	(48,8)*+{Y}="24";
	(0,-8)*+{Z}="31";
	(16,-8)*+{Z\bracket{-1}}="32";
	(32,-8)*+{I_Z}="33";
	(48,-8)*+{Z}="34";
	{\ar@{-->}^-{\delta^{\pr}} "21";"22"};
	{\ar^{f^{\pr}} "22";"23"};
	{\ar^-{g^{\pr}} "23";"24"};
	{\ar@{-->}^-{\lambda_Z} "31";"32"};
	{\ar^{i_Z} "32";"33"};
	{\ar^-{p_Z} "33";"34"};
	{\ar^{h^{\pr}} "21";"31"};
	{\ar@{=}^{} "22";"32"};
	{\ar^{} "23";"33"};
	{\ar^{h^{\pr}} "24";"34"};
	{\ar@{}|\car "21";"32"};
	{\ar@{}|\car "22";"33"};
	{\ar@{}|\car "23";"34"};
	\endxy
	\]
	Then we define the \emph{standard left triangle} of $h^{\pr}$ as 
	$\Omega Z \xrar{\omega(\ul{f^{\pr}})} \omega T \xrar{\ul{g^{\pr} \circ u_T}} Y \xrar{\ul{h^{\pr}}} Z$ in $\ul{\ZZ}$ and
	\[
	\Delta = 
	\left(
	\begin{array}{ll}
	\text{sequences} &\text{in } \ul{\ZZ} \text{ isomorphic to one in } \\
	&\{ \Omega Z \xrar{\omega(\ul{f^{\pr}})} \omega T \xrar{\ul{g^{\pr} \circ u_T}} Y \xrar{\ul{h^{\pr}}} Z \mid 
	h^{\pr} \text{ is a morphism in } \ZZ \}
	\end{array}
	\right).
	\]
	\end{enumerate}
\end{defi}

\begin{thm} \cite[Theorem 4.15]{Nak18} \label{thm_pretri-ccTCP}
Let $((\SS, \TT), (\UU, \VV))$ be a concentric twin cotorsion pair.
Then $(\ul{\ZZ}, \Sigma, \Omega, \nabla, \Delta)$ is a pretriangulated category in Definition \ref{defi_pretri-in-BR}.
\end{thm}

\begin{proof}
If $\CC$ is triangulated, see \cite[Theorem 4.15]{Nak18}.
One can similarly show this statement even if $\CC$ is an ET category 
(or we prove this statement in Theorem \ref{main_thm1} under more general situation.)
\end{proof}

\subsection{Twin cotorsion pairs which induces triangulated categories}
\label{Twin cotorsion pairs which induces triangulated categories}

Next, we introduce two types of twin cotorsion pairs which induce triangulated categories.
\begin{defi} \label{defi_HEandHo}
\cite[Condition 6.1, Definition 6.6]{Nak18}
\cite[Definition 3.10, Proposition 3.12]{LN19}
Let $((\SS, \TT), (\UU, \VV))$ be a concentric twin cotorsion pair. 
We denote $\Cone(\VV, \SS)$ by $\NN^i$ and $\CoCone(\VV, \SS)$ by $\NN^f$.

We consider the following conditions.
\begin{itemize}[leftmargin=40pt]
\item[(Ho)] $\NN^i = \NN^f$.
\item[(HE)] $\add (\SS \ast \TT) = \add (\UU \ast \VV)$
\end{itemize}

\begin{enumerate}
\item $((\SS, \TT), (\UU, \VV))$ is called \emph{Hovey} if it satisfies the condition (Ho).
\item $((\SS, \TT), (\UU, \VV))$ is called \emph{heart-equivalent} if it satisfies the condition (HE).
\end{enumerate}
\end{defi}

\begin{ex} \label{ex_HoveyTCP}
Assume that $\CC$ is a Krull-Schmidt triangulated category.
	\begin{enumerate}
	\item \cite{IY18} \label{ex_HoveyTCP-presilt}
	Assume that $\CC$ has a silting subcategory $\XX$.
	Let $\II$ be a presilting subcategory which is functorially finite  in $\CC$. 
	Then $((\SS,\TT),(\UU,\VV))$ defined as follows is Hovey.
		\begin{enumerate}[leftmargin=18pt]
			\item $\SS = \bcup{i \geq 0}\, \II[-i] \ast \cdots \ast \II$,
			$\TT = \bcap{i>0}\, \rpp{\II[-i]}$.
			\item $\UU= \bcap{i>0}\, \lpp{\II[i]}$,
			$\VV = \bcup{i \geq 0}\, \II \ast \cdots \ast \II[i]$.
		\end{enumerate}
	In particular, $\big(((\thick \II)_{\geq 0}, \rpp{\II[<\!0]}), (\lpp{\II[>\!0]}, (\thick \II)_{\leq 0}) \big)$ 
	in Example \ref{ex_HoveyTCP_ARquiv}(\ref{ex_HoveyTCP_ARquiv_1}) is Hovey.
	\item \cite{Jin23} \label{ex_HoveyTCP-preSMC}
	Assume that $\CC$ has a simple-minded collection $\XX$ and let $\MM$ be a subcollection of $\MM$.
	Assume that $\bracket{\MM}$ is functorially finite  in $\CC$, 
	then $((\SS,\TT),(\UU,\VV))$ defined as follows is Hovey.
		\begin{enumerate}[leftmargin=18pt]
			\item $\SS = \bcup{i > 0}\, \bracket{\MM[i]} \ast \cdots \ast \bracket{\MM[1]}$,
			$\TT = \bcap{i \geq 0}\, \rpp{\MM[i]}$.
			\item $\UU = \bcap{i \geq 0}\, \lpp{\MM[-i]}$,
			$\VV = \bcup{i > 0}\, \bracket{\MM[-1]} \ast \cdots \ast \bracket{\MM[-i]}$.
		\end{enumerate}
	In particular, $\big(((\thick \MM)^{<0}, \rpp{\MM[\geq\!0]}), (\lpp{\MM[\leq\!0]}, (\thick \MM)^{>0}) \big)$ 
	in Example \ref{ex_HoveyTCP_ARquiv}(\ref{ex_HoveyTCP_ARquiv_2}) is Hovey.
	\item \cite{IY08}
	We additionally assume $\CC$ is a Hom-finite triangulated $k$-category
	 and has a Serre functor $\bbS$.
	Let $\II$ be a functorially finite rigid subcategory in $\CC$. 
	If $\II$ is $\bbS_2$-stable, $((\II, \II[-1]^{\perp}), ({}^{\perp}\II[1], \II))$ is heart-equivalent.
\end{enumerate}
\end{ex}

The following definition is well-defined from \cite{LN19}.
We will define similar functors under more general situations in Definition \ref{prop_+_and_-}.

\begin{defi} \cite[Definition 2.18]{LN19} \label{def_of_plus}
Let  $((\SS,\TT), (\UU,\VV))$ be a concentric twin cotorsion pair and $X \in \CC$.
\begin{enumerate}
	\item There exist $X^- \in \CC^-$ and the following commutative diagrams in $\CC$
	where $S \in \SS, T \in \TT, Z \in \ZZ$ and $V \in \VV$.
\[
\xy
(0,24)*+{}="11";
(16,24)*+{S}="12";
(32,24)*+{S}="13";
(48,24)*+{}="14";
(0,8)*+{V}="21";
(16,8)*+{X^-}="22";
(32,8)*+{X}="23";
(48,8)*+{V}="24";
(0,-8)*+{V}="31";
(16,-8)*+{Z}="32";
(32,-8)*+{T}="33";
(48,-8)*+{V}="34";
(0,-24)*+{}="41";
(16,-24)*+{S}="42";
(32,-24)*+{S}="43";
{\ar@{=} "12";"13"};
{\ar^{r_X} "21";"22"};
{\ar^{s_X} "22";"23"};
{\ar@{-->}^{\rho_X} "23";"24"};
{\ar^{} "31";"32"};
{\ar^{} "32";"33"};
{\ar@{-->}^{} "33";"34"};
{\ar@{=} "42";"43"};
{\ar@{=} "21";"31"};
{\ar@{-->}^{} "12";"22"};
{\ar^{} "22";"32"};
{\ar^{} "32";"42"};
{\ar@{-->}^{} "13";"23"};
{\ar^{} "23";"33"};
{\ar^{} "33";"43"};
{\ar@{=} "24";"34"};
{\ar@{}|\car "12";"23"};
{\ar@{}|\car "21";"32"};
{\ar@{}|\car "22";"33"};
{\ar@{}|\car "23";"34"};
{\ar@{}|\car "32";"43"};
\endxy
\]
Moreover, $(\cdot)^- \colon \ul{\CC} \to \ul{\CC^-}$ defined by $X \mapsto X^-$ is a right adjoint functor of the inclusion functor $\ul{\CC^-} \to \ul{\CC}$.
	\item There exist $X^+ \in \CC^+$ and the following commutative diagram in $\CC$ 
	where $S \in \SS, U \in \UU, Z \in \ZZ$ and $V \in \VV$.
\[
\xy
(0,24)*+{}="11";
(16,24)*+{S}="12";
(32,24)*+{S}="13";
(48,24)*+{}="14";
(0,8)*+{V}="21";
(16,8)*+{U}="22";
(32,8)*+{X}="23";
(48,8)*+{V}="24";
(0,-8)*+{V}="31";
(16,-8)*+{Z}="32";
(32,-8)*+{X^+}="33";
(48,-8)*+{V}="34";
(0,-24)*+{}="41";
(16,-24)*+{S}="42";
(32,-24)*+{S}="43";
{\ar@{=} "12";"13"};
{\ar^{} "21";"22"};
{\ar^{} "22";"23"};
{\ar@{-->}^{} "23";"24"};
{\ar^{} "31";"32"};
{\ar^{} "32";"33"};
{\ar@{-->}^{} "33";"34"};
{\ar@{=} "42";"43"};
{\ar@{=} "21";"31"};
{\ar@{-->}^{} "12";"22"};
{\ar^{} "22";"32"};
{\ar^{} "32";"42"};
{\ar@{-->}^{\rho^X} "13";"23"};
{\ar^{r^X} "23";"33"};
{\ar^{s^X} "33";"43"};
{\ar@{=} "24";"34"};
{\ar@{}|\car "12";"23"};
{\ar@{}|\car "21";"32"};
{\ar@{}|\car "22";"33"};
{\ar@{}|\car "23";"34"};
{\ar@{}|\car "32";"43"};
\endxy
\]
Moreover, $(\cdot)^+ \colon \ul{\CC} \to \ul{\CC^+}$ defined by $X \mapsto X^+$ is a left adjoint functor of the inclusion functor $\ul{\CC^+} \to \ul{\CC}$.
\end{enumerate}
\end{defi}

\begin{thm} \label{case_of_twin cotorsion pair} \cite[Theorem 4.15, Remark 5.8]{Nak18}
Let $((\SS,\TT), (\UU,\VV))$ be a concentric twin cotorsion pair.
We assume that $\CC$ is a triangulated category.
We denote the image of $\UU$ \resp{$\TT$} by $(\cdot)^-$ \resp{$(\cdot)^+$} as $\UU^-$ \resp{$\TT^+$}.
\begin{enumerate}
	\item We additionally assume the following conditions. \label{case_of_twin cotorsion pair_1}
		\begin{enumerate}[label=(\roman*)]
		\item $\ul{\UU^-} \subset \ul{\NN^i}$.
		\item $\ul{\TT^+} \subset \ul{\NN^f}$.
		\end{enumerate}
		Then the pretriangulated category $(\ul{\ZZ}, \Sigma, \Omega, \nabla, \Delta)$ 
		in Theorem \ref{thm_pretri-ccTCP} 
		becomes a triangulated category.
	\item If $((\SS,\TT), (\UU,\VV))$ is Hovey or heart-equivalent, then it satisfies the conditions in (1).
	In particular, $\ul{\ZZ}$ is a triangulated category.
\end{enumerate}
\end{thm}
\begin{proof}
(1) From \cite[Remark 5.8]{Nak18}.

(2) From \cite[Corollary 5.11, 6.12, Proposition 6.2]{Nak18}.
\end{proof}

One can check that the poof of Theorem \ref{case_of_twin cotorsion pair} in \cite{Nak18} also works under the following slightly looser conditions.
\begin{condi} \label{condi_MT4-}
Let $((\SS, \TT), (\UU, \VV))$ be a concentric twin cotorsion pairs.
	\begin{itemize}[leftmargin=40pt]
		\item[(Nak1)] $\ul{\ZZ\bracket{1}^-} \subset \ul{\NN^i}$.
		\item[(Nak2)] $\ul{\ZZ\bracket{-1}^+} \subset \ul{\NN^f}$.
	\end{itemize}
\end{condi}
If a concentric twin cotorsion pair $((\SS, \TT), (\UU, \VV))$ satisfies (Nak1) and (Nak2), 
then we say that $((\SS, \TT), (\UU, \VV))$ satisfies (Nak).

\section{Mutation theory} \label{tri_str_by_MP}
We assume that $\CC$ is a triangulated category and all subcategories are closed under direct summands in this section.

\subsection{Rigid mutation pairs} \label{Rigid mutation pairs}
The aim of this subsection is to introduce rigid mutation pairs. For details, see \cite{IY08, AI12}.

\begin{lem} \cite[Definition 2.5, Proposition 2.6]{IY08}, \cite[Lemma 3.5]{Ara24} \label{lem_bracket_rigidver}.
We define the following functors.
	\begin{enumerate}
	\item Assume that $\II$ is covariantly finite in $\XX$.
		\begin{enumerate}[leftmargin=15pt]
		\item For $X \in \XX$, there exists a left $\II$-approximation $i^X \colon X \to I^X$.
		Then we obtain the following triangle in $\CC$.
		\[
		X \xrar{i^X} I^X \xrar{p^X} X\bracket{1} \xrar{l^X} X[1]
		\]
		\item For a morphism $x \colon X \to X^{\pr}$ in $\XX$, we define $x\bracket{1} \colon X\bracket{1} \to X^{\pr}\bracket{1}$ as a morphism in $\CC$ which makes the following diagram in $\CC$ commutative.
		\[
		\xy
		(0,8)*+{X \bracket{1}}="1";
		(0,-8)*+{X^{\pr} \bracket{1}}="2";
		(16,8)*+{X[1]}="3";
		(16,-8)*+{X^{\pr}[1]}="4";
		{\ar^-{l^X} "1";"3"};		
		{\ar^-{l^{X^{\pr}}} "2";"4"};
		{\ar^{x[1]} "3";"4"};
		{\ar_{x \bracket{1}} "1";"2"};
		{\ar@{}|\car "1";"4"};
		\endxy
		\]
		\end{enumerate}
	Then $\bracket{1}$ induces an additive functor $\bracket{1} \colon \XX/[\II] \to \CC/[\II]$.
	Moreover, $\bracket{1}$ is unique up to natural isomorphisms.
	\item Assume that $\II$ is contravariantly finite in $\YY$.
		\begin{enumerate}[leftmargin=15pt]
		\item For $Y \in \YY$, there exists a right $\II$-approximation $p_Y \colon I_Y \to X$.
		Then we obtain the following triangle in $\CC$.
		\[
		Y\bracket{-1} \xrar{i_Y} I_Y \xrar{p_Y} Y \xrar{l_Y} Y\bracket{-1}[1]
		\]
		\item For a morphism $y \colon Y \to Y^{\pr}$ in $\YY$, 
		we define $y\bracket{-1} \colon Y\bracket{-1} \to Y^{\pr}\bracket{-1}$ as a morphism in $\CC$ 
		which makes the following diagram commutative.
		\[
		\xy
		(24,8)*+{Y \bracket{-1}}="3";
		(24,-8)*+{Y^{\pr} \bracket{-1}}="4";
		(0,8)*+{Y[-1]}="1";
		(0,-8)*+{Y^{\pr}[-1]}="2";
		{\ar^-{l_Y[-1]} "1";"3"};		
		{\ar^-{l_{Y^{\pr}}[-1]} "2";"4"};
		{\ar^{y  \bracket{-1}} "3";"4"};
		{\ar_{y[-1]} "1";"2"};
		{\ar@{}|\car "1";"4"};
		\endxy
		\]
		\end{enumerate}
	Then $\bracket{-1}$ induces an additive functor $\bracket{-1} \colon \YY/[\II] \to \CC/[\II]$.
	Moreover, $\bracket{-1}$ is unique up to natural isomorphisms.
	\end{enumerate}
\end{lem}
\begin{proof}
See \cite[Definition 2.5, Proposition 2.6]{IY08} (or Lemma \ref{def_of_bracket}).
\end{proof}

\begin{ex}
The following picture is the AR quiver of $\CC = \Db(k A_4)/\nu[-4]$ 
where $\nu$ is the Nakayama functor and $A_4$ is the linearly oriented Dynkin quiver of type $A_4$
(we omitted AR translations).

The fundamental domain is shaded light gray and $\II$ is shaded dark gray.
Then $\II$ is a functorially finite 4-rigid subcategory in $\CC$.
Take $X \in \CC$ indicated in the following picture.
Then $X\bracket{1}, X\bracket{2}$, $X\bracket{3}$ and $X\bracket{4}$ are as follows.
Note that there exists a nonzero morphism $f$ which is a left $\II$-approximation of $X\bracket{1}$.
\[
\begin{tikzpicture}
	\path[rounded corners,fill=gray,opacity=0.2] (0.75, -0.5) -- (2.75, 3.5) -- (9.25, 3.5) -- (11.25, -0.5) --cycle;
	\draw[rounded corners,gray,fill=gray,opacity=0.6,rotate around={63.44:(4.5,0)}] (4.25,-0.25) rectangle ++(1.63,0.5);
	\foreach \x in {0,1,...,11}
		\foreach \y in {1,3}
		{
		\fill (\x,\y) circle[radius=0.04cm];
		\fill (\x+0.5,\y-1) circle[radius=0.04cm];
		\draw[-{stealth[scale=3]}] (\x,\y) -- (\x+0.48,\y-0.96);
		\draw[-{stealth[scale=3]}] (\x,1) -- (\x+0.48,1.96);
		\ifnum \x<11
			\draw[-{stealth[scale=3]}] (\x+0.5,\y-1) -- (\x+0.98,\y-0.04);
			\draw[-{stealth[scale=3]}] (\x+0.5,2) -- (\x+0.98,1.04);
		\fi
		}
	\fill (1.5, 0) circle[radius=0.08cm] 
	(4, 3) circle[radius=0.08cm] 
	(5.5, 2) circle[radius=0.08cm] 
	(8, 1) circle[radius=0.08cm]
	(2, 1) circle[radius=0.08cm]
	;
	\node (X_0) at (1.5,0) [right] {\msize{0.7}{X}};
	\node (X_1) at (4,3) [right] {\msize{0.7}{X\bracket{1}}};
	\node (X_2) at (5.5,2) [right] {\msize{0.7}{X\bracket{2}}};
	\node (X_3) at (8,1) [right] {\msize{0.7}{X\bracket{3}}};
	\node (X_4-2) at (2,1) [right] {\msize{0.7}{X\bracket{4}}};
	\node (D) at (4.9,0.3) [right] {\msize{0.7}{\II}};
	\draw [-{stealth[scale=3]}, blue, very thick] (4,3) -- (5,1);
	\node (morph) [text=blue] at (4.5,2) [right] {\msize{0.7}{f}};
\end{tikzpicture}
\]
\end{ex}

We use the following notation.
\[
\XX\bracket{1} =
\left\{
Z \in \CC \ \middle| \ 
	\begin{aligned}
	\text{there exists a triangle }& X \xrar{f} I \rar Z \rar X[1] \\
	&\text{ where $f$ is a left $\II$-approximation.}
	\end{aligned}
\right\}
\]
That is because $\ul{\XX\bracket{1}}$ equals to
the isomorphic closure of $\{ X\bracket{1} \mid X \in \XX \}$ in $\ul{\CC}$.
Dually, we define $\YY\bracket{-1}$.

\begin{lem} \cite[Definition 2.5]{IY08}, \cite[Definition 2.30]{AI12} \label{lem_mu-of-rigidMP}
Let $\II$ be a functorially finite rigid subcategory in $\CC$. 
Let $\XX,\YY$ be subcategories containing $\II$.
	\begin{enumerate}
	\item $(\II \ast \XX[1]) \cap {}^{\perp}\II[1] = \XX\bracket{1}$.
	\item $(\YY[-1] \ast \II) \cap \II[-1]^{\perp} = \YY\bracket{-1}$.
	\end{enumerate} 
\end{lem}
\begin{proof}
We only show (1).
Take $Z \in (\II \ast \XX[1]) \cap {}^{\perp}\II[1]$. Since $Z \in \II \ast \XX[1]$,
there exists a triangle $X \xrar{f} I \xrar{g} Z \xrar{h} X[1]$ where $X \in \XX, I \in \II$.
From $Z[-1] \in \lpp{\II}$, $f$ is a left $\II$-approximation.
Let $Z \in \XX\bracket{1}$, then $Z \in \II \ast \XX[1]$.
From the long exact sequence, $Z \in \lpp{\II[1]}$.
\end{proof}

\begin{prop} \cite[Proposition 2.6, 2.7]{IY08} \label{prop_rigid-mutation-pair}
Assume that $\II$ is a functorially finite rigid subcategory in $\CC$.
	\begin{enumerate}
	\item $\bracket{1} \colon \rpp{\II[-1]}/[\II] \to \lpp{\II[1]}/[\II]$ and 
	$\bracket{-1} \colon \lpp{\II[1]}/[\II] \to \rpp{\II[-1]}/[\II]$ are mutually quasi-inverse.
	\item The following correspondence of subcategories is bijective.
	\begin{align*}
	\bracket{1} \colon \{ \XX \colon \text{subcategory} \mid \, \II \subset \XX \subset \rpp{\II[-1]} \}  
	&\rightleftarrows \\
	\{\YY \colon \text{subcategory} \mid &\ \II \subset \YY \subset \lpp{\II[1]} \} \colon \bracket{-1}
	\end{align*}
	\end{enumerate}
\end{prop}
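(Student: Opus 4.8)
The plan is to deduce the statement directly from the general machinery already set up in the excerpt, specializing the abstract functors $\bracket{1}$ and $\bracket{-1}$ to the triangulated setting $\bbE(-,-) = \CC(-,-[1])$ with a functorially finite rigid $\DD$. First I would record that $\DD$ rigid means $\bbE^{\DD}(\DD,\DD) = \CC(\DD,\DD[1]) = 0$, so by Lemma \ref{bracket_equiv} applied with $\II = \DD$ and $\XX$ any subcategory with $\DD \subset \XX \subset \rpp{\DD[-1]}$, the functor $\bracket{1} \colon \XX/[\DD] \to \XX\bracket{1}/[\DD] = \mu^+(\XX;\DD)/[\DD]$ is an equivalence. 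In particular, taking $\XX = \rpp{\DD[-1]}$, the functor $\bracket{1} \colon \rpp{\DD[-1]} \to \lpp{\DD[1]}$ is an equivalence (here I need that $\rpp{\DD[-1]}\bracket{1} = \lpp{\DD[1]}$, which is exactly the computation $\mu^+(\rpp{\DD[-1]};\DD) = (\DD \ast \rpp{\DD[-1]}[1]) \cap \lpp{\DD[1]}$, and one checks $\DD \ast \rpp{\DD[-1]}[1] \supseteq \lpp{\DD[1]}$ using that $\DD \subset \CC$ is functorially finite and rigid, so every object of $\lpp{\DD[1]}$ fits in a triangle with a right $\DD$-approximation). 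Dually, $\bracket{-1} \colon \lpp{\DD[1]} \to \rpp{\DD[-1]}$ is an equivalence by the dual of Lemma \ref{bracket_equiv} using $\bbE_{\DD}(\DD,\DD) = 0$.

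Next I would show the two equivalences are mutually quasi-inverse. For $X \in \rpp{\DD[-1]}$, apply $\bracket{1}$ to get the triangle $X \xrar{i^X} I^X \xrar{p^X} X\bracket{1} \xdrar{\lambda^X} X$ with $i^X$ a left $\DD$-approximation (an $\fraks^{\DD}$-triangle, which by Remark \ref{upper_and_lower}, since $I^X \in \DD$ and $\bbE(\DD,\DD)=0$ on both sides, is also an $\fraks_{\DD}$-triangle); rotating this triangle in $\CC$ gives $X\bracket{1}[-1] \xrar{} X \xrar{i^X} I^X \xrar{}$, and since $X \in \rpp{\DD[-1]} = \lpp{\DD}$ (using $\bbE(-,\DD)=\CC(-,\DD[1])$ and rotation: $X \in \rpp{\DD[-1]}$ iff $\CC(X,\DD[1])=0$ iff... one must be careful with the precise $\perp$ conventions, but this is the standard identification in \cite{IY08}) the map $X \to I^X$ is a left $\DD$-approximation, so this rotated triangle realizes $(X\bracket{1})\bracket{-1}$. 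Hence $(X\bracket{1})\bracket{-1} \cong X$ in $\CC/[\DD]$, functorially; the other composite is symmetric. This gives part (1).

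For part (2), the content is that $\bracket{1}$ sends a subcategory $\XX$ with $\DD \subset \XX \subset \rpp{\DD[-1]}$ to a subcategory $\YY = \XX\bracket{1}$ with $\DD \subset \YY \subset \lpp{\DD[1]}$, and that $\bracket{-1}$ is the inverse assignment. The containment $\DD \subset \XX\bracket{1}$ is in the Notation block ("$\II \subset \XX\bracket{1}_{\II}$"), and $\XX\bracket{1} = \mu^+(\XX;\DD) \subset \lpp{\DD[1]}$ is immediate from the definition of $\mu^+$. That these assignments are mutually inverse on subcategories follows from part (1): $\XX\bracket{1}\bracket{-1} = \{(X\bracket{1})\bracket{-1} \mid X \in \XX\}$ up to summands, which equals $\XX$ since $\XX$ is closed under summands and $(X\bracket{1})\bracket{-1} \cong X$; and one must also check that every object of $\XX\bracket{1}$ is isomorphic (in $\CC$, not just $\CC/[\DD]$) to some $X\bracket{1}$ — this is where the summand-closedness and the explicit form of the triangles matter. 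The main obstacle, I expect, will be bookkeeping the $\perp$-conventions ($\lpp{-}$ versus $\rpp{-}$, and the shift compatibility) so that $\rpp{\DD[-1]}\bracket{1}$ is literally $\lpp{\DD[1]}$ rather than merely contained in it; the reverse inclusion $\lpp{\DD[1]} \subseteq \Cone_{\bbE^{\DD}}(\rpp{\DD[-1]}, \DD)$ requires producing, for each $Y \in \lpp{\DD[1]}$, a triangle $X \to D \to Y \to X[1]$ with $D \in \DD$ and $X \in \rpp{\DD[-1]}$, i.e. taking a left $\DD$-approximation of $Y[-1]$ and checking the cocone lands in $\rpp{\DD[-1]}$ via the long exact sequence and rigidity of $\DD$ — a short diagram chase but the crux of the argument.
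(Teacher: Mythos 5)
Your proposal follows essentially the same route as the paper's (much terser) proof: Lemma \ref{bracket_equiv} and its dual give the two equivalences, the identification $(\rpp{\DD[-1]})\bracket{1} = \lpp{\DD[1]}$ is checked by taking a right $\DD$-approximation and running the long exact sequence with rigidity, and quasi-inverseness comes from reading the triangle $X \to I^X \to X\bracket{1} \dashrightarrow X$ both as an $\fraks^{\DD}$- and an $\fraks_{\DD}$-triangle (Remark \ref{upper_and_lower}). Three slips to repair before this is airtight. (a) The hypothesis of Lemma \ref{bracket_equiv} is $\bbE^{\II}(\II,\XX)=0$, not $\bbE^{\II}(\II,\II)=0$; you cite only $\bbE^{\DD}(\DD,\DD)=0$, whereas what is needed (and what the paper verifies) is $\bbE^{\DD}(\DD,\rpp{\DD[-1]}) \subset \CC(\DD,\rpp{\DD}) = 0$. (b) With the Hom-orthogonal conventions, $\rpp{\DD[-1]} = \{X \mid \CC(\DD,X[1])=0\}$ is not $\lpp{\DD}$; and the condition that makes the rotated triangle realize $(X\bracket{1})\bracket{-1}$ is not that $i^X$ is a left $\DD$-approximation (that holds by construction) but that $p^X \colon I^X \to X\bracket{1}$ is a \emph{right} $\DD$-approximation, which follows precisely from $\bbE(\DD,X)=\CC(\DD,X[1])=0$ for $X \in \rpp{\DD[-1]}$ — this is the content of Remark \ref{upper_and_lower} that you correctly invoke. (c) Your closing sentence says to take a left $\DD$-approximation of $Y[-1]$; the triangle $X \to D \to Y \to X[1]$ exhibiting $Y \in \Cone_{\bbE^{\DD}}(\rpp{\DD[-1]},\DD)$ comes from a right $\DD$-approximation $D \to Y$, as you state correctly in your first paragraph.
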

\begin{proof}
See \cite[Proposition 2.6, 2.7]{IY08}.
\end{proof}

\begin{defi} \label{rigid_MP} \cite[Definition 2.5]{IY08}
Let $\II$ be a functorially finite rigid subcategory in $\CC$.
Let $\XX$ and $\YY$ be subcategories of $\CC$ containing $\II$.
	\begin{enumerate}
	\item We call a pair of subcategories $(\XX, \YY)$ a \emph{rigid $\II$-mutation pair} 
	if $\XX \subset \rpp{\II[-1]}$ and $\YY = \XX\bracket{1}$.
	\item Let $(\XX, \YY)$ be a rigid $\II$-mutation pair.
	Then an additive functor $\bracket{1}$ \resp{$\bracket{-1}$} defined in Lemma \ref{lem_bracket_rigidver}
	is called a \emph{right \resp{left} mutation functor}.
	\end{enumerate}
\end{defi}

\begin{rmk}
We reversed the names ``right mutation'' and ``left mutation'' from \cite{IY08, AI12}.
Then right mutation functors induce right triangulated structures.
\end{rmk}

\begin{ex} \cite{IY08}
Let $\II$ be a functorially finite rigid subcategory in $\CC$.
The following pairs of subcategories are elementary examples of rigid mutation pairs.
	\begin{enumerate}
	\item $(\II, \II)$ is a rigid $\II$-mutation pair.
	\item $(\rpp{\II[-1]}, \lpp{\II[1]})$ is a rigid $\II$-mutation pair.
	\item For any subcategory $\XX$, $(\XX, \XX[1])$ is a rigid $0$-mutation pair.
	\end{enumerate}
\end{ex}

\subsection{Triangulated structures induced by rigid mutation pairs}
\label{Triangulated structures induced by rigid mutation pairs}

The following statement is \cite[Theorem 4.2]{IY08}.

 \begin{condi} \cite[Section 4]{IY08} \label{IY_condi}
 Let $\II \subset \ZZ$ be subcategories of $\CC$. We consider the following conditions.
 	\begin{itemize}[leftmargin=40pt]
	\item[(IY1)] $\II$ is functorially finite in $\ZZ$.
	\item[(IY2)] $(\ZZ, \ZZ)$ is a rigid $\II$-mutation pair.
	\item[(IY3)] $\ZZ$ is closed under extensions.
	\end{itemize}
 \end{condi}
 
 \begin{thm} \cite[Theorem 4.2]{IY08}
 We assume (IY1), (IY2) and (IY3) in Condition \ref{IY_condi}.
 
 Then $(\ZZ/[\II], \bracket{1}, \triangle)$ is a triangulated category, 
 where distinguished triangles are defined as follows.
 
 For a morphism $f \colon X \to Y$ in $\ZZ$, we define a distinguished triangle 
 $X \xrar{f} Y \xrar{g} Z \xrar{h} X\bracket{1}$ 
 in $\ZZ/[\II]$ by the following diagram in $\CC$
 \[
 \xy
(0,8)*+{X}="11";
(16,8)*+{I^X}="12";
(32,8)*+{X\bracket{1}}="13";
(48,8)*+{X[1]}="14";
(0,-8)*+{Y}="21";
(16,-8)*+{Z}="22";
(32,-8)*+{X\bracket{1}}="23";
(48,-8)*+{Y[1]}="24";
{\ar^{i^X} "11";"12"};
{\ar^{p^X} "12";"13"};	
{\ar^{} "13";"14"};
{\ar^{g} "21";"22"};
{\ar^{h} "22";"23"};
{\ar^{} "23";"24"};
{\ar^{f} "11";"21"};
{\ar^{} "12";"22"};
{\ar@{=}^{} "13";"23"};
{\ar^{f[1]} "14";"24"};
{\ar@{}|\car "11";"22"};
{\ar@{}|\car "12";"23"};
{\ar@{}|\car "13";"24"};
\endxy
 \]
and
\[
\triangle = 
\left(
\begin{array}{ll}
\text{sequences in } \ZZ/[\II] & \text{isomorphic to one in } \\
&\{ X \xrar{f} Y \xrar{g} Z \xrar{h} X\bracket{1} \mid f \text{ is a morphism in } \ZZ \}
\end{array}
\right).
\]
\end{thm}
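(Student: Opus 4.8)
The plan is to realize this statement as a special case of Theorem \ref{main_thm2}; of course it is literally \cite[Theorem 4.2]{IY08}, so one may alternatively argue directly as there, but the point is that nothing new is needed beyond the machinery already developed. Throughout, recall that in a triangulated category $\bbE(-,-) = \CC(-,-[1])$, and that since $\DD \subset \ZZ$ the subcategory $\II = \SS \cap \ZZ$ of Condition \ref{MT1-2} (with $\SS = \VV = \DD$) equals $\DD$, so $\ZZ/[\DD] = \ul{\ZZ}$.

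First I would invoke Example \ref{ex_MT}(\ref{ex_MT_IY}): conditions (IY1)--(IY3) are exactly the hypotheses under which that example produces a mutation triple $(\DD, \ZZ, \DD)$. Then Example \ref{ex_MT4}(\ref{ex_MT4_IY}) shows that this mutation triple satisfies (MT4): since $(\ZZ,\ZZ)$ being a rigid $\DD$-mutation pair forces $\ZZ\bracket{1} = \ZZ = \ZZ\bracket{-1}$, the condition in Remark \ref{MT4} becomes $\ul{\ZZ}^- = \ul{\ZZ}^+$, and both sides reduce to $0$ in $\ul{\CC}$. Now Theorem \ref{main_thm2} applies and gives that $(\ul{\ZZ}, \Sigma, \Omega, \nabla, \Delta)$ is a triangulated category; in particular $(\ul{\ZZ}, \Sigma, \nabla)$ is a triangulated category, with $\Sigma$ an autoequivalence.

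It remains to identify $\Sigma$ with $\bracket{1}$ and $\nabla$ with $\triangle$. For the functor: by (IY2) we have $\ZZ\bracket{1}_{\DD} = \Cone_{\bbE^{\DD}}(\ZZ,\DD) = \mu^+(\ZZ;\DD) = \ZZ$, so the functor $\bracket{1}\colon \ul{\ZZ}\to\ul{\wt{\UU}}$ of \S\ref{pretri} has essential image $\ul{\ZZ}$, while the convention $\sigma Z = Z$, $h^Z = \id_Z$ for $Z\in\ZZ$ makes $\sigma$ restrict to the identity on $\ul{\ZZ}$; hence $\Sigma = \sigma\circ\bracket{1}$ is just $\bracket{1}$ regarded as an endofunctor of $\ul{\ZZ}$. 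Moreover this is by construction the functor of Definition \ref{sigma_and_omega_inIY}, because by Lemma \ref{epic_monic} an $\bbE^{\DD}$-triangle $X \to I^X \to X\bracket{1} \drar X$ is precisely a distinguished triangle $X \xrar{i^X} I^X \to X\bracket{1} \to X[1]$ with $i^X$ a left $\DD$-approximation. For the triangles: given $\ul{a}\colon X \to Y$ in $\ul{\ZZ}$, the octahedral diagram \eqref{THE_shifted_oct} supplies an $\bbE^{\DD}$-triangle $Y \xrar{b} C^a \xrar{c^a} X\bracket{1}\xdrar{\gamma^a} Y$ with $Y, X\bracket{1}\in\ZZ$, so $C^a\in\ZZ$ because $\ZZ$ is closed under $\bbE^{\DD}$-extensions ((RM3)); therefore $\sigma C^a = C^a$ and $h^{C^a} = \id$, and the right triangle $X \xrar{\ul a} Y \xrar{\ul{h^{C^a}b}} \sigma C^a \xrar{\sigma(\ul c)} \Sigma X$ becomes $X \xrar{\ul a} Y \xrar{\ul b} C^a \xrar{\ul c} X\bracket{1}$. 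Since $C^a$ is the cone of the morphism $X \to Y\oplus I^X$ with components $a$ and $i^X$, it is the homotopy pushout of $X\xrar{i^X}I^X$ along $a$, hence canonically isomorphic to the object $Z$ in the theorem's diagram, compatibly with $\ul b\mapsto \ul g$ and $\ul c\mapsto \ul h$. This yields $\nabla = \triangle$.

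The delicate point is this last identification of $\nabla$ with $\triangle$: one must reconcile the built-in direct summands $I^X$ and the sign conventions of \eqref{THE_shifted_oct} and Example \ref{ex_righttri} with the distinguished-triangle presentation in the statement, and verify that the canonical isomorphism $C^a \cong Z$ is compatible with all three maps of each triangle. This is a routine, if somewhat tedious, diagram chase; every other step is an immediate appeal to results already in place.
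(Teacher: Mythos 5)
Your proposal is correct in substance, but it takes a genuinely different route from the paper: the paper offers no internal proof of this statement at all --- it is quoted verbatim from \cite[Theorem 4.2]{IY08}, whose original argument verifies the octahedral axiom and its companions directly in $\ZZ/[\DD]$ by repeated use of octahedra in $\CC$. You instead derive it as a specialization of Theorem \ref{main_thm2}, chaining Example \ref{ex_MT}(\ref{ex_MT_IY}) (to get the mutation triple $(\DD,\ZZ,\DD)$ from (IY1)--(IY3)) with Example \ref{ex_MT4}(\ref{ex_MT4_IY}) (to get (MT4) from $\ZZ\bracket{1}=\ZZ=\ZZ\bracket{-1}$), and then identifying $\Sigma$ with $\bracket{1}$ and $\nabla$ with $\triangle$. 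This is consistent with the paper's stated aim that mutation triples generalize rigid mutation pairs, and there is no circularity: the proofs of those Examples invoke only \cite[Lemma 4.3]{IY08}, not Theorem 4.2. What the direct IY08 route buys is independence from the heavier extriangulated machinery; what your route buys is a uniform treatment in which the rigid and simple-minded cases are literally the same theorem.

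One step deserves tightening. You justify $C^a\in\ZZ$ by closure of $\ZZ$ under $\bbE^{\DD}$-extensions applied to the row $Y\xrar{b}C^a\xrar{c^a}X\bracket{1}\xdrar{\gamma^a}Y$ of \eqref{THE_shifted_oct}; for (RM3)(ii) to apply you must first know $\gamma^a\in\bbE^{\DD}(X\bracket{1},Y)$. This does hold, since $X\bracket{1}\in\ZZ\subset\lpp{\DD[1]}$ gives $\bbE(X\bracket{1},\DD)=0$, so every extension with first term $X\bracket{1}$ is automatically an $\bbE^{\DD}$-extension --- but the cleanest fix is simply to invoke (IY3), which gives closure of $\ZZ$ under arbitrary extensions in $\CC$ and renders the relative-structure bookkeeping unnecessary at this point. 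With that remark added, and the (admittedly tedious) check that the canonical isomorphism $C^a\cong Z$ intertwines $(\ul{b},\ul{c})$ with $(\ul{g},\ul{h})$, your argument is complete.
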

\begin{proof}
From \cite[Theorem 4.2]{IY08} (or, Theorem \ref{main_thm2} and Example \ref{ex_MT4}(\ref{ex_MT4_IY})).
\end{proof}
 
In the last of this section, we introduce important examples of rigid mutation pairs.
We assume that $\CC$ is a Hom-finite $k$-category which has a Serre functor $\bbS$.

\begin{defi}  \cite[Section 3]{IY08} \cite[Definition 2.5]{SP20} \label{defi_stable}
Let $\XX$ be a subcategory of $\CC$ and $n$ be an arbitrary integer.
	\begin{enumerate}
	\item We denote $\bbS \circ [-n]$ by $\bbS_n$.
	\item $\XX$ is $\bbS_n$-stable if $\XX \cong \bbS_n \XX$.
	\end{enumerate}
\end{defi}
 
 \begin{ex} \cite{IY08} \label{ex_rigid_mutation_pair}
Let $n \geq 2$ be an integer and $\II$ be a functorially finite $n$-rigid $\bbS_{n}$-stable subcategory in $\CC$.
 
 Then $\ZZ = \bcap{0 < i < n} \lpp{\II[i]}$ satisfies (IY1), (IY2) and (IY3).
 \end{ex}

\subsection{Orthogonal mutation pairs}
\label{Orthogonal mutation pairs}
We collect some information about mutations of orthogonal collections in this section. 
For details, see \cite{Dug15, SP20}. 

Let $\MM \subset \mathrm{Ob}(\CC)$ and fix it.

\begin{condi} \cite[Setup 3.5]{SP20} \label{SP_condi1} We consider the following conditions.
	\begin{itemize}[leftmargin=40pt]
	\item[(SP1)]
		\begin{enumerate}[label=(\roman*)]
		\item $\MM$ is a semibrick and $\bracket{\MM}$ is functorially finite in $\CC$.
		\item $\MM$ is orthogonal or $\bbS_{-1}$-stable.
		\end{enumerate}
	\end{itemize}
\end{condi}

\begin{defi} \cite[Definition 3.1, Lemma 3.3]{SP20} \cite[Section 4]{Dug15} \label{defi_sigma_omega_apdx}
We assume (SP1)(i).
Let $X \in \CC$.
	\begin{enumerate}
	\item $\omega X$ is defined (uniquely up to isomorphisms) by the following triangle.
		\begin{align*}
		\omega X \xrar{f^X} X \xrar{g^X} M^X &\xrar{h^X} \omega X[1]  \\
		&\text{where } g^X \text{ is a left  minimal } \bracket{\MM} \text{-approximation}.
		\end{align*}
	\item $\sigma X$ is defined (uniquely up to isomorphisms) by the following triangle.
		\begin{align*}
		M_X \xrar{f_X} X \xrar{g_X} \sigma X &\xrar{h_X} M_X[1]  \\
		&\text{where } f_X \text{ is a right  minimal } \bracket{\MM} \text{-approximation}.
		\end{align*}
	\end{enumerate}
\end{defi}

\begin{rmk}
From Wakamatsu's lemma, $\omega X \in \lpp{\MM}$ and $\sigma X \in \rpp{\MM}$. 
\end{rmk}

\begin{lem} \cite[Lemma 2.6(1)]{SP20} \label{append_omega_sigma}
We assume that $\MM$ is a semibrick.
Let $X \in \CC$.
	\begin{enumerate}
	\item If $X \in \lpp{\MM[-1]}$ and there exists a left minimal $\bracket{\MM}$-approximation of $X$,
	then $\omega X \in \lpp{\MM[-1]} \cap \lpp{\MM}$. 
	\item If $X \in \rpp{\MM[1]}$ and there exists a right minimal $\bracket{\MM}$-approximation of $X$,
	then $\sigma X \in \rpp{\MM[1]} \cap \rpp{\MM}$.
	\end{enumerate}
\end{lem}
\begin{proof}
See \cite[Lemma 2.6(1)]{SP20}.
\end{proof}

\begin{cor}
We assume (SP1)(i).
	\begin{enumerate}
	\item $\omega \colon \lpp{\MM[-1]} \to \lpp{\MM[-1]} \cap \lpp{\MM}$ is an additive functor.
	\item $\sigma \colon \rpp{\MM[1]} \to \rpp{\MM[1]} \cap \rpp{\MM}$ is an additive functor.
	\end{enumerate}
\end{cor}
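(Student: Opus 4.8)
The plan is to derive both statements from the previous Lemma (that $\omega X \in {}^{\perp}\SS$ and $\sigma X \in \SS^{\perp}$ always hold by Wakamatsu's lemma, and that $\omega$ preserves ${}^{\perp}\SS[-1]$ while $\sigma$ preserves $\SS^{\perp}[1]$) together with the functoriality machinery already set up for $\bracket{1}$ and $\bracket{-1}$ in Lemma \ref{def_of_bracket} and for $\sigma$ in Remark \ref{rmk_adj_of_sigma}. I will only treat (1), since (2) is formally dual. First I would observe that the well-definedness of $\omega X$ on objects is immediate: for $X \in {}^{\perp}\SS[-1]$ we have $\omega X \in {}^{\perp}\SS$ from Wakamatsu's lemma, and $\omega X \in {}^{\perp}\SS[-1]$ from the previous Lemma, so indeed $\omega X$ lands in ${}^{\perp}\SS[-1] \cap {}^{\perp}\SS$ as claimed.

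The substantive point is to make $\omega$ into a functor, i.e. to define $\omega f$ for a morphism $f \colon X \to Y$ in ${}^{\perp}\SS[-1]$ and check functoriality and additivity. The key step is a standard lifting argument: given the defining triangles $\omega X \xrar{f^X} X \xrar{g^X} S^X \xrar{h^X} \omega X[1]$ and $\omega Y \xrar{f^Y} Y \xrar{g^Y} S^Y \xrar{h^Y} \omega Y[1]$, since $g^Y$ is a left $\bracket{\SS}$-approximation and $S^X \in \bracket{\SS}$, the composite $g^Y f g^X \colon \omega X[-1]$-shifted data can be completed; more precisely, because $g^Y$ is a left $\bracket{\SS}$-approximation there is $s \colon S^X \to S^Y$ with $s g^X = g^Y f$, and then the axioms of a triangulated category produce a morphism $\omega f \colon \omega X \to \omega Y$ making the obvious ladder commute. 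For uniqueness one argues exactly as in the proof of Lemma \ref{def_of_bracket}: any two such lifts differ by a morphism $\omega X \to \omega Y$ that factors through $\omega X \xrar{} S^X[-1]$, but $\CC(S^X[-1], \omega Y) = \bbE(S^X[-2],\omega Y)$-type Hom vanishes because $S^X \in \bracket{\SS}$ and $\omega Y \in {}^{\perp}\SS \cap {}^{\perp}\SS[-1]$ — so in fact the lift $\omega f$ is genuinely unique (not merely unique up to an ideal), giving functoriality $\omega(gf) = \omega(g)\omega(f)$ and $\omega(\id) = \id$ on the nose. Additivity $\omega(f + f') = \omega(f) + \omega(f')$ then follows from uniqueness by adding the two ladders, and $\omega$ of a direct sum decomposition splits because the defining triangles can be chosen additively.

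Alternatively, and perhaps more cleanly, I would note that $\omega$ restricted to this subcategory is right adjoint to the inclusion ${}^{\perp}\SS[-1] \cap {}^{\perp}\SS \hookrightarrow {}^{\perp}\SS[-1]$: for $Z \in {}^{\perp}\SS[-1]\cap{}^{\perp}\SS$ and $X \in {}^{\perp}\SS[-1]$, applying $\CC(Z,-)$ to the defining triangle and using $\CC(Z, S^X) = 0 = \CC(Z, S^X[-1])$ (as $Z \in {}^{\perp}\SS \cap {}^{\perp}\SS[-1]$ and $S^X \in \bracket{\SS}$) shows $f^X_* \colon \CC(Z,\omega X) \to \CC(Z,X)$ is a bijection. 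This exhibits $\omega$ as a right adjoint, hence automatically an additive functor, bypassing the hand computation. This is the approach I would actually write down, citing the corresponding bijection-of-Hom argument already used in Lemma \ref{adjoint_pair}.

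The main obstacle is verifying the vanishing $\CC(Z, S^X[-1]) = 0$ for $Z$ in the target subcategory: this requires $Z \in {}^{\perp}\SS[1]$, i.e. $\CC(Z, \SS[-1])$-type orthogonality — which is exactly the hypothesis $Z \in {}^{\perp}\SS[-1]$ after reindexing, so one must be careful with the direction of the shift. Once the shift bookkeeping is pinned down, the adjointness (and hence additive functoriality) is formal, and (2) follows by the dual argument with $\SS^{\perp}$, $\SS^{\perp}[1]$ and right $\bracket{\SS}$-approximations in place of left ones.
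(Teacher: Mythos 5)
Your preferred route (the adjunction argument) is correct and genuinely different from the paper's proof, which is a bare-hands lifting argument: for $a \colon X \to Y$ the composite $g^Y a$ factors through $g^X$ because $g^X$ (not $g^Y$ — your write-up cites the wrong approximation here) is a left $\bracket{\SS}$-approximation of $X$ and $S^Y \in \bracket{\SS}$; then (TR3) produces $b \colon \omega X \to \omega Y$, and uniqueness of $b$ is a one-liner from $\omega X \in \lpp{\SS[-1]}$. Your alternative — showing $f^X_* \colon \CC(Z, \omega X) \to \CC(Z, X)$ is bijective for all $Z \in \lpp{\SS[-1]} \cap \lpp{\SS}$ via $\CC(Z, S^X) = 0 = \CC(Z, S^X[-1])$, and then invoking the pointwise right-adjoint principle — is valid and buys functoriality and additivity for free; it mirrors the proof of Lemma \ref{adjoint_pair} in the main text. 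The "obstacle" you flag at the end dissolves on inspection: $\CC(Z, S^X[-1]) = 0$ is exactly the condition $Z \in \lpp{\SS[-1]}$, extended over the extension closure $\bracket{\SS}$ by the long exact sequence; no orthogonality against $\SS[1]$ enters.

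If you keep the hand computation as well, fix the uniqueness step: you claim the difference of two lifts factors through $S^X[-1]$ and that $\CC(S^X[-1], \omega Y) = 0$ because $\omega Y \in \lpp{\SS} \cap \lpp{\SS[-1]}$. Neither part is right — left perpendicularity of $\omega Y$ controls morphisms \emph{out of} $\omega Y$, not into it, so that Hom group has no reason to vanish. The correct argument: two lifts $b, b'$ both satisfy $f^Y b = a f^X = f^Y b'$, so $b - b'$ factors through the cocone $S^Y[-1] \to \omega Y$ of $f^Y$, and then $\CC(\omega X, S^Y[-1]) = 0$ because $\omega X \in \lpp{\SS[-1]}$ (by the preceding Lemma). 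This is precisely the justification the paper gives.
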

\begin{proof}
We only prove (1).
Take a morphism $a \colon X \rar Y$. $a$ induces another morphism $b \colon \omega X \to \omega Y$ 
since $X \xrar{g^X} M^X$ is a left $\bracket{\MM}$-approximation and the axioms of triangulated category. 
Uniqueness of $b$ follows from $\omega X \in \lpp{\MM[-1]}$.
\end{proof}

\begin{lem} \cite[Lemma 2.6(3)]{SP20}
We assume (SP1).
	\begin{enumerate}
	\item If $X \in \rpp{\MM}$, then $\omega X \in \rpp{\MM}$. 
	\item If $X \in \lpp{\MM}$, then $\sigma X \in \lpp{\MM}$. 
	\end{enumerate}
\end{lem}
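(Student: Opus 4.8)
The statement to prove is the dual of the preceding Lemma \cite[Lemma 2.6(3)]{SP20}(1): if $X \in \lpp{\SS}$, then $\sigma X \in \lpp{\SS}$. The plan is to argue from the defining triangle of $\sigma X$, namely
\[
S_X \xrar{f_X} X \xrar{g_X} \sigma X \xrar{h_X} S_X[1]
\]
where $f_X$ is a right (minimal) $\bracket{\SS}$-approximation, so in particular $S_X \in \bracket{\SS}$. First I would apply the cohomological functor $\CC(-, S[i])$ for $S \in \SS$ and $i > 0$ to this triangle to produce the long exact sequence
\[
\CC(S_X[1], S[i]) \to \CC(\sigma X, S[i]) \to \CC(X, S[i]) \to \CC(S_X, S[i]).
\]
The rightmost and leftmost terms vanish: $\CC(X, S[i]) = 0$ because $X \in \lpp{\SS}$ (here we only need the $i>0$ part of $\lpp{\SS}$), and $\CC(S_X[1], S[i]) = \CC(S_X, S[i-1])$ vanishes for $i \geq 2$ because $\SS$ is orthogonal (hence $S_X \in \bracket{\SS}$ and extension-closure preserves the orthogonality conditions), while for $i = 1$ it is $\CC(S_X, S) $, which need not vanish a priori.

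So the genuine content, and the step I expect to be the main obstacle, is the $i = 1$ case: showing $\CC(\sigma X, S) = 0$ for all $S \in \SS$. This is exactly where the minimality of $f_X$ and hypothesis (SP1)(ii) (that $\SS$ is orthogonal or $\bbS_{-1}$-stable) must enter — just as in the proof of \cite[Lemma 2.6(3)]{SP20}. The approach is to show that the connecting map $\CC(X, S) \to \CC(S_X, S)$ induced by $f_X$ is surjective; then the long exact sequence forces $\CC(\sigma X, S) \hookrightarrow \CC(X, S) \to \CC(S_X, S)$ with the first arrow the kernel of a surjection whose source is... this needs care. More precisely: from the triangle, $\CC(\sigma X, S) = \Ker(\CC(X,S) \xrar{- \circ f_X} \CC(S_X, S))$ once we know $\CC(S_X[1], S) \to \CC(\sigma X, S)$ is zero, equivalently $\CC(\sigma X, S) \to \CC(X,S)$ is injective. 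Since $f_X$ is a right $\bracket{\SS}$-approximation and $S \in \SS \subset \bracket{\SS}$, every map $S_X \to S$... no: approximation gives lifting of maps \emph{into} $X$ from $\bracket{\SS}$, i.e. $\CC(\bracket{\SS}, X) \xrar{} $ nothing useful directly. Instead I would invoke Wakamatsu's lemma (already cited in the Remark after the definition of $\sigma X$): minimality of $f_X$ gives $\sigma X \in \rpp{\SS}$, i.e. $\CC(S[-1], \sigma X) = 0$, equivalently $\CC(\SS, \sigma X[1]) = 0$; combined with the $\bbS_{-1}$-stability or orthogonality of $\SS$ via Serre duality, $\CC(\sigma X, S) \cong D\CC(S, \bbS \sigma X) = D \CC(S, (\bbS_{-1}\sigma X)[1])$, and $\bbS_{-1}\sigma X \in \rpp{\SS}$ (using $\bbS_{-1}$-stability) translates this to $0$; in the orthogonal case one argues directly that $\CC(\sigma X, S)$ vanishes because $X \in \lpp{\SS}$ already includes $\CC(X,S)=0$, making the whole sequence collapse.

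Once $\CC(\sigma X, S[i]) = 0$ for all $S \in \SS$ and all $i > 0$ is established, one concludes $\sigma X \in \lpp{\SS}$ by definition, completing the proof. I would present the two cases of (SP1)(ii) in parallel: the orthogonal case is essentially formal from the long exact sequence since then $\CC(X, S[i]) = 0$ for \emph{all} $i > 0$ already pins everything down, whereas the $\bbS_{-1}$-stable case requires the Serre-duality manipulation above to handle $i = 1$. The only technical point to be careful about is that $S_X \in \bracket{\SS}$ — not $\SS$ itself — so the orthogonality $\CC(S_X, S[i-1]) = 0$ for $i \geq 2$ must be deduced from the fact that orthogonality (and the relevant one-sided $\mathrm{Ext}$-vanishing) is inherited by extension closures, which is a routine induction on the number of extension steps using the long exact sequences in Remark \ref{long_exact_seq}.
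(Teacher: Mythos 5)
Your overall strategy (long exact sequence from the defining triangle of $\sigma X$) is the right starting point, but the execution has concrete errors of degree, and the hard half of the lemma is left unproved.

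First, the perpendicular categories here are taken with respect to $\Hom$ in degree zero only: $X \in \lpp{\SS}$ means $\CC(X,\SS)=0$, and the claim is $\CC(\sigma X,\SS)=0$ --- not $\CC(\sigma X,\SS[i])=0$ for $i>0$. (This is why the paper writes $\lpp{\SS}$, $\lpp{\SS[-1]}$, $\rpp{\SS[1]}$ as separate conditions.) Consequently your vanishing claims are off by a shift: $X\in\lpp{\SS}$ does not give $\CC(X,S[i])=0$ for $i>0$, and ($2$-)orthogonality is $\CC(\SS[1],\SS)=0$, i.e.\ the \emph{negative}-degree vanishing $\CC(\SS,\SS[-1])=0$; it does not give $\CC(S_X,S[i-1])=0$ for $i\ge 2$. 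The intended argument in the orthogonal case is the degree-zero sequence
\[
\CC(S_X[1],S)\to\CC(\sigma X,S)\to\CC(X,S),
\]
where the right term vanishes by hypothesis and the left term vanishes precisely because $S_X\in\bracket{\SS}$ and $\CC(\SS[1],\SS)=0$ passes to extension closures. Your version of the "easy case" omits this left-hand vanishing, which is the only place orthogonality enters.

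Second, the $\bbS_{-1}$-stable case is the genuine content, and your argument for it does not close. Wakamatsu's lemma gives $\sigma X\in\rpp{\SS}$, i.e.\ vanishing of maps \emph{into} $\sigma X$ from $\SS$, which says nothing about the map $\CC(S_X[1],S)\to\CC(\sigma X,S)$ being zero. Your Serre-duality chain also has a sign error: since $\bbS=\bbS_{-1}\circ[-1]$, one gets $\CC(\sigma X,S)\cong D\CC(S,(\bbS_{-1}\sigma X)[-1])\cong D\CC(S[1],\bbS_{-1}\sigma X)$, and $\bbS_{-1}\sigma X\in\rpp{\SS}$ only kills $\CC(S,\bbS_{-1}\sigma X)$, not this shifted group. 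Closing this case genuinely requires the minimality of $f_X$, which your proposal never uses; note that the paper itself does not reprove this case either but cites \cite[Lemma 2.6(3)]{SP20} for it, treating only the orthogonal case as immediate.
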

\begin{proof}
We only prove (2).
If $\MM$ is orthogonal, this is clear. If $\MM$ is $\bbS_{-1}$-stable, this follows from \cite[Lemma 2.6(3)]{SP20}.
\end{proof}

\begin{cor}
We denote $\lpp{\MM} \cap \rpp{\MM}$ by $\dpp{\MM}$. 
Assume (SP1).
	\begin{enumerate}
	\item $\omega \colon \lpp{\MM[-1]} \cap \rpp{\MM}  \to \lpp{\MM[-1]} \cap \dpp{\MM}$ is an additive functor.
	\item $\sigma \colon \rpp{\MM[1]} \cap \lpp{\MM} \to \rpp{\MM[1]} \cap \dpp{\MM}$ is an additive functor.
	\end{enumerate}
\end{cor}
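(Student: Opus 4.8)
The plan is to obtain this corollary for free from the three statements immediately preceding it; no genuinely new argument is required, only a check that the stated target subcategories are correct.

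First I would note that $\omega$ has already been produced as an additive functor $\lpp{\SS[-1]} \to \lpp{\SS[-1]} \cap \lpp{\SS}$ in the corollary above, so restricting it along the inclusion of the full subcategory $\lpp{\SS[-1]} \cap \rpp{\SS} \hookrightarrow \lpp{\SS[-1]}$ yields an additive functor defined on $\lpp{\SS[-1]} \cap \rpp{\SS}$; the morphisms induced on $\omega$-images are still uniquely determined, since $\omega X$ continues to lie in $\lpp{\SS[-1]}$, which is precisely the property used for uniqueness there. Thus additivity and functoriality are inherited, and the only remaining point is to pin down the essential image.

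For $X \in \lpp{\SS[-1]} \cap \rpp{\SS}$, the inclusion $\omega X \in \lpp{\SS[-1]} \cap \lpp{\SS}$ is exactly the content of the corollary above, while $\omega X \in \rpp{\SS}$ is the lemma stating that $X \in \rpp{\SS}$ forces $\omega X \in \rpp{\SS}$ (this is where (SP1)(ii) enters, through the split into the orthogonal and the $\bbS_{-1}$-stable cases). Intersecting, $\omega X \in \lpp{\SS[-1]} \cap \lpp{\SS} \cap \rpp{\SS} = \lpp{\SS[-1]} \cap \dpp{\SS}$, which proves (1); statement (2) follows verbatim by dualizing, using the companion results for $\sigma$ (the dual corollary giving $\sigma X \in \rpp{\SS[1]} \cap \rpp{\SS}$ and the dual lemma giving $\sigma X \in \lpp{\SS}$). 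The main ``obstacle'' is essentially nonexistent: the corollary is a bookkeeping consequence of the lemmas just established, and the only thing to be careful about is not to forget that $\omega X \in \lpp{\SS}$ is available (ultimately via Wakamatsu's lemma) independently of the $\rpp{\SS}$-hypothesis.
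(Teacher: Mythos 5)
Your proposal is correct and matches the paper's (implicit) argument exactly: the paper states this corollary without proof precisely because it is the restriction of the previously established functor $\omega$ (resp.\ $\sigma$) combined with the two preservation statements — the Wakamatsu/approximation corollary giving $\omega X \in \lpp{\SS[-1]} \cap \lpp{\SS}$ and the lemma giving $\omega X \in \rpp{\SS}$ for $X \in \rpp{\SS}$. Your bookkeeping of which hypothesis feeds which containment, and the dualization for $\sigma$, are both accurate.
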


\begin{defi} \cite[Definition 3.1]{SP20} \label{sigma_and_omega_inSP}
Assume (SP1). We define the following functors.
	\begin{enumerate}
	\item $\Omega = \omega \circ [-1] \colon \lpp{\MM} \cap \rpp{\MM[1]} \to \lpp{\MM[-1]} \cap \dpp{\MM}$, called a \emph{left mutation functor}.
	\item $\Sigma = \sigma \circ [1] \colon \rpp{\MM} \cap \lpp{\MM[-1]} \to \rpp{\MM[1]} \cap \dpp{\MM}$, called a \emph{right mutation functor}.
	\end{enumerate}
\end{defi}

\begin{ex}
The following picture is the AR quiver of $\CC = \Db(k A_4)/\nu [2]$ 
where $\nu$ is the Nakayama functor and $A_4$ is the linearly oriented Dynkin quiver of type $A_4$
(we omitted AR translations).

The fundamental domain is shaded light gray, 
$\MM$ is marked by gray circle and $\bracket{\MM}$ is shaded dark gray.
Then $\MM$ is a 2-orthogonal collection and $\bracket{\MM}$ is functorially finite in $\CC$.
Take $X \in \CC$ indicated in the following picture.
Then $X[1], \Sigma X, \Sigma^2 X$ and $\Sigma^3 X$ are as follows.
Note that there exists a nonzero morphism $f$ which is a minimal right $\bracket{\MM}$-approximation of $X[1]$.
\[
\begin{tikzpicture}
	\path[rounded corners,fill=gray,opacity=0.2] (0.75, -0.5) -- (2.75, 3.5) -- (7.25, 3.5) -- (9.25, -0.5) --cycle;
	\path[rounded corners,fill=gray,opacity=0.6] (4,-0.3) -- (5, 1.7) -- (6, -0.3) --cycle;
	\foreach \x in {0,1,...,11}
		\foreach \y in {1,3}
		{
		\fill (\x,\y) circle[radius=0.04cm];
		\fill (\x+0.5,\y-1) circle[radius=0.04cm];
		\draw[-{stealth[scale=3]}] (\x,\y) -- (\x+0.48,\y-0.96);
		\draw[-{stealth[scale=3]}] (\x,1) -- (\x+0.48,1.96);
		\ifnum \x<11
			\draw[-{stealth[scale=3]}] (\x+0.5,\y-1) -- (\x+0.98,\y-0.04);
			\draw[-{stealth[scale=3]}] (\x+0.5,2) -- (\x+0.98,1.04);
		\fi
		}
	\fill (3, 1) circle[radius=0.08cm] 
	(5.5, 2) circle[radius=0.08cm] 
	(6.5, 0) circle[radius=0.08cm] 
	(2.5, 0) circle[radius=0.08cm]
	(5, 3) circle[radius=0.08cm]
	;
	\draw [fill=gray] (4.5, 0) circle[radius=0.08cm] (5.5, 0) circle[radius=0.08cm];
	\node (X_0) at (3,1) [right] {\msize{0.7}{X}};
	\node (X_1) at (5.5,2) [right] {\msize{0.7}{X[1]}};
	\node (X_2) at (6.5,0) [right] {\msize{0.7}{\Sigma X}};
	\node (X_3) at (2.5,0) [right] {\msize{0.7}{\Sigma^2 X}};
	\node (X_4) at (5,3) [right] {\msize{0.7}{\Sigma^3 X}};
	\node (D) at (5,-0.3) [below] {\msize{0.7}{\bracket{\MM}}};
	\draw [-{stealth[scale=3]}, blue, very thick] (5,1) -- (5.48,1.96);
	\node (morph) [text=blue] at (5.25,1.5) [right] {\msize{0.7}{f}};
\end{tikzpicture}
\]
\end{ex}

\begin{prop} \cite[Lemma 3.6]{SP20} \label{prop_SM_mutation}
Assume (SP1).
	\begin{enumerate}
	\item $\Sigma$ and $\Omega$ are mutually quasi-inverse.
	\item (1) induces the following one-to-one correspondence of subcategories.
		\begin{align*}
		\Sigma \colon \{ \XX \colon \text{subcategory in } \CC \mid \ \XX \subset \dpp{\MM} \cap \lpp{\MM[-1]} \} 
		&\rightleftarrows \\
		\{\YY \colon \text{subcategory in } \CC \mid &\ \YY \subset \dpp{\MM} \cap \rpp{\MM[1]} \} \colon \Omega \\
		\text{where }\Sigma\XX =
		\XX[1] \ast \bracket{\MM[1]} \cap \rpp{\MM[1]} \cap \dpp{\MM} \text{ and} \\
		\Omega\YY = 
		\bracket{\MM[-1]} \ast \YY[-1] &\cap \lpp{\MM[-1]} \cap \dpp{\MM}.
		\end{align*}
	\end{enumerate}
\end{prop}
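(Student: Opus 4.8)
The plan is to prove (1) by producing the two canonical natural transformations relating $\mathrm{Id}$ with the composites $\Omega\Sigma$ and $\Sigma\Omega$ and checking that they are isomorphisms; part (2) is then formal. The natural transformations are forced by the triangles defining $\omega$ and $\sigma$: for $X$ in the source of $\Sigma$ the triangle $S_{X[1]}\xrightarrow{f_{X[1]}} X[1]\xrightarrow{g_{X[1]}}\Sigma X\xrightarrow{h_{X[1]}} S_{X[1]}[1]$ (with $f_{X[1]}$ a right minimal $\bracket{\SS}$-approximation) rotates to a triangle $X\to\Sigma X[-1]\xrightarrow{h'} S_{X[1]}\to X[1]$, and I would argue that $h'\colon\Sigma X[-1]\to S_{X[1]}$ is a left $\bracket{\SS}$-approximation. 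Granting that, $\Omega\Sigma X=\omega(\Sigma X[-1])$ agrees with $X$ up to a summand in $\bracket{\SS}$, which must vanish: the only object of $\bracket{\SS}$ lying in $\lpp{\SS}$ (resp. in $\rpp{\SS}$) is $0$, since a nonzero object of $\bracket{\SS}$ has a nonzero map to (resp. from) a member of the semibrick $\SS$. This gives $\Omega\Sigma\cong\mathrm{Id}$ naturally in $X$, and the dual computation gives $\Sigma\Omega\cong\mathrm{Id}$.

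The substantive preliminary step is the bookkeeping that makes the composites defined and matches the source of $\Sigma$ with the target of $\Omega$ and vice versa. From Wakamatsu's lemma and the $\lpp{-}$/$\rpp{-}$-preservation lemmas for $\omega,\sigma$ collected above one gets $\rpp{\SS[1]}\cap\dpp{\SS}\subseteq\lpp{\SS}\cap\rpp{\SS[1]}$ and $\lpp{\SS[-1]}\cap\dpp{\SS}\subseteq\rpp{\SS}\cap\lpp{\SS[-1]}$, so the composites are legitimate; and the genuinely needed point is the reverse-type inclusions $\lpp{\SS}\cap\rpp{\SS[1]}\subseteq\rpp{\SS}$ and $\lpp{\SS[-1]}\cap\rpp{\SS}\subseteq\lpp{\SS}$, so that source and target coincide. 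These two inclusions are exactly where (SP1)(ii) enters: in the $\bbS_{-1}$-stable alternative one converts $\rpp{-}$-vanishing into $\lpp{-}$-vanishing by Serre duality (this is \cite[Lemma 2.6(3)]{SP20}, already invoked above), and in the orthogonal alternative one applies $2$-orthogonality to the approximation triangle for $\sigma$ (or $\omega$). With these in hand, the claim that $h'$ above is a left $\bracket{\SS}$-approximation reduces, for $T\in\bracket{\SS}$, to exactness of $\CC(S_{X[1]},T)\to\CC(\Sigma X[-1],T)\to\CC(X,T)$ together with $\CC(X,T)=0$, i.e. $X\in\lpp{\SS}$. I expect the subcategory bookkeeping and the case split in (SP1)(ii) to be the main obstacle; the triangle chase itself is routine, and the argument follows \cite[Lemma 3.6]{SP20}.

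Finally, (2) is a formal consequence of (1): an equivalence $\Sigma\colon\rpp{\SS}\cap\lpp{\SS[-1]}\xrightarrow{\ \sim\ }\rpp{\SS[1]}\cap\dpp{\SS}$ with quasi-inverse $\Omega$ induces mutually inverse bijections $\XX\mapsto\Sigma\XX$, $\YY\mapsto\Omega\YY$ between the subcategories contained in $\dpp{\SS}\cap\lpp{\SS[-1]}$ and those contained in $\dpp{\SS}\cap\rpp{\SS[1]}$. The closed formula $\Sigma\XX=\XX[1]\ast\bracket{\SS[1]}\cap\rpp{\SS[1]}\cap\dpp{\SS}$ is read off the object-wise triangle $X[1]\to\Sigma X\to S_{X[1]}[1]\xrightarrow{\,+1\,}$, which shows $\Sigma X\in\XX[1]\ast\bracket{\SS[1]}$ while $\Sigma X\in\rpp{\SS[1]}\cap\dpp{\SS}$; conversely, any object of $\XX[1]\ast\bracket{\SS[1]}$ lying in $\rpp{\SS[1]}\cap\dpp{\SS}$ is isomorphic to $\Sigma$ applied to its $\XX$-part, by the same approximation/purity argument used for (1). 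The formula for $\Omega\YY$ is dual.
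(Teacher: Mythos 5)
Your proposal is essentially correct and, where it overlaps with the paper's own proof, takes the same route. The paper simply cites \cite[Lemma 3.6]{SP20} for (1) and only proves the closed formula for $\Sigma\XX$ in (2); there the argument is exactly yours: given $Z$ in $\XX[1]\ast\bracket{\SS[1]}\cap\rpp{\SS[1]}\cap\dpp{\SS}$, rotate to a triangle $Z[-1]\to S\xrightarrow{f}X[1]\to Z$, observe that $Z\in\rpp{\SS}$ forces $f$ to be a right $\bracket{\SS}$-approximation, and that $f$ is right minimal because a non-minimal part would split off a nonzero $\bracket{\SS}$-summand of $Z[-1]$, contradicting $Z[-1]\in\lpp{\SS}$ --- this is precisely your ``purity'' step --- whence $Z\cong\sigma(X[1])=\Sigma X$. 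Your sketch of (1) (rotating the defining triangle of $\sigma(X[1])$ and checking that the connecting map is a left $\bracket{\SS}$-approximation, which needs only $X\in\lpp{\SS}$) is the standard argument and is sound on the domains that actually matter, namely $\dpp{\SS}\cap\lpp{\SS[-1]}$ and $\dpp{\SS}\cap\rpp{\SS[1]}$ as in (2), where $X\in\lpp{\SS}$ is automatic. The one place I would push back is your claimed ``reverse-type inclusions'' $\lpp{\SS}\cap\rpp{\SS[1]}\subseteq\rpp{\SS}$ and $\lpp{\SS[-1]}\cap\rpp{\SS}\subseteq\lpp{\SS}$: these do not obviously follow from either alternative of (SP1)(ii) and are not needed if you state the quasi-inverse property on the $\dpp{\SS}$-restricted (co)domains, which is how both \cite{SP20} and part (2) of the proposition use it; I would drop that step rather than try to prove it.
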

\begin{proof}
For (1), see \cite[Lemma 3.6]{SP20}. 
We only prove $\Sigma \XX = \XX[1] \ast \bracket{\MM[1]} \cap \rpp{\MM[1]} \cap \dpp{\MM}$. 

By definition, $\Sigma \XX \subset \XX[1] \ast \bracket{\MM[1]} \cap \rpp{\MM[1]} \cap \dpp{\MM}$. 
Take $Z \in \XX[1] \ast \bracket{\MM[1]} \cap \rpp{\MM[1]} \cap \dpp{\MM}$. 
Then there exists a triangle $Z[-1] \rar M \xrar{f} X[1] \rar Z$ where $M \in \bracket{\MM}, X \in \XX$.
From $Z \in \rpp{\MM}$, $f$ is a right $\bracket{\MM}$-approximation. 
Assume that $f$ is not right minimal.
Then $Z[-1]$ and $M$ has a nonzero common direct summand.
However, this contradicts $Z[-1] \in \rpp{\MM}$. 
Thus, $f$ is right minimal.
Therefore, $Z \cong \sigma(X[1]) = \Sigma X$.
\end{proof}

\begin{defi} \cite[Definition 3.2]{SP20} \label{SM_MP}
Let $\XX, \YY$ be subcategories of $\CC$ and assume (SP1).
A pair of subcategories $(\XX, \YY)$ is an \emph{orthogonal $\bracket{\MM}$-mutation pair} 
if $\XX \subset \lpp{\MM[-1]} \cap \dpp{\MM}$ and $\Sigma \XX = \YY$.
\end{defi}

\begin{rmk}
In \cite{SP20}, they define (orthogonal) $\bracket{\MM}$-mutation pairs for a collection $\MM$ in $\Ob(\CC)$
where $\bracket{\MM}$ is functorially finite in $\CC$.
From Proposition \ref{prop_SM_mutation}, the definition of orthogonal mutation pairs in \cite{SP20} is equivalent to that in Definition \ref{SM_MP}.
\end{rmk}

\subsection{Triangulated structures induced by orthogonal mutation pairs}
\label{Triangulated structures induced by orthogonal mutation pairs}

The following statement is \cite[Theorem 5.1]{SP20}. 
We start from introducing some conditions for orthogonal $\bracket{\MM}$-mutation pairs satisfying (SP1).

\begin{condi} \cite[Theorem 4.1]{SP20} \label{SP_condi2}
We assume that $\MM$ satisfies (SP1). Let $\ZZ$ be a subcategory of $\CC$.
	\begin{itemize}[leftmargin=40pt]
	\item[(SP2)] $(\ZZ, \ZZ)$ is an orthogonal $\bracket{\MM}$-mutation pair.
	\item[(SP3)] 
		\begin{enumerate}[label=(\roman*)]
		\item $\Cone(\ZZ, \ZZ) \subset \bracket{\MM} \ast \ZZ$.
		\item $\CoCone(\ZZ, \ZZ) \subset \ZZ \ast \bracket{\MM}$.
		\item $\ZZ$ is closed under extensions.
		\end{enumerate}
	\end{itemize}
\end{condi}

\begin{thm} \cite[Theorem 4.1,5.1]{SP20} \label{thm_tri_SM}
We assume (SP1), (SP2) and (SP3) in Condition \ref{SP_condi1} and \ref{SP_condi2}.

Then $(\ZZ, \Sigma, \triangle)$ is a triangulated category, where distinguished triangles are defined as follows.

For a morphism $f \colon X \to Y$ in $\ZZ$, we define a distinguished triangle $X \xrar{f} Y \xrar{g} Z \xrar{h} \Sigma X$ in $\ZZ$ by the following diagram in $\CC$
\[
\xy
(0,16)*+{X}="1";
(16,16)*+{Y}="3";
(32,16)*+{Z^{\pr}}="5";
(32,0)*+{Z}="6";
(48,16)*+{X[1]}="7";
(48,0)*+{\Sigma X}="8";
{\ar^{f} "1";"3"};
{\ar^{g^{\pr}} "3";"5"};
{\ar^{h^{\pr}} "5";"7"};
{\ar^{g_{Z^{\pr}}} "5";"6"};
{\ar^{g_{X[1]}} "7";"8"};
{\ar^{h} "6";"8"};
{\ar_{g}^{\car} "3";"6"};
{\ar@{}|\car "5";"8"};
\endxy
\]
where
$Z = \sigma Z^{\pr}$ and
$X \xrar{f} Y \xrar{g^{\pr}} Z^{\pr} \xrar{h^{\pr}} X[1]$ is a triangle.
Then 
\[
\triangle = 
\left(
\begin{array}{ll}
\text{sequences} &\text{in } \ZZ  \text{ isomorphic to one in } \\
&\{ X \xrar{f} Y \xrar{g} Z \xrar{h} \Sigma X \mid f \text{ is a morphism in } \ZZ \}
\end{array}
\right).
\]
\end{thm}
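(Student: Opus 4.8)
The plan is to obtain this theorem as a special case of the general results of the present paper rather than re-running the arguments of \cite{SP20}. Regard $\CC$ with the extriangulated structure coming from its triangulation, so $\bbE(-,-) = \CC(-,-[1])$ and $\fraks$ is the standard realization, and write $\MM = \SS$ for the underlying collection of objects. Then (SP1) together with (SP2) says precisely that $(\ZZ, \ZZ)$ is a simple-minded $\bracket{\MM}$-mutation pair, so by Example~\ref{example_oneside_mut}(3) and Example~\ref{ex_MT}(\ref{ex_MT_SP}) the triplet $(\bracket{\MM[1]}, \ZZ, \bracket{\MM[-1]})$ is a mutation triple (here (SP3) is what upgrades the premutation triple to a mutation triple). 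The essential simplification is that in this setting $\II = \JJ = \bracket{\MM[1]} \cap \ZZ = 0$: consequently $\ul{\ZZ} = \ZZ/[\II] = \ZZ$, the relative bifunctors $\bbE^{\II}$ and $\bbE_{\II}$ both coincide with $\bbE$, the shift functors are $\bracket{1} = [1]$ and $\bracket{-1} = [-1]$, and the mutations $\Sigma = \sigma \circ \bracket{1}$, $\Omega = \omega \circ \bracket{-1}$ of Definition~\ref{def_mutations} are identified with the functors $\Sigma = \sigma \circ [1]$, $\Omega = \omega \circ [-1]$ of Definition~\ref{sigma_and_omega_inSP}.

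Next I would apply Example~\ref{ex_MT4}(3): in this situation $\wt{\UU}^- = \wt{\TT}^+ = \bracket{\MM}$, so the mutation triple satisfies (MT4$^+$), hence (MT4) and (MT4)$^{\pr}$ by Remark~\ref{MT4}. Theorem~\ref{main_thm2} then shows that $\Sigma$ and $\Omega$ are mutually quasi-inverse, and Corollary~\ref{cor_trictg} gives that $(\ZZ, \Sigma, \nabla)$ is a triangulated category. What remains is purely a matter of unwinding definitions: one must check that the class $\nabla$ of right triangles produced by the mutation-triple construction equals the class $\triangle$ exhibited in the statement.

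This last identification is the main obstacle, and it is exactly where $\II = 0$ pays off. For $a \colon X \to Y$ in $\ZZ$, Notation~\ref{notation_right_tri} produces $C^a$ via Lemma~\ref{inflation}; since $I^X \in \II = 0$ we have $I^X \cong 0$, so $\wt{a} = a$, $b = g^{\pr}$, and $C^a = Z^{\pr}$ sits in a distinguished triangle $X \xrar{a} Y \xrar{g^{\pr}} Z^{\pr} \xrar{h^{\pr}} X[1]$. Applying the main-text $\sigma$ to $Z^{\pr}$ gives an $\fraks$-triangle $Z^{\pr} \xrar{h^{Z^{\pr}}} \sigma Z^{\pr} \to S^{Z^{\pr}} \drar Z^{\pr}$ with $S^{Z^{\pr}} \in \bracket{\MM[1]}$; rotating once and using $\bracket{\MM[1]} = \bracket{\MM}[1]$ together with the vanishing $\bbE(\bracket{\MM}, \ZZ) = 0$ identifies $\sigma Z^{\pr}$ and the map $h^{Z^{\pr}}$ with the object $\sigma Z^{\pr}$ of Definition~\ref{sigma_and_omega_inSP}, whose defining triangle is $S_{Z^{\pr}} \to Z^{\pr} \to \sigma Z^{\pr} \to S_{Z^{\pr}}[1]$ (uniqueness of such a triangle, up to isomorphism fixing $Z^{\pr}$, follows from the long exact sequence of Remark~\ref{long_exact_seq} and the same vanishing). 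Under these matchings the right triangle $X \xrar{\ul{a}} Y \xrar{\ul{h^{C^a} b}} \sigma C^a \xrar{\sigma(\ul{c})} \Sigma X$ becomes $X \xrar{f} Y \xrar{g} Z \xrar{h} \Sigma X$ with $Z = \sigma Z^{\pr}$; the only genuinely delicate bookkeeping is to trace the connecting morphism $\sigma(\ul{c})$ through the shifted octahedron \eqref{THE_shifted_oct} and reconcile it, up to a sign and a rotation, with the morphism $h$ manufactured by the octahedral diagram in the statement. Since both $\nabla$ and $\triangle$ consist exactly of the sequences in $\ZZ$ isomorphic to these representatives, they coincide, and the theorem follows.
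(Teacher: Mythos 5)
Your strategy is legitimate and it is genuinely different from the paper's: in the text this theorem appears only as a citation of \cite[Theorems 4.1, 5.1]{SP20} with no proof supplied, whereas you re-derive it from the paper's own machinery (Examples \ref{example_oneside_mut}(3), \ref{ex_MT}(\ref{ex_MT_SP}) and \ref{ex_MT4}(3), followed by Theorem \ref{main_thm2} and Corollary \ref{cor_trictg}). The derivation is non-circular, since the proofs of those examples use only the definitions of (SP1)--(SP3) and \cite[Lemmas 2.6, 3.6]{SP20}, never the theorem itself; and it has the conceptual merit of exhibiting the Sim\~oes--Pauksztello triangulation as an honest special case of the mutation-triple formalism, which is the point of the paper. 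What the citation route buys is brevity and the avoidance of exactly the bookkeeping you defer to the last paragraph.

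One caveat in that bookkeeping deserves to be made precise, because ``up to a sign'' is not automatically harmless here. With $\II=0$ the fixed $\fraks^{\II}$-triangle $X \to I^X \to X\bracket{1} \xdrar{\lambda^X} X$ degenerates to $X \to 0 \to X[1] \xdrar{\lambda^X} X$ with $\lambda^X = \pm\mathrm{id}_{X[1]}$ (both signs give realizations of distinguished triangles), and $c^a$ is pinned down by $\lambda^X c^a = \wt{\delta} = h^{\pr}$. If one happens to fix $\lambda^X = -\mathrm{id}$, the construction returns $(f, g, -h)$, which is the \emph{anti}-triangle of $(f,g,h)$: changing the sign of exactly one of the three maps does not preserve the isomorphism class in $\mathsf{RTri}$, so $\nabla$ would then be the class of anti-distinguished triangles of $\triangle$ rather than $\triangle$ itself. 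The fix is simply to fix $\lambda^X = \mathrm{id}_{X[1]}$ in Notation \ref{notation_right_tri}, which yields $c^a = h^{\pr}$ and hence $\sigma(\ul{c}) = h$ on the nose; together with your identification of the main-text $\sigma$ with the $\sigma$ of Definition \ref{sigma_and_omega_inSP} (via minimality of the approximation, which follows from $Z \in \lpp{\bracket{\MM}}$ as in Proposition \ref{prop_SM_mutation}), this gives $\nabla = \triangle$ and closes the argument.
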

\begin{proof}
From \cite[Theorem 4.1,5.1]{SP20} (or Theorem \ref{main_thm2} and Example \ref{ex_MT4}(\ref{ex_MT4_SP})).
\end{proof}

\begin{ex} \cite[Lemma 6.3]{SP20} \label{ex_tri_SM}
Let $\MM$ be an $n$-orthogonal collection for $n \geq 1$.
We assume that $\bracket{\MM}$ is $\bbS_{-n}$-stable.
Let $\ZZ = \bcap{0 \leq i \leq n} \rpp{\MM[i]}$. Then $\ZZ$ satisfies (SP1), (SP2) and (SP3).
\end{ex}

\section{Reduction theory} \label{Reduction theory}
We assume that $\CC$ is a Hom-finite Krull-Schmidt triangulated $k$-category 
and has a Serre functor $\bbS$.
We also assume that all subcategories are closed under direct summands in this section.
The aim of this section is to organize the known results in the reduction theory by reducible triples.
For a subcategory $\XX$, we denote the smallest thick subcategory containing $\XX$ by $\thick \XX$.

\subsection{Characterizations for reducible collections}
In this subsection, we characterize reducible collections;
silting subcategory, $n$-cluster tilting subcategory, simple-minded collections and $n$-simple-minded systems, 
as a preliminary to the following subsections.

\begin{defi} \cite[5.1]{BR07} \cite[Section 3]{IY08} \cite[Definition 2.1]{AI12}
Let $\XX$ be a subcategory of $\CC$.
	\begin{enumerate}
	\item For $1 < n <\infty$, $\XX$ is \emph{$n$-cluster tilting} if
	 $\XX$ is functorially finite in $\CC$ and $\XX = \bcap{0<i<n} \lpp{\XX[i]} = \bcap{0<i<n} \rpp{\XX[-i]}$.
	\item $\XX$ is \emph{silting} if $\XX$ is $\infty$-rigid and $\CC = \thick \XX$.
	\end{enumerate}
\end{defi}

\begin{lem} \cite[Proposition 2.1, 2.4]{IY08} \label{lem_ZZ_funct-fin_RG}
Let $\II$ be an $n$-rigid subcategory for $1<n<\infty$. 
We assume that $\II$ is functorially finite in $\CC$.
	\begin{enumerate}
	\item $\Big(\II \ast \II[1] \ast \cdots \ast \II[i], \bcap{0 \leq l \leq i} \rpp{\II[l]} \Big)$ is a torsion pair of $\CC$ 
	for $0 \leq i \leq n-2$.
	\item $\Big( \, \bcap{0 \leq l \leq i} \lpp{\II[-l]}, \, \II[-i] \ast \cdots \ast \II[-1] \ast \II \Big)$ is a torsion pair of $\CC$ 
	for $0 \leq i \leq n-2$.
	\end{enumerate}
\end{lem}
\begin{proof}
See \cite[Proposition 2.1, 2.4]{IY08}.
\end{proof}

\begin{cor} \cite[Proposition 3.5]{IY08} \label{cor_characterize_CTL}
Let $\II$ be an $n$-rigid subcategory for $1<n<\infty$. Then the following are equivalent.
	\begin{enumerate}
	\item $\II$ is functorially finite in $\CC$ and $\II = \bcap{0 < i < n} \lpp{\II[i]} = \bcap{0<i<n} \rpp{\II[-i]}$.
	\item $\II$ is functorially finite in $\CC$, $\bbS_n$-stable and 
	$\II = \bcap{0 < i < n} \lpp{\II[i]}$.
	\item $\CC = \II \ast \II[1] \ast \cdots \ast \II[n-1]$.
	\end{enumerate}
\end{cor}
\begin{proof}
(1)$\Leftrightarrow$(2) is proved in \cite[Proposition 3.5]{IY08}.
(1)$\Leftrightarrow$(3) follows from Lemma \ref{lem_ZZ_funct-fin_RG} and 
$\Big(\, \bcap{0<i<n} \, \rpp{\II[-i]} \Big)[n-1] = \bcap{0 \leq i \leq n-2} \, \rpp{\II[i]}$.
\end{proof}

\begin{lem} \cite[Proposition 3.2]{IY18} \label{lem_torsion_pair_silt}
Let $\II$ be an $\infty$-rigid subcategory.
We assume the following conditions.
	\begin{enumerate}[label=(\roman*), leftmargin=20pt]
	\item $\II$ is covariantly finite in $\bcap{i>0} \lpp{\II[i]}$.
	\item $\II$ is contravariantly finite in $\bcap{i>0} \, \rpp{\II[-i]}$.
	\item $\CC(\II[-i], C) = \CC(C, \II[i]) =0$ for $C \in \CC, i \gg 0$.
	\end{enumerate}
Then the following statements hold.
	\begin{enumerate}
	\item $\Big((\thick \II)_{\geq 0}, \bcap{i \geq 0}\, \rpp{\II[-i]} \Big)$ is a torsion pair of $\CC$.
	\item $\Big(\, \bcap{i \geq 0} \lpp{\II[i]}, (\thick \II)_{\leq 0} \Big)$ is a torsion pair of $\CC$.
	\item \label{lem_torsion_pair_silt_3} 
	 $\bigg( \Big((\thick \II)_{\geq 0}, \bcap{i > 0} \, \rpp{\II[-i]} \Big), 
	\Big(\, \bcap{i > 0}\, \lpp{\II[i]}, (\thick \II)_{\leq 0} \Big) \bigg)$ is a Hovey twin cotorsion pair.
	\end{enumerate}
\end{lem}
\begin{proof}
(1), (2) See \cite[Proposition 3.2]{IY18}. (3) This follows from (1), (2) and definition of Hovey twin cotorsion pairs.
\end{proof}

\begin{cor} \label{cor_characterize_silt}
Let $\II$ be an $\infty$-rigid subcategory.
Then the following are equivalent.
	\begin{enumerate}
	\item $\CC = \thick \II$.
	\item $\CC = \bcup{i \geq 0} \ \II[-i] \ast \II[1-i] \ast \cdots \ast \II[i-1] \ast \II[i]$.
	\item 
		\begin{enumerate}
		\item $\II = \bcap{i>0} (\lpp{\II[i]} \cap \rpp{\II[-i]})$.
		\item $\CC(\II[-i], C) = \CC(C, \II[i]) =0$ for $C \in \CC, i \gg 0$.
		\item $\II$ is covariantly finite in $\bcap{i>0} \lpp{\II[i]}$.
		\item $\II$ is contravariantly finite in $\bcap{i>0} \, \rpp{\II[-i]}$.
		\end{enumerate}
	\end{enumerate}
\end{cor}
\begin{proof}
(1)$\Leftrightarrow$(2) follows from \cite[Proposition 2.23]{AI12}.

Assume (2). Then $\bcap{i > 0} \ \rpp{\II[-i]} = (\thick \II)_{\leq 0}$ and $\bcap{i > 0} \lpp{\II[i]} = (\thick \II)_{\geq 0}$. 
Thus, (i), (iii) and (iv) hold.
Let $C \in \CC$. 
Then there exists $l \geq 0$ which satisfies $C \in \II[-l] \ast \II[1-l] \ast \cdots \ast \II[l-1] \ast \II[l]$.
So, $\CC(\II[-i], C) = \CC(C, \II[i]) =0$ for $i \gg 0$. Therefore, (3) holds.

Assume (3). From Lemma \ref{lem_torsion_pair_silt}(\ref{lem_torsion_pair_silt_3}), 
\begin{align*}
\CC &= (\thick \II)_{\geq 0}[-1] \ast \bcap{i>0} (\lpp{\II[i]} \cap \rpp{\II[-i]}) \ast (\thick \II)_{\leq 0}[1] \\
&= \bcup{i \geq 0} \ \II[-i] \ast \II[1-i] \ast \cdots \ast \II[-1] \ast \II \ast \II[1] \ast \cdots \ast \II[i-1] \ast \II[i].
\end{align*}

Thus, (2) holds.
\end{proof}

\begin{defi} \cite[Definition 3.2]{KY14} \label{defi_simple-minded collection}
Let $\MM = \{M_i \mid i \in I\} \subset \Ob(\CC)$ be an $n$-orthogonal collection for $0<n < \infty$.
	\begin{enumerate}
	\item $\MM$ is an \emph{$n$-simple-minded system}
	if $\bracket{\MM}$ is functorially finite in $\CC$ and
	$\bracket{\MM} = \bcap{0< i < n} \lpp{\MM[-i]} = \bcap{0< i < n} \rpp{\MM[i]}$.
	\item $\MM$ is a \emph{simple-minded collection} 
	if $\MM$ is a pre-simple-minded collection and $\CC = \thick \MM$.
	\end{enumerate}
\end{defi}

One can check the definition of $n$-simple-minded system and that in \cite{SP20} are same by the following statement,
which is an orthogonal version of Lemma \ref{lem_ZZ_funct-fin_RG}. 

\begin{lem} \cite[Lemma 2.10]{SP20} \label{lem_ZZ_funct-fin_SM} 
Let $\MM$ be $n$-orthogonal for $0<n< \infty$. We assume that $\bracket{\MM}$ is functorially finite in $\CC$.
	\begin{enumerate}
	\item $\Big( \add(\bracket{\MM[i]} \ast \cdots \ast \bracket{\MM[1]} \ast \bracket{\MM}), 
	\bcap{0 \leq l \leq i}\, \rpp{\MM[l]} \Big)$ 
	is a torsion pair of $\CC$ for $0 \leq i \leq n$.
	\item $\Big(\, \bcap{0 \leq l \leq i} \lpp{\MM[-l]},  
	\, \add(\bracket{\MM} \ast \bracket{\MM[-1]} \ast \cdots \ast \bracket{\MM[-i]}) \Big)$ 
	is a torsion pair of $\CC$ for $0 \leq i \leq n$.
	\end{enumerate}
\end{lem}
\begin{proof}
(1) We prove this by induction on $i$.
If $i=0$, the statement follows from Wakamatsu's lemma.
Let $i>0$. We define $\XX_i = \add(\bracket{\MM} \ast \cdots \ast \bracket{\MM[1-i]} \ast \bracket{\MM[-i]})$ and
$\YY_i = \bcap{0 \leq l \leq i} \rpp{\MM[-l]}$. 
From the induction hypothesis, $(\XX_{i-1}, \YY_{i-1})$ is a torsion pair.
Let $C \in \CC$ and take a triangle $X_{i-1} \rar C \rar Y_{i-1} \rar X_{i-1}[1]$ 
where $X_{i-1} \in \XX_{i-1}$ and $Y_{i-1} \in \YY_{i-1}$.
We also take a triangle $M[-i] \xrar{m_i} Y_{i-1} \rar Y_i \rar M[1-i]$ 
where $m_i$ is a minimal right $\bracket{\MM[-i]}$-approximation.
Then there exists the following diagram in $\CC$.
\[
\xy
(16,24)*+{Y_i[-1]}="12";
(32,24)*+{Y_i[-1]}="13";
(0,8)*+{X_{i-1}}="21";
(16,8)*+{X_i}="22";
(32,8)*+{M[-i]}="23";
(48,8)*+{X_{i-1}[1]}="24";
(0,-8)*+{X_{i-1}}="31";
(16,-8)*+{C}="32";
(32,-8)*+{Y_{i-1}}="33";
(48,-8)*+{X_{i-1}[1]}="34";
(16,-24)*+{Y_i}="42";
(32,-24)*+{Y_i}="43";
{\ar@{=} "12";"13"};
{\ar^{} "21";"22"};
{\ar^{} "22";"23"};
{\ar^{} "23";"24"};
{\ar^{} "31";"32"};
{\ar^{} "32";"33"};
{\ar^{} "33";"34"};
{\ar@{=} "42";"43"};
{\ar@{=} "21";"31"};
{\ar^{} "12";"22"};
{\ar^{} "22";"32"};
{\ar^{} "32";"42"};
{\ar^{} "13";"23"};
{\ar^{m_i} "23";"33"};
{\ar^{} "33";"43"};
{\ar@{=} "24";"34"};
{\ar@{}|\car "12";"23"};
{\ar@{}|\car "21";"32"};
{\ar@{}|\car "22";"33"};
{\ar@{}|\car "23";"34"};
{\ar@{}|\car "32";"43"};
\endxy
\]
Since $Y_{i-1} \in \rpp{\MM[1-i]}$, $Y_i \in \rpp{\MM[1-i]} \cap \rpp{\MM[-i]}$ from Lemma \ref{append_omega_sigma}.
From the long exact sequence, 
$Y_i \in \bcap{0 \leq l \leq i-2} \rpp{\MM[-l]}$.
Thus, $X_i \in \XX_i$ and $Y_i \in \YY_i$.

(2) Dual of (1).
\end{proof}

\begin{rmk} \cite[Proposition 2.1]{IY08} \label{rmk_direct-summand_SM}
Let $\XX$ and $\YY$ be subcategories of $\CC$ which is closed under direct summands.
If $\CC(\XX, \YY) =0$, then $\XX \ast \YY$ is closed under direct summands from \cite[Proposition 2.1]{IY08}.
Thus, for $0 \leq i < n$,
\[
\add(\bracket{\MM[i]} \ast \cdots \ast \bracket{\MM[1]} \ast \bracket{\MM}) =
\bracket{\MM[i]} \ast \cdots \ast \bracket{\MM[1]} \ast \bracket{\MM}.
\]
\end{rmk}

\begin{cor} \label{cor_characterize_simple-minded system}
Let $\MM$ be an $n$-orthogonal collection for $n < \infty$. Then the following are equivalent.
	\begin{enumerate}
	\item $\bracket{\MM}$ is functorially finite in $\CC$ and 
	$\bracket{\MM} = \bcap{0<i<n} \lpp{\MM[-i]} = \bcap{0<i<n} \rpp{\MM[i]}$.
	\item $\bracket{\MM}$ is functorially finite in $\CC$, $\bbS_{-n}$-stable and
	$\bracket{\MM} = \bcap{0<i<n} \rpp{\MM[i]}$.
	\item $\CC = \bracket{\MM[n-1]} \ast \cdots \ast \bracket{\MM[1]} \ast \bracket{\MM}$.
	\end{enumerate}
\end{cor}
\begin{proof}
Clearly, (2)$\Rightarrow$(1) holds.
(1)$\Rightarrow$(2) follows from the equation below.
\[
\bbS_{-n}\bracket{\MM} = \bbS \Big(\, \bcap{0<i<n} \lpp{\MM[n-i]} \Big) = \bcap{0<i<n} \!\rpp{\MM[n-i]}
= \bracket{\MM}
\]
(1)$\Leftrightarrow$(3) follows from Lemma \ref{lem_ZZ_funct-fin_SM} and Remark \ref{rmk_direct-summand_SM}.
\end{proof}

\begin{lem} \label{lem_torsion_pair_SMC}
Let $\MM$ be an $\infty$-orthogonal collection. 
We assume the following conditions.
	\begin{enumerate}[label=(\roman*), leftmargin=30pt]
	\item $\bracket{\MM}$ is covariantly finite in $\bcap{i > 0} \lpp{\MM[-i]}$.
	\item $\bracket{\MM}$ is contravariantly finite in $\bcap{i>0}\, \rpp{\MM[i]}$.
	\item $\CC(\MM[i], C) = \CC(C, \MM[-i]) =0$ for $C \in \CC, i \gg 0$.
	\end{enumerate}
Then the following statements hold.
	\begin{enumerate}
	\item \label{lem_torsion_pair_SMC_1}
	$\Big( (\thick \MM)^{\leq 0}, \, \bcap{i \geq 0}\, \rpp{\MM[i]} \Big)$ 
	is a torsion pair of $\CC$.
	\item \label{lem_torsion_pair_SMC_2}
	$\Big( \, \bcap{i \geq 0} \lpp{\MM[-i]}, \, (\thick \MM)^{\geq 0} \Big)$ 
	is a torsion pair of $\CC$.
	\item \label{lem_torsion_pair_SMC_3}
	$\Big(\Big( (\thick \MM)^{<0}, \, \bcap{i \geq 0}\, \rpp{\MM[i]} \Big), 
	\Big( \, \bcap{i \geq 0} \lpp{\MM[-i]}, \, (\thick \MM)^{>0} \Big)\Big)$
	is a Hovey twin cotorsion pair of $\CC$.
	\end{enumerate}
\end{lem}
\begin{proof}
From \cite[Proposition 3.2]{Jin23} and by definition of Hovey twin cotorsion pairs.
\end{proof}

\begin{cor} \label{cor_characterize_simple-minded collection}
Let $\MM$ be an $\infty$-orthogonal collection. 
Then the following are equivalent.
	\begin{enumerate}
	\item $\CC = \thick{\MM}$.
	\item $\CC = \bcup{i \geq 0} \bracket{\MM[i]} \ast \bracket{\MM[i-1]} \ast \cdots \ast \bracket{\MM[1-i]} \ast \bracket{\MM[-i]}$.
	\item 
		\begin{enumerate}
		\item $\bracket{\MM} = \bcap{i > 0} (\lpp{\MM[-i]} \cap \rpp{\MM[i]})$.
		\item $\bracket{\MM}$ is covariantly finite in $\bcap{i > 0} \lpp{\MM[-i]}$.
		\item $\bracket{\MM}$ is contravariantly finite in $\bcap{i>0}\, \rpp{\MM[i]}$.
		\item $\CC(\MM[i], C) = \CC(C, \MM[-i]) =0$ for $C \in \CC, i \gg 0$.
		\end{enumerate}
	\end{enumerate}
\end{cor}
\begin{proof}
(1) $\Leftrightarrow$ (2) follows from \cite[Proposition 2.6]{Jin23}.

Assume (2). Then
\begin{align*}
\Big( \bcup{i \geq 0} \bracket{\MM[i]} \ast \cdots \ast \bracket{\MM[1]} \ast \bracket{\MM}, 
\,\bcup{i > 0} \bracket{\MM[-1]} \ast \cdots \ast \bracket{\MM[-i]} \Big) \\
\Big( \bcup{i > 0} \bracket{\MM[i]} \ast \cdots \ast \bracket{\MM[1]}, 
\,\bcup{i \geq 0} \bracket{\MM} \ast \bracket{\MM[-1]} \ast \cdots \ast \bracket{\MM[-i]} \Big)
\end{align*}
are torsion pairs of $\CC$. Thus,
\begin{align*}
\bcap{i>0} \lpp{\MM[-i]} &= \bcup{i \geq 0} \bracket{\MM[i]} \ast \cdots \ast \bracket{\MM[1]} \ast \bracket{\MM} \\
\bcap{i>0} \, \rpp{\MM[i]} &= \bcup{i \geq 0} \bracket{\MM} \ast \bracket{\MM[-1]} \ast \cdots \ast \bracket{\MM[-i]}.
\end{align*}
So, $(\mathrm{i}), (\mathrm{ii})$ and $(\mathrm{iii})$ hold.
Since $\MM$ is $\infty$-orthogonal, 
$\CC(\MM[i], C) = \CC(C, \MM[-i]) =0$ for $C \in \CC, i \gg 0$.
Thus, (2)$\Rightarrow$(3) holds.

Assume (3). Then
\[
\bigg(\Big( (\thick \MM)^{< -1}, \, \bcap{i > 0}\, \rpp{\MM[i]} \Big),
\Big( \, \bcap{i > 0} \lpp{\MM[-i]}, \, (\thick \MM)^{> 1} \Big)\bigg)
\]
is a concentric twin cotorsion pair. Thus, 
\begin{align*}
\CC &= (\thick \MM)^{< -1}[-1] \ast \bcap{i > 0} ( \lpp{\MM[-i]} \cap \rpp{\MM[i]})
\ast (\thick \MM)^{> 1}[1] \\
&= (\thick \MM)^{< 0} \ast \bcap{i > 0} ( \lpp{\MM[-i]} \cap \rpp{\MM[i]})
\ast (\thick \MM)^{> 0} \\
&= \bcup{i \geq 0} \bracket{\MM[i]} \ast \cdots \ast \bracket{\MM[1]} \ast \bracket{\MM}
\ast \bracket{\MM[-1]} \ast \cdots \ast \bracket{\MM[-i]}.
\end{align*}

Therefore, (3)$\Rightarrow$(2) holds.
\end{proof}

\subsection{Reduction theory for rigid subcategories} \label{reduction_of_rigid}
All results and proofs in this subsection are already known.
However, we have organized this subsection to explicitly contract with the next subsection, which is about simple-minded systems and collections.

In this subsection, let $\II$ be an $n$-rigid subcategory in $\CC$ for $1 < n \leq \infty$.

We assume that
	\begin{enumerate}[leftmargin=30pt, label=(\roman*)]
	\item For $1<n<\infty$, $\II$ is functorially finite in $\CC$.
	\item For $n = \infty$, $\II$ is covariantly finite in $\bcap{i>0} \lpp{\II[i]}$ and 
	$\II$ is contravariantly finite in $\bcap{i>0} \, \rpp{\II[-i]}$.
	\end{enumerate}
Let $\ZZ$ be an extension closed subcategory in $\CC$ which contains $\II$.
We denote the set of the isomorphism classes of indecomposable objects in $\II$ by $\II^{\pr}$.
Note that $\II = \add \II^{\pr} = \bracket{\II^{\pr}}^{\II}_{\II}$.
We consider the following conditions.

\begin{condi}
\phantom{X}
	\begin{itemize}[leftmargin=50pt]
	\item[($n$-PS)] Either of the following conditions holds.
		\begin{enumerate}[label=(\roman*)]
		\item $n < \infty$ and $\II$ is $\bbS_n$-stable.
		\item $n = \infty$ and $\CC(\II[-i], C) = \CC(C, \II[i]) =0$ for $C \in \CC, i \gg 0$.
		\end{enumerate}
	\item[($n$-wRG)] $\ZZ \subset \bcap{0<i<n} \lpp{\II[i]}$ or $\ZZ \subset \bcap{0<i<n} \rpp{\II[-i]}$ holds.
	\item[($n$-RG)] $\ZZ = \bcap{0<i<n} (\lpp{\II[i]} \cap \rpp{\II[-i]})$.
	\end{itemize}
\end{condi}

\begin{lem} \label{rmk_ex-of-rigid-mutation-tri}
	\begin{enumerate}
	\item If ($n$-PS) and ($n$-RG) hold with $n < \infty$, 
	then $(\II, \ZZ, \II)$ is a rigid mutation triple.
	\item \label{lem_ex-of-rigid-mutation-tri_2}
	If ($\infty$-PS) and ($\infty$-RG) holds, then 
	$(\II, \ZZ, \II)$ is a rigid mutation triple.
	\end{enumerate}
\end{lem}
\begin{proof}
(1) From Example \ref{ex_rigid_mutation_pair}, \ref{ex_redpreMT}(\ref{ex_redpreMT_rigid}) and \ref{ex_redMT}(\ref{ex_redMT_rigid}).

(2) From Example \ref{ex_redMT}(\ref{ex_redMT_rigid}),
Lemma \ref{lem_redMT}(\ref{lem_redMT_presilt}) and \ref{lem_torsion_pair_silt}(\ref{lem_torsion_pair_silt_3}).
\end{proof}

In the rest of this subsection, we assume that $(\ul{\ZZ}, \bracket{1}, \nabla)$ is a triangulated category 
which is induced by a rigid mutation triple $(\II, \ZZ, \II)$.
Note that $\ul{\ZZ}$ is Hom-finite and Krull-Schmidt since so is $\CC$.

Our main interest of this subsection is to consider the following correspondence of subcategories.
\begin{gather}
\ul{(\cdot)} \colon \{ \XX \mid \II \subset \XX \text{ : subcategory in $\ZZ$}\} \to 
\{ \ul{\XX} \text{ : subcategory of } \ul{\ZZ} \} \label{corres_underbar}
\end{gather}

We prepare some lemmas as follows.

\begin{lem} \label{lem_hom-vanish_RG} \cite[Lemma 4.8]{IY18}
Assume ($n$-wRG). 
	\begin{enumerate}
	\item \label{lem_hom-vanish_RG_1}
	Let $0 < i < n$ and $Z_1,Z_2 \in \ul{\ZZ}$.
		\begin{enumerate}
		\item $\ul{\ZZ}(Z_1, Z_2\bracket{i}) \cong \CC(Z_1, Z_2[i])$.
		\item $\ul{\ZZ}(Z_1\bracket{-i}, Z_2) \cong \CC(Z_1[-i], Z_2)$.
		\end{enumerate}
	\item \label{lem_hom-vanish_RG_2}
	Let $\XX$ be a subcategory which satisfies $\II \subset \XX \subset \ZZ$. 
	We define ${}^{\perp_{\CC}} \XX[i] = \{C \in \CC \mid \CC(C, \XX[i]) =0 \}$ and 
	${}^{\perp_{\ul{\ZZ}}} \ul{\XX}\bracket{i} = \{Z \in \ul{\ZZ} \mid \ul{\ZZ}(Z, \ul{\XX}\bracket{i}) =0 \}$. Then 
	\[
	\ul{\ZZ \cap \Big(\, \bcap{0<i<n} {}^{\perp_{\CC}}\XX[i]} \Big)
	= \bcap{0<i<n} {}^{\perp_{\ul{\ZZ}}} \ul{\XX}\bracket{i},
	\]
	\[
	\ul{\ZZ \cap \Big( \, \bcap{0<i<n} {\XX[-i]}^{\perp_{\CC}}} \Big)
	= \bcap{0<i<n} {\ul{\XX}\bracket{-i}}^{\perp_{\ul{\ZZ}}}.
	\]
	In particular,
	\[
	\ul{\ZZ \cap \Big(\, \bcap{0<i<n} ({}^{\perp_{\CC}}\XX[i] \cap {\XX[-i]}^{\perp_{\CC}}) \Big)} 
	= \bcap{0<i<n} ({}^{\perp_{\ul{\ZZ}}} \ul{\XX}\bracket{i} \cap {\ul{\XX}\bracket{-i}}^{\perp_{\ul{\ZZ}}}).
	\]
	Moreover, if ($n$-RG) and ($n$-PS) holds, we may drop ``$\ZZ \, \cap\,$'' in the left-hand side.
	\end{enumerate}
\end{lem}
\begin{proof}
(1) From \cite[Lemma 4.8]{IY08}. 
(2) Since $\II \subset \XX$, this is direct from (1).
Note that $\ZZ = \bcap{0<i<n} \, \lpp{\II[i]} = \bcap{0<i<n} \, \rpp{\II[-i]}$ hold under ($n$-RG) and ($n$-PS).
\end{proof}

\begin{cor} \label{cor_n-rigid_preserve}
Assume ($n$-wRG).
	\begin{enumerate}
	\item \label{cor_n-rigid_preserve_1} \cite[Theorem 4.9]{IY08}
	For a subcategory $\XX$ which satisfies $\II \subset \XX \subset \ZZ$,
	\[
	\XX \text{ is $n$-rigid in }\CC \iff \ul{\XX} \text{ is $n$-rigid in } \ul{\ZZ}.
	\]
	\item \label{cor_n-rigid_preserve_2}
	Assume ($\infty$-PS) and ($\infty$-RG). 
	Let $\XX$ be an $\infty$-rigid subcategory containing $\II$.
	Then $\XX \subset \ZZ$ and
	\begin{align*}
		\begin{gathered}
		\CC(\XX[-i], C) = \CC(C, \XX[i]) =0 \\
		\text{ for } C \in \CC, i \gg 0.
		\end{gathered}
		\iff
		\begin{gathered}
		\ul{\ZZ}(\ul{\XX}\bracket{-i}, Z) = \ZZ(Z, \ul{\ZZ}\bracket{i}) =0 \\
		\text{ for } Z \in \ul{\ZZ}, i \gg 0.
		\end{gathered}
	\end{align*}
	\end{enumerate}
\end{cor}
\begin{proof}
(1) From Lemma \ref{lem_hom-vanish_RG}(1).

(2) The left-hand side clearly implies the right-hand side from Lemma \ref{lem_hom-vanish_RG}(1).
Let $C \in \CC$. From Lemma \ref{lem_torsion_pair_silt}(3), $\CC = (\thick \II)_{>0} \ast \ZZ \ast (\thick \II)_{<0}$.
Then there exists triangles $I_{>0} \rar C \rar C^{\pr} \rar I_{>0}[1]$ and 
$I_{<0}[-1] \rar Z_C \rar C^{\pr} \rar I_{<0}$
where $I_{>0} \in (\thick \II)_{>0}, I_{<0} \in (\thick \II)_{<0}$ and $Z_C \in \ZZ$.
Thus, for $i \gg 0$, 
$\CC(C, \XX[i]) \cong \CC(Z_C, \XX[i]) \cong \ul{\ZZ}(Z_C, \ul{\XX}\bracket{i}) =0$.
In the same way, one can show $\CC(\XX[-i], C) =0$ for $i \gg 0$.
\end{proof}

\begin{lem} \cite[Theorem 4.7, 4.9]{IY08} \label{lem_iroiro_CTL}
Let $1 < n <\infty$. Assume ($n$-PS) and ($n$-RG).
	\begin{enumerate}
	\item \label{lem_iroiro_CTL_1}
	$\bbS^{\pr} = \bracket{n} \circ \bbS_n$ is a Serre functor of $\ul{\ZZ}$.
	\item \label{lem_iroiro_CTL_2}
	Let $\XX$ be a subcategory which satisfies $\II \subset \XX \subset \ZZ$.
	Then
	\begin{gather*}
	\XX \text{ is } \bbS_n \text{-stable in }\CC \iff
	\ul{\XX} \text{ is } {\bbS^{\pr}}\!_n \text{-stable in }\ul{\ZZ}. \\
	\XX \text{ is functorially finite in } \ZZ \iff 
	\ul{\XX} \text{ is functorially finite in } \ul{\ZZ}.
	\end{gather*}
	\end{enumerate}
\end{lem}
\begin{proof}
(1) From \cite[Theorem 4.7]{IY08}.

(2) $\bbS_n$-stability is direct from (1).
Clearly, $\ul{\XX}$ is functorially finite in $\ul{\ZZ}$ if $\XX$ is functorially finite in $\ZZ$.
Assume that $\ul{\XX}$ is functorially finite in $\ul{\ZZ}$.
Let $Z \in \ZZ$ and take a right $\ul{\XX}$-approximation $g \colon X \to Z$.
Since $\II$ is also functorially finite in $\ZZ$, there exists a right $\II$-approximation $i \colon I \to Z$.
Then one can show that $\msize{0.8}{\begin{bmatrix} g & i  \end{bmatrix}} \colon X \oplus I \to Z$
is a right $\XX$-approximation.
\end{proof}

\begin{rmk}
The proof of Lemma \ref{lem_iroiro_CTL}(\ref{lem_iroiro_CTL_1}) is not as easy as that of Lemma \ref{lem_Serre_SM}.
That is because Lemma \ref{lem_hom-vanish_SM}(\ref{lem_hom-vanish_SM_1}) holds for $0 \leq i \leq n$ 
but Lemma \ref{lem_hom-vanish_RG}(\ref{lem_hom-vanish_RG_1}) holds for $0 < i < n$.
\end{rmk}

\begin{thm} \cite[Theorem 4.9]{IY08} \label{thm_reduction_nCT}
Assume ($n$-PS) and ($n$-RG). Let $1 < n <\infty$.
Then the following correspondence of subcategories induced by (\ref{corres_underbar}) is bijective.
	\[
	\{ \XX : n\text{-cluster tilting in }\CC \mid \II \subset \XX\} \to
	\{ \ul{\XX} : n\text{-cluster tilting in }\ul{\ZZ}\}
	\]
\end{thm}
\begin{proof}
From Corollary \ref{cor_characterize_CTL}, \ref{cor_n-rigid_preserve}(1), 
Lemma \ref{lem_hom-vanish_RG}(\ref{lem_hom-vanish_RG_2}) and \ref{lem_iroiro_CTL}(2).
\end{proof}

\begin{lem} \label{lem_cov-cont-finiteness_silt}
Assume ($\infty$-PS) and ($\infty$-RG).
Let $\XX$ be a subcategory which satisfies $\II \subset \XX \subset \ZZ$. Then
\begin{align*}
\XX &\text{ is covariantly finite in } \bcap{i>0} {}^{\perp_{\CC}}\XX[i] \\
&\iff \ul{\XX} \text{ is covariantly finite in } \bcap{i>0} {}^{\perp_{\ul{\ZZ}}} \ul{\XX}\bracket{i}. \\
\XX &\text{ is contravariantly finite in } \bcap{i>0}\, {\XX[-i]}^{\perp_{\CC}} \\
&\iff \ul{\XX} \text{ is contravariantly finite in } \bcap{i>0}\, {\ul{\XX}\bracket{-i}}^{\perp_{\ul{\ZZ}}}.
\end{align*}
\end{lem}
\begin{proof}
This can be shown in a similar way to the proof of Lemma \ref{lem_iroiro_CTL}(\ref{lem_iroiro_CTL_2}).
\end{proof}

\begin{thm} \cite[Theorem 3.7]{IY18} \label{thm_reduction_silt}
Assume ($\infty$-PS) and ($\infty$-RG). 
Then the following correspondence of subcategories induced by (\ref{corres_underbar}) is bijective.
	\[
	\{ \XX : \text{silting subcategory in }\CC \mid \II \subset \XX\} \to
	\{ \ul{\XX} : \text{silting subcategory in }\ul{\ZZ}\}
	\]
\end{thm}
\begin{proof}
From Corollary \ref{cor_characterize_silt}, \ref{cor_n-rigid_preserve}, Lemma \ref{lem_hom-vanish_RG}(\ref{lem_hom-vanish_RG_2}) and \ref{lem_cov-cont-finiteness_silt}.
\end{proof}

\begin{rmk} \cite[Definition 2.5]{IY08}, \cite[Definition 2.30]{AI12} \label{rmk_mutations-generalization}
Let $\XX$ be an $n$-cluster tilting subcategory or a silting subcategory in $\CC$ and 
$\II$ be a subcategory which is contained in $\XX$ and satisfies ($n$-PS).
Then $\XX$ and $\II$ determine a rigid mutation triple $(\II, \ZZ, \II)$ so that $\ZZ$ and $\II$ satisfy ($n$-RG)
from Lemma \ref{rmk_ex-of-rigid-mutation-tri}.
We denote the set of the isomorphism classes of indecomposable objects in $\XX$ \resp{$\II$} by
$\XX^{\pr}$ \resp{$\II^{\pr}$}.

$\add(\mu^-_{\II^{\pr}}(\XX^{\pr}))$ \resp{$\add(\mu^+_{\II^{\pr}}(\XX^{\pr}))$} in Definition \ref{defi_mu-rMT} 
is exactly $\mu^{-1}(\XX; \II)$ \resp{$\mu(\XX; \II)$} in \cite{IY08} if $\XX$ is an $n$-cluster tilting subcategory, 
and exactly $\mu^+(\XX; \II)$ \resp{$\mu^-(\XX; \II)$} in \cite{AI12} if $\XX$ is a silting subcategory.
\end{rmk}

\begin{cor} \cite[Theorem 5.1]{IY08}, \cite[Theorem 2.31]{AI12}
Let $\XX$ be an $n$-cluster tilting subcategory \resp{a silting subcategory}.
In Remark \ref{rmk_mutations-generalization}, 
both $\add(\mu_{\II^{\pr}}^+(\XX^{\pr}))$ and $\add(\mu_{\II^{\pr}}^-(\XX^{\pr}))$ are $n$-cluster tilting subcategories \resp{silting subcategories}.
\end{cor}
\begin{proof}
Since $\Sigma$ \resp{$\Omega$} is the shift functor \resp{the inverse of the shift functor} in $\ul{\ZZ}$,
this statement directly follows from Theorem \ref{thm_reduction_nCT} and \ref{thm_reduction_silt}.
\end{proof}

\begin{ex}
\begin{enumerate}
\item
The following picture is the AR quiver of $\CC = \Db(k A_4)/\nu[-4]$ 
where $\nu$ is the Nakayama functor and $A_4$ is the linearly oriented Dynkin quiver of type $A_4$
(we omitted AR translations).
Let $\II^{\pr} = \{I_1, I_2\}$.
The fundamental domain is shaded light gray and a functorially finite 4-rigid subcategory
$\II = \add \II^{\pr}$ in $\CC$ is shaded dark gray.

Take a 4-cluster tilting subcategory $\XX = \add \XX^{\pr}$ where $\XX^{\pr} = \{X_1, X_2\} \cup \II^{\pr}$.
Then $\mu^-_{\II^{\pr}} \circ \mu^-_{\II^{\pr}}(\XX^{\pr}) \setminus \II^{\pr} $, 
$\mu^-_{\II^{\pr}}(\XX^{\pr})\setminus \II^{\pr}$ and 
$\mu^+_{\II^{\pr}}(\XX^{\pr})\setminus \II^{\pr}$ are indicated in 
dark blue, light blue and red in the following picture, respectively.
\[
\begin{tikzpicture}
	\path[rounded corners,fill=gray,opacity=0.2] (0.75, -0.5) -- (2.75, 3.5) -- (9.25, 3.5) -- (11.25, -0.5) --cycle;
	\fill[rounded corners,gray,opacity=0.6,rotate around={63.44:(4.5,0)}] (4.25,-0.5) rectangle ++(1.7,0.8);
	\foreach \x in {0,1,...,11}
		\foreach \y in {1,3}
		{
		\fill (\x,\y) circle[radius=0.04cm];
		\fill (\x+0.5,\y-1) circle[radius=0.04cm];
		\draw[-{stealth[scale=3]}] (\x,\y) -- (\x+0.48,\y-0.96);
		\draw[-{stealth[scale=3]}] (\x,1) -- (\x+0.48,1.96);
		\ifnum \x<11
			\draw[-{stealth[scale=3]}] (\x+0.5,\y-1) -- (\x+0.98,\y-0.04);
			\draw[-{stealth[scale=3]}] (\x+0.5,2) -- (\x+0.98,1.04);
		\fi
		}
	\fill [red]
	(4, 3) circle[radius=0.08cm]
	(4.5, 2) circle[radius=0.08cm];
	\fill [black] 
	(4.5,0) circle[radius=0.08cm]
	(5,1) circle[radius=0.08cm]
	(5.5,2) circle[radius=0.08cm]
	(6,3) circle[radius=0.08cm];
	\fill [blue!50!white] 
	(8, 1) circle[radius=0.08cm] 
	(8.5, 0) circle[radius=0.08cm];
	\fill [blue]
	(2, 1) circle[radius=0.08cm]
	(2.5, 0) circle[radius=0.08cm];
	\node [text=red] at (4,3) [right] {\msize{0.7}{\Omega X_1}};
	\node [text=red] at (4.5,2) [right] {\msize{0.7}{\Omega X_2}};
	\node [text=black] at (5.5,2) [right] {\msize{0.7}{X_1}};
	\node [text=black] at (6,3) [right] {\msize{0.7}{X_2}};
	\node [text=blue!50!white] at (8,1) [right] {\msize{0.7}{\Sigma X_1}};
	\node [text=blue!50!white] at (8.5,0) [right] {\msize{0.7}{\Sigma X_2}};
	\node [text=blue] at (2,1) [right] {\msize{0.7}{\Sigma^2 X_1}};
	\node [text=blue] at (2.5,0) [right] {\msize{0.7}{\Sigma^2 X_2}};
	\node (D) at (4.8,0.4) [right] {\msize{0.7}{\II}};
	\node at (4.5,0) [right] {\msize{0.7}{I_1}};
	\node at (5,1) [right] {\msize{0.7}{I_2}};
\end{tikzpicture}
\]
From \cite[Proposition 5.10]{INP24}, the AR quiver of triangulated category $\ul{\ZZ} = \bcap{0<i<4}\, \ul{\lpp{\II[i]}}$ is illustrated as follows (we omitted AR translations).
\[
\begin{tikzpicture}
	\path[rounded corners,fill=gray,opacity=0.2] (0.75, -0.5) -- (2.75, 3.5) -- (9.25, 3.5) -- (11.25, -0.5) --cycle;
	\foreach \x in {-1,0,5}
	{
	\fill 
	(5+\x,3) circle[radius=0.04cm]
	(5.5+\x,2) circle[radius=0.04cm]
	(6+\x,3) circle[radius=0.04cm];
	\draw[-{stealth[scale=3]}] (5+\x,3) -- (5.48+\x,2.04);
	\draw[-{stealth[scale=3]}] (5.5+\x,2) -- (5.98+\x,2.96);
	}
	\foreach \x in {0,6}
	{
	\fill
	(1.5+\x,0) circle[radius=0.04cm]
	(2+\x,1) circle[radius=0.04cm]
	(2.5+\x,0) circle[radius=0.04cm];
	\draw[-{stealth[scale=3]}] (1.5+\x,0) -- (2+\x,0.96);
	\draw[-{stealth[scale=3]}] (2+\x,1) -- (2.5+\x,0.04);
	\draw[-{stealth[scale=3]}] (2.5+\x,0) -- (4+\x,2.96);
	\draw[-{stealth[scale=3]}] (0+\x,3) -- (1.5+\x,0.04);
	}
	\fill (0,3) circle[radius=0.04cm];
	\draw[-{stealth[scale=3]}] (11,3) -- (11.5,2.04);
	\fill [red]
	(4, 3) circle[radius=0.08cm]
	(4.5, 2) circle[radius=0.08cm];
	\fill [black] 
	(5.5,2) circle[radius=0.08cm]
	(6,3) circle[radius=0.08cm];
	\fill [blue!50!white] 
	(8, 1) circle[radius=0.08cm] 
	(8.5, 0) circle[radius=0.08cm];
	\fill [blue]
	(2, 1) circle[radius=0.08cm]
	(2.5, 0) circle[radius=0.08cm];
	\node [text=red] at (4,3) [right] {\msize{0.7}{\Omega X_1}};
	\node [text=red] at (4.5,2) [right] {\msize{0.7}{\Omega X_2}};
	\node [text=black] at (5.5,2) [right] {\msize{0.7}{X_1}};
	\node [text=black] at (6,3) [right] {\msize{0.7}{X_2}};
	\node [text=blue!50!white] at (8,1) [right] {\msize{0.7}{\Sigma X_1}};
	\node [text=blue!50!white] at (8.5,0) [right] {\msize{0.7}{\Sigma X_2}};
	\node [text=blue] at (2,1) [right] {\msize{0.7}{\Sigma^2 X_1}};
	\node [text=blue] at (2.5,0) [right] {\msize{0.7}{\Sigma^2 X_2}};
	\node at (11.5,0) [above] {};
	\node at (0,3) [below] {};
\end{tikzpicture}
\]
\item
The following picture is the AR quiver of $\CC = \Db(k A_4)$ 
where $A_4$ is the linearly oriented Dynkin quiver of type $A_4$
(we omitted AR translations).
Let $\II^{\pr} = \{I_1, I_2\}$.
A presilting subcategory $\II = \add \II^{\pr}$ is shaded gray and 
take a silting subcategory $\XX = \add \XX^{\pr}$ where $\XX^{\pr} =  \{ X_1, X_2 \} \cup \II^{\pr}$.
Then $\mu^-_{\II^{\pr}} \circ \mu^-_{\II^{\pr}}(\XX^{\pr}) \setminus \II^{\pr} $, 
$\mu^-_{\II^{\pr}}(\XX^{\pr})\setminus \II^{\pr}$, 
$\mu^+_{\II^{\pr}}(\XX^{\pr})\setminus \II^{\pr}$ and 
$\mu^+_{\II^{\pr}} \circ \mu^+_{\II^{\pr}}(\XX^{\pr})\setminus \II^{\pr}$ 
are indicated in dark blue, light blue, light red and dark red in the following picture, respectively.
\[
\begin{tikzpicture}
	\fill[rounded corners,gray,opacity=0.6,rotate around={63.44:(4.5,0)}] (4.25,-0.5) rectangle ++(1.7,0.8);
	\foreach \x in {0,1,...,11}
		\foreach \y in {1,3}
		{
		\fill (\x,\y) circle[radius=0.04cm];
		\fill (\x+0.5,\y-1) circle[radius=0.04cm];
		\draw[-{stealth[scale=3]}] (\x,\y) -- (\x+0.48,\y-0.96);
		\draw[-{stealth[scale=3]}] (\x,1) -- (\x+0.48,1.96);
		\ifnum \x<11
			\draw[-{stealth[scale=3]}] (\x+0.5,\y-1) -- (\x+0.98,\y-0.04);
			\draw[-{stealth[scale=3]}] (\x+0.5,2) -- (\x+0.98,1.04);
		\fi
		}
	\fill [red]
	(2, 1) circle[radius=0.08cm]
	(1.5, 0) circle[radius=0.08cm];
	\fill [red!50!white]
	(4, 3) circle[radius=0.08cm]
	(4.5, 2) circle[radius=0.08cm];
	\fill [black] 
	(4.5,0) circle[radius=0.08cm]
	(5,1) circle[radius=0.08cm]
	(5.5,2) circle[radius=0.08cm]
	(6,3) circle[radius=0.08cm];
	\fill [blue!50!white] 
	(8, 1) circle[radius=0.08cm] 
	(8.5, 0) circle[radius=0.08cm];
	\fill [blue] 
	(10.5,2) circle[radius=0.08cm] 
	(11,3) circle[radius=0.08cm];
	\node [text=red] at (2,1) [right] {\msize{0.7}{\Omega^2 X_2}};
	\node [text=red] at (1.5,0) [right] {\msize{0.7}{\Omega^2 X_1}};
	\node [text=red!50!white] at (4,3) [right] {\msize{0.7}{\Omega X_1}};
	\node [text=red!50!white] at (4.5,2) [right] {\msize{0.7}{\Omega X_2}};
	\node [text=black] at (5.5,2) [right] {\msize{0.7}{X_1}};
	\node [text=black] at (6,3) [right] {\msize{0.7}{X_2}};
	\node [text=blue!50!white] at (8,1) [right] {\msize{0.7}{\Sigma X_1}};
	\node [text=blue!50!white] at (8.5,0) [right] {\msize{0.7}{\Sigma X_2}};
	\node [text=blue] at (10.5,2) [right] {\msize{0.7}{\Sigma^2 X_1}};
	\node [text=blue] at (11,3) [right] {\msize{0.7}{\Sigma^2 X_2}};
	\node (D) at (4.8,0.4) [right] {\msize{0.7}{\II}};
	\node at (4.5,0) [right] {\msize{0.7}{I_1}};
	\node at (5,1) [right] {\msize{0.7}{I_2}};
\end{tikzpicture}
\]
From \cite[Proposition 5.10]{INP24}, the AR quiver of triangulated category 
$\ul{\ZZ} = \bcap{i>0}\, \ul{\big( \lpp{\II[i]} \cap \rpp{\II[-i]} \big)}$ is illustrated as follows
(we omitted AR translations).
\[
\begin{tikzpicture}
	\foreach \x in {-1,0,5}
	{
	\fill 
	(5+\x,3) circle[radius=0.04cm]
	(5.5+\x,2) circle[radius=0.04cm]
	(6+\x,3) circle[radius=0.04cm];
	\draw[-{stealth[scale=3]}] (5+\x,3) -- (5.48+\x,2.04);
	\draw[-{stealth[scale=3]}] (5.5+\x,2) -- (5.98+\x,2.96);
	}
	\foreach \x in {0,6}
	{
	\fill
	(1.5+\x,0) circle[radius=0.04cm]
	(2+\x,1) circle[radius=0.04cm]
	(2.5+\x,0) circle[radius=0.04cm];
	\draw[-{stealth[scale=3]}] (1.5+\x,0) -- (2+\x,0.96);
	\draw[-{stealth[scale=3]}] (2+\x,1) -- (2.5+\x,0.04);
	\draw[-{stealth[scale=3]}] (2.5+\x,0) -- (4+\x,2.96);
	\draw[-{stealth[scale=3]}] (0+\x,3) -- (1.5+\x,0.04);
	}
	\fill (0,3) circle[radius=0.04cm];
	\draw[-{stealth[scale=3]}] (11,3) -- (11.5,2.04);
	\fill [red]
	(2, 1) circle[radius=0.08cm]
	(1.5, 0) circle[radius=0.08cm];
	\fill [red!50!white]
	(4, 3) circle[radius=0.08cm]
	(4.5, 2) circle[radius=0.08cm];
	\fill [black]
	(5.5,2) circle[radius=0.08cm]
	(6,3) circle[radius=0.08cm];
	\fill [blue!50!white] 
	(8, 1) circle[radius=0.08cm] 
	(8.5, 0) circle[radius=0.08cm];
	\fill [blue] 
	(10.5,2) circle[radius=0.08cm] 
	(11,3) circle[radius=0.08cm];
	\node [text=red] at (2,1) [right] {\msize{0.7}{\Omega^2 X_2}};
	\node [text=red] at (1.5,0) [right] {\msize{0.7}{\Omega^2 X_1}};
	\node [text=red!50!white] at (4,3) [right] {\msize{0.7}{\Omega X_1}};
	\node [text=red!50!white] at (4.5,2) [right] {\msize{0.7}{\Omega X_2}};
	\node [text=black] at (5.5,2) [right] {\msize{0.7}{X_1}};
	\node [text=black] at (6,3) [right] {\msize{0.7}{X_2}};
	\node [text=blue!50!white] at (8,1) [right] {\msize{0.7}{\Sigma X_1}};
	\node [text=blue!50!white] at (8.5,0) [right] {\msize{0.7}{\Sigma X_2}};
	\node [text=blue] at (10.5,2) [right] {\msize{0.7}{\Sigma^2 X_1}};
	\node [text=blue] at (11,3) [right] {\msize{0.7}{\Sigma^2 X_2}};
	\node at (11.5,0) [above] {};
	\node at (0,3) [below] {};
\end{tikzpicture}
\]
\end{enumerate}
\end{ex}

\subsection{Reduction theory for orthogonal collections} \label{reduction_of_SM}

In this subsection, we consider the reduction theory for simple-minded systems and simple-minded collections.
Although all results in this subsection are also already known, we composed another proof of Theorem 6.6 in \cite{SP20}.

In this subsection, let $\MM \subset \Ob(\CC)$ be an $n$-orthogonal collection for $0 < n \leq \infty$.

We assume that
	\begin{enumerate}[leftmargin=30pt, label=(\roman*)]
	\item For $1 < n < \infty$, $\bracket{\MM}$ is functorially finite in $\CC$.
	\item For $n = \infty$, $\bracket{\MM}$ is covariantly finite in $\bcap{i>0} \lpp{\MM[-i]}$ and 
	$\bracket{\MM}$ is contravariantly finite in $\bcap{i>0} \, \rpp{\MM[i]}$.
	\end{enumerate}
Let $\ZZ$ be an extension closed subcategory of $\CC$.
We consider the following conditions.

\begin{condi}
	\begin{itemize}[leftmargin=50pt]
	\item[]
	\item[($n$-NS)] Either of the following conditions holds.
		\begin{enumerate}[leftmargin=20pt, label=(\roman*)]
		\item $n < \infty$ and $\bracket{\MM}$ is $\bbS_{-n}$-stable.
		\item $n = \infty$ and $\CC(\MM[i], C) = \CC(C, \MM[-i]) =0$ for $C \in \CC, i \gg 0$.
		\end{enumerate}
	\item[($n$-wSM)] $\ZZ \subset \bcap{0 \leq i < n+1} \rpp{\MM[i]}$ or 
	$\ZZ \subset \bcap{0 \leq i < n+1} \lpp{\MM[-i]}$ holds.
	\item[($n$-SM)] 
	$\ZZ = \bcap{0 \leq i < n+1} (\lpp{\MM[-i]} \cap \rpp{\MM[i]})$.
	\end{itemize}
\end{condi}

\begin{lem} \label{lem_SMMT}
	\begin{enumerate}
	\item If both ($n$-NS) and ($n$-SM) hold with $n < \infty$, 
	then $(\bracket{\MM[1]}, \ZZ,$ $\bracket{\MM[-1]})$ is an orthogonal mutation triple.
	\item \label{lem_SMMT_2}
	If both ($\infty$-NS) and ($\infty$-SM) holds, then $(\bracket{\MM[1]}, \ZZ, \bracket{\MM[-1]})$ 
	is an orthogonal mutation triple.
	\end{enumerate}
\end{lem}
\begin{proof}
(1)
From Example \ref{ex_tri_SM}, \ref{ex_redpreMT}(\ref{ex_redpreMT_orth}) and \ref{ex_redMT}(\ref{ex_redMT_orth}).

(2) From Example \ref{ex_redMT}(\ref{ex_redMT_orth}),
Lemma \ref{lem_redMT}(\ref{lem_redMT_preSMC}) and \ref{lem_torsion_pair_SMC}(\ref{lem_torsion_pair_SMC_3}).
\end{proof}

In the rest of this subsection, we assume that $(\ZZ, \Sigma, \nabla)$ is a triangulated category 
which is induced by an orthogonal mutation triple $(\bracket{\MM[1]}, \ZZ, \bracket{\MM[-1]})$.
Note that $\ZZ$ is Hom-finite and Krull-Schmidt since so is $\CC$.

Our main interest of this subsection is to consider the following correspondence of subcategories.
\begin{gather}
(\cdot) \cap \ZZ \colon \{ \XX \mid \MM \subset \XX \subset \Ob(\CC)\} \to 
\{ \XX^{\pr} \mid \XX^{\pr} \subset \Ob(\ZZ) \} \label{corres_cap_ZZ} \\
(\cdot) \cup \MM \colon \{ \XX^{\pr} \mid \XX^{\pr} \subset \Ob(\ZZ) \} \to 
\{ \XX \mid \MM \subset \XX \subset \Ob(\CC)\} \label{corres_cup_MM}
\end{gather}

We prepare some lemmas as follows.

\begin{lem} \label{lem_hom-vanish_SM}
Assume ($n$-wSM). 
	\begin{enumerate}
	\item \label{lem_hom-vanish_SM_1}
	Let $0 \leq i \leq n$ and $Z_1,Z_2 \in \ZZ$.
		\begin{enumerate}
		\item $\ZZ(\Sigma^i Z_1, Z_2) \cong \CC(Z_1[i], Z_2)$.
		\item $\ZZ(Z_1, \Sigma^{-i} Z_2) \cong \CC(Z_1, Z_2[-i])$.
		\end{enumerate}
	\item \label{lem_hom-vanish_SM_2}
	Let $\XX^{\pr} \subset \ZZ$. 
	We define ${\XX^{\pr}[i]}^{\perp_{\CC}} = \{C \in \CC \mid \CC(\XX^{\pr}[i], C) =0\}$ and 
	$\Sigma^i {\XX^{\pr}}^{\perp_{\ZZ}} = \{Z \in \ZZ \mid \ZZ(\Sigma^i \XX^{\pr}, Z) =0\}$. Then 
	\[
	\ZZ \cap \Bigl( \ \bcap{0<i<n} {\XX^{\pr}[i]}^{\perp_{\CC}} \Bigr) = \bcap{0<i<n} \Sigma^i {\XX^{\pr}}^{\perp_{\ZZ}},
	\]
	\[
	\ZZ \cap \Bigl( \ \bcap{0<i<n} {}^{\perp_{\CC}}\XX^{\pr}[-i] \Bigr) = \bcap{0<i<n} {}^{\perp_{\ZZ}} \Sigma^{-i} \XX^{\pr}.
	\]
	In particular,
	\[
	\ZZ \cap \Bigl( \ \bcap{0<i<n} ({}^{\perp_{\CC}}\XX^{\pr}[-i] \cap {\XX^{\pr}[i]}^{\perp_{\CC}}) \Bigr) 
	= \bcap{0<i<n} ({}^{\perp_{\ZZ}} \Sigma^{-i}\XX^{\pr} \cap \Sigma^i {\XX^{\pr}}^{\perp_{\ZZ}}).
	\]
	\end{enumerate}
\end{lem}
\begin{proof}
(1) Assume that $\ZZ \subset \bcap{0 \leq i \leq n} \rpp{\MM[i]}$. 
By definition of $\Sigma$, there exists the following triangle in $\CC$
where $M \in \bracket{\MM}, Z \in \ZZ$ for $0 \leq l < i$.
\[
M[l] \rar (\Sigma^{i-(l+1)}Z)[l+1] \rar (\Sigma^{i-l}Z)[l] \rar M[l+1]
\]
So, for $0 \leq l < n$ and $l < i$,
\[
\CC((\Sigma^{i-(l+1)}Z)[l+1], \ZZ) \cong \CC((\Sigma^{i-l}Z)[l], \ZZ).
\]
Thus, for $0 \leq i \leq n$,
\begin{align*}
	\ZZ(\Sigma^i Z, \ZZ) &\cong \CC((\Sigma^{i-1}Z)[1], \ZZ) \\
					&\cong \CC((\Sigma^{i-2}Z)[2], \ZZ) \\
					&\cdots \\
					&\cong \CC(Z[i], \ZZ).
\end{align*}
If $\ZZ \subset \bcap{0 \leq i \leq n} \lpp{\MM[-i]}$ holds, one can show $\ZZ(Z_1, \Omega^i Z_2) \cong \CC(Z_1, Z_2[-i])$ similarly.

(2) Direct from (1).
\end{proof}

\begin{cor} \label{cor_n-orth_preserve}
Assume ($n$-wSM).
	\begin{enumerate}
	\item \label{cor_n-orth_preserve_1}
	The correspondence (\ref{corres_cap_ZZ}) preserves $n$-orthogonality, that is, 
	\[
	\XX \text{ is $n$-orthogonal in }\CC \implies \XX \cap \ZZ \text{ is $n$-orthogonal in } \ZZ.
	\]
	\item \label{cor_n-orth_preserve_2} 
	Let $\XX$ be an $\infty$-orthogonal collection satisfying 
	$\CC(\XX[i], C) = \CC(C, \XX[-i]) =0$ for $C \in \CC, i \gg 0$. 
	Then $\XX \cap \ZZ$ is also an $\infty$-orthogonal collection satisfying 
	$\ZZ(\Sigma^i(\XX \cap \ZZ), Z) = \ZZ(Z, \Sigma^{-i}(\XX \cap \ZZ)) =0$ for $Z \in \ZZ, i \gg 0$.
	\end{enumerate}
\end{cor}

\begin{lem} \label{lem_Serre_SM}
Let $0 < n <\infty$ and assume ($n$-wSM).
Then if $\ZZ$ is $\bbS_{-n}$-stable, $\bbS^{\pr\pr} = \Sigma^{-n} \circ \bbS_{-n}$ is a Serre functor of $\ZZ$.
\end{lem}
\begin{proof}
Let $X, Y \in \ZZ$. Note that $\bbS_{-n}X \in \ZZ$ from assumptions.
We denote $k$-dual by $D$.
\begin{align*}
	\ZZ(X, Y) &\cong \CC(X[n], Y[n]) 
			\cong D \hspace{1pt} \CC(Y[n], \bbS_{-n}X) \\
			&\cong D\ZZ(\Sigma^n Y, \bbS_{-n}X) 
			\cong D\ZZ(Y, \Sigma^{-n}(\bbS_{-n}X))
\end{align*}
\end{proof}

The following technical lemma is \cite[Theorem 2.11]{SP20}, or \cite[Theorem 3.3]{Dug15}. 
(We may assume the right minimality of $f$ though it was not stated in \cite{SP20}.)

\begin{lem} \label{lem_extension_resolving} \cite[Theorem 2.11]{SP20}, \cite[Theorem 3.3]{Dug15}
Let $\XX$ be a semibrick containing $\MM$.
For $m \geq 1$, we define $\bracket{\XX}_m$ inductively by 
$\bracket{\XX}_1 = \XX$ and $\bracket{\XX}_m = \bracket{\XX}_{m-1} \ast \XX$.

Assume that $X \in \bracket{\XX}_m$ and $m$ is small as possible.
Then there exists a triangle $M \xrar{f} X \rar X^{\pr} \rar M[1]$ which satisfies the following conditions.
\begin{enumerate}[label=(\roman*), leftmargin=40pt]
\item $f$ is a minimal right $\bracket{\MM}$-approximation.
\item $X^{\pr} \in \bracket{\XX}_{m^{\pr}} \cap \rpp{\MM}$ where $m^{\pr} \leq m$ with equality if and only if $X \cong X^{\pr}$.
\end{enumerate}
In particular, $X^{\pr} \cong \sigma X$ if $X \in \bracket{\MM} \ast \ZZ = \wt{\UU}$.
\end{lem}

\begin{proof}
From Theorem 2.11 in \cite{SP20}, there exists a triangle $M \xrar{f} X \rar X^{\pr} \rar M[1]$ 
which satisfies (i) and (ii) except for right minimality of $f$.
We may assume that
$f = \msize{0.8}{\begin{bmatrix} f^{\pr} & 0 \end{bmatrix}} \colon M_1 \oplus M_2 \to X$
where $f^{\pr}$ is minimal right $\bracket{\MM}$-approximation.
Then the triangle $M \xrar{f} X \rar X^{\pr} \rar M[1]$ is a direct sum of 
$M_1 \xrar{f^{\pr}} X \rar X^{\pr\pr} \rar M_1[1]$ and $M_2 \rar 0 \rar M_2[1] \rar M_2[1]$. 
Take an integer $m^{\pr\pr}$ so that $X^{\pr\pr} \in \bracket{\XX}_{m^{\pr\pr}}$ and 
$m^{\pr\pr}$ is small as possible.
Then $m^{\pr\pr} \leq m^{\pr} \leq m$.
If $m^{\pr\pr} = m$, then $m^{\pr\pr} = m^{\pr}$. So, $M_2 = 0$ and $X^{\pr} \cong X^{\pr\pr}$.
From assumption of $f$ and $m=m^{\pr}$, $X \cong X^{\pr}$.
Therefore, equality if and only if $X \cong X^{\pr\pr}$.

Take a triangle $M^{\pr} \xrar{g} X \rar \sigma X \rar M^{\pr}$ in $\CC$.
Since $\CC(\bracket{\MM}, \ZZ) \cong \bbE(\bracket{\MM[1]}, \ZZ) = 0$,
$g$ is a right $\bracket{\MM}$-approximation.
Then we may assume that $g = \msize{0.8}{\begin{bmatrix} g^{\pr} & 0 \end{bmatrix}}$
where $g^{\pr}$ is right minimal.
Because $\bracket{\MM}$ is closed under direct summands, there exists $M^{\pr\pr} \in \bracket{\MM}$ which
satisfies $\sigma X \cong X^{\pr} \oplus M^{\pr\pr}[1]$.
However, $\ZZ \cap \bracket{\MM[1]} = 0$, $X^{\pr} \cong \sigma X$ follows.
\end{proof}

In the rest of this subsection, we denote the extension closure in $(\ZZ, \Sigma, \nabla)$ by $\{ \cdot \}$.
We define $\{\cdot\}_m$ for $m \geq 1$ same as $\bracket{\cdot}_m$.

\begin{lem} \label{lem_cap_ZZ}
Assume ($n$-NS) and ($n$-SM). 
Let $\XX$ be an $n$-orthogonal collection containing $\MM$.
	\begin{enumerate}
	\item \label{lem_cap_ZZ_1}
	$\XX \cap \ZZ = \XX \setminus \MM$.
	\item \label{lem_cap_ZZ_2}
	$\bracket{\XX} \cap \ZZ = \{ \XX \setminus \MM \} = \{ \XX \cap \ZZ \}$.
	\end{enumerate}
\end{lem}
\begin{proof}
(1) From $\ZZ \subset \dpp{\MM}$, $\XX \cap \ZZ \subset \XX \setminus \MM$. 
On the other hand, since $\XX$ is $n$-orthogonal, $\XX \setminus \MM \subset \bcap{0\leq i<n} (\lpp{\MM[-i]} \cap \rpp{\MM[i]})$.
If $n=\infty$, $\XX \setminus \MM \subset \ZZ$.
If $n < \infty$, $\XX \setminus \MM \subset \lpp{\MM} = \rpp{\MM[n]}$ from ($n$-NS). 
Thus, $\XX \setminus \MM \subset \bcap{0\leq i \leq n} \rpp{\MM[i]} = \ZZ$.

(2) Let $X \in \{ \XX \setminus \MM \}_m$. 
We show that $X \in \bracket{\XX} \cap \ZZ$ by induction on $m$.
If $m =1$, this is direct from (1).
If $m > 1$, we may assume that there exists a nonzero morphism
 $f \colon X_1 \to X$ where $X_1 \in \XX \setminus \MM$ and take a triangle $X_1 \xrar{f} X \rar X^{\pr} \rar \Sigma X_1$.
Then from \cite[Lemma 2.6]{Dug15}, $X^{\pr} \in \{ \XX \setminus \MM \}_{m-1} \subset \bracket{\XX} \cap \ZZ$.
Note that $X_1 \in \XX \setminus \MM = \XX \cap \ZZ$ since $f$ is nonzero.
$f$ is a morphism in $\ZZ$.
From Lemma \ref{lem_SMMT} and the definition of standard right triangles, $\sigma C^f \cong X^{\pr}$.
Thus, $C^f \in \bracket{\XX}$ and so is $X$.

Let $X \in \bracket{\XX}_m \cap \ZZ$. We show $X \subset \{ \XX \setminus \MM \}$ by induction on $m$.
If $m =1$, this is direct from (1).
If $m > 1$, we may assume that there exists a nonzero morphism
$f \colon X_1 \to X$ in $\ZZ$ where $X_1 \in \XX$ and take a triangle $X_1 \xrar{f} X \rar X^{\pr} \rar X_1[1]$.
Note that $X^{\pr} \in \bracket{\XX}_{m-1}$.
This triangle induces $X_1 \xrar{f} X \rar \sigma X^{\pr} \rar \Sigma X_1$.
From Lemma \ref{lem_extension_resolving}, $\sigma X^{\pr} \in \bracket{\XX}_{m^{\pr}}$ where $m^{\pr} < m$.
Thus, $\sigma X^{\pr} \in \{ \XX \setminus \MM \}$ and so is $X$. 
\end{proof}

\begin{prop} \label{prop_cap_ZZ_1}
Assume ($n$-NS) and ($n$-SM) and $0 < n < \infty$. 
	\begin{enumerate}
	\item \label{prop_cap_ZZ_1-1}
	(\ref{corres_cap_ZZ}) preserves functorially finiteness of extension closures, that is, 
	\[
	\bracket{\XX} \text{ is functorially finite in } \CC \implies 
	\{ \XX \cap \ZZ \} \text{ is functorially finite in }\ZZ.
	\]
	\item \label{prop_cap_ZZ_1-2}
	(\ref{corres_cap_ZZ}) preserves $\bbS_{-n}$-stability of extension closures, that is, 
	\[
	\bracket{\XX} \text{ is $\bbS_{-n}$-stable} \implies \{ \XX \cap \ZZ \} \text{ is $\bbS^{\pr\pr}_{-n}$-stable}.
	\]
	\item \label{prop_cap_ZZ_1-3}
	(\ref{corres_cap_ZZ}) preserves $n$-simple-minded systems, that is, 
	\begin{align*}
	\XX \text{ is an } &\text{$n$-simple-minded system in $\CC$ with } [1] \implies \\
	&\XX \cap \ZZ \text{ is an $n$-simple-minded system in $\ZZ$ with } \Sigma.
	\end{align*}
	\end{enumerate}
\end{prop}
\begin{proof}
(1) Let $Z \in \ZZ$. Take a triangle $Z^{\pr} \rar Z \xrar{g} X \rar Z^{\pr}[1]$ 
where $g$ is a minimal left $\bracket{\XX}$-approximation.
From Lemma \ref{append_omega_sigma}, $Z^{\pr} \in \lpp{\MM} \cap \lpp{\MM[-1]}$ and
from the long exact sequence, $Z^{\pr} \in \bcap{1 < i \leq n} \lpp{\MM[-i]}$.
Thus, $Z^{\pr} \in \ZZ$. In particular, $X \in \bracket{\MM} \ast \ZZ$. 
Then from Lemma \ref{lem_extension_resolving}, $\sigma X \in \bracket{\XX} \cap \ZZ = \{ \XX \cap \ZZ\}$.
Since $Z^{\pr} \in \lpp{\XX} \subset \lpp{(\XX \cap \ZZ)}$, $h^X \circ g \colon Z \to \sigma X$ is a left $\{\XX \cap \ZZ\}$-approximation.

(2) From Lemma \ref{lem_Serre_SM} and \ref{lem_cap_ZZ}(\ref{lem_cap_ZZ_2}),
	\begin{align*}
	\bbS_{-n}^{\pr\pr}\{ \XX \cap \ZZ \} &= \bbS^{\pr\pr} \circ \Sigma^n \{ \XX \cap \ZZ \} \cong \Sigma^n \circ 	\bbS^{\pr\pr} \{ \XX \cap \ZZ \} \\
	&\cong \bbS_{-n} (\bracket{\XX} \cap \ZZ) \cong \bracket{\XX} \cap \ZZ = \{ \XX \cap \ZZ\}.
	\end{align*}

(3) From (1), (2), Corollary \ref{cor_characterize_simple-minded system} and \ref{cor_n-orth_preserve}(\ref{cor_n-orth_preserve_1}).
\end{proof}

\begin{prop} \label{prop_cap_ZZ_2}
Assume ($\infty$-NS) and ($\infty$-SM).
Let $\XX$ be an $\infty$-orthogonal collection in $\CC$ and we denote $\XX \cap \ZZ$ by $\XX^{\pr}$.
	\begin{enumerate}
	\item
	(\ref{corres_cap_ZZ}) preserves covariantly finiteness of extension closures, that is, 
	\begin{align*}
	\bracket{\XX} \text{ is covari} &\text{antly finite in } \bcap{i>0} {}^{\perp_{\CC}}{\XX[-i]} \implies \\
	\{ \XX^{\pr} \} \text{ is} &\text{ covariantly finite in } \bcap{i>0} {}^{\perp_{\ZZ}}{\Sigma^{-i} \XX^{\pr}}.
	\end{align*}
	\item
	(\ref{corres_cap_ZZ}) preserves contravariantly finiteness of extension closures, that is, 
	\begin{align*}
	\bracket{\XX} \text{ is contra} &\text{variantly finite in } \bcap{i>0} {\XX[i]}^{\perp_{\CC}} \implies \\
	\{ \XX^{\pr} \} \text{ is} &\text{ contravariantly finite in } \bcap{i>0} {\Sigma^{i} \XX^{\pr}}^{\perp_{\ZZ}}.
	\end{align*}
	\item \label{prop_cap_ZZ_2-3}
	(\ref{corres_cap_ZZ}) preserves simple-minded collections, that is, 
	\begin{align*}
	\XX \text{ is a } &\text{simple-minded collection in $\CC$} \implies \\
	&\XX \cap \ZZ \text{ is a simple-minded collection in $\ZZ$}.
	\end{align*}
	\end{enumerate}
\end{prop}
\begin{proof}
(1) Note that 
\[
\bcap{i>0} {}^{\perp_{\ZZ}}{{\Sigma^{-i} \XX^{\pr}}} 
= \ZZ \cap \Big( \, \bcap{i>0} {}^{\perp_{\CC}}{\XX^{\pr}[-i]} \Big) 
= \ZZ \cap \Big( \, \bcap{i>0} {}^{\perp_{\CC}}{\XX[-i]} \Big)
\]
from Lemma \ref{lem_hom-vanish_SM}(\ref{lem_hom-vanish_SM_2}) and \ref{lem_cap_ZZ}(\ref{lem_cap_ZZ_1}).
Let $Z \in {}^{\perp_{\ZZ}}{{\Sigma^{-i} \XX^{\pr}}}$.
Take a triangle $Z^{\pr} \rar Z \xrar{g} X \rar Z^{\pr}[1]$ where $g$ is a minimal left $\bracket{\XX}$-approximation.
Just like the proof of Proposition \ref{prop_cap_ZZ_1}(\ref{prop_cap_ZZ_1-1}), one can show $Z^{\pr} \in \bcap{i \geq 0} {}^{\perp_{\CC}} \MM[-i]$. 
Since $X[-1] \in \bcap{i \geq 0}\, \MM[i]^{\perp_{\CC}}$ and
$Z \in \ZZ \subset \bcap{i \geq 0}\, \MM[i]^{\perp_{\CC}}$,
$Z^{\pr} \in \bcap{i \geq 0} \MM[i]^{\perp_{\CC}}$. Thus, $Z^{\pr} \in \ZZ$.
Therefore, there exists a morphism $h^X \circ g \colon Z \to \sigma X$ and
this is a left $\{\XX^{\pr}\}$-approximation since $Z^{\pr} \in \lpp{\XX}$.

(2) Dual of (1).

(3) From (1), (2), Corollary \ref{cor_characterize_simple-minded collection}, \ref{cor_n-orth_preserve},
 Lemma \ref{lem_hom-vanish_SM}(\ref{lem_hom-vanish_SM_2}) and \ref{lem_cap_ZZ}(\ref{lem_cap_ZZ_2}).
\end{proof}

Next, we consider the correspondence (\ref{corres_cup_MM}).

\begin{lem} \label{lem_n-orth_lifting}
Assume ($n$-SM).
	\begin{enumerate}
	\item \label{lem_n-orth_lifting_1}
	(\ref{corres_cup_MM}) preserves $n$-orthogonality, that is, 
	\[
	\XX^{\pr} \text{ is $n$-orthogonal in } \ZZ \implies \XX^{\pr} \cup \MM \text{ is $n$-orthogonal in } \CC.
	\]
	\item \label{lem_n-orth_lifting_2} 
	We assume ($\infty$-NS).
	Let $\XX^{\pr}$ be an $\infty$-orthogonal collection with 
	$\ZZ(\Sigma^i \XX^{\pr}, Z) = \ZZ(Z, \Sigma^{-i} \XX^{\pr}) =0$ for $Z \in \ZZ, i \gg 0$.
	Then $\XX = \XX^{\pr} \cup \MM$ is also an $\infty$-orthogonal collection with
	$\CC(\XX[i], C) = \CC(C, \XX[-i]) =0$ for $C \in \CC, i \gg 0$.
	\end{enumerate}
\end{lem}
\begin{proof}
We denote $\XX^{\pr} \cup \MM$ by $\XX$.

(1)
From Lemma \ref{lem_hom-vanish_SM}(\ref{lem_hom-vanish_SM_1}), $\CC(\XX^{\pr}[i], \XX^{\pr}) \cong \ZZ(\Sigma^i \XX^{\pr}, \XX^{\pr}) =0$ for $0 < i < n$.
From ($n$-SM), $\CC(\XX^{\pr}[i], \MM) \cong \CC(\XX^{\pr}, \MM[-i]) =0$ for $0 < i < n$
and $\CC(\MM[i], \XX^{\pr}) =0$ for $0 < i < n$.
Since $\MM$ is $n$-orthogonal, $\CC(\XX[i], \XX) =0$ for $0 < i < n$.

(2)
From ($\infty$-NS) and Lemma \ref{lem_torsion_pair_SMC}(\ref{lem_torsion_pair_SMC_3}), 
$\CC = (\thick \MM)^{\leq 0} \ast \ZZ \ast (\thick \MM)^{\geq 0}$.
Let $C \in \CC$. Then there exist triangles 
$M^{\leq 0} \rar C \rar C^{\pr} \rar M^{\leq 0}[1]$ and $Z_C \rar C^{\pr} \rar M^{\geq 0} \rar Z_C[1]$ where 
$M^{\leq 0} \in (\thick \MM)^{\leq 0}, M^{\geq 0} \in (\thick \MM)^{\geq 0}$ and $Z_C \in \ZZ$.

For $i \gg 0$, 
\begin{align*}
\CC(\XX[i], C) \cong \CC(\XX[i], Z_C) = \CC(\XX^{\pr}[i], Z_C) \cong \ZZ(\Sigma^i \XX^{\pr}, Z_C) =0.
\end{align*}

In the same way, $\CC(C, \XX[-i]) =0$ for $i \gg 0$. 
\end{proof}

\begin{lem} \label{lem_lems-for-last-thm}
Assume ($n$-NS) and ($n$-SM). 
Let $\XX^{\pr} \subset \Ob(\ZZ)$ be an $n$-orthogonal collection with respect to $\Sigma$.
We denote $\XX^{\pr} \cup \MM$ by $\XX$.
	\begin{enumerate}
	\item $\bcap{0 < i < n} \big(\lpp{\MM[-i]} \cap \rpp{\MM[i]} \big) 
	= \bracket{\MM} \ast \ZZ \ast \bracket{\MM}$.
	\item $\bracket{\XX} = \bracket{\MM} \ast \{ \XX^{\pr} \} \ast \bracket{\MM}$.
	\label{lem_lems-for-last-thm_2}
	\item \label{lem_lems-for-last-thm_3}
	$
	\bcap{0 < i < n} \big({}^{\perp_{\CC}} \XX[-i]  \cap \XX[i]^{\perp_{\CC}} \big)
	= \bracket{\MM} \ast \Big( \ \bcap{0 < i < n} \big({}^{\perp_{\ZZ}} \Sigma^{-i}\XX^{\pr} \cap 
	\Sigma^i {\XX^{\pr}}^{\perp_{\ZZ}}\big) \Big) \ast \bracket{\MM}$
	
	$\phantom{\bcap{0 < i < n} \big({}^{\perp_{\CC}} \XX[-i]  \cap \XX[i] x} 
	= \bigg\langle \Big( \ \bcap{0 < i < n} \big({}^{\perp_{\ZZ}} \Sigma^{-i}\XX^{\pr} \cap 
	\Sigma^i {\XX^{\pr}}^{\perp_{\ZZ}}\big) \Big) \cup \MM \bigg\rangle$.
	\end{enumerate}
\end{lem}
\begin{proof}
(1) Since $\bracket{\MM}, \ZZ \subset \bcap{0 < i < n} (\lpp{\MM[-i]} \cap \rpp{\MM[i]})$,
so is $\bracket{\MM} \ast \ZZ \ast \bracket{\MM}$.

On the other hand, let $X \in \bcap{0 < i < n} (\lpp{\MM[-i]} \cap \rpp{\MM[i]})$.
Take a triangle $M_1 \xrar{f} X \rar X^{\pr} \rar M_1[1]$ where $f$ is a minimal right $\bracket{\MM}$-approximation. 
From Lemma \ref{append_omega_sigma} and the long exact sequence, 
$X^{\pr} \in$ $\Big( \, \bcap{0 < i < n-1} \lpp{\MM[-i]} \Big)$ $\cap$ 
$\Big( \, \bcap{0 \leq i < n}  \rpp{\MM[i]} \Big)$.
In particular, if $n < \infty$,
\begin{align*}
X^{\pr} \in \Big( \, \bcap{0 < i < n-1} \lpp{\MM[-i]} \Big) \cap 
\Big( \, \bcap{0 < i \leq n}  \lpp{\MM[-i]} \Big) = \bcap{0 < i \leq n} \lpp{\MM[-i]}
\end{align*}
from ($n$-NS) and if $n= \infty$,
\begin{align*}
X^{\pr} \in \Big( \, \bcap{i>0} \lpp{\MM[-i]} \Big) \cap \Big( \, \bcap{i \geq 0}\, \rpp{\MM[i]} \Big).
\end{align*}

Dually, take a triangle $M_2[-1] \rar Z_X \rar X^{\pr} \xrar{g} M_2$ where $g$ is a minimal left $\bracket{\MM}$-approximation and 
one can show that $Z_X \in \ZZ$ similarly.

(2) From Lemma \ref{lem_cap_ZZ} and \ref{lem_n-orth_lifting}, 
$\{ \XX^{\pr} \} = \bracket{\XX} \cap \ZZ$. 
Thus, $\bracket{\XX} \supset \bracket{\MM} \ast \{ \XX^{\pr} \} \ast \bracket{\MM}$.
Let $X \in \bracket{\XX}$. From Lemma \ref{lem_extension_resolving} and (1), 
$X \in \bracket{\MM} \ast (\bracket{\XX} \cap \ZZ) \ast \bracket{\MM} 
= \bracket{\MM} \ast \{ \XX^{\pr} \} \ast \bracket{\MM}$.

(3) From Lemma \ref{lem_hom-vanish_SM}(\ref{lem_hom-vanish_SM_2}),
\begin{align*}
\bcap{0 < i < n} \big({}^{\perp_{\ZZ}} \Sigma^{-i}\XX^{\pr} \cap \Sigma^i {\XX^{\pr}}^{\perp_{\ZZ}}\big)
&=  \ZZ \cap \bcap{0 < i < n} \big({}^{\perp_{\CC}} \XX^{\pr}[-i] \cap \XX^{\pr}[i]^{\perp_{\CC}} \big) \\
&= \ZZ \cap \bcap{0 < i < n} \big({}^{\perp_{\CC}} \XX[-i] \cap \XX[i]^{\perp_{\CC}} \big) \\
&\subset \bcap{0 < i < n} \big({}^{\perp_{\CC}} \XX[-i] \cap \XX[i]^{\perp_{\CC}} \big).
\end{align*}

Since $\bracket{\MM} \subset \bcap{0 < i < n} \big({}^{\perp_{\CC}} \XX[-i] \cap \XX[i]^{\perp_{\CC}} \big)$,
\[
\bigg\langle \Big( \ \bcap{0 < i < n} \big({}^{\perp_{\ZZ}} \Sigma^{-i}\XX^{\pr} \cap 
\Sigma^i {\XX^{\pr}}^{\perp_{\ZZ}}\big) \Big) \cup \MM \bigg\rangle
\subset \bcap{0 < i < n} \big({}^{\perp_{\CC}} \XX[-i]  \cap \XX[i]^{\perp_{\CC}} \big).
\]

Let $X \in \bcap{0 < i < n} \big({}^{\perp_{\CC}} \XX[-i] \cap \XX[i]^{\perp_{\CC}} \big)$.
From (1), there exists the following diagram
where $M_1, M_2 \in \bracket{\MM}$ and $Z_X \in \ZZ$.
\[
\xy
(16,24)*+{M_1}="12";
(32,24)*+{M_1}="13";
(0,8)*+{M_2[-1]}="21";
(16,8)*+{Y}="22";
(32,8)*+{X}="23";
(48,8)*+{M_2}="24";
(0,-8)*+{M_2[-1]}="31";
(16,-8)*+{Z_X}="32";
(32,-8)*+{X^{\pr}}="33";
(48,-8)*+{M_2}="34";
(16,-24)*+{M_1[1]}="42";
(32,-24)*+{M_1[1]}="43";
{\ar@{=} "12";"13"};
{\ar^{} "21";"22"};
{\ar^{} "22";"23"};
{\ar^{} "23";"24"};
{\ar^{} "31";"32"};
{\ar^{} "32";"33"};
{\ar^{} "33";"34"};
{\ar@{=} "42";"43"};
{\ar@{=} "21";"31"};
{\ar^{} "12";"22"};
{\ar^{} "22";"32"};
{\ar^{} "32";"42"};
{\ar^{} "13";"23"};
{\ar^{} "23";"33"};
{\ar^{} "33";"43"};
{\ar@{=} "24";"34"};
{\ar@{}|\car "12";"23"};
{\ar@{}|\car "21";"32"};
{\ar@{}|\car "22";"33"};
{\ar@{}|\car "23";"34"};
{\ar@{}|\car "32";"43"};
\endxy \label{diag_lems-for-last-thm}
\]
Since $\XX$ is $n$-orthogonal, $\MM \subset \bcap{0 \leq i < n} \big( {}^{\perp_{\CC}} \XX^{\pr}[-i] \cap \XX^{\pr}[i]^{\perp_{\CC}} \big)$.
If $n = \infty$, then $M_1[1], M_2[-1] \in \bcap{i>0} \big( {}^{\perp_{\CC}} \XX^{\pr}[-i] \cap \XX^{\pr}[i]^{\perp_{\CC}} \big)$.
If $n < \infty$,
$\XX^{\pr} \subset {}^{\perp_{\CC}}\!\MM^{\perp_{\CC}} = {}^{\perp_{\CC}}\MM[-n] \cap \MM[n]^{\perp_{\CC}}$ from ($n$-NS).
Then $\MM \subset \bcap{0 \leq i \leq n} \big( {}^{\perp_{\CC}} \XX^{\pr}[-i] \cap \XX^{\pr}[i]^{\perp_{\CC}} \big)$. 
In particular, $M_1[1],$ $M_2[-1] \in \bcap{0 < i < n} \big( {}^{\perp_{\CC}} \XX^{\pr}[-i] \cap \XX^{\pr}[i]^{\perp_{\CC}} \big)$.
Thus, $Z_X \in \ZZ \cap \bcap{0 < i < n} \big( {}^{\perp_{\CC}} \XX^{\pr}[-i] \cap \XX^{\pr}[i]^{\perp_{\CC}} \big)$ and
$X \in \bracket{\MM} \ast \Big( \ \bcap{0 < i < n} \big({}^{\perp_{\ZZ}} \Sigma^{-i}\XX^{\pr} \cap 
\Sigma^i {\XX^{\pr}}^{\perp_{\ZZ}}\big) \Big) \ast \bracket{\MM}$.
Because
\begin{align*}
\bracket{\MM} \ast \Big( \ \bcap{0 < i < n} &\big({}^{\perp_{\ZZ}} \Sigma^{-i}\XX^{\pr} \cap 
\Sigma^i {\XX^{\pr}}^{\perp_{\ZZ}}\big) \Big) \ast \bracket{\MM} \\
&\subset \bigg\langle \Big( \ \bcap{0 < i < n} \big({}^{\perp_{\ZZ}} \Sigma^{-i}\XX^{\pr} \cap 
\Sigma^i {\XX^{\pr}}^{\perp_{\ZZ}}\big) \Big) \cup \MM \bigg\rangle
\end{align*}
is clear, the statement holds.
\end{proof}

\begin{prop} \label{prop_cup_MM_1}
Assume ($n$-NS), ($n$-SM) and $0<n<\infty$.
	\begin{enumerate}
	\item \label{prop_cup_MM_1-1}
	(\ref{corres_cup_MM}) preserves functorially finiteness of extension closures, that is, 
	\[
	\{ \XX^{\pr} \} \text{ is functorially finite in } \ZZ \implies \bracket{\XX^{\pr} \cup \MM} \text{ is functorially finite in } \CC.
	\]
	\item \label{prop_cup_MM_1-2}
	(\ref{corres_cup_MM}) preserves $\bbS^{\pr\pr}_{-n}$-stability of extension closures, that is, 
	\[
	\{ \XX^{\pr} \} \text{ is $\bbS^{\pr\pr}_{-n}$-stable} \implies \bracket{\XX^{\pr} \cup \MM} \text{ is $\bbS_{-n}$-stable}.
	\]
	\item \label{prop_cup_MM_1-3}
	(\ref{corres_cup_MM}) preserves $n$-simple-minded systems, that is, 
	\begin{align*}
	\XX^{\pr} \text{ is an } &\text{$n$-simple-minded system in $\ZZ$} \implies \\
	&\XX^{\pr} \cup \MM \text{ is an $n$-simple-minded system in $\CC$}.
	\end{align*}
	\end{enumerate}
\end{prop}
\begin{proof}
We denote $\XX^{\pr} \cup \MM$ by $\XX$.

(1) From Lemma \ref{lem_ZZ_funct-fin_SM}, $\ZZ$ is functorially finite in $\CC$. 
Then $\{\XX^{\pr} \}$ is also functorially finite in $\CC$. 
From Lemma \ref{lem_lems-for-last-thm}(\ref{lem_lems-for-last-thm_2}) and \cite[Theorem 1.3]{Che09},
$\bracket{\XX} = \bracket{\MM} \ast \{ \XX^{\pr} \} \ast \bracket{\MM}$ is functorially finite in $\CC$.

(2) From Lemma \ref{lem_Serre_SM} and \ref{lem_lems-for-last-thm}(\ref{lem_lems-for-last-thm_2}),
\begin{align*}
\bbS_{-n}(\bracket{\XX}) 
&= \bbS_{-n}(\bracket{\MM} \ast \{\XX^{\pr}\} \ast \bracket{\MM}) \\
&= \bracket{\MM} \ast (\Sigma^n \circ \bbS^{\pr\pr}) \{\XX^{\pr}\} \ast \bracket{\MM} \\
&\cong \bracket{\MM} \ast \bbS^{\pr\pr}_{-n} \{\XX^{\pr}\} \ast \bracket{\MM} \\
&\cong \bracket{\MM} \ast \{\XX^{\pr}\} \ast \bracket{\MM} = \bracket{\XX}.
\end{align*}

(3) Let $\XX^{\pr}$ be an $n$-simple-minded system in $\ZZ$.
From (1), (2), Corollary \ref{cor_characterize_simple-minded system} and Lemma \ref{lem_n-orth_lifting}(\ref{lem_n-orth_lifting_1}),
we only have to show $\bracket{\XX} = \bcap{0 < i < n} \rpp{\XX[i]}$.
Note that $\bracket{\XX} = \bracket{ \{ \XX^{\pr} \} \cup \MM }$.
That is because $\{ \XX^{\pr} \} \subset \bracket{\XX}$.
From Lemma \ref{lem_lems-for-last-thm}(\ref{lem_lems-for-last-thm_3}),
\[
\bracket{\XX}
= \bracket{ \{ \XX^{\pr} \} \cup \MM }
= \Bigl\langle \Big( \, \bcap{0 < i < n} \Sigma^i {\XX^{\pr}}^{\perp_{\ZZ}} \Big) 
	\cup \MM \Bigr\rangle
=\bcap{0 < i < n} \rpp{\XX[i]}.
\] 
Thus, $\XX$ is an $n$-simple-minded system in $\CC$.
\end{proof}

\begin{prop} \label{prop_cup_MM_2}
Assume ($\infty$-NS) and ($\infty$-SM).
Let $\XX^{\pr}$ be an $\infty$-orthogonal collection in $\ZZ$.
Then (\ref{corres_cup_MM}) preserves simple-minded collections, that is, 
\begin{align*}
\XX^{\pr} \text{ is a } &\text{simple-minded collection in $\ZZ$} \implies \\
&\XX^{\pr} \cup \MM \text{ is a simple-minded collection in $\CC$}.
\end{align*}
\end{prop}

\begin{proof}
We denote $\XX^{\pr} \cup \MM$ by $\XX$.

Let $\XX^{\pr}$ be a simple-minded collection in $\ZZ$.
From Lemma \ref{lem_n-orth_lifting}(\ref{lem_n-orth_lifting_1}), $\XX$ is an $\infty$-orthogonal collection in $\CC$.
By Lemma \ref{lem_lems-for-last-thm}(\ref{lem_lems-for-last-thm_3}),
\begin{align*}
\bracket{\XX} = \bracket{ \{ \XX^{\pr} \} \cup \MM} 
&= \Bigl\langle \Big( \, \bcap{i > 0} \big( {}^{\perp_{\ZZ}} \Sigma^{-i} \XX^{\pr} \cap \Sigma^i {\XX^{\pr}}^{\perp_{\ZZ}} \big)\Big) \cup \MM \Bigr\rangle \\
&= \bcap{i > 0} \big( {}^{\perp_{\CC}} \XX[-i] \cap \XX[i]^{\perp_{\CC}} \big).
\end{align*}

From Corollary \ref{cor_characterize_simple-minded collection} and Lemma \ref{lem_n-orth_lifting}(\ref{lem_n-orth_lifting_2}), 
we only have to show $\bracket{\XX}$ is covariantly finite in $\bcap{i>0} \lpp{\XX[-i]}$ and 
contravariantly finite in $\bcap{i>0} \rpp{\XX[i]}$.

By definition of $\Sigma$, $\Sigma \XX^{\pr} \subset \XX^{\pr}[1] \ast \bracket{\MM[1]}$ and 
$\Sigma^i \XX^{\pr} \subset \XX^{\pr}[i] \ast \bracket{\MM[i]} \ast \cdots \ast \bracket{\MM[1]}$ for $i \geq 1$ from induction on $i$.
Considering the extensions in $\ZZ$ and \cite[Lemma 2.8]{SP20},
\begin{align*}
\bcup{i \geq 0} \{ \Sigma^i \XX^{\pr} \} \star \cdots \star \{ \XX^{\pr} \} 
&\subset \bcup{i \geq 0} \{ \Sigma^i \XX^{\pr} \cup \cdots \cup \Sigma\XX^{\pr} \cup \XX^{\pr} \} \\
&\subset \bcup{i \geq 0} \bracket{\Sigma^i \XX^{\pr} \cup \cdots \cup \Sigma\XX^{\pr} \cup \XX^{\pr} \cup \MM} \\
&\subset \bcup{i \geq 0} \bracket{ \XX^{\pr}[i] \cup \cdots \XX^{\pr}[1] \cup \XX^{\pr} \cup \MM[i] \cup \cdots \cup \MM[1] \cup \MM} \\
&\subset \bcup{i \geq 0} \bracket{\XX[i] \cup \cdots \XX[1] \cup \XX} \\
&= \bcup{i \geq 0} \bracket{\XX[i]} \ast \cdots \ast \bracket{\XX}
\end{align*}
where $\star$ stands for $\ast$ in $\ZZ$.
Let $C \in \bcap{i > 0} {}^{\perp_{\CC}} \XX[-i]$.
From Lemma \ref{lem_torsion_pair_SMC}(\ref{lem_torsion_pair_SMC_3}),
there exist triangles $M^{\leq0} \rar C \rar C^{\pr} \rar M^{\leq0}[1]$ and $M[-1] \rar Z_C \rar C^{\pr} \rar M$
where $M^{\leq 0} \in (\thick \MM)^{\leq0}, Z_C \in \ZZ$ and $M \in (\thick \MM)^{\geq0}$.
We claim that $Z_C \in \bcap{i > 0} {}^{\perp_{\CC}} \XX[-i]$.
Since $C, M^{\leq0}[1] \in \bcap{i > 0} {}^{\perp_{\CC}} \XX[-i]$, so is $C^{\pr}$.
Then $M \in \bracket{\MM}$ from $Z_C[1] \in \bcap{i > 0} {}^{\perp_{\CC}} \MM[-i]$.
Because $C^{\pr}, M[-1] \in \bcap{i > 0} {}^{\perp_{\CC}} \XX^{\pr}[-i]$, so is $Z_C$.
Thus, $Z_C \in \ZZ \cap \Big( \bcap{i > 0} {}^{\perp_{\CC}} \XX^{\pr}[-i] \Big) = \bcap{i > 0} {}^{\perp_{\CC}} \XX[-i]$.
To sum up,
\[
\bcap{i > 0} {}^{\perp_{\CC}} \XX[-i] 
\subset (\thick \MM)^{\leq 0} \ast \Big( \ZZ \cap \bcap{i>0} {}^{\perp_{\CC}} \XX[-i] \Big) \ast \bracket{\MM}.
\]
Because $\{ \XX^{\pr} \}$ is a simple-minded collection in $\ZZ$, 
\begin{align*}
\bcap{i>0} {}^{\perp_{\ZZ}}\Sigma^{-i}\XX^{\pr}
&= \bcup{i\geq0} \{\Sigma^i \XX^{\pr} \} \star \cdots \star \{\Sigma \XX^{\pr} \} \star \{ \XX^{\pr} \} \\
&\subset \bcup{i \geq 0} \bracket{\XX[i]} \ast \cdots \ast \bracket{\XX}. \\
\end{align*}
Therefore, from \cite[Lemma 2.8]{SP20},
\begin{align*}
\bcap{i > 0} {}^{\perp_{\CC}} \XX[-i]
&\subset (\thick \MM)^{\leq 0} \ast \Big( \ZZ \cap \bcap{i>0} {}^{\perp_{\CC}} \XX[-i] \Big) \ast \bracket{\MM} \\
&\subset \Big( \, \bcup{i \geq 0} \bracket{\MM[i]} \ast \cdots \ast \bracket{\MM} \Big) \ast \Big( \, \bcap{i>0} {}^{\perp_{\ZZ}}\Sigma^{-i}\XX^{\pr} \Big) \ast \bracket{\MM} \\
&\subset \Big( \, \bcup{i \geq 0} \bracket{\MM[i]} \ast \cdots \ast \bracket{\MM} \Big) \ast 
		\Big( \, \bcup{i \geq 0} \bracket{\XX[i]} \ast \cdots \ast \bracket{\XX} \Big) \ast \bracket{\MM} \\
&= \bcup{i \geq 0} \bracket{\XX[i]} \ast \cdots \ast \bracket{\XX} \\
&\subset \bcap{i > 0} {}^{\perp_{\CC}} \XX[-i].
\end{align*}
In particular, 
\[
\bcup{i \geq 0} \bracket{\XX[i]} \ast \cdots \ast \bracket{\XX} = \bcap{i > 0} {}^{\perp_{\CC}} \XX[-i].
\]
So, $\bracket{\XX}$ is covariantly finite in $\bcap{i > 0} {}^{\perp_{\CC}} \XX[-i]$.
One can show that $\bracket{\XX}$ is contravariantly finite in $\bcap{i > 0}  {\XX[i]}^{\perp_{\CC}}$, dually.
\end{proof}

\begin{thm} \label{thm_reduction-n-SMS}
Assume ($n$-NS) and ($n$-SM) where $n < \infty$. 
Then the following correspondence of collections is bijective.
	\begin{align*}
	\{ \XX : n\text{-simple-}&\text{minded system in }\CC \mid \MM \subset \XX\} \to \\
	&\{ \XX\setminus \MM \ : n\text{-simple-minded system in } \ZZ\}
	\end{align*}
\end{thm}
\begin{proof}
From Lemma \ref{lem_cap_ZZ}(1), Proposition \ref{prop_cap_ZZ_1}(\ref{prop_cap_ZZ_1-3}) and \ref{prop_cup_MM_1}(\ref{prop_cup_MM_1-3}), the correspondence (\ref{corres_cap_ZZ}) and (\ref{corres_cup_MM}) are bijection which preserve $n$-simple-minded systems. 
\end{proof}

\begin{thm} \label{thm_reduction-SMC}
Assume ($\infty$-NS) and ($\infty$-SM). Then the following correspondence of collections is bijective.
	\begin{align*}
	\{ \XX : \text{simple-}&\text{minded collection in }\CC \mid \MM \subset \XX\} \to \\
	&\{ \XX\setminus \MM : \text{simple-minded collection in }\ZZ\}
	\end{align*}
\end{thm}
\begin{proof}
From Lemma \ref{lem_cap_ZZ}(1) and Proposition \ref{prop_cap_ZZ_2}(\ref{prop_cap_ZZ_2-3}), \ref{prop_cup_MM_2}, the correspondence (\ref{corres_cap_ZZ}) and (\ref{corres_cup_MM}) are bijections which preserve simple-minded collections. 
\end{proof}

\begin{rmk} \cite[Definition 4.1]{Dug15}, \cite[Definition 3.1]{SP20} \label{rmk_mutation-of-orth-coll}
Let $\XX$ be an $n$-simple-minded system or a simple-minded collection in $\CC$ and 
$\MM$ be a collection which is contained in $\XX$ and satisfies ($n$-NS).
Then $\XX$ and $\MM$ determine an orthogonal mutation triple $(\bracket{\MM[1]}, \ZZ, \bracket{\MM[-1]})$
so that $\ZZ$ and $\MM$ satisfy ($n$-SM) from Lemma \ref{lem_SMMT}.

$\mu^-_{\MM}(\XX)$ \resp{$\mu^+_{\MM}(\XX)$} in Definition \ref{defi_mu-rMT} is exactly 
$\mu^-_{\MM}(\XX)$ \resp{$\mu^+_{\MM}(\XX)$} in \cite{Dug15} if $\XX$ is an $n$-simple-minded system, and
$\mu^-_{\MM}(\XX)[1]$ \resp{$\mu^+_{\MM}(\XX)[-1]$} in \cite{KY14} if $\XX = \{X_1, \ldots, X_m \}$ is a finite set and $\MM = \{X_i\} \subset \XX$.
\end{rmk}

\begin{cor} \cite[Proposition 7.6]{KY14}, \cite[Theorem 6.6]{SP20}
Let $\XX$ be an $n$-simple-minded system \resp{a simple-minded collection}.
In Remark \ref{rmk_mutation-of-orth-coll}, 
both $\mu_{\MM}^-(\XX)$ and $\mu_{\MM}^+(\XX)$ are $n$-simple-minded systems \resp{simple-minded collections} 
if $\XX$ is an $n$-simple-minded system \resp{a simple-minded collection}.
\end{cor}
\begin{proof}
Since $\Sigma$ \resp{$\Omega$} is the shift functor \resp{the inverse of the shift functor} in $\ZZ$,
this statement directly follows from Theorem \ref{thm_reduction-n-SMS} and \ref{thm_reduction-SMC}.
\end{proof}

\begin{ex}
\begin{enumerate}
\item
The following picture is the AR quiver of $\CC = \Db(k A_4)/\nu [2]$ 
where $\nu$ is the Nakayama functor and $A_4$ is the linearly oriented Dynkin quiver of type $A_4$
(we omitted AR translations).

Let $\MM = \{M_1, M_2\}$. Note that $\MM$ is a 2-orthogonal collection.
The fundamental domain is shaded light gray and a functorially finite subcategory $\bracket{\MM}$ in $\CC$ is shaded dark gray.

Take $X_1, X_2 \in \CC$ indicated in the following picture where $\XX = \{ X_1, X_2,$ $M_1,$ $M_2\}$ is a 2-simple-minded system.
Then $\mu^-_{\MM}(\XX) \setminus \MM$, $\mu^+_{\MM}(\XX) \setminus \MM$ are indicated in blue and red, respectively.
\[
\begin{tikzpicture}
	\path[rounded corners,fill=gray,opacity=0.2] (0.75, -0.5) -- (2.75, 3.5) -- (7.25, 3.5) -- (9.25, -0.5) --cycle;
	\path[rounded corners,fill=gray,opacity=0.6] (4,-0.3) -- (5, 1.7) -- (6, -0.3) --cycle;
	\foreach \x in {0,1,...,11}
		\foreach \y in {1,3}
		{
		\fill (\x,\y) circle[radius=0.04cm];
		\fill (\x+0.5,\y-1) circle[radius=0.04cm];
		\draw[-{stealth[scale=3]}] (\x,\y) -- (\x+0.48,\y-0.96);
		\draw[-{stealth[scale=3]}] (\x,1) -- (\x+0.48,1.96);
		\ifnum \x<11
			\draw[-{stealth[scale=3]}] (\x+0.5,\y-1) -- (\x+0.98,\y-0.04);
			\draw[-{stealth[scale=3]}] (\x+0.5,2) -- (\x+0.98,1.04);
		\fi
		}
	\fill [red]
	(7.5,0) circle[radius=0.08cm]
	(3,1) circle[radius=0.08cm];
	\fill 
	(3.5,0) circle[radius=0.08cm]
	(4.5, 0) circle[radius=0.08cm] 
	(5.5, 0) circle[radius=0.08cm]
	(6.5, 0) circle[radius=0.08cm];
	\fill [blue]
	(7,1) circle[radius=0.08cm]
	(2.5,0) circle[radius=0.08cm];
	\node [text=red] at (7.5,0) [right] {\msize{0.7}{\Omega X_1}};
	\node [text=red] at (3,1) [right] {\msize{0.7}{\Omega X_2}};
	\node at (3.5,0) [right] {\msize{0.7}{X_1}};
	\node at (4.5,0) [right] {\msize{0.7}{M_1}};
	\node at (5.5,0) [right] {\msize{0.7}{M_2}};
	\node at (6.5,0) [right] {\msize{0.7}{X_2}};
	\node [text=blue] at (7,1) [right] {\msize{0.7}{\Sigma X_1}};
	\node [text=blue] at (2.5,0) [right] {\msize{0.7}{\Sigma X_2}};
	\node (D) at (5,-0.3) [below] {\msize{0.7}{\bracket{\MM}}};
\end{tikzpicture}
\]
From \cite[Proposition 5.10]{INP24}, the AR quiver of triangulated category $\ZZ = \bcap{0\leq i\leq 2}\, \rpp{\MM[i]}$ is illustrated as follows
(we omitted AR translations).
\[
\begin{tikzpicture}
	\path[rounded corners,fill=gray,opacity=0.2] (0.75, -0.5) -- (2.75, 3.5) -- (7.25, 3.5) -- (9.25, -0.5) --cycle;
	\foreach \x in {0,4}
	{
	\fill (2.5+\x,0) circle[radius=0.04cm]
	(3+\x,1) circle[radius=0.04cm]
	(3.5+\x,0) circle[radius=0.04cm];
	\draw[-{stealth[scale=3]}] (2.5+\x,0) -- (2.98+\x,0.96);
	\draw[-{stealth[scale=3]}] (3+\x,1) -- (3.48+\x,0.04);
	\draw[-{stealth[scale=3]}] (3.5+\x,0) -- (4.98+\x,2.96);
	\draw[-{stealth[scale=3]}] (1+\x,3) -- (2.48+\x,0.04);
	}
	\foreach \x in {0,9}
	{
	\fill (0+\x,3) circle[radius=0.04cm]
	(0.5+\x,2) circle[radius=0.04cm]
	(1+\x,3) circle[radius=0.04cm];
	\draw[-{stealth[scale=3]}] (0+\x,3) -- (0.48+\x,2.04);
	\draw[-{stealth[scale=3]}] (0.5+\x,2) -- (0.98+\x,2.96);
	}
	\fill (5,3) circle[radius=0.04cm]
	(11.5,0) circle[radius=0.04cm];
	\draw[-{stealth[scale=3]}] (10,3) -- (11.48,0.04);
	\fill [red]
	(7.5,0) circle[radius=0.08cm]
	(3,1) circle[radius=0.08cm];
	\fill 
	(3.5,0) circle[radius=0.08cm]
	(6.5, 0) circle[radius=0.08cm];
	\fill [blue]
	(7,1) circle[radius=0.08cm]
	(2.5,0) circle[radius=0.08cm];
	\node [text=red] at (7.5,0) [right] {\msize{0.7}{\Omega X_1}};
	\node [text=red] at (3,1) [right] {\msize{0.7}{\Omega X_2}};
	\node at (3.5,0) [right] {\msize{0.7}{X_1}};
	\node at (6.5,0) [right] {\msize{0.7}{X_2}};
	\node [text=blue] at (7,1) [right] {\msize{0.7}{\Sigma X_1}};
	\node [text=blue] at (2.5,0) [right] {\msize{0.7}{\Sigma X_2}};
\end{tikzpicture}
\]
\item
The following picture is the AR quiver of $\CC = \Db(k A_4)$ 
where $\nu$ is the Nakayama functor and $A_4$ is the linearly oriented Dynkin quiver of type $A_4$
(we omitted AR translations).

Let $\MM = \{M_1, M_2\}$. Note that $\MM$ is a pre-simple-minded collection.
The fundamental domain is shaded light gray and a functorially finite subcategory $\bracket{\MM}$ in $\CC$ is shaded gray.

Take $X_1, X_2 \in \CC$ indicated in the following picture where $\XX = \{ X_1, X_2,$ $M_1,$ $M_2\}$ is a simple-minded collection.
Then $\mu^-_{\MM}(\XX) \setminus \MM$, $\mu^+_{\MM}(\XX) \setminus \MM$ are indicated in blue and red, respectively.
\[
\begin{tikzpicture}
	\path[rounded corners,fill=gray,opacity=0.6] (4,-0.3) -- (5, 1.7) -- (6, -0.3) --cycle;
	\foreach \x in {0,1,...,11}
		\foreach \y in {1,3}
		{
		\fill (\x,\y) circle[radius=0.04cm];
		\fill (\x+0.5,\y-1) circle[radius=0.04cm];
		\draw[-{stealth[scale=3]}] (\x,\y) -- (\x+0.48,\y-0.96);
		\draw[-{stealth[scale=3]}] (\x,1) -- (\x+0.48,1.96);
		\ifnum \x<11
			\draw[-{stealth[scale=3]}] (\x+0.5,\y-1) -- (\x+0.98,\y-0.04);
			\draw[-{stealth[scale=3]}] (\x+0.5,2) -- (\x+0.98,1.04);
		\fi
		}
	\fill [red]
	(1,3) circle[radius=0.08cm]
	(3,1) circle[radius=0.08cm];
	\fill 
	(3.5,0) circle[radius=0.08cm]
	(4.5, 0) circle[radius=0.08cm] 
	(5.5, 0) circle[radius=0.08cm]
	(6.5, 0) circle[radius=0.08cm];
	\fill [blue]
	(7,1) circle[radius=0.08cm]
	(9,3) circle[radius=0.08cm];
	\node [text=red] at (1,3) [right] {\msize{0.7}{\Omega X_1}};
	\node [text=red] at (3,1) [right] {\msize{0.7}{\Omega X_2}};
	\node at (3.5,0) [right] {\msize{0.7}{X_1}};
	\node at (6.5,0) [right] {\msize{0.7}{X_2}};
	\node [text=blue] at (7,1) [right] {\msize{0.7}{\Sigma X_1}};
	\node [text=blue] at (9,3) [right] {\msize{0.7}{\Sigma X_2}};
	\node (D) at (5,-0.3) [below] {\msize{0.7}{\bracket{\MM}}};
\end{tikzpicture}
\]
From \cite[Proposition 5.10]{INP24}, the AR quiver of triangulated category $\ZZ = \bcap{i \geq 0}\, \big( \lpp{\MM[-i]} \cap \rpp{\MM[i]} \big)$ is illustrated as follows
(we omitted AR translations).
\[
\begin{tikzpicture}
	\foreach \x in {0,4}
	{
	\fill (2.5+\x,0) circle[radius=0.04cm]
	(3+\x,1) circle[radius=0.04cm]
	(3.5+\x,0) circle[radius=0.04cm];
	\draw[-{stealth[scale=3]}] (2.5+\x,0) -- (2.98+\x,0.96);
	\draw[-{stealth[scale=3]}] (3+\x,1) -- (3.48+\x,0.04);
	\draw[-{stealth[scale=3]}] (3.5+\x,0) -- (4.98+\x,2.96);
	\draw[-{stealth[scale=3]}] (1+\x,3) -- (2.48+\x,0.04);
	}
	\foreach \x in {0,9}
	{
	\fill (0+\x,3) circle[radius=0.04cm]
	(0.5+\x,2) circle[radius=0.04cm]
	(1+\x,3) circle[radius=0.04cm];
	\draw[-{stealth[scale=3]}] (0+\x,3) -- (0.48+\x,2.04);
	\draw[-{stealth[scale=3]}] (0.5+\x,2) -- (0.98+\x,2.96);
	}
	\fill (5,3) circle[radius=0.04cm]
	(11.5,0) circle[radius=0.04cm];
	\draw[-{stealth[scale=3]}] (10,3) -- (11.48,0.04);
	\fill [red]
	(1,3) circle[radius=0.08cm]
	(3,1) circle[radius=0.08cm];
	\fill 
	(3.5,0) circle[radius=0.08cm]
	(6.5, 0) circle[radius=0.08cm];
	\fill [blue]
	(7,1) circle[radius=0.08cm]
	(9,3) circle[radius=0.08cm];
	\node [text=red] at (1,3) [right] {\msize{0.7}{\Omega X_1}};
	\node [text=red] at (3,1) [right] {\msize{0.7}{\Omega X_2}};
	\node at (3.5,0) [right] {\msize{0.7}{X_1}};
	\node at (6.5,0) [right] {\msize{0.7}{X_2}};
	\node [text=blue] at (7,1) [right] {\msize{0.7}{\Sigma X_1}};
	\node [text=blue] at (9,3) [right] {\msize{0.7}{\Sigma X_2}};
\end{tikzpicture}
\]
\end{enumerate}
\end{ex}

\section{Localization and Hovey mutation triples}
Let $(\SS, \ZZ, \VV)$ be a mutation triple.
In this subsection, we assume that $\CC$ is a Krull-Schmidt triangulated category and the following condition, which is stronger than (MT2).

\begin{itemize}[leftmargin=50pt]
\item[(MT2$^{+}$)] 
	\begin{enumerate}
	\item $\bbE(\SS, \ZZ) = 0$ and $\bbE(\SS, \ZZ\bracket{-1}) =0$.
	\item $\bbE(\ZZ, \VV) = 0$ and $\bbE(\ZZ\bracket{1}, \VV)=0$.
	\end{enumerate}
\end{itemize}

\begin{defi}
We say that $(\SS, \ZZ, \VV)$ is a \emph{Hovey mutation triple} if it satisfies the following conditions.
	\begin{enumerate}[leftmargin=30pt, label=(\roman*)]
	\item $\SS, \ZZ$ and $\VV$ are closed under direct summands.
	\item $\SS \cap \VV \subset \II$.
	\item $\CC = \SS[-1] \ast \ZZ \ast \VV[1]$.
	\item $\NN$ defined by $\SS \ast \VV[1]$ is a thick subcategory of $\CC$.
	\end{enumerate}
\end{defi}

\begin{ex}
Let $((\SS, \TT), (\UU, \VV))$ be a Hovey twin cotorsion pair.
Then $(\SS, \TT \cap \UU, \VV)$ is a Hovey mutation triple.
\end{ex}

The following statement and its proof are based on \cite[Proposition 6.10, Corollary 6.13]{Nak18}

\begin{thm}
Let $(\SS, \ZZ, \VV)$ be a Hovey mutation triple. Then the inclusion functor $\ZZ \to \CC$ induces a triangle equivalence $\Phi \colon \ul{\ZZ} \to \CC/\NN$.
\end{thm}
\begin{proof}
Let $X \in \CC$. Then there exists the following diagram where 
$S_X \in \SS, V_X \in \VV$ and $Z_X \in \ZZ$.
\[
\xy
(0,8)*+{V_X}="21";
(16,8)*+{U_X}="22";
(32,8)*+{X}="23";
(0,-8)*+{V_X}="31";
(16,-8)*+{Z_X}="32";
(32,-8)*+{T_X}="33";
(16,-24)*+{S_X}="42";
(32,-24)*+{S_X}="43";
{\ar^{} "21";"22"};
{\ar^{} "22";"23"};
{\ar^{} "31";"32"};
{\ar^{z} "32";"33"};
{\ar@{=} "42";"43"};
{\ar@{=} "21";"31"};
{\ar^{} "22";"32"};
{\ar^{} "32";"42"};
{\ar^{t} "23";"33"};
{\ar^{} "33";"43"};
{\ar@{}|\car "21";"32"};
{\ar@{}|\car "22";"33"};
{\ar@{}|\car "32";"43"};
\endxy
\]
Since $z \colon Z_X \to T_X$ and $t \colon X \to T_X$ are quasi-isomorphisms, in other words, morphisms whose cones are contained in $\NN$, $\Phi$ is essentially surjective.

Let $f \in \ZZ(X,Y)$ where $\Phi(\ul{f})=0$. Then there exists a quasi-isomorphism $s \colon Y \to C$ such that $sf=0$.
Because the cocone of $s$ is in $\NN$, $f$ factors through $\NN = \SS[-1] \ast \VV$.
Moreover, $f$ factors through $\VV$ since $\CC(\SS[-1], Y) \subset \CC(\SS[-1], \ZZ) =0$.
Finally, $f$ factors through $\II$ because $\CC(X\bracket{1}[-1], \VV) \cong \CC(X\bracket{1}, \VV[1]) =0$.
Thus, $\ul{f} =0$.

Let $f/s \in \CC/\NN(X,Y)$ be a morphism which can be expressed as a roof
\[
\xy
(0,0)*+{X}="1";
(12,6)*+{C}="2";
(24,0)*+{Y}="3";
{\ar^{f} "1";"2"};
{\ar_{s} "3";"2"};
\endxy
\]
where $s \colon Y \to C$ is a quasi-isomorphism and $X, Y \in \ZZ, C \in \CC$.
Take triangles $C \xrar{t} T_C \rar S_C \rar C[1]$ and $V_C \rar Z_C \xrar{z} T_C \xrar{v} V_C[1]$
where $S_C \in \SS, Z_C \in \ZZ$ and $V_C \in \VV$.
Then there exist morphisms $f^{\pr} \colon X \to Z_C$ and $s^{\pr} \colon Y \to Z_C$ in $\CC$ where $tf = zf^{\pr}$ and $ts = zs^{\pr}$
since $\CC(\ZZ, \VV[1]) = 0$.
Note that $s^{\pr}$ is a quasi-isomorphism, since $z$ and $ts$ are quasi-isomorphisms (recall that $\NN$ is thick).
Because $t$ and $z$ are quasi-isomorphisms $f/s = tf/ts = zf^{\pr}/zs^{\pr} = f^{\pr}/s^{\pr}$. 
Take triangles $Y \xrar{s^{\pr}} Z_C \rar N \rar Y[1]$ where $N \in \NN$ and $V \rar S \rar N \rar V[1]$.
Then there exists the following diagram.
\[
\xy
(0,24)*+{}="11";
(16,24)*+{V}="12";
(32,24)*+{V}="13";
(0,8)*+{Y}="21";
(16,8)*+{W}="22";
(32,8)*+{S}="23";
(48,8)*+{Y[1]}="24";
(0,-8)*+{Y}="31";
(16,-8)*+{Z_C}="32";
(32,-8)*+{N}="33";
(48,-8)*+{Y[1]}="34";
(16,-24)*+{V[1]}="42";
(32,-24)*+{V[1]}="43";
{\ar@{=} "12";"13"};
{\ar^{s_1} "21";"22"};
{\ar^{} "22";"23"};
{\ar^{0} "23";"24"};
{\ar^{s^{\pr}} "31";"32"};
{\ar^{} "32";"33"};
{\ar^{} "33";"34"};
{\ar@{=} "42";"43"};
{\ar@{=} "21";"31"};
{\ar^{} "12";"22"};
{\ar^{s_2} "22";"32"};
{\ar^{0} "32";"42"};
{\ar^{} "13";"23"};
{\ar^{} "23";"33"};
{\ar^{} "33";"43"};
{\ar@{=} "24";"34"};
{\ar@{}|\car "12";"23"};
{\ar@{}|\car "21";"32"};
{\ar@{}|\car "22";"33"};
\endxy
\]
Then $S$ is a direct summand of $V \oplus Z_C$.
Since $\CC$ is Krull-Schmidt, there exist $S_1, S_2 \in \CC$ where $S \cong S_1 \oplus S_2$ and $S_1$ \resp{$S_2$} is a direct summand of $V$ \resp{$Z_C$}.
From assumption, $S_1 \in \SS \cap \VV \subset \II$ and $S_2 \in \SS \cap \ZZ = \II$.
So, $S \in \II$.
In the same way, one can show that $V \in \II$.
Thus, $\ul{s_1}$ and $\ul{s_2}$ are isomorphisms in $\ul{\CC}$ and $\Phi(\ul{{s^{\pr}}^{-1} f^{\pr}}) = f/s$.

We define a natural isomorphism $\phi \colon [1] \circ \Phi \Rightarrow \Phi \circ \Sigma$ by 
$\phi_X = h^{X\bracket{1}} \circ {(l^X)}^{-1}$ where $h^{X\bracket{1}}$ and $l^X$ are defined as follows.
\[
\xy
(16,24)*+{S^X[-1]}="12";
(32,24)*+{S^X[-1]}="13";
(48,24)*+{}="14";
(0,8)*+{X}="21";
(16,8)*+{I^X}="22";
(32,8)*+{X\bracket{1}}="23";
(48,8)*+{X[1]}="24";
(0,-8)*+{X}="31";
(16,-8)*+{W^{\pr}}="32";
(32,-8)*+{\Sigma X}="33";
(48,-8)*+{X[1]}="34";
(0,-24)*+{}="41";
(16,-24)*+{S^X}="42";
(32,-24)*+{S^X}="43";
{\ar@{=} "12";"13"};
{\ar^{i^X} "21";"22"};
{\ar^{p^X} "22";"23"};
{\ar^{l^X} "23";"24"};
{\ar^{} "31";"32"};
{\ar^{} "32";"33"};
{\ar^{} "33";"34"};
{\ar@{=} "42";"43"};
{\ar@{=} "21";"31"};
{\ar^{} "12";"22"};
{\ar^{} "22";"32"};
{\ar^{} "32";"42"};
{\ar^{} "13";"23"};
{\ar^{h^{X\bracket{1}}} "23";"33"};
{\ar^{g^{X\bracket{1}}} "33";"43"};
{\ar@{=} "24";"34"};
{\ar@{}|\car "12";"23"};
{\ar@{}|\car "21";"32"};
{\ar@{}|\car "22";"33"};
{\ar@{}|{\phantom{XXX}\car} "23";"34"};
{\ar@{}|\car "32";"43"};
\endxy
\]

Then we only have to show $\Phi$ preserves triangles. It is enough to show that 
$\Phi$ sends a standard right triangle $X \xrar{\ul{a}} Y \xrar{\ul{h^{C^a} b}} \sigma C^a \xrar{\sigma (\ul{c^a})} \Sigma X$, 
which is induced by a triangle $X \xrar{\wt{a}} Y\oplus I^X \xrar{\wt{b}} C^a \xrar{\wt{c}} X[1]$ in $\CC$,
to a triangle in $\CC/\NN$.
Note that the morphism $\wt{c}$ makes the following diagram in $\CC$ commutative.
\[
\xy
(0,8)*+{X}="11";
(16,8)*+{Y \oplus I^X}="12";
(32,8)*+{C^a}="13";
(48,8)*+{X[1]}="14";
(0,-8)*+{X}="21";
(16,-8)*+{I^X}="22";
(32,-8)*+{X\bracket{1}}="23";
(48,-8)*+{X1]}="24";
{\ar^-{\wt{a}} "11";"12"};
{\ar^-{\wt{b}} "12";"13"};	
{\ar^-{\wt{c}} "13";"14"};
{\ar^{i^X} "21";"22"};
{\ar^{p^X} "22";"23"};
{\ar^{l^X} "23";"24"};
{\ar@{=}^{} "11";"21"};
{\ar^{\msize{0.6}{\begin{bmatrix} 0 & 1 \end{bmatrix}}} "12";"22"};
{\ar^{c^a} "13";"23"};
{\ar@{=}^{} "14";"24"};
{\ar@{}|\car "11";"22"};
{\ar@{}|\car "12";"23"};
{\ar@{}|\car "13";"24"};
\endxy
\]
Recall that there exists the following commutative diagram in $\CC$.
\[
\xy
(0,8)*+{C^a}="11";
(16,8)*+{\sigma C^a}="12";
(0,-8)*+{X\bracket{1}}="21";
(16,-8)*+{\Sigma X}="22";
{\ar^{h^{C^a}} "11";"12"};
{\ar^{h^{X\bracket{1}}} "21";"22"};
{\ar^{c^a} "11";"21"};
{\ar^{\sigma(c^a)} "12";"22"};
{\ar@{}|\car "11";"22"};
\endxy
\]
Thus, there exists the following commutative diagram in $\CC/\NN$.
\[
\xy
(0,8)*+{X}="11";
(16,8)*+{Y\oplus I^X}="12";
(32,8)*+{C^a}="13";
(48,8)*+{X[1]}="14";
(0,-8)*+{X}="21";
(16,-8)*+{Y}="22";
(32,-8)*+{\sigma C^a}="23";
(48,-8)*+{\Sigma X}="24";
{\ar^-{\wt{a}} "11";"12"};
{\ar^-{\wt{b}} "12";"13"};	
{\ar^{\wt{c}} "13";"14"};
{\ar^{\Phi(\ul{a})} "21";"22"};
{\ar^{\Phi(\ul{h^{C^a}b})} "22";"23"};
{\ar^{\Phi(\sigma(\ul{c^a}))} "23";"24"};
{\ar@{=}^{} "11";"21"};
{\ar_{\vsim}^{\msize{0.6}{\begin{bmatrix} 1 & 0 \end{bmatrix}}} "12";"22"};
{\ar^{h^{C^a}} "13";"23"};
{\ar^{h^{X\bracket{1}}{(l^X)}^{-1}} "14";"24"};
{\ar@{}|\car "11";"22"};
{\ar@{}|\car "12";"23"};
{\ar@{}|{\phantom{Xx}\car} "13";"24"};
\endxy
\]
\end{proof}

\newpage
\bibliographystyle{plain}
\bibliography{myrefs} 

\begin{thebibliography}{10}

\bibitem{AIR14}
Takahide Adachi, Osamu Iyama, and Idun Reiten.
\newblock {$\tau$}-tilting theory.
\newblock {\em Compos. Math.}, 150(3):415--452, 2014.

\bibitem{AI12}
Takuma Aihara and Osamu Iyama.
\newblock Silting mutation in triangulated categories.
\newblock {\em J. Lond. Math. Soc. (2)}, 85(3):633--668, 2012.

\bibitem{Asa20}
Sota Asai.
\newblock Semibricks.
\newblock {\em Int. Math. Res. Not. IMRN}, (16):4993--5054, 2020.

\bibitem{ASS06}
Ibrahim Assem, Daniel Simson, and Andrzej Skowro\'{n}ski.
\newblock {\em Elements of the representation theory of associative algebras.
  {V}ol. 1}, volume~65 of {\em London Mathematical Society Student Texts}.
\newblock Cambridge University Press, Cambridge, 2006.
\newblock Techniques of representation theory.

\bibitem{BR07}
Apostolos Beligiannis and Idun Reiten.
\newblock Homological and homotopical aspects of torsion theories.
\newblock {\em Mem. Amer. Math. Soc.}, 188(883):viii+207, 2007.

\bibitem{BMRRT06}
Aslak~Bakke Buan, Robert Marsh, Markus Reineke, Idun Reiten, and Gordana
  Todorov.
\newblock Tilting theory and cluster combinatorics.
\newblock {\em Adv. Math.}, 204(2):572--618, 2006.

\bibitem{Che09}
Xiao-Wu Chen.
\newblock Extensions of covariantly finite subcategories.
\newblock {\em Arch. Math. (Basel)}, 93(1):29--35, 2009.

\bibitem{Sim17}
Raquel Coelho Sim\~{o}es.
\newblock Mutations of simple-minded systems in {C}alabi-{Y}au categories
  generated by a spherical object.
\newblock {\em Forum Math.}, 29(5):1065--1081, 2017.

\bibitem{SP20}
Raquel Coelho Sim\~{o}es and David Pauksztello.
\newblock Simple-minded systems and reduction for negative {C}alabi-{Y}au
  triangulated categories.
\newblock {\em Trans. Amer. Math. Soc.}, 373(4):2463--2498, 2020.

\bibitem{SPP22}
Raquel Coelho Sim\~{o}es, David Pauksztello, and David Ploog.
\newblock Functorially finite hearts, simple-minded systems in negative cluster
  categories, and noncrossing partitions.
\newblock {\em Compos. Math.}, 158(1):211--243, 2022.
\newblock Appendix by Coelho Sim\~{o}es, Pauksztello and Alexandra Zvonareva.

\bibitem{DRSSK99}
Peter Dr\"{a}xler, Idun Reiten, Sverre~O. Smal\o, and \O\phantom{}yvind
  Solberg.
\newblock Exact categories and vector space categories.
\newblock {\em Trans. Amer. Math. Soc.}, 351(2):647--682, 1999.
\newblock With an appendix by B. Keller.

\bibitem{Dug15}
Alex Dugas.
\newblock Torsion pairs and simple-minded systems in triangulated categories.
\newblock {\em Appl. Categ. Structures}, 23(3):507--526, 2015.

\bibitem{FGPPP24}
Xin Fang, Mikhail Gorsky, Yann Palu, Pierre-Guy Plamondon, and Matthew
  Pressland.
\newblock Extriangulated ideal quotients, with applications to cluster theory
  and gentle algebras, 2024.
\newblock arXiv:2308.05524.

\bibitem{GNP23}
Mikhail Gorsky, Hiroyuki Nakaoka, and Yann Palu.
\newblock Hereditary extriangulated categories: Silting objects, mutation,
  negative extensions, 2023.
\newblock arXiv:2303.07134.

\bibitem{Hap88}
Dieter Happel.
\newblock {\em Triangulated categories in the representation theory of
  finite-dimensional algebras}, volume 119 of {\em London Mathematical Society
  Lecture Note Series}.
\newblock Cambridge University Press, Cambridge, 1988.

\bibitem{HLN21}
Martin Herschend, Yu~Liu, and Hiroyuki Nakaoka.
\newblock {$n$}-exangulated categories ({I}): {D}efinitions and fundamental
  properties.
\newblock {\em J. Algebra}, 570:531--586, 2021.

\bibitem{IJ23}
Osamu Iyama and Haibo Jin.
\newblock Positive {F}uss-{C}atalan numbers and simple-minded systems in
  negative {C}alabi-{Y}au categories.
\newblock {\em Int. Math. Res. Not. IMRN}, (8):6624--6647, 2023.

\bibitem{INP24}
Osamu Iyama, Hiroyuki Nakaoka, and Yann Palu.
\newblock Auslander-{R}eiten theory in extriangulated categories.
\newblock {\em Trans. Amer. Math. Soc. Ser. B}, 11:248--305, 2024.

\bibitem{IW18}
Osamu Iyama and Michael Wemyss.
\newblock Reduction of triangulated categories and maximal modification
  algebras for {$cA_n$} singularities.
\newblock {\em J. Reine Angew. Math.}, 738:149--202, 2018.

\bibitem{IY18}
Osamu Iyama and Dong Yang.
\newblock Silting reduction and {C}alabi-{Y}au reduction of triangulated
  categories.
\newblock {\em Trans. Amer. Math. Soc.}, 370(11):7861--7898, 2018.

\bibitem{IY08}
Osamu Iyama and Yuji Yoshino.
\newblock Mutation in triangulated categories and rigid {C}ohen-{M}acaulay
  modules.
\newblock {\em Invent. Math.}, 172(1):117--168, 2008.

\bibitem{Jin20}
Haibo Jin.
\newblock Cohen-{M}acaulay differential graded modules and negative
  {C}alabi-{Y}au configurations.
\newblock {\em Adv. Math.}, 374:107338, 59, 2020.

\bibitem{Jin23}
Haibo Jin.
\newblock Reductions of triangulated categories and simple-minded collections.
\newblock {\em J. Lond. Math. Soc. (2)}, 107(4):1482--1518, 2023.

\bibitem{KY14}
Steffen Koenig and Dong Yang.
\newblock Silting objects, simple-minded collections, {$t$}-structures and
  co-{$t$}-structures for finite-dimensional algebras.
\newblock {\em Doc. Math.}, 19:403--438, 2014.

\bibitem{LN19}
Yu~Liu and Hiroyuki Nakaoka.
\newblock Hearts of twin cotorsion pairs on extriangulated categories.
\newblock {\em J. Algebra}, 528:96--149, 2019.

\bibitem{LZ13}
Yu~Liu and Bin Zhu.
\newblock Triangulated quotient categories.
\newblock {\em Comm. Algebra}, 41(10):3720--3738, 2013.

\bibitem{Nak18}
Hiroyuki Nakaoka.
\newblock A simultaneous generalization of mutation and recollement of
  cotorsion pairs on a triangulated category.
\newblock {\em Appl. Categ. Structures}, 26(3):491--544, 2018.

\bibitem{NP19}
Hiroyuki Nakaoka and Yann Palu.
\newblock Extriangulated categories, {H}ovey twin cotorsion pairs and model
  structures.
\newblock {\em Cah. Topol. G\'{e}om. Diff\'{e}r. Cat\'{e}g.}, 60(2):117--193,
  2019.

\bibitem{Ara24}
Aran Tattar.
\newblock The structure of aisles and co-aisles of t-structures and
  co-t-structures.
\newblock {\em Appl. Categ. Structures}, 32(1):Paper No. 5, 32, 2024.

\bibitem{ZZ18}
Panyue Zhou and Bin Zhu.
\newblock Triangulated quotient categories revisited.
\newblock {\em J. Algebra}, 502:196--232, 2018.

\end{thebibliography}

\end{document}